\newcommand{\om}{\omega}
\newcommand{\on}{\operatorname}
\newcommand{\der}{\on{der}}
\DeclareMathOperator{\pr}{pr}
\DeclareMathOperator{\colim}{colim}
\DeclareMathOperator{\coinv}{coinv}
\DeclareMathOperator{\ind}{ind}
\DeclareMathOperator{\For}{For}
\DeclareMathOperator{\Rep}{Rep}
\DeclareMathOperator{\Fl}{\mathfrak{Fl}}
\DeclareMathOperator{\Prest}{PreSt}
\DeclareMathOperator{\Free}{Free}
\DeclareMathOperator{\Vect}{Vect}
\DeclareMathOperator{\St}{St}
\DeclareMathOperator{\op}{op}
\DeclareMathOperator{\gr}{gr}
\DeclareMathOperator{\cL}{\mathcal{L}}
\DeclareMathOperator{\Spec}{Spec}
\DeclareMathOperator{\Id}{Id}
\DeclareMathOperator{\Sch}{Sch}
\DeclareMathOperator{\Alg}{AlgSp}
\DeclareMathOperator{\Aff}{Aff}
\DeclareMathOperator{\Hom}{Hom}
\DeclareMathOperator{\rs}{rs}
\DeclareMathOperator{\codim}{codim}
\DeclareMathOperator{\Fun}{Funct}
\DeclareMathOperator{\EExt}{\mathscr{E}\text{\kern -3pt {\calligra\large xt}}\,}
\DeclareMathOperator{\Ind}{Ind}
\DeclareMathOperator{\Ad}{Ad}
\DeclareMathOperator{\Aut}{Aut}
\newcommand{\cal}[1]{\mathcal{#1}}
\newcommand{\C}[1]{\cal#1}
\newcommand{\B}[1]{\mathbb#1}
\DeclareMathOperator{\Perv}{Perv}
\DeclareMathOperator{\Loc}{Loc}
\DeclareMathOperator{\perf}{perf}
\DeclareMathOperator{\Res}{Res}
\DeclareMathOperator{\val}{val}
\DeclareMathOperator{\pt}{pt}
\DeclareMathOperator{\Lie}{Lie}
\DeclareMathOperator{\res}{res}
\DeclareMathOperator{\red}{red}
\DeclareMathOperator{\ev}{ev}
\DeclareMathOperator{\triv}{triv}
\DeclareMathOperator{\RGamma}{R\Gamma}
\newtheorem{theorem}{Theorem}[subsection]
\newtheorem{Thm}[theorem]{Theorem}
\newtheorem{Prop}[theorem]{Proposition}
\newtheorem{Lem}[theorem]{Lemma}
\newtheorem{Cor}[theorem]{Corollary}
\newtheorem{Cl}[theorem]{Claim}
\theoremstyle{definition}
\newtheorem{Emp}[theorem]{}
\newtheorem{theorem'}{Theorem}[section]
\theoremstyle{definition}
\newtheorem{Emp'}[theorem']{}
\numberwithin{equation}{section}
\newcommand{\form}[1]{(\ref{Eq:#1})}
\newcommand{\rl}[1]{Lemma \ref{L:#1}}
\newcommand{\rp}[1]{Proposition \ref{P:#1}}
\newcommand{\re}[1]{\ref{E:#1}}
\newcommand{\rco}[1]{Corollary \ref{C:#1}}
\newcommand{\rt}[1] {Theorem \ref{T:#1}}
\newcommand{\cH}{\mathcal{H}}
\newcommand{\dt}{\delta}
\newcommand{\un}{\underline}
\newcommand{\clp}{\mathcal{L}^{+}}
\newcommand{\co}{\mathcal{O}}
\newcommand{\cO}{\mathcal{O}}
\newcommand{\cS}{\mathcal{S}}
\newcommand{\g}{\gamma}
\newcommand{\fC}{\frak{C}}
\newcommand{\fg}{\frak{g}}
\newcommand{\fc}{\frak{c}}
\newcommand{\isom}{\overset {\thicksim}{\to}}
\newcommand{\cY}{\mathcal{Y}}
\newcommand{\la}{\lambda}
\newcommand{\gm}{\mathbb{G}_{m}}
\newcommand{\fq}{\B{F}_q}
\newcommand{\kC}{\mathfrak{C}}
\newcommand{\al}{\alpha}
\newcommand{\br}{\textbf{r}}
\newcommand{\hra}{\hookrightarrow}
\newcommand{\sm}{\smallsetminus}
\newcommand{\cC}{\mathcal{C}}
\newcommand{\ov}{\overline}
\newcommand{\bql}{\overline{\mathbb{Q}}_{\ell}}
\newcommand{\kD}{\mathfrak{D}}
\newcommand{\kc}{\mathfrak{c}}
\newcommand{\kg}{\mathfrak{g}}
\newcommand{\NN}{\mathbb{N}}
\newcommand{\Gm}{\Gamma}
\newcommand{\qlbar}{\ov{\mathbb{Q}}_l}
\newcommand{\wt}{\widetilde}
\newcommand{\cX}{\mathcal{X}}
\newcommand{\cD}{\mathcal{D}}
\newcommand{\cZ}{\mathcal{Z}}
\newcommand{\cI}{\mathcal{I}}
\newcommand{\La}{\Lambda}
\newcommand{\cU}{\mathcal{U}}
\newcommand{\lra}{\longrightarrow}
\newcommand{\lla}{\longleftarrow}
\newcommand{\Affft}{\Aff^{\on{ft}}}
\newcommand{\Schft}{\Sch^{\on{ft}}}
\newcommand{\Algft}{\Alg^{\on{ft}}}
\def\dar[#1]{\ar@<2pt>[#1]\ar@<-2pt>[#1]}
\def\tar[#1]{\ar@<3pt>[#1]\ar@<-3pt>[#1]\ar@<0pt>[#1]}
\begin{document}

\title[Perversity of coinvariants of affine Springer sheaves]
{Perversity of coinvariants of affine Springer sheaves}

\author{Alexis Bouthier}

\address{Institut de Mathematiques de Jussieu\\
4 place Jussieu, 75005 Paris, France}
\email{alexis.bouthier@imj-prg.fr}

\author{David Kazhdan}
%\address{Einstein Institute of Mathematics\\
%Edmond J. Safra Campus\\
%The Hebrew University of Jerusalem\\
%Givat Ram, Jerusalem, 9190401, Israel}
%\email{kazhdan@math.huji.ac.il}

\author{Yakov Varshavsky}
\address{Einstein Institute of Mathematics\\
Edmond J. Safra Campus\\
The Hebrew University of Jerusalem\\
Givat Ram, Jerusalem, 9190401, Israel}
\email{kazhdan@math.huji.ac.il, yakov.varshavsky@mail.huji.ac.il}

\date{\today}

\thanks{The research was partially supported by ERC grant 101142781. The research of Y.V. was partially supported by the
ISF grant 2091/21.}

\begin{abstract}
Using techniques of \cite{BKV}, we construct a perverse $t$-structure on the $\infty$-category of $\ell$-adic $\cL G$-equivariant sheaves on the regular-semisimple bounded locus of the loop group $\cL G$ and prove that the derived $\tau$-coinvariants of affine Grothendieck--Springer sheaves are perverse. Our main new ingredient is a theorem of Yun \cite{Yun} on compatibility of actions.
\end{abstract}

\maketitle

\centerline{To G\'erard Laumon with deep respect}

\tableofcontents

\section*{Introduction}

\begin{Emp'}
{\bf The finite-dimensional case.} (a) Let $G$ be a connected reductive group over an algebraically closed field $k$, and let $T\subseteq B$ and $W$ be a maximal torus, a Borel subgroup and the Weyl group of $G$, respectively. Consider the diagram
\[
[G/G]\overset{\ov{\frak{p}}^{\on{fin}}}{\lla} [B/B]\overset{\pr}{\lra} T,
\]
where $[G/G]$ and $[B/B]$ denote quotient stacks corresponding to the adjoint actions, morphism $\ov{\frak{p}}^{\on{fin}}$ (called the {\em Grothendieck--Springer resolution}) is induced by the inclusion $B\hra G$, and morphism $\pr$ is induced by the projection $B\to B/R_u(B)\simeq T$.

\smallskip

(b) To every local system $\cL$ on $T$, we associate the {\em Grothendieck--Springer sheaf}
\[
\C{S}_{\cL}^{\on{fin}}:=(\ov{\frak{p}}^{\on{fin}})_!\pr^*(\cL)[\dim G]\in\C{D}([G/G]).
\]
Since $[B/B]$ is a smooth stack, while $\ov{\frak{p}}^{\on{fin}}$ is a small morphism, the sheaf  $\C{S}_{\cL}^{\on{fin}}$ is perverse and is the intermediate extension of its restriction to the regular semisimple locus $[G^{\rs}/G]$.

\smallskip

(c) Moreover, if $\cL$ is $W$-equivariant, then $\C{S}_{\cL}^{\on{fin}}$ is equipped with a $W$-action. In this case, for every representation $\tau\in\on{Rep}_{\qlbar}(W)$ one can form its $\tau$-isotypical component
\[
\C{S}_{\cL,\tau}^{\on{fin}}:=\tau\otimes_{\qlbar[W]} \C{S}_{\cL}^{\on{fin}}\in\C{D}([G/G]),
\]
which is again a perverse sheaf, and is the intermediate extension of its restriction to $[G^{\rs}/G]$. Furthermore, $\C{S}_{\cL,\tau}^{\on{fin}}$ is irreducible, if $\cL$ and $\tau$ are irreducible. 
\end{Emp'}

The goal of this work is to prove analogs of these results for loop groups, using constructions and results from \cite{BKV}.

\begin{Emp'}
{\bf The affine case.} (a) Let $\clp (G)$ and $\cL G$ be the arc group and the loop  group of $G$, respectively,
let $\ev:\clp(G)\rightarrow G$ be the projection (see Section~\re{looparc}), and let $I:=\ev^{-1}(B)\subseteq \clp(G)$ be the Iwahori subgroup scheme.
 %Namely, $\clp (G)$ is a group scheme over $k$, whose group of $k$-points is $G(k[[t]])$, and $\cL G$ is a group ind-scheme, whose group of $k$-points is $G(k((t)))$. We denote by $\ev:\clp(G)\rightarrow G$ the projection, corresponding to the morphism $G(k[[t]])\to G(k):t\mapsto 0$,
%and let $I:=\ev^{-1}(B)\subseteq \clp(G)$ be the Iwahori subgroup scheme (see Sections~\re{looparc}).

\smallskip

(b) Let $\fC\subseteq \cL\kg$ be the locus of ``bounded elements''. More precisely, we define $\fC\subseteq \cL\kg$ to be the preimage $\fC:=(\cL\chi)^{-1}(\clp(\fc))$, where $\fc:=\Spec k[G]^G$ is the Chevalley space of $\kg$, $\cL\chi:\cL G\to\cL\fc$ is the morphism between 
loop  spaces induced by the projection $\chi:G\to\fc$, and $\clp(\fc)\subseteq\cL\fc$ is the arc space of $\fc$.  Consider the diagram
\[
[\fC/\cL G]\overset{\ov{\frak{p}}}{\lla} [I/I]\overset{\pr}{\lra} T,
\]
where $[\fC/\cL G]$ and $[I/I]$ denote quotient stacks corresponding to the adjoint actions, morphism $\ov{\frak{p}}$ (called the {\em affine Grothendieck--Springer resolution}) is induced by the inclusion $I\hra \fC\subseteq \cL G$, and morphism $\pr:[I/I]\to T$ is induced by the projection $I\overset{\ev}{\lra} B\overset{\pr}{\to} T$.

\smallskip

(c) Let $\ell$ be a prime number different from the characteristic of $k$. Every $\infty$-stack $\cX$ over $k$ gives rise to a stable $\infty$-category $\C{D}(\C{X})$ of $\ell$-adic sheaves on $\C{X}$, and every morphism $f:\cX\to\cY$ of $\infty$-stacks gives rise to a pullback functor $f^!:\cD(\cY)\to \cD(\cX)$ (see Section~\re{sheaves}). In particular, to every $\infty$-stack $\cX$ one can associate a dualizing sheaf $\om_{\cX}\in\cD(\cX)$, defined to be the $!$-pullback of $\qlbar\in\cD(\pt)$.

\smallskip

(d) As in the Lie algebra case, the projection $\ov{\frak{p}}$ is ind-fp-proper, where ``fp'' stands for {\em finitely-presented} (see Section~\re{fibr}(b)), therefore the pullback $\ov{\frak{p}}^!:\cD([\fC/\cL G])\to \cD([I/I])$ has a left adjoint $\ov{\frak{p}}_!$ (see \cite[Proposition~5.3.7]{BKV}). For every local system $\cL$ on $T$, we set
\[
\C{S}_{\cL}:=\ov{\frak{p}}_!\pr^!(\om_T\otimes\cL)\in \cD([\fC/\cL G]),
\]
and call it the {\em affine Grothendieck--Springer sheaf}.

\smallskip

(e) We denote by $\fC_{\bullet}\subseteq\fC$ the open ind-subscheme such that $\fC_{\bullet}(k)=\fC(k)\cap G^{\rs}(k((t)))$, set $I_{\bullet}:=I\cap \fC_{\bullet}$, and let $\ov{\frak{p}}_{\bullet}:[I_{\bullet}/I]\to[\fC_{\bullet}/\cL G]$ and $\C{S}_{\cL,\bullet}\in \cD([\fC_{\bullet}/\cL G])$ be the restrictions of $\ov{\frak{p}}$ and let $\C{S}_{\cL}$, respectively.
\end{Emp'}

\begin{Emp'} \label{E:mainresults}
{\bf Main results.} (a) Assume that the derived group $G^{\der}$ of $G$ is simply connected and that the order of $W$ is prime to the characteristic of $k$. In this case, we show that the stable $\infty$-category $\cD([\fC_{\bullet}/\cL G])$ is equipped with
a natural perverse $t$-structure.

\smallskip

(b) Assume further that the characteristic of $k$ is either zero  or greater than $2h$, where $h$ is the Coxeter number of $G$.\footnote{In \cite{BKV} we forgot to impose this condition (see footnote after the formulation of Theorem~\ref{flat1} below).} In this case we show that for every local system $\cL$ on $T$, the sheaf $\C{S}_{\cL,\bullet}$ is perverse, and is the intermediate extension of its restriction to the locus
$[\fC_{\leq 0}/\cL G]$ of bounded elements with regular semisimple reduction. Moreover, we show that  if the local system $\C{L}$ is $W$-equivariant, then $\C{S}_{\cL,\bullet}$ is equipped with a natural action of the extended affine Weyl group $\wt{W}$ of $G$.
In this case, for every representation $\tau\in\on{Rep}_{\qlbar}(\wt{W})$, one can form the $\tau$-isotypical component
\[
\C{S}_{\cL,\bullet,\tau}:=\tau\otimes^L_{\qlbar[\wt{W}]} \C{S}_{\cL,\bullet}\in\C{D}([\fC_{\bullet}/\cL G]).
\]

\smallskip

(c) Furthermore, we show that $\C{S}_{\cL,\bullet}$ is {\em $\wt{W}$-constructible}, that is, the quotient stack $[\fC_{\bullet}/\cL G]$ has a {\em constructible stratification} such that $!$-restriction of  $\C{S}_{\cL,\bullet}$ to each stratum is a local system, whose fibers are perfect complexes of $\qlbar[\wt{W}]$-modules. Therefore, for every finite-dimensional  representation $\tau\in\on{Rep}_{\qlbar}(\wt{W})$, the $\tau$-isotypical component $\C{S}_{\cL,\bullet,\tau}$ is constructible.

\smallskip

(d) Finally, the main result of this work asserts that for every representation $\tau\in\on{Rep}_{\qlbar}(\wt{W})$, the $\tau$-isotypical component $\C{S}_{\cL,\bullet,\tau}$ is perverse.

\end{Emp'}

\begin{Emp'}
{\bf Remarks.} (a) Note that while the Lie algebra analogs of parts (a)-(c) were known before, the Lie algebra analog of part~(d) is new. 

\smallskip

(b) Unlike $\C{S}_{\cL,\bullet}$, perverse sheaf $\C{S}_{\cL,\bullet,\tau}$ is not always the intermediate extension of its restriction to
$[\fC_{\leq 0}/\cL G]$. Moreover, $\C{S}_{\cL,\bullet,\tau}$ is not always irreducible, when $\C{L}$ and $\tau$ are irreducible.
\end{Emp'}

\begin{Emp'}
{\bf Affine character sheaves.} (a) Perverse sheaves $\C{S}_{\cL,\tau}^{\on{fin}}$ provide examples of Lusztig's character sheaves, which play an important role in the representation theory of finite groups of Lie type.

\smallskip

(b) Likewise we expect that perverse sheaves $\C{S}_{\cL,\bullet,\tau}$ provide examples of {\em affine character sheaves}, whose general definition is currently unknown, but which are expected to play a similarly important role in the representation theory of $p$-adic groups.
\end{Emp'}

\begin{Emp'}
{\bf Outline of proofs.} (a) Since $G^{\der}$ is assumed to be simply connected, the Chevalley space $\fc$ is smooth. Then, mimicking the Lie algebra case, considered in \cite{BKV}, we show that the stack $[\fC_{\bullet}/\cL G]$ is {\em placidly stratified}, that is, has a constructible stratification $\{[\fC_{w,\br}/\cL G]_{\red}\}_{w,\br}$ such that each stratum is a {\em placid stack}. Therefore each  $\cD([\fC_{w,\br}/\cL G]_{\red})$ is equipped with a canonical ($!$-adapted) perverse $t$-structure, and the perverse structure of $\cD([\fC_{\bullet}/\cL G])$ is constructed by gluing.

\smallskip

(b) As in the Lie algebra case, to prove the assertion of Section~\re{mainresults}(b),
we show that the affine Grothendieck--Springer fibration  $\ov{\frak{p}}_{\bullet}:[I_{\bullet}/I]\to[\fC_{\bullet}/\cL G]$ is small.
Moreover, we deduce the smallness from codimension formula for Goresky--Kottwitz--MacPherson strata, dimension formula for affine Springer fibers
$\Fl_{\g}$ and flatness of projection $v_n:I_n\to \cL_n(\fc)$ between truncated arc spaces. Finally, we deduce the flatness of $v_n$ from the corresponding result for Lie algebras using Jordan decomposition and identifying the unipotent locus of the group with the nilpotent locus of
the Lie algebra. 

\smallskip

(c) To show the assertion of Section~\re{mainresults}(c), we prove the finiteness properties of the Grothendieck--Springer fibration $\ov{\frak{p}}_{\bullet}$, mimicking the Lie algebra case.

\smallskip

(d) Since $\C{S}_{\cL,\bullet}$ is perverse and the functor of $\tau$-coinvariants is right $t$-exact, to show the perversity of $\C{S}_{\cL,\bullet,\tau}$ it suffices to show that $\C{S}_{\cL,\bullet,\tau}\in {}^p\cD^{\geq 0}([\fC_{\bullet}/\cL G])$. Moreover, using Section~\re{mainresults}(c), we show that it suffices to prove that for every $\g\in\fC_{\bullet}(k)$, we have an inclusion
\begin{equation} \label{Eq:intr}
R\Gm_c(\Fl_{\g},\om_{\cL})_{\tau}\in D^{\geq -d_{\g}},
\end{equation}
where $\Fl_{\g}$ denotes the affine Springer fiber,  $R\Gm_c(\Fl_{\g},\om_{\cL})_{\tau}$ denotes the $\tau$-coinvariants of the cohomology with compact support of the pullback of $\om_T\otimes\cL$, and $d_{\g}$ is an explicit integer.

Finally, we deduce the inclusion \form{intr} from the group analog \cite[Theorem~2.3.4]{BV} of a theorem of Yun \cite{Yun} on the compatibility of actions.
\end{Emp'}

\begin{Emp'}
{\bf Remark.} Though we knew for a long time that the perversity of $\C{S}_{\cL,\bullet,\tau}$ follows from inclusion \form{intr} and that inclusion \form{intr} should follow from the theorem of Yun, we were able to complete the proof of inclusion \form{intr}
only recently. The missing step was Proposition~\ref{coh3}, and it was motivated by a nice trick explained to us by Zhiwei Yun.
\end{Emp'}

\begin{Emp'}
{\bf Relation to local Langlands conjectures and stability.} 
The local Langlands conjecture predicts that the set of isomorphism classes of smooth irreducible representations of $G(\fq((t)))$ has a natural partition into finite subsets $\Pi_{\la}$, called $L$-packets.

Moreover, it is expected that the linear span $\on{Span}\{\chi_{\pi}\}_{\pi\in\Pi_{\la}}$ of characters of $\pi\in\Pi_{\la}$ has another basis $\{\chi_{\la}^{\kappa}\}_{\kappa}$ such that each $\chi_{\la}^{\kappa}$ is ``$\C{E}_{\la,\kappa}$-stable''. Furthermore, it is expected that each $\chi_{\la}^{\kappa}$ is obtained from a perverse sheaf on $[\cL G/\cL G]$ by the ``sheaf-function correspondence''.

Using results of \cite{BV}, the perversity of $\cS_{\cL,\bullet,\tau}$ implies a version of the above conjecture for $L$-packets
of cuspidal Deligne--Lusztig representations, introduced in \cite{KV}.
\end{Emp'}

\begin{Emp'}
{\bf Plan of the paper.} The paper is organized as follows:

\smallskip

In the Section~1 we carry out geometric preliminaries:
First, in Section 1.1, we recall various notions from~\cite{BKV}, including placid $\infty$-stacks, placidly stratified $\infty$-stacks
and small morphisms. Next, in Section 1.2, we introduce $\Gm$-constructible and essentially constructible sheaves on $\infty$-stacks
and study their properties. We also introduce a subclass of placid $\infty$-stacks, which we call {\em admissible}.
Then, in Section 1.3, we recall a construction of perverse $t$-structures on placidly stratified $\infty$-stacks, introduced and studied
in \cite{BKV}, and give several criteria we need later. Finally, in Section 1.4, we show several simple properties of quasi-coherent sheaves, which play a central role in the proof of our main theorem.

\smallskip

In Section~2, we prove group analogs of some of the results of \cite{BKV}: First, in Section 2.1, we show flatness of the Chevalley map for
truncated arc spaces, deducing it from its analog for Lie algebras. Next, in Section 2.2, we introduce a GKM-stratification and
give a proof of a formula for the codimension of strata, which is much shorter than the original one. Finally, in Section~2.3, we study basic properties of the affine Grothendieck--Springer fibration, essentially mimicking the corresponding assertions for Lie algebras.

\smallskip

In Section~3, we introduce affine Grothendieck--Springer sheaves $\cS_{\cL,\bullet}$ and study their properties.
First, in Section 3.1, we show that each $\cS_{\cL,\bullet}$ is perverse, equipped with a natural $\wt{W}$-action and
that the induced action on homologies of affine Springer fibers coincides with the action constructed by Lusztig. Finally, in Section 3.2,
we show the main result of this work asserting that sheaves of $\tau$-coinvariants $\cS_{\cL,\bullet,\tau}$  are perverse.
For completeness, we also show the corresponding assertion for Lie algebras, whose proof is almost identical.

\end{Emp'}

\begin{Emp'}
{\bf Acknowledgments.} We thank Zhiwei Yun who explained to us a nice simple trick, which was crucial for the proof of our main theorem.

It is our pleasure and honor to dedicate this paper to G\'erard Laumon. Over the years, A. B. has greatly benefited from G\'erard's support, enthusiasm and insight. Through his work, his students, and his personal qualities, G\'erard has developed a wide network of knowledge and friendship that will last for many years to come. As used to say Gr\'egoire de Nysse: ``Celui qui s'\'el\`eve, il va de commencement en commencement, par des commencements qui n'ont jamais de fin''. Merci G\'erard.
\end{Emp'}

\section{Geometric preliminaries.}

\subsection{Placid stacks and small morphisms}

In this section we recall certain construction from~\cite{BKV}.
%To make the exposition simpler, most of our notions are more restrictive than those considered in~\cite{BKV}.

\begin{Emp}
{\bf Infinity-stacks.} Let $k$ be an algebraically closed field.

\smallskip

(a) Let $\Sch_k,\Aff_k$ and $\Alg_k$ be the categories of schemes, affine schemes and algebraic spaces over $k$, respectively, let $\Schft_k,\Affft_k$ and $\Algft_k$ be the subcategories of schemes, affine schemes and algebraic spaces of finite type over $k$, and let $\frak{S}$ be the $\infty$-category of spaces, which are often referred as $\infty$-groupoids.

\smallskip

(b) We denote by $\Prest_k$ the $\infty$-category of $\frak{S}$-valued presheaves on $\Aff_k$, that is,
of functors of $\infty$-categories $\Aff_k^{\op}\to\frak{S}$, usually called {\em $\infty$-prestacks} over $k$.
We denote by $\St_k\subseteq \Prest_k$ the full $\infty$-subcategory of sheaves in the
\'etale topology $\Aff_k^{\op}\to\frak{S}$, called {\em $\infty$-stacks}.

\smallskip

(c) We call a morphism $\cX\to\cY$ of $\infty$-stacks a {\em covering}, if it is a surjective map of sheaves. Explicitly, this means that for every
morphism $Y\to\C{Y}$ with $Y\in\Aff_k$ there exists an \'etale covering  $X\to Y$ of (affine) schemes such that the composition $X\to Y\to\C{Y}$ has a lift
$X\to\C{X}$.

\smallskip

(d) Generalizing classical construction for (affine) schemes, to every $\infty$-stack $\C{X}$ one can associate its reduction $\C{X}_{\red}$
(see~\cite[Section~1.4]{BKV}). %We say $\C{X}$ is {\em reduced}, if the canonical morphism $\C{X}_{\red}\to\C{X}$ is an isomorphism.
As in \cite[Section~1.5.1]{BKV}, we say that a morphism between $\infty$-stacks $f:\C{X}\to\C{Y}$ is a {\em topological equivalence}, if the induced morphism $\C{X}_{\red}\to\C{Y}_{\red}$ between reductions is an isomorphism.\footnote{It is more natural to require that the induced map $\C{X}_{\perf}\to\C{Y}_{\perf}$ is an isomorphism, but this more general notion is not needed for our purposes.}

\smallskip

(e) As in \cite[Section~2.4.1]{BKV}, for every $\infty$-stack $\C{X}$ and an $\infty$-substack $\C{Y}\subseteq\C{X}$, one can form the complementary $\infty$-substack $\C{X}\sm\C{Y}\subseteq\C{X}$.

\smallskip

(f) For an $\infty$-stack $\C{X}$ and a point $x\in\C{X}(k)$, we denote by $\iota_x$ the corresponding morphism $\pt:=\Spec k\to \cX$.

\end{Emp}

\begin{Emp} \label{E:classes}
{\bf Classes of morphisms.}

\smallskip

(a) Let $(P)$ be a class of morphisms $f:\C{X}\to Y$ with $\cX\in\St_k$ and $Y\in\Aff_k$, stable under base change. We say that a morphism between $\infty$-stacks {\em belongs to $(P)$}, if its pullback to any affine scheme belong to $(P)$.

\smallskip

(b) Using construction of part~(a), we can talk about representable morphisms (corresponding to the class of all morphisms $X\to Y$ with $X\in\Alg_k$),
representable (locally)-fp morphisms (where {\em fp} stands for {\em finitely-presented}), and fp-open/(locally) closed embeddings between $\infty$-stacks.

\smallskip

(c) We say that a morphism $f:\C{X}\to Y$ with $\cX\in\St_k$ and $Y\in\Aff_k$ is {\em ind-fp-proper}, if $X$ has a presentation as a filtered colimit $\C{X}\simeq\colim_{\al}X_{\al}$ such that $X_{\al}\in \Alg_k$ for all $\al$, each  $X_{\al}\to Y$ is fp-proper, and each transition map
$X_{\al}\to X_{\beta}$ is an fp-closed embedding. Moreover, using the construction of part~(a) we can talk about ind-fp-proper morphisms
between $\infty$-stacks.

\smallskip

(d) We say that an $\infty$-substack $\cY\subseteq\cX$ is {\em topologically fp-(locally) closed},
if for every morphism $X\to\cX$ with $X\in \Aff_k$ there exists an fp-(locally) closed subscheme $Y\subseteq X$ and an isomorphism
$Y_{\red}\simeq (\cY\times_{\cX} X)_{\red}$ over $X$.

%(d) Let $(P)$ be a class as in (a). We say that morphisms $f:\C{X}\to Y$ with $\cX\in\St_k$ and $Y\in\Aff_k$ is topologically-$(P)$ if there exists a morphism $\C{X}'\to Y$ with  $\cX'\in\St_k$ and an isomorphism $\cX_{\red}\isom\cX'_{\red}$ over $Y$. Moreover, using construction (a) we can talk about topologically-$(P)$ morphisms between $\infty$-stacks. In particular, we can talk about
\end{Emp}

\begin{Emp} \label{E:consstr}
{\bf Constructible stratifications} (compare \cite[Section~2.4.5]{BKV}).

\smallskip

Let $\cX$ be an $\infty$-stack, let $\{\cX_{\al}\}_{\al\in\cI}$ be a collection of non-empty topologically fp-locally closed reduced $\infty$-substacks of $\cX$ with $\cX_{\al}\cap\cX_{\beta}=\emptyset$, and denote by $\eta_{\al}:\cX_{\al}\hra\cX$ the inclusion morphisms.

\smallskip

(a) We say that the collection $\{\cX_{\al}\}_{\al\in\cI}$ forms a {\em finite constructible stratification} of $\cX$, if $\cI$ is finite and there exists an ordering $\al_1<\dots<\al_n$ of $\cI$ and an increasing sequence of fp-open $\infty$-substacks $\emptyset=\cX_0\subseteq\dots\subseteq\cX_n=\cX$ of $\cX$ such that for every $i=1,\dots, n$ we have $\cX_{\al_i}\subseteq\cX_i\sm\cX_{i-1}$ and the embedding $\cX_{\al_i}\hra\cX_i\sm\cX_{i-1}$ is a topological equivalence.

\smallskip

(b) For an $\infty$-substack $\cY$ of $\cX$, we say that $\cY$ is $\{\cX_{\al}\}_{\al\in\cI}$-{\em adapted}, if for every $\al\in \cI$, we have  either $\cY\cap\cX_{\al}=\emptyset$ or $\cX_{\al}\subseteq\cY$. In which case, we set $\cI_{\cY}:=\{\al\in\cI~\vert~\cX_{\al}\cap\cY\neq\emptyset\}$.

\smallskip

(c) We say that $\{\cX_{\al}\}_{\al\in\cI}$ forms a {\em bounded constructible stratification} of $\cX$, if there exists a presentation $\cX\simeq\colim_{U} \cX_U$ as a filtered colimit such that each $\cX_U\subseteq\cX$ is an fp-open $\{\cX_{\al}\}_{\al\in\cI}$-adapted $\infty$-substack and the collection $\{\cX_{\al}\}_{\al\in\cI_{\C{X}_U}}$ forms a finite constructible stratification
of $\cX_U$.

\smallskip

(d) Let $f:\cY\to\cX$ be a morphism of $\infty$-stacks. Then a finite/bounded constructible stratification $\{\cX_{\al}\}_{\al\in\cI}$ of $\cX$ induces a corresponding stratification $\{\cY_{\al}:=f^{-1}(\cX_{\al})_{\red}\}_{\al\in\cI,\cY_{\al}\neq\emptyset}$
of $\cY$.
\end{Emp}

%\begin{Emp} \label{E:remcons}
%{\bf Remarks.}
%(a) To simplify the terminology, our notion \re{consstr} slightly differs from the corresponding notion of
%\cite[???]{BKV}.
%\smallskip
%(b) For every finite/bounded constructible stratification $\{\cY_{\al}\}_{\al\in\cI}$ and every morphism $f:\cX\to\cY$, one form the finite/bounded constructible stratification $\{\cX_{\al}:=f^{-1}(\cY_{\al})\}_{\al\in\cI}$ of $\cX$.
%\end{Emp}

% and notation slightly differs.

\begin{Emp} \label{E:glplsch}
{\bf Placid $\infty$-stacks and smooth morphisms.} %Let $\Schft_k$ (resp. $\Affft_k$) be the categories of schemes (resp. affine schemes) of finite type over $k$.

\smallskip

(a) We say that a $k$-scheme $X$ {\em has a placid presentation}, if $X$ has a presentation as a filtered limit
$X\simeq\lim_{\al} X_{\al}$ such that $X_{\al}\in \Schft_k$ for all $\al$ and all transition maps $X_{\beta}\to X_{\al}$ are smooth and affine.
Such a presentation will be called {\em placid}.

\smallskip

(b)  We call a morphism $f:X\to Y$ of $k$-schemes {\em strongly pro-smooth}, if $X$ has a presentation as a filtered limit
$X\simeq\lim_{\al} X_{\al}$ over $Y$, where all morphisms $X_{\al}\to Y$ are smooth and finitely presented, while all projections $X_{\beta}\to X_{\al}$ are smooth, finitely presented and affine.

\smallskip

(c) We call a $k$-scheme $X$ {\em placid}, if it has an \'etale covering by schemes admitting placid presentations.
We call a morphism $f:X\to Y$ of placid $k$-schemes {\em smooth}, if locally in
the \'etale topology it is a strongly pro-smooth morphism of schemes.\footnote{As in~\cite{BKV}, our smooth morphisms are not assumed to be locally finitely presented.}

\smallskip

(d) More generally, following \cite[Section~1.3.1]{BKV}, we define a collection of {\em placid $\infty$-stacks} and a collection {\em smooth morphisms} between placid $\infty$-stacks, containing placid schemes and smooth morphisms from part~(c).
By definition, for every placid $\infty$-stack $\C{X}$, there is a smooth covering $X\to\C{X}$ from a placid scheme $X$. %Such a map is called a {\em placid atlas}.

\smallskip

(e) A placid $\infty$-stack $\cX$ will be called {\em smooth}, if the structure morphism $\cX\to \pt$ is smooth.

\smallskip
\end{Emp}

\begin{Emp} \label{E:basic example}
{\bf Examples.}

\smallskip

(a) Let $G$ be a group scheme over $k$, whose neutral connected component is strongly pro-smooth, acting on a placid scheme $X$.
Then the quotient stack $\C{X}=[X/G]$ is placid, and the projection $X\to\C{X}$ is a smooth covering (see \cite[Section~1.3.9]{BKV}).
Moreover, all placid $\infty$-stacks appearing in this work are of this form.

\smallskip

(b) It follows from \cite[Lemma~1.3.6(c)]{BKV} that if $f:\cX\to\cY$ is a locally fp-morphism between placid $\infty$-stacks such that
$\cY$ is placid, then $\cX$ is placid. Also, by \cite[Corollary~1.4.5(b)]{BKV}, in this case the reduction $\cX_{\red}$ is placid and the embedding
$\cX_{\red}\to\cX$ is fp-closed.

\smallskip

(c) Using results of part~(b) one deduces that if $\cY$ is a placid $\infty$ stack and $\cX\subseteq\cY$ is a topologically fp-(locally) closed reduced $\infty$-substack, then $\cX\subseteq\cY$ is fp-(locally) closed and $\cX$ is placid.
%More generally, the same also holds if $G$ is merely a group scheme, whose connected component is pro-smooth.
\end{Emp}

\begin{Emp} \label{E:topsp}
{\bf Underlying topological space} (compare \cite[Section~2.2.1]{BKV}).

\smallskip

(a) Generalizing the classical notion for schemes, to every $\infty$-stack $\C{X}$, one associates the underlying topological space $[\C{X}]$ \label{N:[x]} such that

\smallskip

\quad\quad$\bullet$ the underlying set is defined to be the set of equivalent classes of pairs $(K,[z])$, where $K/k$ is a field extension,
$[z]\in \pi_0(\C{X}(K))$, and $(K',[z'])\sim (K'',[z''])$, if there exist field embeddings
$K'\hra K$ and $K''\hra K$ such that $[z']$ and $[z'']$ have the same image in $\pi_0(\C{X}(K))$.

\smallskip

\quad\quad$\bullet$ a subset $U\subseteq [\cX]$ is open, if $U=[\cU]$ for some  open $\infty$-substack $\cU\subseteq\cX$.

\smallskip

(b) Every morphism $f:\cX\to \cY$ of $\infty$-stacks induces a continuous map $[f]:[\cX]\to[\cY]$
of topological spaces. We call a morphism $f:\cX\to\cY$ of $\infty$-stacks {\em open}\label{I:open morphism}, if the induced map $[f]$ is open. We call $f$ {\em universally open}\label{I:universally open morphism},
if every pullback $\cX\times_{\cY}\cZ\to\cZ$ of $f$ is open.

\smallskip

(c) To simplify the notation, we will often denote the topological space $[\C{X}]$ by $\C{X}$ and the map $[f]$ by $f$.
\end{Emp}

%\begin{Emp} \label{E:red}
%{\bf Reduction.}
%(a) Recall that to every scheme/algebraic space $X$ one can associate the corresponding reduced scheme/algebraic space $X_{\red}$. Moreover, $X_{\red}$ is placid, if $X$ is such (see~\cite[Lemma 1.4.5]{BKV}).

%(b) More generally, to every placid stack $\C{X}$ one can associate a reduced  placid stack $\C{X}_{\red}$ (see~\cite[1.4]{BKV}).
%Furthermore, the assignment $\C{X}\mapsto\C{X}_{\red}$ is functorial, we have a canonical functorial finitely presented closed embedding  $\C{X}_{\red}\to\C{X}$, and the induced map $\un{\C{X}_{\red}}\to\un{\C{X}}$ of the underlying sets is a bijection.
%\end{Emp}

%\section{Dimension theory}

\begin{Emp} \label{E:dimfn}
{\bf Dimension function.}

\smallskip

(a) For  every $X\in\Schft_k$ and $x\in X$, we denote by $\dim_x(X)$ the maximum of dimensions of irreducible components of $X$, containing $x$.

\smallskip

(b) Following \cite[Lemmas~2.2.4--2.2.5]{BKV}, to every locally fp-representable morphism $f:\C{X}\to\C{Y}$ between placid $\infty$-stacks
one can associate a dimension function $\un{\dim}_f:[\C{X}]\to\B{Z}$. Namely, it is uniquely characterised by the following properties:

\smallskip

\quad\quad (i) if $\C{X},\C{Y}\in \Schft_k$, then we have $\un{\dim}_f(x)=\dim_x(\cX)-\dim_{f(x)}(\cY)$ for every $x\in[\cX]$;

\smallskip

\quad\quad (ii) For every Cartesian diagram of placid $\infty$-stacks
\[
\begin{CD}
\C{X}' @>f'>> \C{Y}'\\
@V h VV @VV g V\\
\C{X} @>f>> \C{Y}
\end{CD}
\]
such that $g$ and $h$ are smooth and every $x'\in[\cX']$, we have an equality $\un{\dim}_{f'}(x')=\un{\dim}_{f}(h(x))$;

\smallskip

\quad\quad (iii) for every \'etale schematic morphism $g:\cZ\to\cX$ between placid $\infty$-stacks and every $z\in[\cZ]$, we have an equality
$\un{\dim}_{f\circ g}(z)=\un{\dim}_{f}(g(z))$.
\end{Emp}

\begin{Emp} \label{E:eqdim}
{\bf Equidimensional morphisms} (compare \cite[Section~2.2.6]{BKV}).

\smallskip

(a) A locally fp-representable morphism $f:\cX\to\cY$ of placid $\infty$-stacks is called

\smallskip

\quad\quad $\bullet$ {\em weakly equidimensional} (of relative dimension $d$), if the dimension function $\un{\dim}_f:[\cX]\to\B{Z}$ from
Section~\re{dimfn} is locally constant (constant with value $d$);

\smallskip

\quad\quad $\bullet$ {\em equidimensional}, if it is weakly equidimensional and satisfies $\un{\dim}_f(x) =\dim_x f^{-1}(f(x))$ for
every $x\in[\cX]$;

\smallskip

\quad\quad $\bullet$  {\em uo-equidimensional}, if it weakly equidimensional and universally open (see Section~\re{topsp}(b)).

\smallskip

(b) By \cite[Corollary~2.3.6]{BKV}, a uo-equidimensional morphism is equidimensional.

\smallskip

(c) We say that an fp-locally closed $\infty$-substack $\cX\subseteq\cY$ of a placid $\infty$-stack $\cY$ is of {\em (pure) codimension $d$}
and write $\codim_{\cX}(\cY)=d$, if the inclusion $\cX\hra\cY$ is weakly equidimensional of relative dimension $-d$.
\end{Emp}

%\subsubsection{a function of codimension}
%We need a notion of codimension for fp-locally closed substacks of $[\kC_{\leq m}/\cL G]$, subordinated to our morphism $f$.
%First recall that for any fp-locally closed $Y\subseteq I$, using noetherian descent, one has a good notion of codimension by \cite[Lemmas 2.2.4-2.2.5]{BKV} and also on the placid stack $[I/I]$.
%Then, for every fp-locally closed  $Z$ of $\wt{\fC}$, that is $\cL G$ equivariant, it can be written as $p^{-1}(\bar{Z})$ for $\bar{Z}\subseteq[\wt{\fC}/\cL G]=[I/I]$.
%Thus, we set $\codim(Z)=\codim(\bar{Z})$.
%Now, for  every $m\in\NN$ and a fp-locally closed $Z\hra\kC_{\leq m}$, $\cL G$-equivariant, we set:
%\[\codim^{f}(Z)=\delta_{Z}+\codim(f^{-1}(Z)),\]
%where $\delta_{Z}$ is the smallest integer such that $Z\cap\{\g\in\kC_{\leq m},\dim(f^{-1}(\g))=\delta\}\neq\emptyset$.

\begin{Emp} \label{E:plstr}
{\bf Placidly stratified $\infty$-stacks.}

\smallskip

(a) We say that an $\infty$-stack $\cY$ is {\em $\cI$-stratified}, if it is equipped with a
bounded constructible stratification $\{\cY_{\al}\}_{\al\in\cI}$ (see Section~\re{consstr}).

\smallskip

(b) We say that an $\cI$-stratified $\infty$-stack $(\cY, \{\cY_{\al}\}_{\al\in\cI})$ is
{\em placidly stratified}, if every $\cY_{\al}$ is a placid $\infty$-stack (see Section~\re{glplsch}(d)). \end{Emp}

\begin{Emp} \label{E:small}
{\bf Small morphisms} (compare \cite[Section~2.4.9]{BKV}).

\smallskip

(a) Let $(\cY, \{\cY_{\al}\}_{\al\in\cI})$ be a placidly stratified $\infty$-stack, $\cU\subseteq\cY$ an fp-open $\{\cY_{\al}\}_{\al\in\cI}$-adapted $\infty$-substack, $f:\cX\to\cY$ a morphism of $\infty$-stacks, $\{\cX_{\al}\}_{\al}$ the induced bounded constructible stratification of $\cX$ (see Section~\re{consstr}(d)), and denote by $f_{\al}:\cX_{\al}\to\cY_{\al}$ the restriction
of $f$.

\smallskip

(b) In the situation of part~(a), assume that $\cX$ is placid. Then, by Section~\re{basic example}(c), each $\cX_{\al}\subseteq\cX$ is a placid
fp-locally closed $\infty$-substack.

\smallskip

(c) In the situation of part~(b), we say that $f$ is $\cU$-\textsl{small} if for every $\al\in\cI$ there exist integers  $b_{\al},\dt_{\al}\in\B{N}$ such that

\smallskip

\quad\quad (i) each $\cX_{\al}\subseteq\cX$ is of pure codimension $b_{\al}$;

\smallskip
	
\quad\quad (ii) each $f_{\al}$ is locally fp-representable and equidimensional of relative dimension $\delta_{\al}$;

\smallskip
	
\quad\quad (iii)	for every $\al\in\cI$, one has $\delta_{\al}\leq b_{\al}$, with a strict inequality for $\al\in\cI\sm\cI_{\cU}$.
\end{Emp}

%\begin{Emp} \label{E:str}
%{\bf Constructible stratification.}
%Let $X$ be an affine scheme of finite type over $k$.
%(a) By a {\em constructible stratification}, ......we mean a collection $\eta_{\al}:X_{\al}\hra X$ of locally closed such that ....
%(b) We say that a constructible stratification is {\em smooth} if every $X_{\al}$ is smooth over $k$. We say that a stratification is
%{\em equi-dimensional (of relative dimension $\dt_{\cdot}$)}, if each $X_{\al}$ is equidimensional and each $\eta_{\al}$ is weakly equidimensional of relative dimension $\delta_{\al}$.
%(c) Every constructible stratification has a refinement, which smooth and equidimensional.
%\end{Emp}

\subsection{$\Gm$-constructible sheaves on $\infty$-stacks}
In this section we will introduced $\Gm$-constructible sheaves on $\infty$-stacks and show their basic properties.

\begin{Emp} \label{E:sheaves}
{\bf Sheaves on $\infty$-prestacks} (compare \cite[Sections~5.2.1, 5.2.2, 5.3.1]{BKV}).
%We denote by $\Sch^{\on{ft}}_k$ and $\Aff^{\on{ft}}_k$ the categories of schemes and affine schemes of finite type, respectively.
Let $\ell$ be a prime, different from the characteristic of $k$.

\smallskip

%(a) Let $\frak{S}$ be the $\infty$-category of spaces, which are often referred as $\infty$-groupoids.
%By a prestack over $k$ we mean functor of $\infty$-categories $\Aff_k^{\op}\to\frak{S}$.

(a) For an affine scheme $Y\in \Aff^{\on{ft}}_k$, we denote by $\cD_c(Y):=\cD_c(Y,\qlbar)$ the stable $\infty$-category
of constructible sheaves on $Y$.  %and let $\cD(Y):=\Ind\cD_c(Y)$ be the $\infty$-category of ind-constructible sheaves.
Then, for an affine scheme $X$ over $k$, we set $\cD_c(X):=\colim_{X\to Y}\cD_c(Y)$, where the colimit is taken over all morphisms
$X\to Y$ with $Y\in \Aff^{\on{ft}}_k$, and the transition maps are $!$-pullbacks. Next, we set $\cD(X):=\Ind\cD_c(X)$.

\smallskip

(b) For an $\infty$-prestack $\C{X}$ over $k$, we set $\cD_c(\C{X}):=\lim_{X\to \C{X}} \cD_c(X)$ and $\cD(\C{X}):=\lim_{X\to \C{X}} \cD(X)$, where the limits are taken over all morphisms $X\to\C{X}$ with $X$ affine, and the transition maps are $!$-pullbacks.  Notice that $\cD_c(\C{X})\subseteq \cD(\C{X})$ is a full $\infty$-subcategory, and we call objects of $\cD_c(\C{X})$ {\em constructible}.
By construction, the $\infty$-category $\cD(\C{X})$ is equipped with a $!$-tensor product.

\smallskip

(c) Notice that for every $Y\in \Schft_k$ or, more generally, $Y\in \Algft_k$ the $\infty$-category  $\cD_c(Y)$ is naturally identifies with  the $\infty$-category of constructible sheaves on $Y$.
\end{Emp}

We will need the following generalization of constructible sheaves.
\begin{Emp} \label{E:esscons}
{\bf Essentially constructible sheaves.}

\smallskip

(a) For an affine $k$-scheme $X$, we denote by $\cD_{\on{ess}-c}(X)\subseteq\cD(X)$ the smallest full $\infty$-subcategory, which contains
$\cD_c(X)$ and is closed under retracts, finite colimits and tensor products $-\otimes_{\bql}V$, where $V$ is a $\bql$-vector space.
%We call objects of $\cD_{\on{ess}-c}(X)$ {\em essentially constructible}.

\smallskip

(b) For a morphism $f:X\to Y$ of affine $k$-schemes, the pullback $f^{!}$ preserves the $\infty$-subcategories $\cD_{\on{ess}-c}(-)\subseteq\cD(-)$, so
 for an arbitrary $\infty$-prestack $\cX$, we can form a full $\infty$-subcategory
\[
\cD_{\on{ess}-c}(\cX):=\lim_{X\to \C{X}}\cD_{\on{ess}-c}(X)\subseteq \cD(\C{X}),
\]
and call objects of $\cD_{\on{ess}-c}(\cX)$ {\em essentially constructible}.
\smallskip

(c) By construction, for every morphism  $f:\cX\to \cY$ of $\infty$-prestacks, the pullback $f^!$ preserves the class of essentially constructible objects.
\end{Emp}

\begin{Lem} \label{L:ess-c}
Let $X\in\Schft_k$ and $K\in\cD(X)$. Then we have $K\in \cD_{\on{ess}-c}(X)$ if and only if there exists a finite constructible stratification $\{X_{\al}\}_{\al}$ of $X$ such that for each index $\al$,

\smallskip

\quad $\bullet$ only finitely many of cohomologies $\C{H}^i(\eta_{\al}^!(K))$'s are nonzero.

\smallskip

\quad$\bullet$ each $\C{H}^i(\eta_{\al}^!(K))$ is a finite extension of $L\otimes_{\bql}V$, where $L$ is an irreducible constructible local system
on $X$, and $V$ is a $\bql$-vector space.

\smallskip
Moreover, we can further assume that each $X_{\al}$ is connected smooth and affine, while each embedding $\eta_{\al}:X_{\al}\to X$ is weakly equidimensional.
\end{Lem}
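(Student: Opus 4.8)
The plan is to prove the two implications separately, and the moreover clause will essentially come for free from the construction in the forward direction. First I would handle the "only if" direction, which is the conceptually cleaner one. Given $K \in \cD_{\on{ess}-c}(X)$, I would argue by induction on the way $K$ is built up from $\cD_c(X)$ via the three operations (retracts, finite colimits, tensoring with a $\bql$-vector space $V$). The key point is to find a single finite constructible stratification that works simultaneously: for $K \in \cD_c(X)$ ordinary constructibility gives a stratification into connected smooth affine pieces on which the cohomology sheaves are local systems, and by refining we may take the strata weakly equidimensional (using that over a connected smooth scheme a locally constant sheaf has constant rank; semicontinuity of fiber dimension lets us stratify into equidimensional locally closed pieces). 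For a finite colimit $K = \colim_j K_j$, take a common refinement of the stratifications for the $K_j$; the long exact sequence for the (finite) colimit, applied after $\eta_\al^!$, shows $\C{H}^i(\eta_\al^! K)$ is built by finitely many extensions from the $\C{H}^i(\eta_\al^! K_j)$, hence again a finite extension of sheaves of the form $L \otimes_{\bql} V$. Tensoring with $V$ just replaces each $L \otimes W$ by $L \otimes (W \otimes_{\bql} V)$, and retracts are harmless since a retract of a finite extension of local systems tensored by vector spaces is again of that form (each $\C{H}^i$ is a direct summand, and on a connected base a summand of $L^{\oplus} \otimes V$-type sheaf decomposes compatibly). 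This gives the desired stratification, and by the above we may arrange all the extra properties in the moreover clause.

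For the "if" direction, suppose such a stratification $\{X_\al\}_\al$ exists. I would induct on the number of strata using the gluing/recollement triangle: pick an open stratum $U = X_\al$ with closed complement $Z$, with open embedding $j$ and closed embedding $i$. One has the triangle $j_! j^! K \to K \to i_* i^* K$, but since we are working with $!$-pullbacks it is cleaner to use $i_* i^! K \to K \to j_* j^* K$ — actually the most convenient is $j_! j^! K \to K \to i_! i^! K$ (note $i_! = i_*$ for a closed embedding). By the inductive hypothesis applied to $i^! K$ on $Z$ (with the induced stratification, which has fewer strata), $i_! i^! K \in \cD_{\on{ess}-c}(X)$ since $i_!$ of a constructible sheaf is constructible and $i_!$ commutes with the relevant operations. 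For the open part, $j^! K = j^* K$ on $U$ has, by hypothesis, only finitely many nonzero cohomology sheaves, each a finite extension of $L \otimes_{\bql} V$'s with $L$ an irreducible constructible local system on $U$; each such $L \otimes_{\bql} V$ lies in $\cD_{\on{ess}-c}(U)$ (it is $L$, a constructible sheaf, tensored with $V$), finite extensions and shifts keep us in $\cD_{\on{ess}-c}(U)$, so $j^! K \in \cD_{\on{ess}-c}(U)$. Then I need that $j_!$ sends $\cD_{\on{ess}-c}(U)$ into $\cD_{\on{ess}-c}(X)$: for $U$ an fp-open subscheme this holds because $j_!$ of a constructible sheaf is constructible and $j_!$ is exact and commutes with $-\otimes_{\bql} V$ and with retracts. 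Hence $j_! j^! K \in \cD_{\on{ess}-c}(X)$, and since $\cD_{\on{ess}-c}(X)$ is closed under finite colimits (in particular under the cofiber), $K \in \cD_{\on{ess}-c}(X)$.

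The main obstacle I anticipate is the bookkeeping around "finite extension of $L \otimes_{\bql} V$" being a robust enough class: I need it to be stable under $\eta_\al^!$, under common refinement of stratifications, under the six-functor operations $j_!$ and $i_!$ that appear, and under taking cohomology sheaves of finite colimits and of retracts. None of these is deep, but getting a clean formulation — perhaps by first proving the auxiliary statement that on a connected smooth affine scheme, $\cD_{\on{ess}-c}$ consists exactly of complexes with finitely many nonzero cohomology sheaves each of the stated form — and then invoking it uniformly, is where the real work lies. A secondary subtlety is arranging the strata to be simultaneously connected, smooth, affine, \emph{and} the embeddings weakly equidimensional: smoothness and affineness come from generic smoothness and shrinking, connectedness from splitting into connected components, and weak equidimensionality from stratifying by the locally constant function $x \mapsto \dim_x X_\al$, but one must check these refinements can be done compatibly with keeping the cohomology sheaves of the stated form (they can, since restriction to a smaller locally closed subscheme preserves the class).
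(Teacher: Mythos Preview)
Your overall approach matches the paper's: for the ``only if'' direction you check that the class of $K$ admitting such a stratification contains $\cD_c(X)$ and is closed under retracts, finite colimits, and $-\otimes_{\bql} V$; for the ``if'' direction you reassemble $K$ from the strata via recollement. The paper phrases the latter non-inductively, observing that $K$ is a finite extension of the $(\eta_\al)_*\eta_\al^!(K)$'s and that each $(\eta_\al)_*$ preserves essential constructibility (it preserves constructibility, finite colimits, retracts, and commutes with $-\otimes_{\bql} V$), so each $(\eta_\al)_*\eta_\al^!(K)\in\cD_{\on{ess}-c}(X)$.

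There is, however, one concrete error in your write-up. The sequence $j_! j^! K \to K \to i_! i^! K$ you settle on is \emph{not} a distinguished triangle: the two recollement triangles are $j_! j^! K \to K \to i_* i^* K$ and $i_* i^! K \to K \to j_* j^* K$. Since the hypothesis of the lemma controls the $!$-restrictions $\eta_\al^!(K)$, the triangle involving $i^*$ is useless here (there is no direct relation between $(\eta'_\beta)^!(i^*K)$ and $\eta_\beta^!(K)$), so you must use the second triangle. Consequently the functor you need to show preserves essential constructibility is $j_*$, not $j_!$; this holds for an fp-open immersion between finite-type $k$-schemes for the same reasons as above, and is precisely what the paper uses (in the form $(\eta_\al)_*$). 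With this correction your argument goes through and coincides with the paper's proof.
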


\begin{proof}
Let $\cD'_{\on{ess}-c}(X)$ be the collection of all $K\in\cD(X)$ such that  there exists a finite constructible stratification $\{X_{\al}\}_{\al}$ of $X$ satisfying the conditions of the lemma.
%such that for each index $\al$ each cohomology $\C{H}^i(\eta_{\al}^!(K))$ is a finite extension of $L\otimes_{\bql}V$, where $L$ is an irreducible constructible local system on $X$ and $V$ is a $\bql$-vector space, and only finitely many of the $\C{H}^i(\eta_{\al}^!(K))$'s are nonzero.
Then $\cD'_{\on{ess}-c}(X)$ contains $\cD_c(X)$, closed under retracts, finite colimits and tensor products $-\otimes_{\bql}V$ and therefore contains $\cD_{\on{ess}-c}(X)$.

\smallskip

It remains to show that every $K\in\cD'_{\on{ess}-c}(X)$ belongs to $\cD_{\on{ess}-c}(X)$. Since $K$ is a finite extension of the
$(\eta_{\al})_*\eta_{\al}^!K$'s, it suffices to show that each  $(\eta_{\al})_*\eta_{\al}^!(K)$ is essentially constructible.
Next, since each $(\eta_{\al})_*$ preserve constructibility, finite colimits, retracts and commute with $-\otimes_{\qlbar}V$, it preserves essentially constructibility. So it suffices to show that each  $\eta_{\al}^!(K)$ is essentially constructible. But this follows from the fact that
each $\eta_{\al}^!(K)$ is a finite extension of its shifted cohomologies  $\cH^i(\eta_{\al}^!(K))[-i]$ and definition of $\cD'_{\on{ess}-c}(X)$.

\smallskip

Finally, to show the ``moreover'' assertion we argue similarly but use the fact that every constructible stratification $\{X'_{\beta}\}_{\beta}$ has a refinement $\{X_{\al}\}_{\al}$ such that each $X_{\al}$ is connected smooth and affine, while each embedding $\eta_{\al}:X_{\al}\to X$ is weakly equidimensional.
\end{proof}

\begin{Emp} \label{E:admstack}
{\bf Admissible $\infty$-stacks.}

\smallskip

(a) We call $Y\in\Affft_k$ {\em acyclic}, if the canonical morphism  $\bql\to\RGamma(Y,\bql)$ is an isomorphism.

\smallskip

(b) Following \cite[Section~1.1.2]{BeKV1}, we call a $k$-scheme $X$ {\em admissible}, if it admits a placid presentation $X\simeq\lim_{\al} X_{\al}$ such that all transition maps are smooth affine and have acyclic geometric fibers, and we will call such presentation {\em admissible}.

\smallskip

(c) It follows from \cite[Lemma~1.1.3]{BeKV1} that a presentation $X\simeq\lim_{\al} X_{\al}$ is admissible if and only if the  pullback $\pi_{\al}^{!}:\cD(X_{\al})\to\cD(X)$, corresponding to each projection $\pi_{\al}:X\to X_{\al}$, is fully faithful.

\smallskip

(d) We call a placid $\infty$-stack {\em admissible}, if it has a smooth covering by a disjoint union of admissible schemes.

\smallskip

(e) An argument of \cite[Lemma~1.3.6(b)]{BKV} shows that $f:X\to Y$ is an fp-morphism of $k$-schemes such that
$Y$ is an admissible, then $X$ is admissible.
\end{Emp}

\begin{Lem} \label{L:adm-essc}
Let $X$ be an affine $k$-scheme with an admissible presentation $X\simeq\lim_{\al}X_{\al}$. Then every $K\in \cD_{\on{ess}-c}(X)$
is a pullback of some $K_{\al}\in \cD_{\on{ess}-c}(X_{\al})$.
\end{Lem}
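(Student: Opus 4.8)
The plan is to realize the objects of $\cD(X)$ that are $!$-pullbacks from a finite stage of the presentation as a full $\infty$-subcategory of $\cD(X)$ satisfying the closure properties characterizing $\cD_{\on{ess}-c}(X)$, and then to invoke minimality. Write $\pi_{\al}\colon X\to X_{\al}$ for the projections and $\rho_{\beta\al}\colon X_{\beta}\to X_{\al}$ (for $\beta\geq\al$) for the transition maps, and let $\cC\subseteq\cD(X)$ be the full $\infty$-subcategory spanned by the objects of the form $\pi_{\al}^{!}(K_{\al})$ with $K_{\al}\in\cD_{\on{ess}-c}(X_{\al})$. Since, by Section~\re{esscons}(a), $\cD_{\on{ess}-c}(X)$ is the smallest full $\infty$-subcategory of $\cD(X)$ containing $\cD_c(X)$ and closed under retracts, finite colimits and the operations $-\otimes_{\bql}V$, it suffices to check that $\cC$ contains $\cD_c(X)$ and has these three closure properties; then $\cD_{\on{ess}-c}(X)\subseteq\cC$, which is precisely the assertion.

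Two preliminary remarks make $\cC$ tractable. First, by Section~\re{esscons}(c) each $\rho_{\beta\al}^{!}$ preserves essential constructibility, so $\pi_{\al}^{!}(K_{\al})\simeq\pi_{\beta}^{!}\bigl(\rho_{\beta\al}^{!}K_{\al}\bigr)$ shows that an object of $\cC$ witnessed at stage $\al$ is also witnessed at every larger stage; as the index poset is filtered, any finite family of objects of $\cC$ can be witnessed simultaneously at a single stage $\gamma$. Second, since the presentation is admissible, Section~\re{admstack}(c) says every $\pi_{\gamma}^{!}\colon\cD(X_{\gamma})\to\cD(X)$ is fully faithful; composing with the fully faithful inclusion $\cD_{\on{ess}-c}(X_{\gamma})\hookrightarrow\cD(X_{\gamma})$, the resulting functor $\cD_{\on{ess}-c}(X_{\gamma})\to\cD(X)$ is fully faithful with essential image the part of $\cC$ witnessed at stage $\gamma$.

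Now: (i) \emph{Finite colimits.} A finite diagram in $\cC$ is, after passing to a common stage $\gamma$, a diagram all of whose vertices lie in the essential image of the fully faithful functor $\cD_{\on{ess}-c}(X_{\gamma})\to\cD(X)$; it therefore lifts (essentially uniquely) to a finite diagram in $\cD_{\on{ess}-c}(X_{\gamma})$, and applying the exact functor $\pi_{\gamma}^{!}$ to the colimit of the lift --- which stays in $\cD_{\on{ess}-c}(X_{\gamma})$ --- returns the colimit of the original diagram. (ii) \emph{Retracts.} A retract $K$ of $L=\pi_{\gamma}^{!}(A)\in\cC$ is the splitting of a coherent idempotent $e$ on $L$; by full faithfulness $e\simeq\pi_{\gamma}^{!}(e')$ for a coherent idempotent $e'$ on $A$, which splits in the idempotent-complete category $\cD(X_{\gamma})$ with retract $B$ of $A$; since $\cD_{\on{ess}-c}(X_{\gamma})$ is retract-closed, $B\in\cD_{\on{ess}-c}(X_{\gamma})$, and since $\pi_{\gamma}^{!}$ carries the splitting of $e'$ to one of $e$, we get $K\simeq\pi_{\gamma}^{!}(B)\in\cC$. (iii) \emph{The operations $-\otimes_{\bql}V$.} As $\pi_{\gamma}^{!}$ is $\bql$-linear, $\pi_{\gamma}^{!}(A)\otimes_{\bql}V\simeq\pi_{\gamma}^{!}(A\otimes_{\bql}V)$ and $A\otimes_{\bql}V\in\cD_{\on{ess}-c}(X_{\gamma})$. (iv) $\cD_c(X)\subseteq\cC$. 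By Section~\re{sheaves}(a) one has $\cD_c(X)=\colim_{X\to Y}\cD_c(Y)$ over $Y\in\Affft_k$, so any constructible $K$ on $X$ is $g^{!}(K_Y)$ for some $g\colon X\to Y$ with $Y$ of finite type; since $X\simeq\lim_{\al}X_{\al}$ has affine transition maps and $Y$ is of finite type over $k$, a standard limit argument factors $g$ as $X\xrightarrow{\pi_{\al}}X_{\al}\xrightarrow{g_{\al}}Y$, whence $K\simeq\pi_{\al}^{!}(g_{\al}^{!}K_Y)$ with $g_{\al}^{!}K_Y\in\cD_c(X_{\al})\subseteq\cD_{\on{ess}-c}(X_{\al})$.

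The step requiring the most care is (i)--(iii): keeping the $\infty$-categorical bookkeeping honest when lifting a finite diagram, a retract, or a coherent idempotent through a fully faithful functor, and --- above all --- disposing of the one operation involving a possibly infinite-dimensional $V$ by $\bql$-linearity of $\pi_{\gamma}^{!}$, rather than by the (generally false) claim that $\pi_{\gamma}^{!}$ commutes with all colimits. Everything else is formal.
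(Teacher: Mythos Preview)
Your proof is correct and follows essentially the same approach as the paper: define the subcategory of pullbacks from finite stages, and verify it contains $\cD_c(X)$ and enjoys the three closure properties defining $\cD_{\on{ess}-c}(X)$, using filteredness of the index system together with full faithfulness of each $\pi_\al^!$ (Section~\re{admstack}(c)). The paper's proof is considerably terser---it asserts closure under $-\otimes_{\qlbar}V$ and containment of $\cD_c(X)$ ``by definition'' and dispatches finite colimits and retracts in one line---whereas you spell out the lifting arguments; but the substance is identical. One small remark: your closing comment contrasting ``$\bql$-linearity of $\pi_\gamma^!$'' with ``commuting with all colimits'' is a distinction without a difference here, since compatibility with $-\otimes_{\bql}V$ for infinite-dimensional $V$ already amounts to preserving arbitrary coproducts, and in this framework $\pi_\gamma^!$ (being the ind-extension of an exact functor on compact objects) does preserve all small colimits.
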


\begin{proof}
We denote by $\cD'_{\on{ess}-c}(X)\subseteq\cD_{\on{ess}-c}(X)$ the full $\infty$-subcategory, consisting of objects, which are pullbacks
of some $K_{\al}\in \cD_{\on{ess}-c}(X_{\al})$. By definition, $\cD'_{\on{ess}-c}(X)$  contains $\cD_{c}(X)$ and is closed under tensor
products $-\otimes_{\qlbar} V$. It remains to show that $\cD'_{\on{ess}-c}(X)$ is closed under finite colimits and retracts.
But this follows from the fact that the limit $X\simeq\lim_{\al}X_{\al}$ is filtered and each $\pi_{\al}^!:\cD(X_{\al})\to\cD(X)$ is fully faithful (see Section~\re{admstack}(c)).
\end{proof}

\begin{Emp} \label{E:catequiv}
{\bf Categories of $\Gm$-equivariant objects.}
\smallskip

(a) Let $\Gm$ be a (discrete) group, and let $B\Gm$ be the classifying space of $\Gm$. For an $\infty$-category $\cC$, we denote by
$\cC^{B\Gm}$ the $\infty$-category of functors $B\Gm\to\cC$, and call it the $\infty$-category of {\em $\Gm$-equivariant objects} in $\cC$. Explicitly, objects of  $\cC^{B\Gm}$ are objects of $\cC$ equipped with an action of $\Gm$.

\smallskip

(b) The assignment $\cC\mapsto \cC^{B\Gm}$ is functorial in $\Gm$. In particular, for every homomorphism of groups $\al:\Gm_1\to\Gm_2$, we have a restriction functor $\res_{\Gm_2}^{\Gm_1}=\al^*:\cC^{B\Gm_2}\to \cC^{B\Gm_1}$,
whose left adjoint we denote by $\ind_{\Gm_1}^{\Gm_2}=\al_!$ (if exists).
Notice that $\al_!$ always exists, if $\cC$ is {\em cocomplete}, that is, has all small colimits.

\smallskip

(c) An important particular case is when $\al$ is the projection $\Gm\to \{1\}$. In this case, $\al_!$ is the functor of coinvariants $\coinv_{\Gm}$. Another important particular case is when $\al$ is the embedding $\{1\}\to \Gm$. In this case, $\al^*$ is the forgetful functor $\For$, and $\al_!$ is the functor $\on{Free}$.

\smallskip

%(d) Let $\cC$ be a stable $\infty$-category. Then every $t$-structure on $\cC$ induces a unique $t$-structure on $\cC^{B\Gm}$ such that the forgetful functor $\on{For}:\cC^{B\Gm}\to \cC$ is $t$-exact.
%For example,
%this can be shown using the fact that $p:\cX\to[\cX/\Gm]$ is an \'etale covering as in \cite[6.1.3.(b)]{BKV}.
%
%(d) Notice that if $\C{C}$ is a stable $\qlbar$-linear category, then  $\cC^{B\Gm}$ is naturally identified with the category
%$\Mod_{\qlbar[\Gm]}(\C{C})$ of $\qlbar[\Gm]$-modules in $\C{C}$. In this case, the functor  $\al_!:C^{B\Gm_1}\to C^{B\Gm_2}$ can be identified with the tensor product functor
%\[
%\qlbar[\Gm_2]\otimes_{\qlbar[\Gm_2]}-:\Mod_{\qlbar[\Gm_1]}(\C{C})\to \Mod_{\qlbar[\Gm_2]}(\C{C}).
%\]

\smallskip

(d) The assignment $\C{C}\to\C{C}^{B\Gm}$ is functorial in $\C{C}$. In particular, every functor $f:\C{C}_1\to\C{C}_2$ of $\infty$-categories
naturally lifts to a functor  $f:\C{C}^{B\Gm}_1\to\C{C}^{B\Gm}_2$. Moreover, $f$ commutes with functors $\al^*$ from part~(b). Furthermore,
 $f$ commutes with functors $\al_!$, if $\C{C}_1$ and $\C{C}_2$ are cocomplete and $f$ preserves all small colimits.
\end{Emp}

\begin{Emp} \label{E:basicex}
{\bf Basic example.} Let $\C{C}$ be the derived $\infty$-category $\Vect$ of $\qlbar$-vector spaces. In this case, the $\infty$-category
$\C{C}^{B\Gm}$ is naturally identified with the derived $\infty$-category $\C{D}(\qlbar[\Gm])$ of (left) $\qlbar[\Gm]$-modules or, what is the same,
of representations of $\Gm$ over $\qlbar$.

\end{Emp}

\begin{Emp} \label{E:isot}
{\bf Functor of $\tau$-coinvariants.}

\smallskip

(a) Let $\C{C}$ be a $\qlbar$-tensored stable $\infty$-category. Then the tensor product functor
\[
\otimes_{\qlbar}:\Vect\times \C{C}\to \C{C}:(V,K)\mapsto V\otimes_{\qlbar}K
\]
defines a functor $\otimes_{\qlbar}:\Vect^{B\Gm}\times \C{C}^{B\Gm}\simeq(\Vect\times \C{C})^{B\Gm}\to \C{C}^{B\Gm}$.
% or, what is the same, $\C{D}(\qlbar[\Gm])\times D^{\Gm}(\C{X})\to\C{C}$.

\smallskip
(b) Assume in addition that $\C{C}$ is cocomplete. Then (by Sections~\re{catequiv}(c) and \re{basicex}) for every $\tau\in \Rep_{\qlbar}(\Gm)$ and $K\in\C{C}^{B\Gm}$, one can form $\tau$-coinvariants
\[
K_{\tau}=\coinv_{\tau}(K):=\coinv_{\Gm}(\tau\otimes_{\qlbar}K)\in \C{C}.
\]

\smallskip

(c) The assignment $\coinv_{\tau}$ is functorial in $\C{C}$. Namely, let $f:\C{C}_1\to\C{C}_2$ be a $\qlbar$-tensored functor between cocomplete
$\qlbar$-tensored stable $\infty$-categories. Then $f$ induces a functor $f:\C{C}^{B\Gm}_1\to\C{C}^{B\Gm}_2$, and for every
$K\in  \C{C}^{B\Gm}_1$ we have an identification $f(K)_{\tau}\simeq f(K_{\tau})$.

\smallskip

(d) Assume that $\C{C}=\Vect$. Then every $\tau\in\Rep_{\qlbar}(\Gm)$ can be viewed as a right $\qlbar[\Gm]$-module (using involution 
$\iota:\g\mapsto\g^{-1}$ of $\Gm$). Moreover, for every $V\in\Vect$, we have an identification $V_{\tau}\simeq\tau\otimes_{\qlbar[\Gm]}V$. Moreover, a similar formula holds for an arbitrary $\C{C}$.
\end{Emp}

%\begin{Lem}\label{r-tex}
%(a) Let $\cC$ be a stable $\infty$-category equipped with a $t$-structure on $\cC$. Then there exists a unique
%$t$-structure on $\cC^{B\Gm}$  such that the forgetful functor $\on{For}:\cC^{B\Gm}\to \cC$ is $t$-exact.

%(b) Assume in addition that $\C{C}$ be $\qlbar$-tensored cocomplete and the $t$-structure on $\cC$ is compatible with filtered colimits.
%Then for every $\tau\in\on{Rep}_{\bql}(\Gm)$, the functor $\on{coinv}_{\tau}:\C{C}^{B\Gm}\to\C{C}$ is right $t$-exact.
%\end{Lem}
%\begin{proof}
%(a)
%\smallskip

%(b) Since the $t$-structure is compatible with filtered colimits, the functor $\tau\otimes_{\bql}-:\C{C}\to \C{C}$ is $t$-exact. As $\on{coinv}_{\tau}=\on{coinv}_{\Gm}\circ(\tau\otimes_{\bql}-)$, we are reduced to prove the claim for the functor of $\Gm$-coinvariants. It is thus sufficient to prove that the right adjoint $\triv:\C{C}\to\C{C}^{B\Gm}$
%is left $t$-exact. But it is $t$-exact, as the composition $\For\circ\triv$ is the identity functor.
%\end{proof}

\begin{Emp} \label{E:eqshv}
{\bf Equivariant sheaves.}

\smallskip

(a) Let $\cX$ be an $\infty$-stack, equipped with an action of a group $\infty$-stack $\C{G}$, and we denote by $p$ the projection $\C{X}\to[\C{X}/\C{G}]$. We set $\cD^{\C{G}}(\C{X}):=\cD([\C{X}/\C{G}])$, refer to objects of $\cD^{\C{G}}(\C{X})$ as {\em $\C{G}$-equivariant sheaves} on $\C{X}$ and view the pullback $p^!$ as the {\em forgetful functor}.

\smallskip

(b) Notice that if $\C{G}$ is a (discrete) group $\Gm$, then the projection
$p:\C{X}\to[\C{X}/\Gm]$ is ind-fp-proper (see \cite[Section~5.6.4]{BKV}), thus a pullback $p^!$ has a left adjoint $p_!$ commuting with base change (see \cite[Proposition~5.3.7]{BKV}).
\end{Emp}

\begin{Emp} \label{E:triv}
{\bf The case of a trivial action.} Let $\cX$ be an $\infty$-stack.

\smallskip

(a) For every group $\Gm$, we equip $\cX$ with the trivial action of $\Gm$. Then, using standard bar-resolution argument, there is a natural equivalence between the $\infty$-category $\cD(\cX)^{B\Gm}$ of $\Gm$-equivariant objects of
$\cD(X)$ and the $\infty$-category $\cD([\C{X}/\Gm])$ of $\Gm$-equivariant sheaves on $\cX$.

\smallskip

(b) A group homomorphism $\al:\Gm_1\to\Gm_2$ induces a morphism of $\infty$-stacks $\al:[\C{X}/\Gm_1]\to [\C{X}/\Gm_2]$. Then, under
the equivalence of part~(a), the restriction $\res_{\Gm_2}^{\Gm_1}:\cD(\C{X})^{B\Gm_2}\to\cD(\C{X})^{B\Gm_1}$ corresponds to the
pullback $\al^!:\cD([\C{X}/\Gm_2])\to \cD([\C{X}/\Gm_1])$, while the induction $\ind_{\Gm_2}^{\Gm_1}:\cD(\C{X})^{B\Gm_1}\to\cD(\C{X})^{B\Gm_2}$ corresponds to the pushforward $\al_!:\cD([\C{X}/\Gm_1])\to \cD([\C{X}/\Gm_2])$.

\smallskip

(c) In particular, the forgetful functor $\For:\cD(\cX)^{B\Gm}\to \cD(\cX)$ corresponds to the pullback $p^!:\cD([\C{X}/\Gm])\to \cD(\C{X})$, hence functor $\on{Free}$ (see Section~\re{catequiv}(c)) corresponds to the push-forward $p_!$, thus justifying the conventions of Section~\re{eqshv}.
\end{Emp}

\begin{Lem} \label{L:taucoinv}
Let $\C{X}$ be an $\infty$-stack equipped with an action of a group $\Gm$, and let $p$ be the projection $\C{X}\to\C{Y}:=[\C{X}/\Gm]$.
Then

\smallskip

(a) For every $K\in \cD(\C{Y})$, the object $p_!p^!K\in \cD(\cY)$ has a natural lift $\wt{K}\in \cD(\C{Y})^{B\Gm}$ and satisfies
\[
\coinv_{\Gm}(\wt{K})\simeq K\in\cD(\C{Y}).
\]

\smallskip

(b) Moreover, for every representation $\tau\in\Rep_{\qlbar}(\Gm)$, we have a natural isomorphism
\[
\coinv_{\tau}(\wt{K})\simeq A_{\tau}\overset{!}{\otimes}K,
\]
where $A_{\tau}=\pr^!(\tau)$, $\pr:[\cX/\Gm]\to [\pt/\Gm]$ is the canonical projection corresponding to the projection $\cX\to\pt$, and we identify $\tau$ with the corresponding object
of $\cD([\pt/\Gm])$.
\end{Lem}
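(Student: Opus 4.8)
The plan is to reduce both parts to an elementary computation in $\cD(B\Gm)=\Vect^{B\Gm}$ (Section~\re{basicex}), via base change and the projection formula. Write $\cY=[\cX/\Gm]$, let $\pr:\cY\to B\Gm$ be the canonical projection, and let $g:\cX\to\pt$ and $q:\pt\to B\Gm$ be the structure maps, so that the square
\[
\begin{CD}
\cX @>g>> \pt\\
@VpVV @VVqV\\
\cY @>\pr>> B\Gm
\end{CD}
\]
is Cartesian and $q$, hence $p$, is ind-fp-proper (Section~\re{eqshv}(b)); thus $q_!$ and $p_!$ are defined and commute with base change (\cite[Proposition~5.3.7]{BKV}). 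First I would combine the projection formula for $p_!$ with base change along this square to obtain, naturally in $K$,
\[
p_!p^!K\;\simeq\;(p_!\om_{\cX})\otimes^{!}K\;\simeq\;\bigl(\pr^{!}q_!\om_{\pt}\bigr)\otimes^{!}K .
\]
Moreover $q$ is the morphism $[\pt/\{1\}]\to[\pt/\Gm]$, so $q_!=\Free$ (Section~\re{triv}(c)); hence $R_{\Gm}:=q_!\om_{\pt}\simeq\Free(\bql)$ is the regular representation $\bql[\Gm]\in\Vect^{B\Gm}$, with $\Gm$ acting by left translation.

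Besides its defining left-translation action, $\bql[\Gm]$ carries a commuting right-translation action, which upgrades $R_{\Gm}$ to an object $\wt{R}_{\Gm}\in\cD(B\Gm)^{B\Gm}$ --- under the identification $\cD(B\Gm)^{B\Gm}\simeq\Vect^{B(\Gm\times\Gm)}$ this is the bitranslation module $\bql[\Gm]$ --- with $\For(\wt{R}_{\Gm})=R_{\Gm}$. I would then set
\[
\wt{K}\;:=\;\pr^{!}(\wt{R}_{\Gm})\otimes^{!}K\;\in\;\cD(\cY)^{B\Gm},
\]
with $\Gm$ acting trivially on $K$. Since $\For$ commutes with $\pr^{!}$ and with $\otimes^{!}$ (Section~\re{catequiv}(d)), the isomorphisms above give $\For(\wt{K})\simeq p_!p^!K$ naturally in $K$, so $\wt{K}$ is the required lift. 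For the coinvariants, note that $\pr^{!}$ and $-\otimes^{!}K$ preserve all small colimits and are $\bql$-linear, hence commute with $\coinv_{\tau}$ (Section~\re{isot}(c)); therefore
\[
\coinv_{\tau}(\wt{K})\;\simeq\;\pr^{!}\bigl(\coinv_{\tau}(\wt{R}_{\Gm})\bigr)\otimes^{!}K
\]
for every $\tau\in\Rep_{\bql}(\Gm)$. Since $\coinv_{\Gm}=\coinv_{\tau}$ for $\tau$ the trivial representation $\bql=\om_{B\Gm}$, part~(a) is the case $\tau=\bql$ of part~(b).

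Everything thus comes down to the identity $\coinv_{\tau}(\wt{R}_{\Gm})\simeq\tau$ in $\Vect^{B\Gm}$, i.e.\ $\coinv_{\Gm}\bigl(\tau\otimes_{\bql}\bql[\Gm]\bigr)\simeq\tau$, where $\Gm$ acts diagonally --- by $\tau$ and by right translation --- and the residual $\cD(B\Gm)$-structure is left translation. This is the ``untwisting'' isomorphism $v\otimes[\g]\mapsto\tau(\g)v\otimes[\g]$: it carries the diagonal action to $\id_{\tau}\otimes(\text{right translation})$, so that forming $\coinv_{\Gm}$ over the $\bql[\Gm]$-factor --- a free $\bql[\Gm]$-module, so with no higher homology --- returns the underlying space of $\tau$, while it carries the residual left-translation structure to the original $\Gm$-action on $\tau$. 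Taking $\tau=\bql$ then gives $\coinv_{\Gm}(\wt{R}_{\Gm})\simeq\om_{B\Gm}$, whence $\coinv_{\Gm}(\wt{K})\simeq\pr^{!}(\om_{B\Gm})\otimes^{!}K\simeq\om_{\cY}\otimes^{!}K\simeq K$ ($\om_{\cY}$ being the $\otimes^{!}$-unit), which is (a); for general $\tau$ one gets $\coinv_{\tau}(\wt{K})\simeq\pr^{!}(\tau)\otimes^{!}K=A_{\tau}\otimes^{!}K$, which is (b).

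In my view the main obstacle is not a single step but the $\infty$-categorical bookkeeping: one must check that the extra $\Gm$-action on $\wt{R}_{\Gm}$ (and hence on $\wt{K}$) is a genuine action rather than merely an automorphism for each $\g$, which is why I would carry it as the literal bitranslation object of $\Vect^{B(\Gm\times\Gm)}$; and one needs $\pr^{!}$ and the $!$-tensor product to preserve colimits so that $\coinv_{\tau}$, a colimit over $B\Gm$, passes through them. Both are part of the formalism of \cite{BKV}; the projection formula, the base-change isomorphism, and the untwisting step are all standard.
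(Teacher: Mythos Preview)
Your proof is correct and takes a genuinely different route from the paper's. The paper works with the Cartesian square
\[
\begin{CD}
\cX @>p>> \cY\\
@VpVV @VVpV\\
[\cX/\Gm] @>\ov{p}>> [\cY/\Gm]
\end{CD}
\]
(trivial $\Gm$-action on $\cY$) and defines the lift directly as $\wt{K}:=\ov{p}_!K\in\cD([\cY/\Gm])\simeq\cD(\cY)^{B\Gm}$, using base change for this square to identify $p^!\ov{p}_!K\simeq p_!p^!K$. Part~(a) is then immediate from $\pi\circ\ov{p}\simeq\Id_{\cY}$, where $\pi:[\cY/\Gm]\to\cY$ is the projection implementing $\coinv_{\Gm}$; part~(b) follows from the projection formula for $\ov{p}_!$. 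You instead base-change along $\pr:\cY\to B\Gm$ and factor everything through the biregular object $\wt{R}_{\Gm}$, reducing both parts to the single identity $\coinv_{\tau}(\wt{R}_{\Gm})\simeq\tau$ in $\cD(B\Gm)$. The paper's approach is shorter and exhibits the lift as a geometric pushforward, which is convenient when one later wants to match this $\Gm$-action with one arising from geometry (as in the application to affine Springer fibers). Your approach makes the representation-theoretic content explicit and has the pleasant feature of treating (a) and (b) uniformly; the two lifts can be identified, though the lemma as stated does not require this.
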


\begin{proof}
(a) Projection $p$ gives rise to a Cartesian diagram
\[
\begin{CD}
\cX @>p>> \cY\\
@VpVV @VVpV\\
\cY=[\cX/\Gm] @>\ov{p}>> [\cY/\Gm],
\end{CD}
\]
where $\Gm$ acts on $\cY$ trivially, and $\ov{p}$ is induced by $p$. Then, by base change, we get a canonical isomorphism $p_!p^!K\simeq p^!(\ov{p}_!K)$, thus $\wt{K}:=\ov{p}_!K\in \C{D}([\cY/\Gm])\simeq\C{D}(\cY)^{B\Gm}$ is a lift of $p_!p^!K$.

Moreover, under the identification $\C{D}([\cY/\Gm])\simeq\C{D}(\cY)^{B\Gm}$, the functor $\coinv_{\Gm}$ corresponds to the push-forward
$\pi_!:\cD([\cY/\Gm])\to \cD(\cY)$, corresponding to the projection $\pi:[\cY/\Gm]\to\cY$.
Since $\pi\circ\ov{p}\simeq\Id$, the isomorphism $\coinv_{\Gm}(\wt{K})=\coinv_{\Gm}(\ov{p}_!K)\simeq K$ follows.

\smallskip

(b) By definitions, we have $\coinv_{\tau}(\wt{K})\simeq \coinv_{\Gm}(\tau\otimes_{\qlbar}\wt{K})$,
and $\tau\otimes_{\qlbar}\wt{K}\simeq\ov{\pr}^!(\tau)\overset{!}{\otimes}\wt{K}$, where
$\ov{\pr}:[\cY/\Gm]\to [\pt/\Gm]$ is the projection induced by the projection $\cY\to\pt$. Moreover, we have
\[
\ov{\pr}^!(\tau)\overset{!}{\otimes}\wt{K}=\ov{\pr}^!(\tau)\overset{!}{\otimes}\ov{p}_!(K)\simeq \ov{p}_!(\ov{p}^!\ov{\pr}^!(\tau))\overset{!}{\otimes}K)\simeq \ov{p}_!(\pr^!(\tau)\overset{!}{\otimes}K)=\ov{p}_!(A_{\tau}\overset{!}{\otimes}K)
\]
by the projection formula. Hence, as in part~(a), we conclude that
\[
\coinv_{\tau}(\wt{K})\simeq \coinv_{\Gm}(\ov{p}_!(A_{\tau}\overset{!}{\otimes}K))\simeq A_{\tau}\overset{!}{\otimes}K.
\]
\end{proof}

\begin{Emp} \label{E:Gmcons}
{\bf $\Gm$-constructible sheaves.} Assume that we are in the situation of Section~\re{triv}.

\smallskip

(a) For every $X\in\Aff_k$, we denote by $\cD_{\Gm,c}(X)\subseteq \cD^{\Gm}(X)$ the full $\infty$-subcategory of compact objects. Alternatively, $\cD_{\Gm,c}(X)\subseteq \cD^{\Gm}(X)$ can be described the smallest $\infty$-subcategory, which contains objects $\Free(K)$ for $K\in\cD_c(X)$ and is stable under finite colimits and retracts. Objects of $\cD_{\Gm,c}(X)$ will be called {\em $\Gm$-constructible}.
Notice that for every morphism $f:X\to Y$ of affine schemes, the pullback $f^!:\cD^{\Gm}(Y)\to \cD^{\Gm}(X)$ preserves
$\cD_{\Gm,c}(-)\subseteq\cD^{\Gm}(-)$.

\smallskip

(b) The assignment $\cC\mapsto \cC^{B\Gm}$ commutes with limits. Therefore for every $\infty$-stack $\C{X}$, the $\infty$-category $\cD^{\Gm}(\C{X})$ is naturally identified with a limit $\lim_{X\to \C{X}}\cD^{\Gm}(X)$, taken over maps from  affine schemes $X$. Hence, using part~(a), we define the full $\infty$-subcategory
\[
\cD_{\Gm,c}(\C{X}):=\lim_{X\to \C{X}}\cD_{\Gm,c}(X)\subseteq \cD(\C{X}).
\]
\end{Emp}

\begin{Emp} \label{E:example}
{\bf Examples.} (a) Let $\C{X}=\pt$. Then, by  Sections~\re{basicex} and \re{triv}(a), the $\infty$-category $\cD^{\Gm}(\pt)$ is identified
with the derived $\infty$-category $\cD(\qlbar[\Gm])$ of $\qlbar[\Gm]$-modules.
Under this identification, $\cD_{\Gm,c}(\pt)$ corresponds to the subcategory $\cD_{\perf}(\qlbar[\Gm])\subseteq \cD(\qlbar[\Gm])$ of perfect complexes.

\smallskip

(b) Note that if $A$ is a Noetherian ring of finite cohomological dimension, then an object $V\in \cD(A)$
%in the derived category of $A$-modules
is perfect if and only if the direct sum of cohomology groups $\bigoplus_i H^i(V)$ is a finitely generated $A$-module.
%Indeed, the ``only if'' assertion is easy (for all rings $A$). Conversely, we have to show that every finitely generated $A$-module $M$ is a perfect complex.
%But this follows from the fact that $A$ is Noetherian and of finite cohomological dimension.
\end{Emp}

\begin{Emp} \label{E:func}
{\bf Functoriality properties.} Let $\cX$ be an $\infty$-stack.

\smallskip

(a) For every group homomorphism $\al:\Gm_1\to\Gm_2$, the induction functor $\al_!:\cD^{\Gm_1}(\C{X})\to  D^{\Gm_2}(\C{X})$ maps
$D_{\Gm_1,c}(\C{X})$ to $D_{\Gm_2,c}(\C{X})$. Indeed, since $\al_!$ commutes with $!$-pullbacks, we have to show the assertion when $\C{X}$ is an  affine scheme $X$. In this case, the assertion follows from the fact that the pushforward $\al_!: \cD^{\Gm_1}(X)\to  D^{\Gm_2}(X)$
preserves compact objects.

\smallskip

(b) The forgetful map $\For:\cD^{\Gm}(\C{X})\to \cD(\C{X})$ maps $\cD_{\Gm,c}(\C{X})$ to $\cD_{\on{ess}-c}(\C{X})$. Indeed, arguing as in part~(a), we can assume that $\C{X}$ is an affine scheme $X$. Thus, it suffices to show that for every $K\in D_c(X)$ we have
$\For(\Free(K))\in \cD_{\on{ess}-c}(X)$. But this follows from the isomorphism $\For(\Free(K))\simeq\qlbar[\Gm]\otimes_{\qlbar}K$.
%Indeed, the composition
%$\For\circ\Free$ corresponds to the composition $p^!\circ p_!:\cD(X)\to \cD([X/\Gm])\to \cD(X)$, so it is isomorphic to  $\qlbar[\Gm]\otimes_{\qlbar}-$ by the base change.
\smallskip

%(c) We denote by $\cD_{\on{ess}-c}^{\Gm}(X)\subseteq \cD^{\Gm}(X)$ be the full subcategory consisting of all $K\in\cD^{\Gm}(X)$ such that $\For(K)\in\cD_{\on{ess}-c}(X)$. By part~(b), we have $\cD_{\Gm,c}(X)\subseteq\cD_{\on{ess}-c}^{\Gm}(X)$.
\end{Emp}

\begin{Lem} \label{L:Gmconsstr}
Let $\cX$ be an $\infty$-stack, let $\Gm$ be a group, and let $K\in\cD^{\Gm}(\cX)$.

\smallskip

(a) Let $\{\cX_{\al}\}_{\al}$ be a bounded constructible stratification of $\cX$. Then we have $K\in\cD_{\Gm,c}(\cX)$ if and only if $\eta_{\al}^!(K)\in \cD_{\Gm,c}(\cX_{\al})$ for all $\al$.

\smallskip

(b) Let $f:\cY\to\cX$ be a covering of $\infty$-stacks. Then we have $K\in \cD_{\Gm,c}(\cX)$  if and only if $f^!K\in \cD_{\Gm,c}(\cY)$.

\smallskip
(c) If $K\in D_{\Gm,c}(\C{X})$, then for every $\tau\in \Rep_{\qlbar}(\Gm)$, we have $K_{\tau}\in \cD_{\on{ess}-c}(\C{X})$. Moreover, we have
$K_{\tau}\in \cD_{c}(\C{X})$, if $\tau$ is finite-dimensional.
\end{Lem}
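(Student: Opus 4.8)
The plan is to reduce each of the three parts to a statement about affine schemes, and then to argue by dévissage (parts (a) and (b)) or by direct computation (part (c)). The reduction rests on the fact, recalled in Section~\re{Gmcons}, that the formation of $\Gm$-equivariant objects commutes with limits, so that $\cD^{\Gm}(\cX)\simeq\lim_{X\to\cX}\cD^{\Gm}(X)$ and, likewise, $\cD_{\Gm,c}(\cX)\simeq\lim_{X\to\cX}\cD_{\Gm,c}(X)$ and $\cD_{\on{ess}-c}(\cX)\simeq\lim_{X\to\cX}\cD_{\on{ess}-c}(X)$, together with the fact that $!$-pullbacks preserve $\cD_{\Gm,c}(-)$ (Section~\re{Gmcons}) and $\cD_{\on{ess}-c}(-)$ (Section~\re{esscons}). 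In parts (a) and (b), one implication is then immediate from the preservation of $\Gm$-constructibility under $!$-pullback, so the work is in the converse implications.

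For (a), I would first reduce to the case where $\cX=X$ is an affine scheme carrying a \emph{finite} constructible stratification: over any affine $X\to\cX$ a bounded stratification restricts to a finite one, since $X$ is quasi-compact and hence factors through one of the adapted opens, and all hypotheses are stable under this restriction by base change for the locally closed embeddings $\eta_{\al}$. I would then induct on the number of strata, using the recollement triangle $i_!i^!K\to K\to j_*j^!K$ attached to the decomposition of $X$ into the top open union of strata $X_{n-1}\overset{j}{\hookrightarrow}X$ and its closed complement $Z\overset{i}{\hookrightarrow}X$ (identifying $\cD(Z)$ with $\cD(Z_{\red})$ and $i^!K$ with $\eta_{\al_n}^!K$): the inductive hypothesis gives $j^!K\in\cD_{\Gm,c}(X_{n-1})$, the hypothesis gives $i^!K\in\cD_{\Gm,c}(Z)$, and then $K$, being an extension of $i_!i^!K$ by $j_*j^!K$, is $\Gm$-constructible once one knows that $i_!$ and $j_*$ preserve $\Gm$-constructibility. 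For (b), I would again reduce to $\cX=X$ affine; since $f$ is a covering, after replacing $X$ by a member $X'$ of an \'etale covering we obtain a lift $X'\to\cY$, so that $K|_{X'}$ is a $!$-restriction of $f^!K$ and hence $\Gm$-constructible, and it remains to descend $\Gm$-constructibility along the \'etale covering $X'\to X$. Both (a) and (b) thereby reduce, modulo bookkeeping, to the corresponding statements for ordinary constructible sheaves established in~\cite{BKV}, \emph{together with} the observation that the functors entering those proofs ($i_!$, $j_*$, and the \'etale-descent patterns), all of which preserve ordinary constructibility, also preserve $\Gm$-constructibility: indeed each commutes with $\Free(-)=\qlbar[\Gm]\otimes_{\qlbar}(-)$ --- for $i_!$ by the projection formula, and for $j_*$ because $Rj_*$ along a quasi-compact open immersion commutes with arbitrary direct sums of sheaves --- while preserving finite colimits and retracts. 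I expect this last point to be the main obstacle: one must check that the constructibility-preservation inputs borrowed from~\cite{BKV}, in particular that $Rj_*$ along the fp-open inclusions of strata preserves constructibility, still apply in the $\Gm$-equivariant setting; granting this, the inductions run exactly as in the non-equivariant case.

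For (c), I would reduce to $\cX=X$ affine, using that $\coinv_{\tau}$ commutes with $!$-pullbacks (Section~\re{isot}(c), since $!$-pullback is $\qlbar$-tensored and colimit-preserving) and that $\cD_{\on{ess}-c}(-)$, resp.\ $\cD_{c}(-)$, is closed under finite colimits and retracts. As $\cD_{\Gm,c}(X)$ is generated under finite colimits and retracts by the objects $\Free(\cF)$ with $\cF\in\cD_{c}(X)$ (Section~\re{Gmcons}), and $\coinv_{\tau}$ preserves finite colimits and retracts, it suffices to compute $\coinv_{\tau}(\Free(\cF))$. Untwisting the diagonal $\Gm$-action on $\tau\otimes_{\qlbar}\qlbar[\Gm]$ into a free one and using $\coinv_{\Gm}\circ\Free\simeq\Id$ yields a natural identification $\coinv_{\tau}(\Free(\cF))\simeq V_{\tau}\otimes_{\qlbar}\cF$, where $V_{\tau}$ denotes the underlying $\qlbar$-vector space of $\tau$. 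This object lies in $\cD_{\on{ess}-c}(X)$ by the definition of essentially constructible sheaves, which gives the first assertion; and if $\tau$ is finite-dimensional then $V_{\tau}\otimes_{\qlbar}\cF\simeq\cF^{\oplus\dim\tau}\in\cD_{c}(X)$, which gives the second.
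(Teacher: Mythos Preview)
Your approach is essentially the same as the paper's, and part (c) is identical. Two small differences in execution are worth noting. In (a), rather than verifying that $i_!$ and $j_*$ commute with $\Free$, the paper observes directly that for an fp-locally closed immersion $\eta_{\al}:X_{\al}\hra X$ with $X_{\al}$ affine, the functor $(\eta_{\al})_*$ preserves compact objects of $\cD^{\Gm}(-)$, which immediately gives preservation of $\Gm$-constructibility; the general case is then reduced to this one by further stratifying each $X_{\al}$ by affines. In (b), the paper makes the \'etale descent explicit: after reducing to an \'etale covering $f:Y\to X$ of affine schemes, one stratifies $X$ so that $f$ is finite \'etale over each stratum, invokes part (a) to reduce to the finite \'etale case, and then uses that $K$ is a retract of $f_!f^!K$ (with $f_!$ preserving compacts). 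Your sketch defers this step to \cite{BKV} and the commutation-with-$\Free$ principle, which is correct in spirit but leaves the retract argument implicit.
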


\begin{proof}
(a) The ``only if'' assertion follows from the fact that the $!$-pullbacks preserve $\cD_{\Gm,c}$.
For the converse, choose a presentation $\C{X}=\colim_U\C{X}_U$ as in Section~\re{consstr}(c). Since every morphism $X\to\cX$ from
an affine scheme $X$ factors through some $\C{X}_U$, we conclude that $K\in\cD_{\Gm,c}(\cX)$ if and only if $K|_{\cX_U}\in\cD_{\Gm,c}(\cX_U)$ for all $U$. Thus, replacing $\C{X}$ and $K$ by $\cX_U$ and $K|_{\cX_U}$, we can assume that the stratification is finite.

Next, taking pullback to an affine scheme, we can assume that $X$ is an affine scheme. Since $K$ is a finite extension of the $(\eta_{\al})_*\eta_{\al}^!(K)$'s, it remains to show that functors $(\eta_{\al})_*$ preserve $\Gm$-constructibility.
If $X_{\al}$ is affine, the assertion follows from the fact that $(\eta_{\al})_*$ preserve compact objects. To show the general case,
choose a finite constructible stratification $\{X_{\beta}\}_{\beta}$ of $X_{\al}$ by affine schemes and
let $\eta_{\beta}$ be the composition $X_{\beta}\hra X_{\al}\hra X$. Then the assertion for $(\eta_{\al})_*$ follows from that for the $(\eta_{\beta})_*$'s.

\smallskip

(b) By construction, the $!$-pullbacks preserves $\cD_{\Gm,c}$. To show the converse assertion, notice that for every morphism
$X\to \C{X}$ from an affine scheme $X$ there exists an \'etale covering $Y\to X$  such that the composition $Y\to X\to\cX$ has a lift $Y\to\cY$. Thus, it suffices to show the assertion when $f:Y\to X$ is an \'etale covering of affine schemes. Then $f$ is finitely-presented,
so there exists a finite constructible stratification $\{X_{\al}\}_{\al}$ of $X$ such that each $Y_{\al}:=f^{-1}(X_{\al})\to X_{\al}$ is finite
\'etale.  Thus, by part~(a), we can assume that $f$ is finite \'etale. In this case, the assertion follows from the fact that $K$ is a
retract of $f_!f^!(K)$.

\smallskip

(c) Since $\coinv_{\tau}$ commutes with $!$-pullbacks, we can assume that $\cX$ is an affine scheme and
$K=\Free(K')$ for some $K'\in \cD_c(X)$. Using isomorphisms $\tau\otimes_{\qlbar}\Free(K')\simeq \Free(\qlbar^{\dim\tau}\otimes_{\qlbar}K)$ and
$\coinv_{\Gm}\circ\Free\simeq\Id$, we conclude that in this case we have $\coinv_{\tau}(K)\simeq\qlbar^{\dim\tau}\otimes_{\qlbar}K$,
from which the assertion follows.
\end{proof}

\begin{Lem} \label{L:La-cstr}
Let $f:X\to S$ a morphism of schemes, let $\Gm$ be a group acting on $X$ over $S$ such that $Y:=[X/\Gm]$ is an algebraic space, fp-proper over $S$. Then the pushforward $f_!:\cD(X)\to \cD(S)$ exists, and for every $K\in\cD(Y)$, the object $f_!(p^!K)\in\cD(S)$ has a natural lift to $\cD^{\Gm}(S)$. Moreover, this lift
belongs to $\cD_{\Gm,c}(S)$ if $K\in \cD_c(Y)$.
\end{Lem}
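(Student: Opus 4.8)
The plan is to use the factorization $f=g\circ p$, where $p\colon X\to Y=[X/\Gm]$ is the quotient projection (note that $f$, being $\Gm$-invariant since $\Gm$ acts over $S$, indeed factors through $Y$) and $g\colon Y\to S$ is the fp-proper structure morphism. As $\Gm$ is discrete, $p$ is ind-fp-proper (Section~\re{eqshv}(b)), so $p^!$ admits a left adjoint $p_!$ commuting with base change (\cite[Proposition~5.3.7]{BKV}); and $g^!$ admits a base-change-compatible left adjoint $g_!$ because $g$ is fp-proper. Hence $f^!=p^!g^!$ admits the left adjoint $f_!:=g_!p_!$, proving the first assertion. For the lift, I give $Y$ and $S$ the trivial $\Gm$-actions, so that $\cD^{\Gm}(Y)\simeq\cD(Y)^{B\Gm}$ and $\cD^{\Gm}(S)\simeq\cD(S)^{B\Gm}$ (Section~\re{triv}(a)). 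By Lemma~\ref{L:taucoinv}(a) the object $p_!p^!K$ carries a canonical lift $\wt{K}_Y\in\cD(Y)^{B\Gm}$, and by functoriality of $\cC\mapsto\cC^{B\Gm}$ (Section~\re{catequiv}(d)) the functor $g_!$ lifts to $g_!\colon\cD(Y)^{B\Gm}\to\cD(S)^{B\Gm}$ commuting with the forgetful functors $\For$. Setting $\wt{K}:=g_!(\wt{K}_Y)$, I get $\For(\wt{K})\simeq g_!\bigl(\For(\wt{K}_Y)\bigr)=g_!(p_!p^!K)=f_!(p^!K)$, so $\wt{K}\in\cD^{\Gm}(S)$ is a natural lift of $f_!(p^!K)$.

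Now assume $K\in\cD_c(Y)$. I first claim $\wt{K}_Y\in\cD_{\Gm,c}(Y)$. The $\Gm$-torsor $p\colon X\to Y$ is a covering of $\infty$-stacks, so by Lemma~\ref{L:Gmconsstr}(b) it suffices to show $p^!\wt{K}_Y\in\cD_{\Gm,c}(X)$ (with $X$ now carrying the trivial $\Gm$-action). Using the identification $\wt{K}_Y\simeq\ov{p}_!K$ from the proof of Lemma~\ref{L:taucoinv}(a) and base change, together with the fact that the pullback of the torsor $X\to Y$ along $p$ is the trivial torsor $\Gm\times X$ (so the relevant classifying map to $B\Gm$ is null-homotopic), one identifies $p^!\wt{K}_Y$ with $\Free_X(p^!K)$. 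Since $!$-pullback preserves constructibility we have $p^!K\in\cD_c(X)$, hence $\Free_X(p^!K)\in\cD_{\Gm,c}(X)$, proving the claim. It remains to check that the lifted $g_!$ sends $\cD_{\Gm,c}(Y)$ into $\cD_{\Gm,c}(S)$: $g_!$ preserves finite colimits and retracts, commutes with $\Free$ (Section~\re{catequiv}(d)), and preserves constructibility (as $g$ is fp-proper), so $g_!(\Free_Y(L))\simeq\Free_S(g_!L)\in\cD_{\Gm,c}(S)$ for $L\in\cD_c(Y)$; a standard d\'evissage — reduce to an affine $S$ and use a finite stratification of $Y$ by affine schemes — brings the general case back to these generators. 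Therefore $\wt{K}=g_!(\wt{K}_Y)\in\cD_{\Gm,c}(S)$.

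The main obstacle is the constructibility statement, specifically the claim $\wt{K}_Y\in\cD_{\Gm,c}(Y)$. When $\Gm$ is infinite — the case of interest — the torsor $X\to Y$ has no trivialization over any finite-type \'etale cover, so one cannot argue \'etale-locally in the usual sense; instead one must use the genuinely infinite covering $p\colon X\to Y$ itself, which is permitted by Lemma~\ref{L:Gmconsstr}(b), and exploit that a $\Gm$-torsor becomes trivial after pullback along itself, so that $\wt{K}_Y$ pulls back to an object of the form $\Free_X(-)$. The remaining ingredient — preservation of $\Gm$-constructibility under fp-proper pushforward — is routine once the base is reduced to an affine scheme.
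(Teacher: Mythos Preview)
Your argument is correct, and in fact it is more careful than the paper's at the decisive step. Both approaches produce the same lift, namely $\ov{f}_!K\in\cD([S/\Gm])$ for $\ov{f}\colon Y=[X/\Gm]\to[S/\Gm]$ the map induced by $f$; your $g_!(\wt{K}_Y)$ equals this because $\ov{f}$ factors as $Y\xrightarrow{\ov{p}}[Y/\Gm]\xrightarrow{[g/\Gm]}[S/\Gm]$. For the constructibility assertion the paper claims a second factorization $\ov{f}=p\circ g$ through the quotient map $p\colon S\to[S/\Gm]$, which would give $\ov{f}_!K=p_!(g_!K)=\Free_S(g_!K)$ in one stroke. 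That factorization, however, fails whenever the torsor $X\to Y$ is nontrivial: under $[S/\Gm]\simeq S\times B\Gm$ one has $\ov{f}=(g,c)$ with $c\colon Y\to B\Gm$ the classifying map of the torsor, whereas $p\circ g=(g,\ast)$; concretely, for $\Gm=\bZ/2$, $S=\pt$, $X\to Y$ a connected \'etale double cover of smooth projective curves and $K=\qlbar$, one computes $\dim \For(\ov{f}_!K)=\dim H^*(X)=4g_Y\neq 4g_Y+4=\dim\For(\Free_S(g_!K))$. Your route sidesteps this by pulling $\wt{K}_Y$ back along the covering $p\colon X\to Y$ itself, where the torsor \emph{does} trivialize (so $p^!\wt{K}_Y\simeq\Free_X(p^!K)$), whence $\wt{K}_Y\in\cD_{\Gm,c}(Y)$ by \rl{Gmconsstr}(b); pushing forward along the fp-proper $g$ then finishes the job by the d\'evissage you indicate. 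What your approach buys is exactly the missing justification for why the lift is $\Gm$-constructible when the $\Gm$-torsor over $Y$ is nontrivial.
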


\begin{proof}
Morphism $f$ gives rise to a Cartesian diagram
\[
\begin{CD}
X @>f>> S\\
@VpVV @VVpV\\
Y=[X/\Gm] @>\ov{f}>> [S/\Gm],
\end{CD}
\]
where $\Gm$ acts on $S$ trivially. Moreover, morphisms $f$ and $\ov{f}$ decomposes as  $X\overset{p}{\to}Y\overset{g}{\to}S$ and
$Y\overset{g}{\to}S\overset{p}{\to}[S/\Gm]$, respectively, and morphism $g:Y\to S$ is fp-proper. By  \cite[Proposition~5.2.5]{BKV}, our assumption on $g$ implies that $g^!$ has a left adjoint $g_!$, from which the existence of $f_!$ and $\ov{f}_!$ follow.

\smallskip

Moreover, since $p$ is \'etale, we have an isomorphism
$f_!(p^!K)\simeq p^!(\ov{f}_!K)$, thus  $f_!(p^!K)\in\cD(S)$ has a natural lift $\ov{f}_!K\in\cD([S/\Gm])\simeq\cD^{\Gm}(S)$.
Finally, if $K\in \cD_c(Y)$, then $g_!K\in \cD_c(S)$, thus $\ov{f}_!K=p_!(g_!K)\in \cD_{\Gm,c}(S)$.
\end{proof}

\begin{Prop} \label{P:Gmcons}
Let $\cX$ be an admissible $\infty$-stack, let $\Gm$ be a group such that $\qlbar[\Gm]$ is Noetherian of finite cohomological dimension, and let   $K\in\cD^{\Gm}(\cX)$. Then we have $K\in\cD_{\Gm,c}(\cX)$ if and only if we have $\For(K)\in\cD_{\on{ess}-c}(\cX)$
and $\iota_x^!K\in\cD_{\Gm,c}(\pt)=\cD_{\perf}(\qlbar[\Gm])$ for every $x\in \cX(k)$.
\end{Prop}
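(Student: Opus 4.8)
The plan is to reduce everything to the affine case and then to the analysis of a single admissible presentation. The ``only if'' direction is essentially formal: if $K\in\cD_{\Gm,c}(\cX)$, then $\For(K)\in\cD_{\on{ess}-c}(\cX)$ by Lemma \ref{L:func}(b) (i.e.\ Section~\re{func}(b)), and $\iota_x^!K\in\cD_{\Gm,c}(\pt)$ because $!$-pullbacks preserve $\Gm$-constructibility; the identification $\cD_{\Gm,c}(\pt)=\cD_{\perf}(\qlbar[\Gm])$ is Section~\re{example}(a). So the content is the ``if'' direction. Since $\cX$ is admissible, it admits a smooth covering $X\to\cX$ by a disjoint union of admissible schemes, and by Lemma \ref{L:Gmconsstr}(b) the property $K\in\cD_{\Gm,c}(\cX)$ can be checked on $\For$ along this covering; the hypotheses $\For(K)\in\cD_{\on{ess}-c}$ and fibrewise perfectness at all $k$-points pull back along $X\to\cX$ (for the fibrewise condition one uses that every $k$-point of $X$ maps to a $k$-point of $\cX$ and $\iota^!$ is transitive). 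Hence we may assume $\cX=X$ is an affine admissible scheme with a fixed admissible presentation $X\simeq\lim_\al X_\al$, $X_\al\in\Schft_k$, with smooth affine transition maps having acyclic geometric fibers, so that each $\pi_\al^!:\cD(X_\al)\to\cD(X)$ is fully faithful (Section~\re{admstack}(c)).

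Next I would descend $K$ itself to a finite level. By hypothesis $\For(K)\in\cD_{\on{ess}-c}(X)$, so by Lemma \ref{L:adm-essc} there is $\al$ and $L_\al\in\cD_{\on{ess}-c}(X_\al)$ with $\pi_\al^!L_\al\simeq\For(K)$. The point is to upgrade this to a $\Gm$-equivariant statement. One way: the $\Gm$-action on $\For(K)=\pi_\al^!L_\al$ is, since $\pi_\al^!$ is fully faithful, the pullback of a (homotopy-coherent) $\Gm$-action on $L_\al$ after possibly passing to a larger index $\al'\ge\al$ — here one uses that $\cC\mapsto\cC^{B\Gm}=\Fun(B\Gm,\cC)$ commutes with the filtered colimit computing $\cD_{\on{ess}-c}(X)$ out of the $\cD_{\on{ess}-c}(X_\al)$, together with the fact that $\cD_{\on{ess}-c}(X)\subseteq\cD(X)$ is closed under the relevant limits so mapping spaces out of $B\Gm$ are computed at finite level. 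Thus we obtain $K_\al\in\cD^{\Gm}(X_\al)$ with $\pi_\al^!K_\al\simeq K$ in $\cD^{\Gm}(X)$, and $K_\al\in\cD_{\on{ess}-c}(X_\al)$ as a plain sheaf. It now suffices to prove the assertion for $X_\al$, i.e.\ to reduce to the \emph{finite-type} case, because $\cD_{\Gm,c}$ is preserved by $\pi_\al^!$.

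So assume $X\in\Schft_k$, $K\in\cD^{\Gm}(X)$ with $\For(K)\in\cD_{\on{ess}-c}(X)$ and $\iota_x^!K$ perfect over $\qlbar[\Gm]$ for all $x\in X(k)$; I must show $K\in\cD_{\Gm,c}(X)$. Apply Lemma \ref{L:ess-c} to $\For(K)$ to get a finite constructible stratification $\{X_\beta\}$ with each $X_\beta$ connected smooth affine, $\eta_\beta$ weakly equidimensional, and each $\cH^i(\eta_\beta^!\For(K))$ a finite extension of sheaves $L\otimes_{\qlbar}V$ with $L$ irreducible and $V$ a $\qlbar$-vector space; by Lemma \ref{L:Gmconsstr}(a) it is enough to show each $\eta_\beta^!K\in\cD_{\Gm,c}(X_\beta)$. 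Fix $\beta$ and a point $x\in X_\beta(k)$. Since $X_\beta$ is connected smooth, each irreducible local system $L$ occurring is determined by its stalk, so $\iota_x^!\eta_\beta^!\For(K)$ recovers the ``multiplicity'' data; combined with the equivariant structure and the fact (Section~\re{example}(b)) that over the Noetherian finite-cohomological-dimension ring $\qlbar[\Gm]$ perfection is detected by finite generation of $\bigoplus_i H^i$, the hypothesis $\iota_x^!K\in\cD_{\perf}(\qlbar[\Gm])$ forces the $\qlbar[\Gm]$-modules $\bigoplus_i H^i(\eta_\beta^!K)$ (stalk at $x$) to be finitely generated, hence each $\eta_\beta^!K$ is a finite extension of sheaves of the form $L\otimes_{\qlbar}W$ with $L$ irreducible constructible and $W\in\cD_{\perf}(\qlbar[\Gm])$; such sheaves lie in $\cD_{\Gm,c}(X_\beta)$ (e.g.\ write $L$ as a retract of $f_!f^!\qlbar$ along a finite étale $f$ trivializing $L$, as in the proof of Lemma \ref{L:Gmconsstr}(b), and use that $\Free$ of constructible is $\Gm$-constructible and $\cD_{\Gm,c}$ is closed under finite colimits, retracts, and $-\otimes_{\qlbar}W$ with $W$ perfect). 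Running this over all $\beta$ gives $K\in\cD_{\Gm,c}(X)$.

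The main obstacle I anticipate is the equivariant descent in the second paragraph: transferring the homotopy-coherent $\Gm$-action on $\For(K)$ down to a finite-level object $K_\al\in\cD^{\Gm}(X_\al)$. Full faithfulness of $\pi_\al^!$ gives descent of the underlying object, but one must check that the filtered colimit $\cD_{\on{ess}-c}(X)\simeq\colim_\al\cD_{\on{ess}-c}(X_\al)$ (Lemma \ref{L:adm-essc}) is compatible with forming $\Fun(B\Gm,-)$, i.e.\ that a coherent $\Gm$-action, being a limit-type datum indexed by the (compact, when $\Gm$ is finite, but here $\Gm$ may be infinite) space $B\Gm$, is already defined at a finite stage. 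For infinite $\Gm$ this is the delicate point and may require instead working with the geometric model: realize $K$ via $\ov{p}_!$ of something on $[X/\Gm]$ as in Lemma \ref{L:taucoinv} and Lemma \ref{L:La-cstr}, or argue that $\cD^{\Gm}(X)\simeq\lim \cD^{\Gm}(X_\al)$ directly from Section~\re{Gmcons}(b) and that $\cD_{\on{ess}-c}$ being generated under finite colimits/retracts/$\otimes_{\qlbar}V$ by $\cD_c$ (which \emph{does} descend to finite level) suffices to conclude $K$ itself descends. Getting this reduction clean is where the real work lies; the finite-type endgame in the third paragraph is then a routine stratification argument.
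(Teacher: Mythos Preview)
Your overall architecture matches the paper's: reduce to an affine admissible scheme, descend the equivariant object to a finite level of the admissible presentation, and then settle the finite-type case by stratifying and analyzing local systems.

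The point you flag as the ``main obstacle'' --- the equivariant descent --- is in fact the easy part, and the paper dispatches it in one line. Once $\pi_\al^!:\cD(X_\al)\to\cD(X)$ is fully faithful, the induced functor $\cD(X_\al)^{B\Gm}\to\cD(X)^{B\Gm}$ is fully faithful as well, and its essential image consists exactly of those $\Gm$-objects whose underlying object lies in the essential image of $\pi_\al^!$. (For any fully faithful $F:\cC\hookrightarrow\cD$ and any $\infty$-groupoid $\cI$, a functor $\cI\to\cD$ factors through $\cC$ as soon as it does so on objects, because all the mapping spaces are already computed in $\cC$.) Equivalently, $\cD^{\Gm}(X_\al)\to\cD(X_\al)\times_{\cD(X)}\cD^{\Gm}(X)$ is an equivalence. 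There is no issue with $\Gm$ being infinite, no need to pass to a larger index, and no compatibility of filtered colimits with $\Fun(B\Gm,-)$ to check. Your second alternative at the end is in the right direction, but the clean statement is just the fiber-product equivalence above.

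Conversely, your finite-type endgame is too quick. The implication ``$\iota_x^!K$ perfect $\Rightarrow$ $\eta_\beta^!K$ is a finite extension of $L\otimes_{\qlbar}W$ with $W$ perfect'' hides two reductions that the paper makes explicit. First, on a smooth connected stratum one has $\iota_x^!\simeq\iota_x^*[2\dim X_\beta]$, which is $t$-exact, so one may replace $K$ by $\bigoplus_i\cH^i(K)$ (this is $\Gm$-equivariant since the standard $t$-structure is functorial). Second, to extract the isotypic pieces one uses the canonical socle filtration of the resulting local system $\ov{K}$: each step $\ov{K}_{\leq i}$, being canonical, is $\Gm$-stable, and on the semisimple graded pieces one has the decomposition $K\simeq\bigoplus_L \Hom(L,\ov{K})\otimes_{\qlbar}L$, from which finite generation of $\Hom(L,\ov{K})$ over $\qlbar[\Gm]$ is read off at the stalk. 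Without the socle filtration step it is not clear that the $\Gm$-action respects a decomposition into pieces of the form $L\otimes W$.
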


\begin{proof}
By \rl{Gmconsstr}(b), we can assume that $\cX$ is an admissible affine scheme $X$.

\smallskip

Assume first $X$ is of finite type over $k$. If $K\in\cD_{\Gm,c}(X)$, then $\For(K)\in\cD_{\on{ess}-c}(X)$ by Section~\re{func}(b), and each $\iota_x^!K\in\cD_{\Gm,c}(\pt)$ because $\iota_x^!$ preserves $\Gm$-constructibility. Conversely, assume that $\ov{K}:=\For(K)\in\cD_{\on{ess}-c}(X)$ and $\iota_x^!K\in \cD_{\perf}(\qlbar[\Gm])$ for every $x\in X(k)$.

\smallskip

By \rl{Gmconsstr}(a), it suffices to show that there exists a finite constructible stratification $\{X_{\al}\}_{\al}$ of $X$ such that
$\eta_{\al}^!K\in \cD_{\Gm,c}(X_{\al})$ for all $\al$. Thus, by \rl{ess-c}, we can replace  $K$ by  $\eta_{\al}^!(K)$, thus assuming that $X$ is smooth connected, $\ov{K}\in \cD^b(X)$ and each $\cH^i(\ov{K})$ is a finite extension of the $L\otimes_{\qlbar}V$'s, where $L$ is an irreducible constructible local system on $X$ and $V$ is a $\qlbar$-vector space.

\smallskip

Since $\qlbar[\Gm]$ is Noetherian of finite cohomological dimension, we have $\iota_x^!K\in\cD_{\perf}(\qlbar[\Gm])$ if and only if 
$\bigoplus_i H^i(\iota_x^!K)$ is a finitely generated $\qlbar[\Gm]$-module. Since $K$ is a finite extension of its shifted cohomologies, and $\iota_x^![-2\dim X]\simeq \iota^*_x$ is exact, we can replace $K$ by $\bigoplus_i \C{H}^i(K)$, thus assuming that $\ov{K}=\For(K)$ is a finite extension of the $L\otimes_{\qlbar}V$'s, where $L$ is a simple (constructible) local system on $X$ and $V$ is a $\qlbar$-vector space.

\smallskip

In this case, $\ov{K}$ has a canonical finite increasing filtration $\{\ov{K}_{\leq i}\}_i$, defined by the property that $\ov{K}_{\leq i+1}/\ov{K}_{\leq i}$ is the largest semisimple local subsystem of $\For(K)/\For(K)_{\leq i}$. In particular, each  $\ov{K}_{\leq i}$ is $\Gm$-invariant, so filtration $\{\ov{K}_{\leq i}\}_i$ gives rise to a filtration $\{K_{\leq i}\}_i$ of $K$.

\smallskip

Replacing $K$ by its graded piece $\gr^i(K)$, we can assume that $\ov{K}$ is semisimple and has only many simple factors. In this case, $K$ decomposes as a finite direct sum $\bigoplus_{L}K_{L}$, where $L$ runs over the set of isomorphism classes of simple local systems $L$ on $X$, and  $K_{L}:=\Hom(L,\ov{K})\otimes_{\qlbar}L$. Now our assumption that each $H^i(\iota_x^!K)$ is a finitely generated $\qlbar[\Gm]$-module
implies that each $\Hom(L,\ov{K})$ is a finitely generated $\qlbar[\Gm]$-module. Thus $K_{L}\in\cD_{\Gm,c}(X)$ for all $L$, hence $K\in\cD_{\Gm,c}(X)$.

\smallskip

In the general case, choose an admissible presentation $X\simeq\lim_{\al}X_{\al}$. Then $\For(K)$ is a pullback of $\For(K)_{\al}\in\cD_{\on{ess}-c}(X_{\al})\subseteq \cD(X_{\al})$ for some $\al$ (by \rl{adm-essc}). Next, using the fact that the pullback $\pi_{\al}^!:\cD(X_{\al})\to\cD(X)$ is fully faithful, we conclude that the natural functor
\[
\cD^{\Gm}(X_{\al})\to\cD(X_{\al})\times_{\cD(X)}\cD^{\Gm}(X)
\]
is an equivalence.  Thus, $K$ is a pullback of a unique object $K_{\al}\in \cD^{\Gm}(X_{\al})$ such that
\[
\For(K_{\al})\simeq \For(K)_{\al}\in\cD_{\on{ess}-c}(X_{\al}).
\]

Since presentation $X\simeq\lim_{\al}X_{\al}$ is admissible, the projection $\pi_{\al}:X(k)\to X_{\al}(k)$ is surjective. Hence,
our assumption implies that $\iota_{x_{\al}}^!(K_{\al})\in\cD_{\Gm,c}(\pt)$ for every $x_{\al}\in X_{\al}(k)$. Then, we get $K_{\al}\in \cD_{\Gm,c}(X_{\al})$ (by the particular case shown above), hence $K=\pi_{\al}^!(K_{\al})\in\cD_{\Gm,c}(X)$.
\end{proof}

%\begin{Emp}
%{\bf Question:} Is the assumption that $\qlbar[\Gm]$ is Noetherian of finite cohomological dimension really necessary, that is it is necessary to pass to individual cohomologies?
%\end{Emp}

\subsection{Perverse $t$-structures}

\begin{Emp} \label{E:tstrdc}
{\bf Perverse $t$-structures on $\cD_c(X)$.} Following \cite[Section~6.2.3]{BKV}, for every $X\in\Schft_k$ we equip the $\infty$-category $\cD_c(X)$ with a canonical ($!$-adapted) perverse $t$-structure $({}^p\cD_c^{\leq 0}(X),{}^p\cD_c^{\geq 0}(X))$:

\smallskip

(a) If $X$ is equidimensional, we equip $\cD_c(X)$ with the $t$-structure $({}^p\cD_c^{\leq 0}(X),{}^p\cD_c^{\geq 0}(X))$,
obtained from the classical (middle-dimensional) perverse $t$-structure by shifting it by $\dim X$ to the left.

\smallskip

(b) For a general $X$, consider finite constructible stratification $X_i:=\{x\in X~\vert~\dim_x(X)=i\}$ (see \cite[Section~2.1.1]{BKV}).
Then each $X_i$ is equidimensional of dimension $i$, and we denote the inclusion $X_i\hra X$ by $\eta_i$. We define $({}^p\cD_c^{\leq 0}(X),{}^p\cD^{\geq 0}(X))$ to be the $t$-structure on $\cD_c(X)$, obtained from the $t$-structures $({}^p\cD_c^{\leq 0}(X_i),{}^p\cD_c^{\geq 0}(X_i))$ from part~(a) by gluing. Explicitly, we define
${}^p\cD_c^{\leq 0}(X)$ (resp. ${}^p\cD_c^{\geq 0}(X)$) to be the collection of all $K\in \cD_c(X)$ such that
$\eta_i^*(K)\in {}^p\cD_c^{\leq 0}(X_i)$ (resp. $\eta_i^!(K)\in {}^p\cD_c^{\geq 0}(X_i)$) for all $i$.

\smallskip

(c) By \cite[Lemma~6.2.5(c)]{BKV}, for every smooth morphism $f:X\to Y$, the pullback $f^!$ is $t$-exact.
\end{Emp}

\begin{Emp} \label{E:tstr}
{\bf Perverse $t$-structures on placid $\infty$-stacks.}

\smallskip

(a) Following \cite{BKV}, for every placid $\infty$-stack $\C{X}$ we equip the $\infty$-category $\cD(\cX)$
with a canonical perverse $t$-structure $({}^p\cD^{\leq 0}(\cX),{}^p\cD^{\geq 0}(\cX))$.
Namely, using \cite[Lemma~6.1.2(a) and Propositions~6.3.1, 6.3.3]{BKV} it can be uniquely characterised by the following properties:

\smallskip

\quad (i) when $\cX\in\Schft_k$, then $({}^p\cD^{\leq 0}(\cX),{}^p\cD^{\geq 0}(\cX))$ extends the $t$-structure $({}^p\cD_c^{\leq 0}(\cX),{}^p\cD_c^{\geq 0}(\cX))$ on $\cD_c(\cX)$ described in Section~\re{tstrdc}(b);

\smallskip

\quad (ii) for every smooth morphism $f:\cX\to\cY$ of placid $\infty$-stacks, the pullback $f^!$ is $t$-exact;

\smallskip

\quad (iii) the $t$-structure $({}^p\cD^{\leq 0}(\cX),{}^p\cD^{\geq 0}(\cX))$ is {\em compatible with filtered colimits}, that is,
the $\infty$-subcategory ${}^p\cD^{\geq 0}(\cX)\subseteq\cD(\cX)$ is closed under filtered colimits.

\smallskip

(b) By \cite[Section~5.3.1(e) and Lemma~6.1.2(d)]{BKV}, we conclude that for every smooth covering $f:\cX\to\cY$ of placid $\infty$-stacks
and $K\in \cD(\cX)$ we have $K\in{}^p\cD^{\geq 0}(\cX)$ if and only if $f^!K\in{}^p\cD^{\geq 0}(\cY)$.
\end{Emp}

\begin{Lem} \label{L:gluing}
Assume that $X\in\Schft_k$ is equipped with a finite constructible stratification $\{X_{\al}\}_{\al}$ such that each $X_{\al}\subseteq X$ is of pure codimension $d_{\al}$, let $\eta_{\al}:X_{\al}\hra X$ be the inclusion maps, and let $K\in\cD(X)$. Then we have $K\in \mathstrut^{p}\cD^{\leq 0}(X)$ (resp. $K\in \mathstrut^{p}\cD^{\geq 0}(X)$) if and only if $\eta_{\al}^*K\in {}^p\cD^{\leq -d_{\al}}(X_{\al})$ (resp. $\eta_{\al}^!K_{\al}\in {}^p\cD^{\geq -d_{\al}}(X_{\al})$) for all $\al$.
\end{Lem}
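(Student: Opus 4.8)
The plan is to deduce the statement from the definition of the canonical perverse $t$-structure — which is itself a gluing prescription along the dimension stratification — together with the classical recollement formalism on each equidimensional dimension stratum, the only bookkeeping being to convert ``codimension $d_{\al}$'' into a shift by $d_{\al}$. First I would record the relevant combinatorics. Write $X_i=\{x\in X\mid\dim_x X=i\}$ for the dimension strata of $X$ and $(X_{\al})_j=\{x\in X_{\al}\mid\dim_x X_{\al}=j\}$ for those of $X_{\al}$; by Section~\re{tstrdc}(b) each $X_i$ (resp. $(X_{\al})_j$) is equidimensional of dimension $i$ (resp. $j$), and by construction (extended to $\cD$ as in Section~\re{tstr}(a)) the $t$-structure ${}^p\cD^{\leq 0}(X)$ is characterised by $\eta_i^*K\in{}^p\cD^{\leq 0}(X_i)$ for all $i$, and likewise for each $X_{\al}$ and its own dimension strata. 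Since $\eta_{\al}\colon X_{\al}\hra X$ is weakly equidimensional of relative dimension $-d_{\al}$ (Section~\re{eqdim}(c)), one has $\dim_x X_{\al}=\dim_x X-d_{\al}$ for every $x\in X_{\al}$ (Section~\re{dimfn}(i)), whence the key identity $(X_{\al})_j=X_{\al}\cap X_{j+d_{\al}}$. Consequently $\{X_{\al}\cap X_i\}_{\al,i}$ is at once a common refinement of $\{X_{\al}\}_{\al}$ and of $\{X_i\}_i$: each nonempty $X_{\al}\cap X_i$ is locally closed in $X$, in $X_{\al}$ and in $X_i$, is equidimensional of dimension $i-d_{\al}$ (hence of codimension $d_{\al}$ in $X_i$), and the ordering of $\{X_{\al}\}_{\al}$ restricts to exhibit $\{X_{\al}\cap X_i\}_{\al}$ as a finite constructible stratification of $X_i$.

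Next I would run the chain of equivalences for the ``${}^p\cD^{\leq 0}$'' assertion. By definition $K\in{}^p\cD^{\leq 0}(X)$ iff $\eta_i^*K\in{}^p\cD^{\leq 0}(X_i)$ for all $i$. Fix $i$; since $X_i$ is equidimensional, its canonical $t$-structure is a shift of the classical one, so by classical gluing along the finite constructible stratification $\{X_{\al}\cap X_i\}_{\al}$ (valid on $\cD(X_i)=\Ind\cD_c(X_i)$ — this is the input I would cite from \cite{BKV}) one has $\eta_i^*K\in{}^p\cD^{\leq 0}(X_i)$ iff the $*$-restriction of $\eta_i^*K$ to each $X_{\al}\cap X_i$ lies in ${}^p\cD^{\leq -d_{\al}}(X_{\al}\cap X_i)$, the shift $-d_{\al}$ being the codimension of $X_{\al}\cap X_i$ in $X_i$. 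Because the inclusion $X_{\al}\cap X_i\hra X$ factors both through $X_i$ and through $X_{\al}$ (via $(X_{\al})_{i-d_{\al}}\hra X_{\al}$), this $*$-restriction equals the $*$-restriction of $\eta_{\al}^*K$ to $(X_{\al})_{i-d_{\al}}$. Thus $K\in{}^p\cD^{\leq 0}(X)$ iff for every $\al$ and every $j$ the $*$-restriction of $\eta_{\al}^*K$ to $(X_{\al})_j$ lies in ${}^p\cD^{\leq -d_{\al}}((X_{\al})_j)$; applying the defining gluing characterisation of ${}^p\cD^{\leq -d_{\al}}(X_{\al})$ (Section~\re{tstrdc}(b) for $X_{\al}$, shifted by $-d_{\al}$) this says exactly $\eta_{\al}^*K\in{}^p\cD^{\leq -d_{\al}}(X_{\al})$ for all $\al$, as desired. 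The ``${}^p\cD^{\geq 0}$'' assertion follows by the same argument with every $*$-restriction replaced by the corresponding $!$-restriction, and the right $t$-exactness side of classical gluing replaced by the left $t$-exactness side.

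The only non-formal ingredient is the classical recollement statement on each equidimensional stratum $X_i$ — that the shifted classical perverse $t$-structure on $\cD(X_i)$ is detected, on the ``$\leq$'' side by $*$-restrictions and on the ``$\geq$'' side by $!$-restrictions, along an arbitrary finite constructible stratification — together with the fact that this characterisation survives the passage from $\cD_c$ to $\cD=\Ind\cD_c$; I would quote this from \cite{BKV} (it underlies Sections~\re{tstrdc}--\re{tstr}) rather than reprove it. Everything else is the bookkeeping already carried out above, the crux being the identity $(X_{\al})_j=X_{\al}\cap X_{j+d_{\al}}$ that turns the hypothesis ``pure codimension $d_{\al}$'' into the shift by $d_{\al}$ appearing in the conclusion.
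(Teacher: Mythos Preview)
Your argument is correct but takes a different route from the paper's. You unpack the definition of the canonical $t$-structure via the dimension stratification $\{X_i\}$, use the combinatorial identity $(X_{\al})_j=X_{\al}\cap X_{j+d_{\al}}$ coming from pure codimension, and reduce everything to classical middle-perversity gluing on each equidimensional $X_i$ (where the paper's $t$-structure is just a shift of the classical one, so the shift by $-d_{\al}$ matches on the nose). The paper instead argues more directly: it invokes \cite[Lemma~6.2.5(b)]{BKV} to get that $\eta_{\al}^*[-d_{\al}]$ is right $t$-exact and $\eta_{\al}^![-d_{\al}]$ is left $t$-exact (this gives ``only if''), passes to adjoints to obtain the $t$-exactness of $(\eta_{\al})_![d_{\al}]$ and $(\eta_{\al})_*[d_{\al}]$, and then reconstructs $K$ as a finite extension of the $(\eta_{\al})_*\eta_{\al}^!K$'s (resp.\ $(\eta_{\al})_!\eta_{\al}^*K$'s) to obtain ``if''. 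In effect you are inlining the proof of \cite[Lemma~6.2.5(b)]{BKV} by hand via the common refinement $\{X_{\al}\cap X_i\}$; this makes your argument more self-contained but longer, while the paper's version is shorter and cleanly separates the $t$-exactness input from the recollement step.
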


\begin{proof}
The ``only if'' assertion follows from the fact that functor $\eta_{\al}^*[-d_{\al}]$ is right $t$-exact, while functor $\eta_{\al}^![-d_{\al}]$ is left   $t$-exact (see \cite[Lemma~6.2.5(b)]{BKV}). By adjointness, functor $(\eta_{\al})_![d_{\al}]$ is right  $t$-exact, while functor $(\eta_{\al})_*[d_{\al}]$ is left  $t$-exact. From this the ``if'' assertion follows. Indeed, if $\eta_{\al}^!K\in {}^p\cD^{\geq -d_{\al}}(X_{\al})$ for all $\al$, then $(\eta_{\al})_*\eta_{\al}^!K\in {}^p\cD^{\geq 0}(X)$ for all $\al$. Since $K$ is a finite extension of the $(\eta_{\al})_*\eta_{\al}^!K$'s,
 we conclude that $K\in \mathstrut^{p}\cD^{\geq 0}(X)$. The assertion for $\mathstrut^{p}\cD^{\leq 0}(X)$ is similar.
\end{proof}

\begin{Prop}\label{pullcheck}
Let $\cX$ be an admissible $\infty$-stack (see Section~\re{admstack}), and we equip $\cD(\cX)$ with its canonical perverse $t$-structure
(see Section~\re{tstr}). Let $K\in \cD_{\on{ess}-c}(\cX)$ be such that for every  $x\in \cX(k)$, we have $\iota_x^{!}K\in \mathstrut^{p}\cD^{\geq 0}(\pt)$.
Then $K\in \mathstrut^{p}\cD^{\geq 0}(\cX)$.
\end{Prop}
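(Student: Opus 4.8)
The plan is a chain of reductions that ends with a cohomological amplitude estimate on a smooth connected variety.

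\emph{Reduction to a finite-type scheme.} Since $\cX$ is admissible it admits a smooth covering by a disjoint union of admissible affine schemes (Section~\re{admstack}(d),(e)); membership in ${}^p\cD^{\geq 0}$ is detected on a smooth covering (Section~\re{tstr}(b)), essential constructibility is preserved by $!$-pullback (Section~\re{esscons}(c)), and for a $k$-point $y$ of the covering $g$ one has $\iota_y^!(g^!K)=\iota_{g(y)}^!K\in{}^p\cD^{\geq 0}(\pt)$, so we may assume $\cX=X$ is an admissible affine scheme. Choose an admissible presentation $X\simeq\lim_\al X_\al$. By \rl{adm-essc}, $K\simeq\pi_\al^!K_\al$ for some $K_\al\in\cD_{\on{ess}-c}(X_\al)$; since $\pi_\al$ is strongly pro-smooth, $\pi_\al^!$ is $t$-exact (Section~\re{tstr}(a)(ii)), so it suffices to prove the proposition for $K_\al$. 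The hypothesis passes to $K_\al$ because $\pi_\al\colon X(k)\to X_\al(k)$ is surjective for an admissible presentation (as in the proof of \rp{Gmcons}): writing $x_\al=\pi_\al(x)$ with $x\in X(k)$, we get $\iota_{x_\al}^!K_\al=\iota_x^!\pi_\al^!K_\al=\iota_x^!K\in{}^p\cD^{\geq 0}(\pt)$. Thus we may assume $X\in\Schft_k$.

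\emph{The finite-type case.} By the ``moreover'' part of \rl{ess-c} there is a finite constructible stratification $\{X_\al\}_\al$ of $X$ with each $X_\al$ connected, smooth and affine, each $\eta_\al\colon X_\al\hra X$ weakly equidimensional of some codimension $d_\al\in\NN$, and each $\eta_\al^!K$ having locally constant cohomology sheaves, only finitely many nonzero. By the gluing criterion \rl{gluing} it is enough to show $\eta_\al^!K\in{}^p\cD^{\geq -d_\al}(X_\al)$ for each $\al$. Fix $\al$ and set $M:=\eta_\al^!K$, $m:=\dim X_\al$. On the smooth connected scheme $X_\al$ the canonical perverse $t$-structure is the classical middle one shifted to the left by $m$ (Section~\re{tstrdc}(a)), so its heart consists of local systems placed in cohomological degree $-2m$; hence $\cH^j(M)[-j]\in{}^p\cD^{\geq j+2m}(X_\al)$ for every $j$, and since $M$ is a finite iterated extension of the $\cH^j(M)[-j]$ while ${}^p\cD^{\geq 0}$ is closed under extensions, $M\in{}^p\cD^{\geq -d_\al}(X_\al)$ will follow once we know $\cH^j(M)=0$ for all $j<-2m-d_\al$. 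Now choose $x\in X_\al(k)$; the point $\iota_x\colon\pt\to X$ factors through $\eta_\al$ via some $\iota_x^\al\colon\pt\to X_\al$, and by smoothness of $X_\al$ one has $(\iota_x^\al)^!\simeq(\iota_x^\al)^*[-2m]$ up to a Tate twist, so that $\cH^n(\iota_x^!K)\simeq\cH^{n-2m}(M)_x$. The hypothesis $\iota_x^!K\in{}^p\cD^{\geq 0}(\pt)=\cD^{\geq 0}(\Vect)$ then yields $\cH^j(M)_x=0$ for $j<-2m$, and since $\cH^j(M)$ is locally constant on the connected $X_\al$ this forces $\cH^j(M)=0$ for $j<-2m$, a fortiori for $j<-2m-d_\al$ because $d_\al\geq 0$. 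This proves the proposition.

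\emph{Where the difficulty lies.} The reductions in the first paragraph are routine given the cited results. \textbf{The delicate step is the last paragraph}, which requires describing the canonical perverse $t$-structure on a smooth connected variety explicitly — its heart lies in cohomological degree $-2m$ rather than $-m$, and it is exactly this ``$!$-adaptedness'' that makes the amplitude of $M$ visible through a single $!$-stalk — and then checking that the codimension shift $d_\al$ from \rl{gluing} and the factor $2m$ from purity combine so that the estimate coming from the hypothesis is in fact more than enough. It is here that both admissibility of $\cX$ (to reduce the strata to finite type) and essential constructibility of $K$ (to make $\eta_\al^!K$ locally constant on each stratum, so that the perverse estimate degenerates to a plain cohomological estimate) are genuinely used.
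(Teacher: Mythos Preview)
Your proof is correct and follows the same route as the paper's: reduce to an admissible affine scheme via a smooth covering, descend to a finite-type $X_\al$ using \rl{adm-essc} and surjectivity of $X(k)\to X_\al(k)$, then on the finite-type scheme stratify via \rl{ess-c}, apply \rl{gluing}, and on each smooth connected stratum translate the $!$-stalk hypothesis (using $(\iota_x)^!\simeq(\iota_x)^*[-2m]$) into the cohomological vanishing $\cH^j(M)=0$ for $j<-2m$, which gives $M\in{}^p\cD^{\geq 0}(X_\al)\subseteq{}^p\cD^{\geq -d_\al}(X_\al)$. The only cosmetic difference is that you perform the descent to finite type before treating the finite-type case, whereas the paper reverses the order.
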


%TO WRITE DOWN A MORE GENERAL CRITERION???

\begin{proof}
By Section~\re{tstr}(b), we can assume that $\cX$ is an affine admissible $k$-scheme $X$. 

\smallskip

Assume first that $X$ is of finite type over $k$. By \rl{ess-c}, there exists a finite constructible stratification
$\{X_{\al}\}_{\al}$ of $X$ such that each $X_{\al}$ is smooth connected, each inclusion $\eta_{\al}:X_{\al}\hra X$ is weakly equidimensional of relative dimension $-d_{\al}$ and all cohomology sheaves $\cH^i(K_{\al})$ of all $K_{\al}:=\eta^!_{\al}K$ are local systems.

By \rl{gluing}, we have $K\in \mathstrut^{p}\cD^{\geq 0}(X)$ if and only if  $K_{\al}\in {}^p\cD^{\geq -d_{\al}}(X_{\al})$ for all $\al$.
In particular, it suffices to show that  $K_{\al}\in {}^p\cD^{\geq 0}(X_{\al})$ for all $\al$. Thus, replacing $X$ by $X_{\al}$ and $K$ by
$K_{\al}$, we can assume that $X$ is smooth connected and all cohomology sheaves $\cH^i(K)$ are local systems.
Then our assumption that $\iota_x^{!}K\simeq \iota_x^* K[2\dim X]\in \mathstrut^{p}\cD^{\geq 0}(\pt)$ implies that
$K\in \cD^{\geq -2\dim X}(X)$, where $\cD^{\geq -2\dim X}$ refers to the usual (rather than perverse) $t$-structure. Since $\cD^{\geq -2\dim X}(X)=\mathstrut^{p}\cD^{\geq 0}(X)$, we are done.

\smallskip

For a general $X$, choose an admissible presentation $X\simeq\lim_{\al} X_{\al}$, with projections $\pi_{\al}$.  Since  $K\in \cD_{\on{ess}-c}(X)$ we conclude from \rl{adm-essc} that there exists an index $\al$ and $K_{\al}\in\cD_{\on{ess}-c}(X_{\al})$ such that $K\simeq \pi_{\al}^!K_{\al}$. Moreover, since the projection $\pi_{\al}:X(k)\to X_{\al}(k)$ is surjective, our assumption implies that $\iota_{x_{\al}}^{!}(K_{\al})\in \mathstrut^{p}\cD^{\geq 0}(\pt)$ for all  $x_{\al}\in X_{\al}(k)$. Therefore, by the assertion for $X_{\al}$, shown above, we get $K_{\al}\in \mathstrut^{p}\cD^{\geq 0}(X_{\al})$. Since $\pi_{\al}^!$ is $t$-exact, the assertion follows.
\end{proof}

\begin{Emp} \label{E:pervfun}
{\bf Perversity function.}

\smallskip

(a) By a {\em perversity} on an $\cI$-stratified $\infty$-stack $(\cY,\{\cY_{\al}\}_{\al\in\cI})$ (see Section~\re{plstr}),
we mean a function $p_{\nu}:\cI\to \B{Z}:\al\mapsto\nu_{\al}$, or, what is the same, a collection $\{\nu_{\al}\}_{\al\in\cI}$ of integers.

\smallskip

(b) Let $f:\cX\to \cY$ be a morphism of $\infty$-stacks, where $\cX$ is a placid $\infty$-stack,
and $(\cY, \{\cY_{\al}\}_{\al\in\cI})$  is a placidly stratified $\infty$-stack, satisfying assumptions (i) and (ii) of Section~\re{small}(c).
Then $f$ gives rise to perversity $p_{f}:=\{\nu_{\al}\}_{\al\in\cI}$, defined by $\nu_{\al}:=b_{\al} +\dt_{\al}$ for all $\al\in\cI$.
\end{Emp}

\begin{Emp} \label{E:pervplstr}
{\bf Perverse $t$-structures on placidly stratified $\infty$-stacks.}

\smallskip

(a) Following \cite[Definition~5.5.1]{BKV},  we say that an $\infty$-stack $\cY$ {\em admits gluing of sheaves}, if for every topologically
fp-locally closed embedding $\eta:\cX\to\cY$, the pushforward $\eta_*:\cD(\cX)\to \cD(\cY)$ (see \cite[Section~5.4.4]{BKV}) admits a
left adjoint $\eta^* :\cD(\cY)\to \cD(\cX)$.

\smallskip

(b) Let $(\cY,\{\cY_{\al}\}_{\al\in\cI})$ be a placidly stratified $\infty$-stack such that $\cY$ admits gluing of sheaves. As every $\cY_{\al}$ is placid, every $\cD(\cY_{\al})$ has a canonical perverse  $t$-structure $(\mathstrut^{p}\cD^{\leq 0}(\cY_{\al}),\mathstrut^{p}\cD^{\geq 0}(\cY_{\al}))$ (see Section~\re{tstr}). Since $\cY$ admits gluing of sheaves, we have pullback functors $\eta_{\al}^{*},\eta_{\al}^{!}:\cD(\cY)\to\cD(\cY_{\al})$.

\smallskip

Then, by \cite[Proposition~6.4.2]{BKV}, for every perversity function  $p_{\nu}=\{\nu_{\al}\}_{\al\in\cI}$ there exists a unique $t$-structure $(\mathstrut^{p_{\nu}}\cD^{\leq 0}(\cY),\mathstrut^{p_{\nu}}\cD^{\geq 0}(\cY))$ on $\cD(\cY)$ such that
\begin{equation*}
\mathstrut^{p_{\nu}}\cD^{\leq 0}(\cY)=\{K\in\cD(\cY)~\vert~\eta_{\al}^{*}K\in\mathstrut^{p}\cD^{\leq -\nu_{\al}}(\cY_{\al})\text{ for all }\al\in\cI\},
\end{equation*}
\begin{equation*}
\mathstrut^{p_{\nu}}\cD^{\geq 0}(\cY)=\{K\in\cD(\cY)~\vert~\eta_{\al}^{!}K\in\mathstrut^{p}\cD^{\geq -\nu_{\al}}(\cY_{\al})\text{ for all }\al\in\cI\}.
\end{equation*}

(c) We say that a sheaf $K\in \cD(\cY)$ is {\em $p_{\nu}$-perverse}, if it is perverse with respect to perversity $p_{\nu}$.
\end{Emp}

\begin{Emp} \label{E:interm}
{\bf The intermediate extension.} Suppose we are in the situation of Section~\re{pervplstr}(b), and let $\cU\subseteq\cY$ be an
fp-open $\infty$-substack.

\smallskip

(a) Then $\cU\subseteq\cY$ is equipped with a constructible stratification $\{\cU_{\al}\}_{\al\in\cI_{\cU}}$, where   $\cU_{\al}:=\cU\cap \cY_{\al}$ for every $\al\in \cI$ and $\cI_{\cU}:=\{\al\in\cI~\vert~\cU_{\al}\neq\emptyset\}$. Moreover,
$(\cU,\{\cU_{\al}\}_{\al\in\cI_{\cU}})$ is a placidly stratified $\infty$-stack.

\smallskip

(b) The perversity function  $p_{\nu}:\cI\to\B{Z}$ restricts to a perversity function $p^{\cU}_{\nu}:\cI_{\cU}\to\B{Z}$, thus gives rise to a
perverse $t$-structure $(\mathstrut^{p^{\cU}_{\nu}}\cD^{\leq 0}(\cU),\mathstrut^{p^{\cU}_{\nu}}\cD^{\geq 0}(\cU))$ on $\cD(\cU)$.

\smallskip

(c) For every $p_{\nu}$-perverse sheaf $K\in \cD(\cY)$, its restriction $K|_{\cU}\in \cD(\cU)$ is $p^{\cU}_{\nu}$-perverse. Conversely, for every $p^{\cU}_{\nu}$-perverse sheaf $K_U\in \cD(\cU)$ there exists a unique extension $K\in\cD(\cY)$ (called the {\em intermediate extension}) such that $K$ is $p_{\nu}$-perverse and $K$ does not have non-zero subobjects and quotients supported on $\cY\sm\cU$ (see \cite[Corollary~6.4.9]{BKV}).
\end{Emp}

\begin{Emp} \label{E:!ls}
{\bf $!$-local systems.}

\smallskip

(a) For every $\infty$-prestack $\C{X}$, we define a full subcategory $\Loc^!(\cX)\subseteq\cD(\cX)$ of {\em $!$-local systems} defined as follows:
\smallskip

\quad $\bullet$ For $Y\in \Affft_k$, we denote by $\Loc^!(Y)$ be the full $\infty$-subcategory of $\cD(Y)$ consisting of objects
$\omega_{Y,\cL}:=\omega_{Y}\otimes \cL$, where $\cL$ is a usual (ind-constructible) local system on $Y$ and $\om_Y$ is a dualizing complex.
For every morphism $f:Y'\to Y$ of $\Affft_k$, we have a natural isomorphism
$f^!(\omega_{Y,\cL})\simeq \omega_{Y',f^*\cL}$, thus the class of $!$-local systems is closed under $!$-pullbacks.

\smallskip

\quad $\bullet$ Next, for $X\in \Aff_k$, we denote by $\Loc^!(X)$ be the full $\infty$-subcategory of $\cD(X)$ consisting of objects
$f^!K$, where $f:X\to Y$ is a morphism with $Y\in \Affft_k$ and $K\in\Loc^!(Y)$. Finally, for an arbitrary $\infty$-prestack $\cX$, we denote by
$\Loc^!(\cX)$ be the full $\infty$-subcategory of $\cD(\cX)$ consisting of objects $K$ such that $f^!K\in \Loc^!(X)$ for every morphism
$f:X\to \cX$ from an affine scheme $X$.

\smallskip

(b) By construction, the class of $!$-local systems is closed under $!$-pullbacks and $!$-tensor products. In particular,
for every morphism $f:\cX\to Y$ with $Y\in \Affft_k$ and every local system $\cL$ on $Y$, the pullback $\om_{\cX,\cL}:=f^!(\om_{Y,\cL})$ is a $!$-local system. To simplify the notation, we will often write $\om_{\cL}$ instead of $\om_{\cX,\cL}$.

\smallskip

(c) For every $Y\in\Algft_k$ and $K\in\Loc^!(X)$, 
we have $K\in \cD^{\geq -2\dim Y}(Y)$, where $\cD^{\geq -2\dim Y}$ refers to the usual (rather than perverse) $t$-structure.  More generally, mimicking the proof of \cite[Lemma~6.3.5(a)]{BKV}, for every placid $\infty$-stack $\cX$ and $K\in\Loc^!(\cX)$ we have
$K\in \mathstrut^{p}\cD^{\geq 0}(\cX)$.

\smallskip

(d) Similarly, mimicking the proof of \cite[Lemma~6.3.5(d)]{BKV}, one shows that if $\eta:\cX\hra\cY$ is an fp-locally closed immersion between placid $\infty$-stacks of codimension $d$ with $\cY$ smooth, then for every  $K\in\Loc^!(\cY)$ we have $\eta^{*}K\in \mathstrut^{p}D^{\leq -2d}(\cX) $.
\end{Emp}

The following result is a slight generalization of \cite[Theorem~6.5.3]{BKV}.

\begin{Thm}\label{g-small}
Let $(\cY,\{\cY_{\al}\}_{\al\in\cI})$ be a placidly stratified $\infty$-stack, let $\C{U}\subseteq \C{Y}$ be an fp-open $\{\cY_{\al}\}_{\al\in\cI}$-adapted $\infty$-substack, and let $f:\cX\to\cY$ be a $\C{U}$-small morphism of $\infty$-stacks such that
$f$ is ind-fp-proper, $\cX$ is smooth, while $\cY$ admits gluing of sheaves.

Then for every $K\in\Loc^{!}(\cX)$, the pushforward
$f_!(K)$ is $p_f$-perverse. Moreover, $f_!(K)$ is the intermediate extension of its restriction to $\C{U}$.
\end{Thm}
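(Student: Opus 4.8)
The plan is to follow the proof of \cite[Theorem~6.5.3]{BKV} essentially verbatim; the only change is that the input dualizing sheaf $\om_{\cX}$ there is replaced by an arbitrary $!$-local system $K$, and the two estimates the argument needs are supplied by Section~\re{!ls} for all of $\Loc^{!}(\cX)$, not just for $\om_{\cX}$.

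First I would reduce everything to a statement on the strata. Write $p_f=\{\nu_{\al}=b_{\al}+\dt_{\al}\}_{\al\in\cI}$, where $b_{\al},\dt_{\al}$ are the integers attached to $\cU$-smallness in Section~\re{small}(c), let $\cX_{\al}\subseteq\cX$ be the induced stratum (see Section~\re{consstr}(d)) and $f_{\al}:\cX_{\al}\to\cY_{\al}$ the restriction of $f$, and denote by $j_{\al}:\cX_{\al}\hra\cX$ and $\eta_{\al}:\cY_{\al}\hra\cY$ the inclusions. Because $\cX$ is smooth, hence placid, $\cX_{\al}$ is, by Section~\re{small}(c)(i) and Section~\re{basic example}(c), an fp-locally closed placid substack of pure codimension $b_{\al}$, and $f_{\al}$ is — up to a nilpotent thickening of its source, as a base change of $f$ — an ind-fp-proper morphism which is equidimensional of relative dimension $\dt_{\al}$ by Section~\re{small}(c)(ii). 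By Section~\re{pervplstr}(b), which applies since $\cY$ admits gluing of sheaves, $f_!(K)$ is $p_f$-perverse iff $\eta_{\al}^{*}f_!(K)\in{}^{p}\cD^{\leq-\nu_{\al}}(\cY_{\al})$ and $\eta_{\al}^{!}f_!(K)\in{}^{p}\cD^{\geq-\nu_{\al}}(\cY_{\al})$ for every $\al$; and by the characterization of intermediate extensions through support conditions on the closed complement $\cY\sm\cU$ with its induced stratification (Section~\re{interm}(c) and the proof of \cite[Corollary~6.4.9]{BKV}), $f_!(K)$ is moreover the intermediate extension of $f_!(K)|_{\cU}$ provided these inclusions hold with $-\nu_{\al}$ replaced by $-\nu_{\al}-1$, resp.\ $-\nu_{\al}+1$, for all $\al\in\cI\sm\cI_{\cU}$.

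Since $f$ is ind-fp-proper, $f_!$ commutes with base change along $\eta_{\al}$, for $!$- and for $*$-pullbacks, cf.\ \cite[Proposition~5.3.7]{BKV}, so the conditions above read $(f_{\al})_!(j_{\al}^{*}K)\in{}^{p}\cD^{\leq-\nu_{\al}}(\cY_{\al})$ and $(f_{\al})_!(j_{\al}^{!}K)\in{}^{p}\cD^{\geq-\nu_{\al}}(\cY_{\al})$. To estimate these I would use the hypotheses on $K$: since $K\in\Loc^{!}(\cX)$ and $\cX$ is smooth, Section~\re{!ls}(d) gives $j_{\al}^{*}K\in{}^{p}\cD^{\leq-2b_{\al}}(\cX_{\al})$, while $j_{\al}^{!}K\in\Loc^{!}(\cX_{\al})$ by Section~\re{!ls}(b), hence $j_{\al}^{!}K\in{}^{p}\cD^{\geq 0}(\cX_{\al})$ by Section~\re{!ls}(c). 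On the pushforward side, as in the proof of \cite[Theorem~6.5.3]{BKV}, the fact that $f_{\al}$ is ind-fp-proper and equidimensional of relative dimension $\dt_{\al}$ makes $(f_{\al})_!$ right $t$-exact for the canonical perverse $t$-structures and sends ${}^{p}\cD^{\geq 0}(\cX_{\al})$ into ${}^{p}\cD^{\geq-2\dt_{\al}}(\cY_{\al})$ — the placid-stack form of the usual fibre-dimension bounds for perverse pushforward, the ind-colimit causing no trouble since ${}^{p}\cD^{\geq 0}$ is closed under filtered colimits by Section~\re{tstr}. Combining, $\eta_{\al}^{*}f_!(K)\in{}^{p}\cD^{\leq-2b_{\al}}(\cY_{\al})$ and $\eta_{\al}^{!}f_!(K)\in{}^{p}\cD^{\geq-2\dt_{\al}}(\cY_{\al})$; the inequality $\dt_{\al}\leq b_{\al}$ of Section~\re{small}(c)(iii) then yields $-2b_{\al}\leq-\nu_{\al}$ and $-2\dt_{\al}\geq-\nu_{\al}$, proving $p_f$-perversity, and it is strict for $\al\notin\cI_{\cU}$, giving the extra support conditions that identify $f_!(K)$ with the intermediate extension of its restriction to $\cU$.

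The step I expect to be the real obstacle is the $t$-exactness input for $(f_{\al})_!$: transporting the classical fibre-dimension estimates for perverse pushforward to the placidly-stratified, $!$-adapted setting, where only relative dimensions are available, and checking their compatibility with the ind-colimit presentation and with the gluing functors $\eta_{\al}^{*}$. As this is already part of the proof of \cite[Theorem~6.5.3]{BKV}, the genuinely new content reduces to confirming that the argument there used only the vanishing estimates of Section~\re{!ls} and no feature particular to $\om_{\cX}$.
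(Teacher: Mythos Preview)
Your proposal is correct and takes essentially the same approach as the paper: the paper's proof simply says to repeat the argument of \cite[Theorem~6.5.3]{BKV}, replacing \cite[Lemma~6.3.5(a),(d)]{BKV} by the generalizations in Section~\re{!ls}(c),(d), which is exactly what you do. You have in fact spelled out more of the details than the paper does, and correctly identified that the only new content is checking that the two vanishing estimates used in \cite[Theorem~6.5.3]{BKV} hold for all $!$-local systems, not just $\om_{\cX}$.
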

%\begin{Rem}
%We recall that by \cite[Prop. 5.3.7]{BKV}, as $f$ is ind-fp-proper, the functor $f_!$ is well defined.
%We recall that for a strongly pro-smooth affine scheme $X$, we have $\Loc^{!}(X)=\colim\limits_{X\to Y}\Loc^{!}(Y)$, where the colimit runs over all the strongly smooth maps $X\to Y$ with $Y$ smooth of finite type over $k$ with respect to $f^{!}$ and $\Loc^{!}(Y)=\{\omega_{Y}\otimes_{\bql}\cL=\cL[2\dim_{Y}], \cL\in\Loc(Y)\}$ the category of $!$-local systems.
%Then, by descent one has a natural category of $\Loc^{!}(\cX)$ for any smooth placid stack, see \cite[3.2.4]{B}.
%\end{Rem}
\begin{proof}
To proof the result, we repeat the argument of \cite[Theorem~6.5.3]{BKV}, except we replace \cite[Lemma~6.3.5(a),(d)]{BKV} by  Sections~\re{!ls}(c),(d).
\end{proof}

The following general lemma will be used later.

\begin{Lem}\label{r-tex}

(a) Let $\cC$ be a stable $\infty$-category equipped with $t$-structure. Then there exists a unique
$t$-structure on $\cC^{B\Gm}$  such that the forgetful functor $\on{For}:\cC^{B\Gm}\to \cC$ is $t$-exact.

\smallskip 

(b) Assume further that $\C{C}$ is $\qlbar$-tensored, cocomplete and that the $t$-structure on $\cC$ is compatible with filtered colimits.
Then for every $\tau\in\on{Rep}_{\bql}(\Gm)$, the functor $\on{coinv}_{\tau}:\C{C}^{B\Gm}\to\C{C}$ is right $t$-exact.
\end{Lem}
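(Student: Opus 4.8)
The plan is to obtain the $t$-structure in part~(a) by transporting the given one along the forgetful functor $p:=\For\colon\cC^{B\Gm}\to\cC$, and then to deduce part~(b) formally. I will use repeatedly that $\cC^{B\Gm}=\Fun(B\Gm,\cC)$, that $p$ is evaluation at the (essentially unique) object of $B\Gm$, and that limits and colimits in $\cC^{B\Gm}$ are computed ``pointwise''; in particular a sequence in $\cC^{B\Gm}$ is a fiber sequence exactly when $p$ carries it to one in $\cC$, and $p$ preserves all colimits that exist in $\cC$.

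\emph{Part (a).} I would put $\cC^{B\Gm,\leq 0}:=p^{-1}(\cC^{\leq 0})$ and $\cC^{B\Gm,\geq 0}:=p^{-1}(\cC^{\geq 0})$ and check the axioms of a $t$-structure. Stability under $[1]$ and $[-1]$ is immediate since $p$ is exact. Orthogonality: for $X\in\cC^{B\Gm,\leq 0}$ and $Y\in\cC^{B\Gm,\geq 1}$, the mapping spectrum $\Map_{\cC^{B\Gm}}(X,Y)$ is the homotopy fixed points of $\Map_{\cC}(pX,pY)$ for the natural $\Gm$-action, and $\Map_{\cC}(pX,pY)\simeq 0$ by orthogonality in $\cC$, hence $\Map_{\cC^{B\Gm}}(X,Y)\simeq 0$. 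Truncation: here the point is functoriality---the truncation functors $\tau^{\leq 0},\tau^{\geq 1}\colon\cC\to\cC$ and the fiber sequence $\tau^{\leq 0}\to\id_\cC\to\tau^{\geq 1}$ of endofunctors post-compose to endofunctors of $\cC^{B\Gm}$ together with a fiber sequence between them; evaluating at $X\in\cC^{B\Gm}$ yields a fiber sequence whose $p$-image is $\tau^{\leq 0}(pX)\to pX\to\tau^{\geq 1}(pX)$, which is exactly what the axiom requires. By construction $p$ is then $t$-exact. For uniqueness, if $(\cA,\cB)$ is any $t$-structure on $\cC^{B\Gm}$ for which $p$ is $t$-exact, then $\cA\subseteq p^{-1}(\cC^{\leq 0})$ and $\cB\subseteq p^{-1}(\cC^{\geq 0})$; and two $t$-structures, one of whose connective parts is contained in the other's, must coincide (pass to right orthogonals).

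\emph{Part (b).} By Section~\re{isot}, $\coinv_\tau(K)=\coinv_\Gm(\tau\otimes_{\qlbar}K)$, so it is enough to show that $\tau\otimes_{\qlbar}-\colon\cC^{B\Gm}\to\cC^{B\Gm}$ and $\coinv_\Gm\colon\cC^{B\Gm}\to\cC$ are each right $t$-exact and then compose. For the first: $p(\tau\otimes_{\qlbar}K)\simeq\tau\otimes_{\qlbar}pK$, and since $\tau\in\Rep_{\qlbar}(\Gm)$ is an ordinary $\qlbar$-vector space it is the filtered colimit of its finite-dimensional subspaces, so $\tau\otimes_{\qlbar}pK$ is a filtered colimit of finite direct sums of $pK$; as $\cC^{\leq 0}$ is closed under all colimits (a formal consequence of the $t$-structure axioms in the cocomplete stable category $\cC$), this lies in $\cC^{\leq 0}$ whenever $pK$ does, i.e.\ whenever $K\in\cC^{B\Gm,\leq 0}$. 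For the second: by Section~\re{catequiv}(c), $\coinv_\Gm=\al_!$ is left adjoint to the restriction functor $\al^{*}\colon\cC\to\cC^{B\Gm}$ along $\al\colon\Gm\to\{1\}$, which equips an object with the trivial action; since $p\circ\al^{*}\simeq\id_\cC$ and the $t$-structure on $\cC^{B\Gm}$ is detected by $p$, the functor $\al^{*}$ is $t$-exact, and a left adjoint of a left $t$-exact functor is right $t$-exact. (Equivalently, $\coinv_\Gm$ is the colimit functor $\colim_{B\Gm}$, which is right $t$-exact for the same reason as above.) Composing, $\coinv_\tau$ is right $t$-exact.

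The only slightly delicate point is the truncation axiom in part~(a): one must know that the functoriality of the truncation functors, and the fiber sequence relating them, genuinely descend to $\cC^{B\Gm}$---which comes down to the fact that fiber sequences in $\Fun(B\Gm,\cC)$ are precisely those detected by evaluation at the object of $B\Gm$, i.e.\ by $p$. Everything else is formal, and part~(b) then follows at once.
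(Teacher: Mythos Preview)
Your proof is correct and follows essentially the same approach as the paper: for part~(a) you verify directly the pointwise $t$-structure on $\Fun(B\Gm,\cC)$ that the paper simply cites as a standard fact about functor categories, and for part~(b) both you and the paper factor $\coinv_\tau=\coinv_\Gm\circ(\tau\otimes_{\qlbar}-)$ and deduce right $t$-exactness of $\coinv_\Gm$ from $t$-exactness of its right adjoint $\triv=\al^*$ via $\For\circ\triv\simeq\id$. One small quibble: your uniqueness clause ``two $t$-structures, one of whose connective parts is contained in the other's, must coincide'' is false as stated (consider the degenerate $t$-structure with $\cC^{\leq 0}=\cC$); what you actually have, and what suffices, is \emph{both} inclusions $\cA\subseteq p^{-1}(\cC^{\leq 0})$ and $\cB\subseteq p^{-1}(\cC^{\geq 0})$, whence equality follows by passing to orthogonals on either one and comparing with the other.
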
                                                                                                                                         \begin{proof}                                                                                                                                    (a) Is a particular case of a standard fact, asserting that for every $\infty$-category $\cI$ the stable $\infty$-category of functors
$\Fun(\cI,\cC)$ has a unique $t$-structure such that $\Fun(\cI,\cC)^{\leq 0}=\Fun(\cI,\cC^{\leq 0})$ and
$\Fun(\cI,\cC)^{\geq 0}=\Fun(\cI,\cC^{\geq 0})$.

\smallskip

(b) Since the $t$-structure is compatible with filtered colimits, the functor $\tau\otimes_{\bql}-:\C{C}\to \C{C}$ is $t$-exact. As
$\on{coinv}_{\tau}=\on{coinv}_{\Gm}\circ(\tau\otimes_{\bql}-)$, it suffices to show that the functor of $\Gm$-coinvariants is right $t$-exact.
It is thus sufficient to prove that its right adjoint $\triv:\C{C}\to\C{C}^{B\Gm}$
is left $t$-exact. But it is $t$-exact, as the composition $\For\circ\triv$ is the identity functor.
\end{proof}

\subsection{Lemmas on quasi-coherent sheaves}\label{app}

Motivated by a nice trick explained to us by Zhiwei Yun, in this section we will establish simple properties of quasi-coherent sheaves, playing a central role in the proof of Proposition~\ref{p-tau2}.

%In the sequel, all tensor products are derived.

\begin{Lem} \label{coh-loc}
Let $i:X\hra Y$ be a closed embedding of codimension $d$ between smooth connected schemes over $k$. Then

\smallskip

(a) the derived pullback functor $i^{*}: D^{+}(\on{QCoh}(Y))\to D^{+}(\on{QCoh}(X))$ sends $D^{\geq 0}$ to $D^{\geq -d}$;

\smallskip
		
(b) for every non-zero sheaf $K\in\on{QCoh}(Y)$, which is scheme-theoretically supported on $X$, we have $\cH^{-d}(i^{*}K)\neq 0$.
\end{Lem}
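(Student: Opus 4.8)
The plan is to reduce both parts to a local computation with the Koszul complex. The basic input is that, since $X$ and $Y$ are smooth (hence regular) and connected, the closed immersion $i$ is automatically a regular immersion of pure codimension $d$; thus Zariski-locally on $Y$ the ideal sheaf $\mathcal{I}$ of $X$ is generated by a regular sequence $f_{1},\dots,f_{d}$, and the associated Koszul complex is a finite locally free resolution of $\mathcal{O}_{X}$ of length $d$. I would cite this as a standard fact.

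\textbf{Part (a).} The existence of such a length-$d$ locally free resolution of $\mathcal{O}_{X}$ shows that the derived pullback $i^{*}$ has Tor-amplitude in $[-d,0]$; concretely, for every $F\in\on{QCoh}(Y)$ the cohomology sheaves $\mathcal{H}^{j}(i^{*}F)$ vanish unless $-d\leq j\leq 0$. Given $K\in D^{\geq 0}(\on{QCoh}(Y))$, I would filter $K$ by its Postnikov tower, whose graded pieces are $\mathcal{H}^{q}(K)[-q]$ with $q\geq 0$, and apply $i^{*}$ termwise: each $i^{*}\bigl(\mathcal{H}^{q}(K)[-q]\bigr)$ lies in $D^{[q-d,q]}\subseteq D^{\geq -d}$, so each $i^{*}\tau^{\leq n}K$ lies in $D^{\geq -d}$ by stability under extensions, and $i^{*}K=\colim_{n} i^{*}\tau^{\leq n}K\in D^{\geq -d}$ since $D^{\geq -d}$ is closed under filtered colimits. (Equivalently, one runs the hyper-Tor spectral sequence $\mathcal{H}^{p}\bigl(i^{*}\mathcal{H}^{q}(K)\bigr)\Rightarrow\mathcal{H}^{p+q}(i^{*}K)$, with $-d\leq p\leq 0$ and $q\geq 0$, and observes that every nonzero term has $p+q\geq -d$.)

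\textbf{Part (b).} To say that $K$ is scheme-theoretically supported on $X$ means precisely that $K\cong i_{*}K'$ for a unique nonzero $K'\in\on{QCoh}(X)$. I would compute $\mathcal{H}^{-d}(i^{*}K)$ locally, using the Koszul resolution of $\mathcal{O}_{X}$ attached to a regular sequence $f_{1},\dots,f_{d}$ generating $\mathcal{I}$: after tensoring that resolution over $\mathcal{O}_{Y}$ with $i_{*}K'$, all differentials vanish, since each $f_{i}$ annihilates $i_{*}K'$. Hence the complex computing $i^{*}K$ locally has zero differentials, and its degree $-d$ cohomology is $\Lambda^{d}(\mathcal{I}/\mathcal{I}^{2})\otimes_{\mathcal{O}_{X}}K'$; globally this identifies $\mathcal{H}^{-d}(i^{*}K)$ with the twist of $K'$ by the line bundle $\det(\mathcal{I}/\mathcal{I}^{2})$, the top exterior power of the conormal bundle, which is nonzero because $K'$ is nonzero.

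\textbf{Obstacle.} All of this is packaging of standard homological algebra, so I do not anticipate a real obstacle; the single point worth stating carefully is the regular-immersion claim, which guarantees that the Koszul resolution is available and of length exactly $d$ everywhere along $X$, and which follows from the fact that a regular closed subscheme of a regular scheme is, locally, cut out by part of a regular system of parameters.
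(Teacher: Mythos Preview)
Your proof is correct and follows essentially the same approach as the paper's. Both parts rest on the Koszul resolution of $i_*\mathcal{O}_X$ coming from the regular-immersion property; your treatment is simply more explicit (spelling out the Tor-amplitude argument for (a), and computing locally that the Koszul differentials vanish on $i_*K'$ for (b)), whereas the paper dispatches (a) in one line and for (b) packages the same computation via the projection formula $i^{*}i_{*}M\simeq M\otimes_{\mathcal{O}_X} i^{*}i_{*}\mathcal{O}_X$.
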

\begin{proof}
(a) follows from the Koszul resolution of $i_*\mathcal{O}_X$.

\smallskip

(b) By assumption, one has $K=i_{*}M$ for some $M\in\on{QCoh}(X)$. Thus, by the projection formula, we have $i^{*}i_{*}M\simeq M\otimes_{\mathcal{O}_{X}}i^{*}i_{*}\mathcal{O}_X$, and the assertion follows from the Koszul resolution for $i_*\mathcal{O}_X$ as well.
\end{proof}

\begin{Cor}\label{coh2}
In the situation of Lemma~\ref{coh-loc}, let $K\in D^{\geq 0}(\on{QCoh}(Y))$ be such that $\cH^{0}(K)$ has a non-zero subsheaf $K'$, which is scheme-theoretically supported on $X$. Then $\cH^{-d}(i^{*}K)\neq 0$.
\end{Cor}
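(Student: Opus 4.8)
The plan is to reduce the statement to Lemma~\ref{coh-loc} by a short diagram chase with the canonical truncation of $K$. Write $F:=\cH^{0}(K)$ and let $K'\subseteq F$ be the given non-zero subsheaf, scheme-theoretically supported on $X$. Since $K\in D^{\ge 0}(\on{QCoh}(Y))$, the truncation $\tau^{\le 0}K$ equals $F[0]$, so composing the canonical morphism $F[0]\to K$ with the inclusion $K'[0]\hookrightarrow F[0]$ produces a morphism $\alpha\colon K'[0]\to K$ in $D^{+}(\on{QCoh}(Y))$ whose induced map on $\cH^{0}$ is the given monomorphism of sheaves $K'\hookrightarrow F$.

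Next I would analyse the cone $C:=\on{cone}(\alpha)$. In the long exact sequence of cohomology sheaves for the triangle $K'[0]\xrightarrow{\alpha}K\to C\to K'[1]$, the vanishing $\cH^{j}(K)=0$ for $j<0$ together with the injectivity of $\cH^{0}(\alpha)$ forces $\cH^{j}(C)=0$ for all $j<0$; in other words $C\in D^{\ge 0}(\on{QCoh}(Y))$. Applying $i^{*}$ and invoking Lemma~\ref{coh-loc}(a), we get $i^{*}C\in D^{\ge -d}(\on{QCoh}(X))$, hence $\cH^{-d-1}(i^{*}C)=0$. Then the long exact sequence of cohomology sheaves over $X$ for the triangle $i^{*}K'[0]\to i^{*}K\to i^{*}C$ contains the exact segment
\[
0=\cH^{-d-1}(i^{*}C)\longrightarrow\cH^{-d}(i^{*}K')\longrightarrow\cH^{-d}(i^{*}K),
\]
so $\cH^{-d}(i^{*}K')$ injects into $\cH^{-d}(i^{*}K)$. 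Since $K'$ is a non-zero quasi-coherent sheaf scheme-theoretically supported on $X$, Lemma~\ref{coh-loc}(b) gives $\cH^{-d}(i^{*}K')\neq 0$, whence $\cH^{-d}(i^{*}K)\neq 0$.

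I expect no genuine obstacle here once Lemma~\ref{coh-loc} is in hand; the one point deserving care is the claim that $C\in D^{\ge 0}$, which is precisely where both hypotheses enter --- that $K$ is cohomologically concentrated in non-negative degrees, and that $K'$ is an honest \emph{subsheaf} of $\cH^{0}(K)$ and not merely a subquotient.
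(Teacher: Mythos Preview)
Your proof is correct and follows essentially the same approach as the paper: both arguments reduce to the injectivity of $\cH^{-d}(i^{*}K')\to\cH^{-d}(i^{*}K)$ and then invoke Lemma~\ref{coh-loc}(b). The only difference is organizational --- the paper first uses the triangle $\cH^{0}(K)\to K\to\tau^{>0}K$ to reduce to $K\in\on{QCoh}(Y)$ and then the triangle $K'\to K\to K/K'$, whereas you combine these into the single triangle $K'[0]\to K\to C$; your cone $C$ is just an extension of $\tau^{>0}K$ by $(\cH^{0}(K)/K')[0]$, so the content is identical.
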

\begin{proof}
Consider the fiber sequence
\[i^{*}\cH^{0}(K)\to i^{*}K\to i^{*}\tau^{> 0}K.\]
Since $i^{*}\tau^{> 0}K$ belongs to $D^{\geq -d+1}$ by Lemma~\ref{coh-loc}(a), we conclude that $\cH^{-d}(i^{*}\cH^0(K))\simeq\cH^{-d}(i^{*}K)$. Thus, replacing $K$ by $\cH^{0}(K)$, we can assume that $K\in\on{QCoh}(Y)$. Then, using Lemma \ref{coh-loc}(a) for $i^{*}(K/K')$, we obtain that the map
\[\cH^{-d}(i^{*}K')\to \cH^{-d}(i^{*}K)\]
is injective. Since  $\cH^{-d}(i^{*}K')\neq 0$ by Lemma~\ref{coh-loc}(b), the assertion follows.
\end{proof}

\begin{Prop}\label{coh3}
Let $\eta_1,\dots, \eta_m:X\to Y$ be closed embeddings of smooth connected $k$-schemes, let $p_X:X\times Y\to X$ and
 $p_Y:X\times Y\to Y$ be projections, and let $K\in D^{+}(\on{QCoh}(X\times Y))$ be such that

\smallskip

(i)	$K$ is set-theoretically supported on the union of the graphs of the $\eta_a$'s;

\smallskip
	
(ii)	we have $K\otimes p_{X}^{*}A\in D^{\geq 0}$ for all $A\in\on{QCoh}(X)$.

\smallskip

\noindent Then we have $K \otimes p_{Y}^{*}B\in D^{\geq \dim X-\dim Y}$ for all $B\in\on{QCoh}(Y)$.
\end{Prop}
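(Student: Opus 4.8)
The plan is to prove the statement stalk‑locally and translate it into a Koszul computation over regular local rings, in which conditions (i) and (ii) enter simultaneously.

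First I would reduce to the case $B=\iota_{y,*}\kappa(y)$ with $y\in Y$ a closed point: a general $B$ is a filtered colimit of coherent sheaves and both $-\otimes^{L}p_{Y}^{*}(-)$ and the cohomology functors commute with filtered colimits, and a coherent $B$ is dévissaged (plus a localisation argument) to the structure sheaves of points; this part is routine, so I would just record it. It then suffices to check $K\otimes^{L}p_{Y}^{*}(\iota_{y,*}\kappa(y))\in D^{\geq \dim X-\dim Y}$ on stalks at closed points $w\in X\times Y$; since $k$ is algebraically closed, $\kappa(w)=\kappa(p_{X}(w))=\kappa(p_{Y}(w))=k$. Write $x_{0}=p_{X}(w)$, $y_{0}=p_{Y}(w)$ and $R=\mathcal{O}_{X\times Y,w}$, $S=\mathcal{O}_{X,x_{0}}$, $T=\mathcal{O}_{Y,y_{0}}$: all three are regular local, $\dim R=\dim S+\dim T$, and the projections give flat local maps $\alpha\colon S\to R$, $\beta\colon T\to R$. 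By the projection formula for $\alpha$, hypothesis (ii) says precisely that $\Res_{S}(K_{w})$ has $S$‑Tor‑amplitude in $[0,+\infty)$; and, with $d=\dim Y-\dim X$, the goal becomes $K_{w}\otimes^{L}_{R}R/\beta(\mathfrak{m}_{T})R\in D^{\geq -d}$ (note $R/\beta(\mathfrak{m}_{T})R=R\otimes^{L}_{T}\kappa_{T}$, a regular system of parameters of $T$ mapping to a regular sequence in $R$).

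Next I would unwind the graph hypotheses. For each $a$ with $w\in\Gamma_{a}$ the graph gives a prime $\mathfrak{p}_{a}\subset R$ with $\alpha$ followed by $R\twoheadrightarrow R/\mathfrak{p}_{a}$ an isomorphism $S\xrightarrow{\sim}R/\mathfrak{p}_{a}$, and $\beta$ followed by $R\twoheadrightarrow R/\mathfrak{p}_{a}$ equal to $\eta_{a}^{\#}\colon T\twoheadrightarrow S$ (surjective because $\eta_{a}$ is a closed embedding), whose kernel is a regular sequence of length $d$. I record: (1) $\mathfrak{p}_{a}+\beta(\mathfrak{m}_{T})R=\mathfrak{m}_{R}$ and $\mathfrak{p}_{a}+\alpha(\mathfrak{m}_{S})R=\mathfrak{m}_{R}$, since modulo $\mathfrak{p}_{a}$ both $\beta(\mathfrak{m}_{T})$ and $\alpha(\mathfrak{m}_{S})$ land in $\mathfrak{m}_{S}$; (2) $\alpha(\mathfrak{m}_{S})R+\beta(\mathfrak{m}_{T})R=\mathfrak{m}_{R}$, because the fibre of $p_{Y}$ through $w$ is $X_{\kappa(y_{0})}\cong X$ and $\alpha$ identifies $R/\beta(\mathfrak{m}_{T})R$ with $S$. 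Choose a regular system of parameters $s_{1},\dots,s_{x}$ of $S$ and $t_{1},\dots,t_{n}$ of $T$; then $\alpha(\underline{s}),\beta(\underline{t})$ is a regular system of parameters of $R$, and by (ii) the sequence $\alpha(s_{1}),\dots,\alpha(s_{x})$ is $K_{w}$‑regular in the derived sense, so $N:=K_{w}\otimes^{L}_{R}R/\alpha(\mathfrak{m}_{S})R\in D^{\geq 0}$; by (1) it is supported at $\{\mathfrak{m}_{R}\}$, and likewise $P:=K_{w}\otimes^{L}_{R}R/\beta(\mathfrak{m}_{T})R$ is supported at $\{\mathfrak{m}_{R}\}$.

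The heart of the proof is to compute $Q:=K_{w}\otimes^{L}_{R}R/\alpha(\mathfrak{m}_{S})R\otimes^{L}_{R}R/\beta(\mathfrak{m}_{T})R$ in two ways. First, $Q=N\otimes^{L}_{R}R/\beta(\mathfrak{m}_{T})R$ with $N\in D^{\geq 0}$ and $R/\beta(\mathfrak{m}_{T})R$ of $R$‑Tor‑dimension $n$, so $Q\in D^{\geq -n}$. Second, $Q=P\otimes^{L}_{R}R/\alpha(\mathfrak{m}_{S})R$: if $c$ is the least degree with $\mathcal{H}^{c}(P)\neq 0$, then in the spectral sequence $\Tor^{R}_{-p}(\mathcal{H}^{q}(P),R/\alpha(\mathfrak{m}_{S})R)\Rightarrow\mathcal{H}^{p+q}(Q)$ the only contribution in total degree $c-x$ is the corner term $\Tor^{R}_{x}(\mathcal{H}^{c}(P),R/\alpha(\mathfrak{m}_{S})R)$, which supports no differentials and equals $(0:_{\mathcal{H}^{c}(P)}\alpha(\mathfrak{m}_{S})R)$; as $\mathcal{H}^{c}(P)$ is a nonzero $\mathfrak{m}_{R}$‑power‑torsion module its socle is nonzero and lies in this annihilator — this is the local form of Lemma~\ref{coh-loc}(b)/Corollary~\ref{coh2}, i.e.\ Yun's trick. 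Hence $\mathcal{H}^{c-x}(Q)\neq 0$, so $c-x\geq -n$, i.e.\ $c\geq -d$, which gives $P\in D^{\geq -d}$, as required. The step I expect to be the main obstacle is exactly this setup: since (ii) is a ``flatness over $X$'' condition that does not descend along the filtration by the ideal of $\bigcup_{a}\Gamma_{a}$, one cannot first reduce $K$ to a sheaf scheme‑theoretically supported on a single smooth graph; instead (i) and (ii) must be fed together into the Koszul computation, with (ii) providing the $X$‑direction regular sequence and (i) forcing both $N$ and $P$ to be concentrated at the closed point.
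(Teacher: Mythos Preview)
Your argument has a genuine gap in the first reduction. You claim that passing to $B=\iota_{y,*}\kappa(y)$ is ``routine'' via filtered colimits, d\'evissage and a localisation argument. But for a general $C\in D^{+}(T)$ over a regular local ring $(T,\mathfrak m,\kappa)$, the implication ``$C\otimes^{L}_{T}\kappa\in D^{\geq -d}\Rightarrow C\otimes^{L}_{T}M\in D^{\geq -d}$ for all $M$'' is \emph{false}: take $T=k[[t_{1},t_{2}]]$ and $C=k((t_{1}))$ (annihilated by $t_{2}$); then $C\otimes^{L}_{T}\kappa=0$, yet $C\otimes^{L}_{T}T/(t_{2})$ has nonzero $H^{-1}$. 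The standard ``Tor-amplitude is detected by the residue field'' lemma needs $C$ bounded above (or $C\in D^{b}_{\mathrm{coh}}$), which $K_{w}$ need not be for $K\in D^{+}(\on{QCoh})$. One can try to repair this using hypothesis~(i) --- e.g.\ when $K$ is coherent each $R/\mathfrak p_{a}^{N}$ is finite over $T$, forcing $K_{w}\in D^{b}_{\mathrm{coh}}(T)$ --- but that is not what you wrote, and in the stated generality I do not see how to make the reduction work without essentially reproving the proposition. The subsequent two-way Koszul computation of $Q$ is correct and elegant, but it is tailored to $B=\kappa(y)$: for general $B$ the module $\cH^{c}(P_{w})$ need not be supported at $\mathfrak m_{R}$, so the socle argument giving $\Tor^{R}_{x}(\cH^{c}(P_{w}),R/\alpha(\mathfrak m_{S})R)\neq 0$ breaks down.

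The paper sidesteps this entirely by never reducing $B$. For arbitrary $B$ it takes $n$ minimal with $\cH^{n}(K\otimes p_{Y}^{*}B)\neq 0$, uses~(i) to find a nonzero subsheaf of $\cH^{n}$ scheme-theoretically supported on one graph $\Gm_{\eta_{a}}$, and applies the same Koszul/socle trick (Corollary~\ref{coh2}) to the embedding $\Gm_{\eta_{a}}:X\hookrightarrow X\times Y$ of codimension $\dim Y$ --- rather than to the fibre $V(\alpha(\mathfrak m_{S}))$ as you do. The point is the identity
\[
\Gm_{\eta_{a}}^{*}(K\otimes p_{Y}^{*}B)\ \simeq\ \Gm_{\eta_{a}}^{*}K\otimes\eta_{a}^{*}B\ \simeq\ \Gm_{\eta_{a}}^{*}\bigl(K\otimes p_{X}^{*}(\eta_{a}^{*}B)\bigr),
\]
coming from $p_{X}\circ\Gm_{\eta_{a}}=\Id_{X}$ and $p_{Y}\circ\Gm_{\eta_{a}}=\eta_{a}$: this converts the $p_{Y}^{*}$ into a $p_{X}^{*}$, so hypothesis~(ii) applies directly (with $A=\eta_{a}^{*}B\in D^{\geq\dim X-\dim Y}$). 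Pulling back along the graph rather than along the $X$-fibre is exactly what makes the argument uniform in $B$.
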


\begin{proof}
Let $n$ be the minimal integer $j$ such that $\cH^{j}(K \otimes p_{Y}^{*}B)\neq 0$. We want to show that $n\geq \dim X-\dim Y$. By assumption~(i), there exists a nonzero subsheaf $K'\subseteq \cH^{n}(K\otimes p_{Y}^{*}B)$, which is scheme-theoretically supported on the graph of some $\eta_a$.

Consider morphism $\Gm_{\eta_a}:=(\Id_X,\eta_a):X\to X\times Y$.  It is a closed embedding of smooth connected $k$-schemes of codimension $\dim Y$.
Thus, by Corollary \ref{coh2}, we have $\cH^{n-\dim Y}(\Gm_{\eta_a}^{*}(K \otimes p_{Y}^{*}B))\neq 0$. So it suffices to show that $\Gm_{\eta_a}^{*}(K\otimes p_{Y}^{*}B)\in D^{\geq \dim X-2\dim Y}$.

Using isomorphisms
\[\Gm_{\eta_a}^{*}(K \otimes p_{Y}^{*}B)\simeq \Gm_{\eta_a}^{*}(K) \otimes \eta_{a}^{*}B\simeq \Gm_{\eta_a}^{*}(K \otimes p_{X}^{*}(\eta_{a}^{*}B))\]
and Lemma~\ref{coh-loc}(a), it suffices to prove that $K \otimes p_{X}^{*}(\eta_a^{*}B)\in D^{\geq \dim X-\dim Y}$.
By Lemma~\ref{coh-loc}(a), we have $\eta_{a}^{*}B\in D^{\geq \dim X-\dim Y}$, so the assertion follows from assumption (ii).
\end{proof}

\section{Group analogs of results of \cite{BKV}}

In this section, we will show analogs of the results \cite{BKV} in the group case.

\subsection{Flatness of the Chevalley map for arcs}

\begin{Emp} \label{E:looparc}
{\bf Arc and loop spaces.} We set $\co:=k[[t]]$ and $F:=k((t)))$.

\smallskip

(a) For an affine scheme of finite type $X$ over $\co$, we denote by $\clp(X)$ the affine scheme over $k$ (called  the {\em arc space} of $X$),
representing a functor $A\mapsto X(A[[t]])$. Then $\clp(X)\simeq\lim_{n\in\B{N}}\cL_n(X)$, where
$\cL_n(X)$ is an affine scheme of finite type over $k$ (called the {\em $n$-truncated arc space} of $X$), representing a functor $A\mapsto X(A[t]/(t^{n+1}))$.

\smallskip

(b) We have a closed embedding $X\hra \cL_n (X)$, corresponding to the $A$-algebra homomorphism
$A\hra A[t]/(t^{n+1})$, and projections $\ev_n:\cL_n (X)\to X$ and $\ev:\clp(X)\to X$, corresponding to the $A$-algebra homorphisms $A[t]/(t^{n+1})\to A$ and $A[[t]]\to A$ such that $t\mapsto 0$.

\smallskip

(c) We have a natural action of multiplicative group $\B{G}_m$ on $\cL_n (X)$ over $X$, which corresponds to the action of $A^{\times}$ on the $A$-algebra $A[t]/(t^{n+1})$ given by the rule $a\cdot f(t):=f(at)$. Then for $x\in \cL_n (X)(k)$ and $a\in \B{G}_m$,  the limit $\lim_{a\to 0}(a\cdot x)$ exists and is equal to $\ev_n(x)$.

\smallskip

(d) If $X$ is smooth over $\cO$, then the argument of \cite[Section~3.1.4]{BKV} shows that the arc space $\cL(X)$ is admissible and  $\cL(X)\simeq\lim_n\cL_n(X)$ is an admissible presentation.

\smallskip

(e) For every affine scheme of finite type $X$ over $F$, we denote by $\cL(X)$ the ind-scheme over $k$ representing functor
$A\mapsto X(A((t)))$, which is called the {\em loop  space} of $X$ (compare \cite[Section~3.1.5]{BKV}).
\end{Emp}

\begin{Emp} \label{E:setup}
{\bf Set up.}

\smallskip

(a) Let $G$ be a connected reductive group over $k$, let $B\supseteq T$ be a Borel group and a maximal torus of $G$, respectively, let $W$\label{N:W} be the Weyl group of $G$, let $\La:=X_{*}(T)$\label{N:X*T} be the lattice of cocharacters of $T$, let $\widetilde{W}:=\La\rtimes W$\label{N:Waff} be the extended affine Weyl group of $G$, and let $R$\label{N:R} be the set of roots of $G$.

\smallskip

(b) By a theorem of Steinberg--Chevalley \cite{S}, the restriction map $k[G]\to k[T]$ induces an isomorphism $k[G]^{G}\isom k[T]^{W}$, where $G$ acts by conjugacy. Let $\kc:=T/W=\Spec(k[G]^G)$\label{N:c} be the Chevalley space of $G$. Then we have canonical projections  $\chi:G\rightarrow \kc$\label{N:chi} and $\pi:T\rightarrow\kc$.

\smallskip

(c) Let $\mathfrak{D}:=\prod_{\alpha\in R}(1-\alpha)$\label{N:mathfrakD} be the discriminant function.
Then $\mathfrak{D}\in k[\kc]=k[T]^W$, and the regular semisimple locus $\kc^{\rs}\subseteq\kc$\label{N:crs} is the complement of the locus
$Z(\mathfrak{D})$ of zeros of $\mathfrak{D}$. We denote by  $G^{\rs}:=\chi^{-1}(\kc^{\rs})$\label{N:grs} and $T^{\rs}:=\pi^{-1}(\kc^{\rs})$\label{N:trs} the preimages of $\kc^{\rs}$.

\smallskip

(d) For most of the paper (everywhere except \rl{eqdim}) we will assume that the derived group $G^{\der}$ of $G$ is simply connected. In this case, $\kc$ is a smooth affine scheme (by \cite[Theorem~6.1]{S}). Furthermore, this assumption implies that the centralizer of every semisimple
$g\in G$ is connected (see \cite[$\S$~2.11, Theorem]{H}) that centralizer of every $g\in G^{\rs}$ is a maximal torus and that the restriction $\pi^{\rs}:T^{\rs}\to \fc^{\rs}$ of $\pi$ is a $W$-torsor (see \cite[$\S$~2.5, Remark]{H}).

\smallskip

(e) We denote by $I:=\ev^{-1}(B)\subseteq\clp (G)$\label{N:I} the Iwahori subgroup scheme of $\cL G$ and denote by
$v:I\to\clp(\kc)$ the restriction of $\chi:\kg\rightarrow \kc$. Similarly, for every $n\in\B{N}$, we set $I_n:=\ev_n^{-1}(B)\subseteq \cL_n(G)$ and let $v_n:I_n\to\cL_n(\kc)$ be the restriction of $\chi_n:= \cL_{n}(\chi):\cL_{n}(G)\to\cL_n(\kc)$.

\smallskip

(f) Following \cite{Yun1, Yun}, we assume that the characteristic of $k$ is either zero  or greater than $2h$, where $h$ is the Coxeter number of $G$. Note that this assumption implies that the characteristic of $k$ is prime to $|W|$, hence prime to the cardinality of $Z(G^{\der})$.
\end{Emp}

The following result is central for what follows.

\begin{Thm}\label{flat1}
%Assume $G_{der}$ be simply connected, assume that $p$ is a very good prime.
For every $n\in\NN$, the morphisms $\chi_n:\cL_{n}(G)\to\cL_n(\kc)$ and $v_n:I_n\to\cL_n(\kc)$ are flat.
\end{Thm}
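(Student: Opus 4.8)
The plan is to reduce the group statement to its Lie algebra analog, which we may treat as known. The key structural input is that, because $\kc$ is smooth (we assume $G^{\der}$ is simply connected) and the discriminant $\mathfrak{D}$ cuts out the non-regular-semisimple locus, it suffices to check flatness of $\chi_n$ and $v_n$ over the generic point and in a neighborhood of the special fiber; and in fact, since source and target are of finite type over $k$, the cleanest route is to establish flatness directly from the Lie algebra case via Jordan decomposition. More precisely, first I would recall that, under the running characteristic hypothesis (char $k$ zero or $>2h$, hence prime to $|W|$ and to $|Z(G^{\der})|$), Springer's isomorphism gives a $G$-equivariant identification of the unipotent variety $\mathcal{U}\subseteq G$ with the nilpotent cone $\mathcal{N}\subseteq\kg$, compatible with the Chevalley maps in the sense that it fits into a commutative square with $\chi$ and the Lie algebra Chevalley map $\chi^{\kg}$. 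Applying $\cL_n(-)$ to this square, and likewise to the analogous identification $I\cong \ev^{-1}(\kb)$ on the Iwahori side, reduces statements about $\chi_n,v_n$ near the unipotent/nilpotent loci to the Lie algebra analogs.

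The main step is then the passage from "flat near the nilpotent locus" to "flat everywhere" using Jordan decomposition. I would argue fiberwise: a point of $\cL_n(G)(k)$ is (up to the $\cL_n$-structure) a jet of maps into $G$, and its image in $\cL_n(\kc)$ is determined by the semisimple part; one stratifies $\cL_n(G)$ (resp. $I_n$) according to the conjugacy class of the "leading" semisimple part, and on each stratum the centralizer $C_G(s)$ is again connected reductive with simply connected derived group (here one uses the characteristic hypothesis and \cite[\S 2.11]{H}), so that the fiber of $\chi_n$ over a point can be described via the Chevalley map for $C_G(s)$. Then flatness of $\chi_n$ follows by checking that all fibers have the expected (constant, correct) dimension — the miracle flatness / "flatness from equidimensionality of fibers over a smooth base" criterion applies because $\cL_n(\kc)$ is a smooth $k$-scheme (being $\cL_n$ of the smooth scheme $\kc$) and $\cL_n(G)$ is Cohen--Macaulay (indeed $\cL_n(G)$ is smooth, as $G$ is smooth over $k$). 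So the whole statement becomes: all fibers of $\chi_n$ have dimension $\dim\cL_n(G)-\dim\cL_n(\kc)$, which is exactly what the Lie algebra analog supplies after transport through Springer's isomorphism and the stratification by semisimple part. For $v_n$ one repeats the argument: $I_n$ is smooth over $k$ (it is $\ev_n^{-1}(B)$, a tower of affine-bundle-type maps over $B$), $\cL_n(\kc)$ is smooth, so it again suffices to control fiber dimensions, and these are handled by the $\kb$-analog.

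The hard part, and where I expect the real work to lie, is making the Jordan-decomposition stratification of the \emph{truncated arc spaces} $\cL_n(G)$ precise and verifying that the fiber-dimension count is uniform across strata — in particular handling jets whose reduction mod $t$ is not semisimple, where "the semisimple part" must be taken in the loop group over $k[[t]]$ rather than pointwise, and where the relevant centralizers are arc groups of (possibly twisted) reductive groups. One must also be careful that the characteristic hypothesis is genuinely used to guarantee connectedness of centralizers and the validity of Springer's isomorphism at the level needed; this is precisely the point flagged in the footnote to Theorem~\ref{flat1}. Everything else — smoothness of $\cL_n(G)$ and $I_n$ over $k$, smoothness of $\cL_n(\kc)$, and the miracle flatness criterion — is routine, so the argument is essentially: (1) Springer isomorphism compatible with Chevalley maps; (2) reduce to a fiber-dimension statement by miracle flatness; (3) compute fiber dimensions by stratifying along Jordan decomposition and invoking the Lie algebra result of \cite{BKV} on each stratum.
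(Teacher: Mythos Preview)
Your overall strategy matches the paper's: reduce to miracle flatness (smooth source and target, so it suffices to control fiber dimensions), transport the unipotent variety to the nilpotent cone via Springer's isomorphism, and use Jordan decomposition together with the Lie algebra result of \cite{BKV} to handle general fibers. The paper does exactly this, proving first that $\dim\cL_n(\mathcal{U}_G)=\dim\cL_n(G)-\dim\cL_n(\kc)$ (and the $I_n$ analog) via Springer, and then for arbitrary $a\in\kc(k)$ using the isomorphism $G\times^{G_s}\mathcal{U}_{G_s}\cong\chi^{-1}(a)$ applied at the level of $\cL_n$.

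The one genuine gap in your proposal is the step you yourself flag as ``the hard part'': you propose to stratify $\cL_n(G)$ by a Jordan decomposition on truncated arc spaces and worry about jets whose reduction mod $t$ is not semisimple, about centralizers being arc groups of possibly twisted reductive groups, etc. You do not actually carry this out, and as stated it would be delicate. The paper bypasses this difficulty completely with a $\B{G}_m$-contraction trick that you have missed: the rescaling action $a\cdot f(t)=f(at)$ on $\cL_n(G)$ is compatible with $\chi_n$, and for any $g\in\cL_n(G)(k)$ one has $\lim_{a\to 0}(a\cdot g)=\ev_n(g)\in G(k)$. By upper semicontinuity of fiber dimension, this reduces the required inequality $\dim_g\chi_n^{-1}(\chi_n(g))\leq\dim\cL_n(G)-\dim\cL_n(\kc)$ to the case $g\in G(k)\subseteq\cL_n(G)(k)$, i.e.\ to \emph{constant} jets. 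For those, $\chi_n^{-1}(\chi_n(g))\simeq\cL_n(\chi^{-1}(a))$ with $a=\chi(g)\in\kc(k)$, and now only the \emph{classical} Jordan decomposition on $G$ is needed --- no arc-space Jordan decomposition, no twisted centralizers. The same trick handles $v_n$. So your outline is right, but the missing ingredient that makes it go through cleanly is this semicontinuity-plus-$\B{G}_m$-contraction reduction to constant jets.
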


Our strategy will be to deduce the result from the corresponding results for Lie algebras shown in \cite[Theorem~3.4.7]{BKV}.\footnote{Notice that unlike many of the results of \cite{BKV}, which are proven under an assumption that the characteristic of $k$ is prime to $|W|$, our proof of \cite[Theorem~3.4.7]{BKV} uses \cite[Corollary~2.5.2]{Yun1} and therefore is only valid under the stronger assumption of Section~\re{setup}(f).}

\begin{Lem} \label{L:eqdim}
Assume that $G$ satisfies the assumption of Section~\re{setup}(f) but not necessary of Section~\re{setup}(d), and
let $\C{U}_G\subseteq G$ be the locus of unipotent elements of $G$. Then we have equalities
\begin{equation} \label{Eq:eqdim}
\dim\cL_n(\C{U}_G)=\dim\cL_{n}(G)-\dim \cL_n(\kc),\quad \dim(\cL_n(\C{U}_G)\cap I_n)=\dim I_n-\dim \cL_n(\kc).
\end{equation}
\end{Lem}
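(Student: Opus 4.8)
The idea is to reduce the two equalities in \form{eqdim} to the corresponding statements for the Lie algebra $\kg$ via Jordan decomposition, using that the assumption of Section~\re{setup}(f) makes the characteristic good enough for the exponential/logarithm to identify the unipotent variety $\C{U}_G\subseteq G$ with the nilpotent cone $\C{N}\subseteq\kg$, compatibly with the adjoint action, with the Chevalley map, and with $B$ (resp.\ $\kb$). First I would record that, under this identification, $\cL_n(\C{U}_G)\simeq\cL_n(\C{N})$ and $\cL_n(\C{U}_G)\cap I_n\simeq\cL_n(\C{N})\cap \kb_n$ (where $\kb_n:=\ev_n^{-1}(\kb)$), so that the right-hand sides of \form{eqdim} are literally the Lie-algebra quantities computed in \cite{BKV}. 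Then it suffices to invoke \cite[Theorem~3.4.7]{BKV} together with whatever dimension count for $\cL_n(\C{N})$ and $\cL_n(\C{N})\cap\kb_n$ is extracted there from flatness of the Lie-algebra Chevalley map for truncated arcs.

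More concretely, for the first equality: flatness of $\chi_n^{\kg}:\cL_n(\kg)\to\cL_n(\kc)$ gives that all fibers of $\chi_n^{\kg}$ have the same dimension $\dim\cL_n(\kg)-\dim\cL_n(\kc)$, and the fiber over the arc $0\in\cL_n(\kc)$ is exactly $\cL_n$ of the scheme-theoretic nilpotent fiber $\chi^{-1}(0)$; since in good characteristic $\chi^{-1}(0)_{\red}=\C{N}$ and taking arcs commutes with reduction up to nilpotents (so dimensions agree), one gets $\dim\cL_n(\C{N})=\dim\cL_n(\kg)-\dim\cL_n(\kc)$, which transports to $G$ via the Jordan/exponential identification. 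For the second equality one runs the same argument with $v_n^{\kg}:\kb_n\to\cL_n(\kc)$ in place of $\chi_n^{\kg}$: its flatness (also part of \cite[Theorem~3.4.7]{BKV}) forces all fibers to have dimension $\dim\kb_n-\dim\cL_n(\kc)$, and the fiber over $0$ is $\cL_n(\C{N}\cap\kb)=\cL_n(\C{N})\cap\kb_n$; transporting back yields $\dim(\cL_n(\C{U}_G)\cap I_n)=\dim I_n-\dim\cL_n(\kc)$. Note that here we do \emph{not} yet know $v_n$ itself is flat — that is what Theorem~\ref{flat1} will deduce afterward — so the argument must genuinely go through the Lie algebra side rather than through flatness of $v_n$.

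The step I expect to be the main obstacle is the careful justification of the Jordan-decomposition comparison at the level of $n$-truncated arc spaces: one needs an isomorphism $\C{U}_G\simeq\C{N}$ (in good characteristic, e.g.\ via a Springer-type isomorphism, which exists and is $G$-equivariant when $\mathrm{char}\,k>2h$, or at least intertwines the relevant structures) that is compatible with $\chi$ versus $\chi^{\kg}$ and with $B$ versus $\kb$, and then one must check that applying $\cL_n(-)$ preserves all these compatibilities and does not change dimensions when passing to reductions of the non-reduced nilpotent fibers. The second subtlety, somewhat routine but worth spelling out, is the identification $\cL_n(\C{N}\cap\kb)=\cL_n(\C{N})\cap\kb_n$ inside $\cL_n(\kg)$, i.e.\ that $\cL_n$ of an intersection of closed subschemes is the intersection of the $\cL_n$'s, which holds because $\cL_n$ is a right adjoint and hence preserves fiber products; combined with the fact that $\kb_n=\ev_n^{-1}(\kb)=\cL_n(\kg)\times_{\kg}\kb$, this is immediate. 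Everything else is a straightforward dimension bookkeeping once flatness over $\cL_n(\kc)$ is in hand from \cite{BKV}.
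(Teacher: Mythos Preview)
Your proposal is correct and follows essentially the same route as the paper: transport $\C{U}_G$ to $\C{N}_{\fg}$ via a Springer isomorphism, apply $\cL_n(-)$, and read off the fiber dimensions from the flatness of the Lie-algebra Chevalley maps in \cite[Theorem~3.4.7]{BKV}. Two minor remarks: the paper first replaces $G$ by the simply connected cover $G^{\on{sc}}$ of $G^{\der}$ (using $\C{U}_{G^{\on{sc}}}\isom\C{U}_G$) before invoking the Springer isomorphism, which you should do as well since the lemma explicitly drops assumption \re{setup}(d); and your worry about ``reduction of non-reduced nilpotent fibers'' is unnecessary, because in the assumed characteristic the scheme-theoretic fiber $\chi_{\fg}^{-1}(0_{\fc})$ is already reduced and equals $\C{N}_{\fg}$.
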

\begin{proof}
Let $G^{\on{sc}}$ be the semisimple covering of the derived group of $G$, and let $\C{N}_{\fg}$ be the variety of nilpotent elements of
$\fg:=\Lie G$.

Since the projection $G^{\on{sc}}\to G$ induces an isomorphism  $\C{U}_{G^{\on{sc}}}\isom\C{U}_G$, we can replace $G$ by $G^{\on{sc}}$,
thus assuming that $G$ is semisimple and simply connected. Then, applying results of Springer \cite{Spr} (or
using quasi-logarithms of \cite[Section~1.8]{KV} and results of \cite{BR}) one shows that there exists an $\Ad G$-equivariant isomorphism
$\Phi:\C{U}_{G}\isom\C{N}_{\fg}$, inducing an isomorphism $R_u(B)\isom \Lie R_u(B)$. Therefore $\Phi$ induces isomorphisms
\begin{equation} \label{Eq:isom2}
\cL_n(\C{U}_G)\isom \cL_n(\C{N}_{\fg}), \quad \cL_n(\C{U}_G)\cap I_n\isom\cL_n(\C{N}_{\fg})\cap\Lie I_n.
\end{equation}

Since $\C{N}_{\fg}=\chi_{\fg}^{-1}(0_{\fc})$, where $\chi_{\fg}:\fg\to\fc_{\fg}$ is the Chevalley map for $\fg$ and
$0_{\fc}:=\chi_{\fg}(0)$, and since  $\cL_n$ commutes with limits, we have $\cL_n(\C{N}_{\fg})=\cL_{n}(\chi_{\fg})^{-1}(0_{\fc})$.
Therefore equalities \form{eqdim} follows from isomorphisms \form{isom2} and flatness of morphisms
$\cL_n(\chi_{\fg}):\cL_n(\fg)\to\cL_n(\fc_{\fg})$ and $v_n:\Lie I_n\to\cL_n(\fc_{\fg})$, established in \cite[Theorem~3.4.7]{BKV}.
\end{proof}

\begin{Cor} \label{C:eqdim}
In the situation of Theorem~\ref{flat1}, for every $a\in\fc(k)$, we have equalities
\begin{equation} \label{Eq:eqdim2}
\dim\cL_n(\chi^{-1}(a))=\dim\cL_{n}(G)-\dim \cL_n(\kc), \quad \dim(\cL_n(\chi^{-1}(a))\cap I_n)=
\dim I_n-\dim \cL_n(\kc).
\end{equation}
\end{Cor}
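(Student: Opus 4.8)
The plan is to reduce the two equalities of \eqref{Eq:eqdim2} to their counterparts for the unipotent variety, i.e. to \rl{eqdim}, by means of the Jordan decomposition: a general fibre $\chi^{-1}(a)$ will be analysed through the centralizer $H:=Z_G(s)$ of a semisimple element $s\in T$ with $\chi(s)=a$. Throughout I use that $\kc\cong\ab^{\rk G}$ (by Section~\re{setup}(d) it is smooth, and in fact $k[G]^G=k[T]^W$ is a polynomial ring), so that $\cL_n(\kc)\cong\ab^{(n+1)\rk G}$, and that $\cL_n(G)$, $I_n$ are smooth irreducible (the maps $\ev_n\colon\cL_n(G)\to G$ and $I_n\to B$ being smooth with affine-space fibres over the smooth connected $G$, resp. $B$).

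The inequalities ``$\ge$'' are the easy half. Since $\cL_n(\chi^{-1}(a))=\chi_n^{-1}(a)$ is cut out in $\cL_n(G)$ by $(n+1)\rk G$ equations, and $\cL_n(\chi^{-1}(a))\cap I_n$ by those plus the $\dim G-\dim B$ equations pulled back under $\ev_n$ from the ones defining $B\subseteq G$, Krull's Hauptidealsatz gives that every component of $\cL_n(\chi^{-1}(a))$ has dimension $\ge(n+1)(\dim G-\rk G)=\dim\cL_n(G)-\dim\cL_n(\kc)$, and, using $\dim I_n=\dim B+n\dim G$, that every component of $\cL_n(\chi^{-1}(a))\cap I_n$ has dimension $\ge\dim I_n-\dim\cL_n(\kc)$. (Alternatively one may invoke upper semicontinuity of fibre dimension after checking that $\chi_n$ and $v_n$ are dominant, using that every regular semisimple class meets $T\subseteq B$.)

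For the inequalities ``$\le$'', fix $a$, choose $s\in\pi^{-1}(a)$, and set $H:=Z_G(s)$. By Section~\re{setup}(d), $H$ is connected reductive; it contains $T$, so $\rk H=\rk G$; and, being a pseudo-Levi subgroup of $G$, its Coxeter number is at most that of $G$, so $H$ again satisfies the hypothesis of Section~\re{setup}(f) and \rl{eqdim} applies to $H$. Using uniqueness of the Jordan decomposition and normality of the fibres of $\chi$ (a standard consequence of $G^{\der}$ being simply connected in good characteristic) one obtains an isomorphism $\chi^{-1}(a)\cong G\times^{H}(s\,\cU_H)$, with $H$ acting by conjugation on $s\,\cU_H\cong\cU_H$; moreover this identification carries $B_H:=B\cap H$ to a Borel of $H$ and $\cL_n(\chi^{-1}(a))\cap I_n$ to the analogous Iwahori-intersected arc space $\cL_n(\cU_H)\cap I_n^H$ for $(H,B_H)$, up to the $G/H$- and $B/B_H$-directions. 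Since $\cL_n$ sends smooth surjections to smooth surjections, the $H$-torsor $G\times\cU_H\to\chi^{-1}(a)$ and its Iwahori variant give
\[
\dim\cL_n(\chi^{-1}(a))=(n+1)(\dim G-\dim H)+\dim\cL_n(\cU_H),
\]
together with the corresponding formula for $\dim(\cL_n(\chi^{-1}(a))\cap I_n)$ in terms of $\dim G-\dim H$, $\dim B-\dim B_H$ and $\dim(\cL_n(\cU_H)\cap I_n^H)$. Now \rl{eqdim} for $H$ gives $\dim\cL_n(\cU_H)=(n+1)\dim H-\dim\cL_n(\kc_H)$ and $\dim(\cL_n(\cU_H)\cap I_n^H)=\dim I_n^H-\dim\cL_n(\kc_H)$, while the trivial bound $\dim\cL_n(\kc_H)\ge(n+1)\dim\kc_H=(n+1)\rk H$ (valid for any variety, $\kc_H$ being irreducible of dimension $\rk H$) yields $\dim\cL_n(\cU_H)\le(n+1)(\dim H-\rk H)$ and likewise for the $I_n^H$-version. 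Substituting, and using $\rk H=\rk G$, $\dim\cL_n(G)=(n+1)\dim G$, $\dim\cL_n(\kc)=(n+1)\rk G$, gives the ``$\le$'' halves of \eqref{Eq:eqdim2}; combined with the lower bounds this proves the corollary.

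I expect the first equality to be essentially routine once the identification $\chi^{-1}(a)\cong G\times^{H}(s\,\cU_H)$ is available. The main obstacle is the second equality: one has to make precise the compatibility of the ``intersect with the Iwahori'' operation with the passage from $G$ to $H$ — namely that $B$ really cuts out the Borel $B_H$ of $H$ and that the bundle-type description persists at the level of jet schemes — and then push the dimension count through in the presence of the singularities of $\cU_H$. The latter is possible precisely because \rl{eqdim} asserts that $\cL_n(\cU_H)$ and $\cL_n(\cU_H)\cap I_n^H$ already have the minimal, expected dimension, so no spurious jet components over the singular locus can dominate.
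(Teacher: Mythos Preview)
Your approach coincides with the paper's: reduce both equalities to \rl{eqdim} for the centralizer $H=G_s$ via the isomorphism $G\times^{G_s}\cU_{G_s}\isom\chi^{-1}(a)$ and its arc-space version $\cL_n(G)\times^{\cL_n(G_s)}\cL_n(\cU_{G_s})\isom\cL_n(\chi^{-1}(a))$. The first equality is complete as you wrote it.

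For the second equality the paper supplies exactly what you flag as missing, and the precise form is not quite what your ``up to $B/B_H$-directions'' suggests. The point is that for $g\in\chi^{-1}(a)\cap B$ the semisimple part is $B$-conjugate to a \emph{unique} element of $T\cap\chi^{-1}(a)$, and this element ranges over the whole $W$-orbit of your chosen $s$; thus one has
\[
\coprod_{s'\in T(k)\cap\chi^{-1}(a)}\bigl(B\times^{G_{s'}\cap B}R_u(G_{s'}\cap B)\bigr)\;\isom\;\chi^{-1}(a)\cap B,
\]
and a Cartesian-diagram argument (comparing $\ev_n^{-1}(-)$ on $\cL_n(G)$ and on $\cL_n(G_{s'})$) upgrades this to
\[
\coprod_{s'}\bigl(I_n\times^{\cL_n(G_{s'})\cap I_n}(\cL_n(\cU_{G_{s'}})\cap I_n)\bigr)\;\isom\;\cL_n(\chi^{-1}(a))\cap I_n.
\]
Here $\cL_n(G_{s'})\cap I_n$ is the Iwahori $I_n^{G_{s'}}$ for $(G_{s'},G_{s'}\cap B)$ and $\cL_n(\cU_{G_{s'}})\cap I_n=\cL_n(\cU_{G_{s'}})\cap I_n^{G_{s'}}$, so the second equality of \rl{eqdim} for $G_{s'}$ finishes. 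Two minor points: the Hauptidealsatz lower bounds and the inequality $\dim\cL_n(\kc_H)\ge(n+1)\rk H$ are unnecessary, since the proof of \rl{eqdim} (passing to $H^{\on{sc}}$ and the nilpotent cone) already yields exact values; and normality of the fibres of $\chi$ plays no role --- the isomorphism $G\times^{G_s}\cU_{G_s}\isom\chi^{-1}(a)$ needs only Jordan decomposition and connectedness of $G_s$.
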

\begin{proof}
%Assume first that $s$ lies in the center $Z(G)$. Then the map $g\mapsto zg:G\isom G$ is $\Ad G$-equivariant, so it induces an isomorphism
%$\chi^{-1}(\chi(1))\isom \chi^{-1}(\chi(s))$, therefore the inequality \form{in} for $s$ follows from that for $1$,
%established in Step 2.

To show the first equality \form{eqdim2}, we choose any $s\in T(k)\cap \chi^{-1}(a)$. Then $\chi^{-1}(a)\subseteq G$ consists of elements $g$ with Jordan decomposition $g=g_s u_s$
such that $g_s$ is $G$-conjugate to $s$. Since $G^{\der}$ is assumed to be simply connected, the centralizer $G_s$ is connected. Moreover, the map $[(g,u)]\mapsto g(su)g^{-1}$ induces an isomorphism
\begin{equation} \label{Eq:is}
G\times^{G_{s}} \cU_{G_s}\isom \chi^{-1}(a),
\end{equation}
(see, for example, \cite[Ch. II, 3.10, Lemma and Thm]{Slo}).

Therefore it induces an isomorphism
\begin{equation} \label{Eq:isom4}
\cL_{n}(G)\times^{\cL_{n}(G_s)}\cL_n(\cU_{G_s})\isom\cL_n(G\times^{G_{s}} \cU_{G_s})\isom\cL_n(\chi^{-1}(a)),
\end{equation}
where for the left isomorphism we use \cite[Section~3.1.4(f)]{BKV}. Then
\[
\dim\cL_n(\chi^{-1}(a))=\dim \cL_n(G)-\dim \cL_n(G_s)+\dim\cL_n(\cU_{G_s}),
\]
so the first equality \form{eqdim2} is equivalent to the first equality \form{eqdim} for $G_s$. Since our assumption
of Section~\re{setup}(f) for $G$ implies that $G_s$, the assertion thus follows from \rl{eqdim}.

\smallskip

To show the second equality \form{eqdim2}, it suffices to show that the maps $[(g,u)]\mapsto g(su)g^{-1}$ for $s\in T(k)\cap\chi^{-1}(a)$ induce an isomorphism
\begin{equation} \label{Eq:isom3}
\coprod_{s\in T(k)\cap\chi^{-1}(a)}\left(I_n\times^{\cL_{n}(G_{s})\cap I_n}(\cL_n(\C{U}_{G_{s}})\cap I_n)\right)\isom \cL_n(\chi^{-1}(a))\cap I_n.
\end{equation}
Indeed, assuming isomorphism \form{isom3}, the second equality \form{eqdim2} is equivalent to the second equality \form{eqdim} for $G_s$
and thus would follow from \rl{eqdim}.

\smallskip

Notice first that the maps $[(g,u)]\mapsto g(su)g^{-1}$ induce an isomorphism
\begin{equation} \label{Eq:isom5}
\coprod_{s\in T(k)\cap \chi^{-1}(a)}\left(B\times^{G_{s}\cap B}R_u(G_{s}\cap B)\right)\isom \chi^{-1}(a)\cap B.
\end{equation}
Indeed, this follows from the fact that for every $g\in\chi^{-1}(a)\cap B$ its semisimple part $g_s\in B$ is $B$-conjugate to
a unique element $s\in T\cap \chi^{-1}(a)$.

Next, we observe that the diagram
\begin{equation} \label{Eq:cd}
\begin{CD}
I_n\times^{\cL_{n}(G_{s})\cap I_n}(\cL_{n}(\C{U}_{G_{s}})\cap I_n) @>>> \cL_n(G)\times^{\cL_n(G_s)}\cL_{n}(\C{U}_{G_{s}})\\
@VVV @VVV\\
B\times^{G_{s}\cap B}R_u(G_{s}\cap B) @>>> G\times^{G_{s}}\C{U}_{G_{s}},
\end{CD}
\end{equation}
where the horizontal maps are induced by the embedding $B\hra G$ and vertical maps by the projection $\ev_n:\cL_n(G)\to G$, is Cartesian.

Finally, isomorphism \form{isom3} follows from a combination of isomorphisms \form{is}, \form{isom4} and \form{isom5}, using the Cartesian diagram \form{cd}.
\end{proof}

Now we are ready to prove Theorem~\ref{flat1}.

\begin{proof}[Proof of Theorem~\ref{flat1}]
First we will show the case of $\chi_n$. As the source and the target of $\chi_n$ are smooth and connected, it suffices to prove that all non-empty geometric fibers of $\chi_n$ are equidimensional of constant dimension $\dim\cL_{n}(G)-\dim \cL_n(\kc)$.

Since the function
\begin{equation} \label{Eq:dim}
\cL_n(G)\to\B{Z}:g\mapsto \dim_g \chi_n^{-1}(\chi_n(g))
\end{equation}
is upper semi-continuous (see \cite[Theorem 13.1.3]{EGAIV} or \cite[Tag~02FZ]{Sta}), it thus suffices to show
that
\begin{equation} \label{Eq:ineq}
\dim_g \chi_n^{-1}(\chi_n(g))\leq\dim\cL_{n}(G)-\dim \cL_n(\kc)
\end{equation}
for every $g\in \cL_n(G)(k)$.

Recall that group $\B{G}_m$ acts on $\cL_n(G)$ and $\cL_n(\fc)$ (see Section~\re{looparc}(c)), the morphism
$\chi_n$ is $\B{G}_m$-equivariant, and element $\ov{g}:=\ev_n(g)\in G(k)$ lies in the closure of the $\B{G}_m$-orbit of $g\in\cL_n(G)(k)$.

Thus, by the semi-continuity of function \form{dim} we conclude that
\[
\dim_g \chi_n^{-1}(\chi_n(g))\leq \dim_{\ov{g}} \chi_n^{-1}(\chi_n(\ov{g})).
\]
Therefore it is suffices to show inequality \form{ineq} for $g\in G(k)$.

Finally, since functor $\cL_n$ commutes with limits, we have an identification
\[
\chi_n^{-1}(\chi_n(g))\simeq\chi_n^{-1}(a)\simeq\cL_n(\chi^{-1}(a))\text{ for }a:=\chi(g)\in \fc(k).
\]
Thus, inequality \form{ineq} for $g\in G(k)$ follows from the first equality \form{eqdim2}.

\smallskip

The proof for $v_n$ is similar. First of all, it suffices to show that for every $g\in I_n$
we have inequality
\begin{equation} \label{Eq:ineq2}
\dim_g v_n^{-1}(v_n(g))\leq\dim I_n-\dim \cL_n(\kc).
\end{equation}
Next, since $I_n\subseteq \cL_n(G)$ is closed and $\B{G}_m$-invariant, repeating the argument for $\chi_n$ we conclude that it suffices 
to show inequality \form{ineq2} for $g\in B(k)$. Finally, using identification
\[
v_n^{-1}(v_n(g))\simeq \cL_n(\chi^{-1}(a))\cap I_n\text{ for }a:=\chi(g)\in \fc(k),
\]
inequality \form{ineq2} follows from the second equality \form{eqdim2}.
\end{proof}

%\begin{Cor}\label{C:flat}
%The morphisms $\clp(\chi):\clp(G)\to\clp(\kc)$ and $v:I\to\clp(\kc)$ are flat.
% and universally open equidimensional in the sense of \cite[???]{BKV.}
%\end{Cor}
%\begin{proof}
%As being flat is stable by base change and filtered limits, the assertion follows  from Theorem~\ref{flat1}.
%as being  faithfully flat is stable by base change and filtered limits. As $v_n$ and $\chi_n$ are faithfully flat between irreducibles varieties, by \cite[2.1.3.(b)]{BKV}, they are both universally open equidimensional and it follows from \cite[2.3.2.(ii),(iv)]{BKV} that it is also the case for $\clp\chi$ and $v$.
%\end{proof}

\subsection{The Goresky--Kottwitz--MacPherson stratification}

\begin{Emp} \label{E:constr}
{\bf Construction} (compare \cite[Sections~3.3.2--3.3.3]{BKV}).

\smallskip

(a) We set $a:=|W|$, $\cO':=k[[t^{\frac{1}{a}}]]$, let $\xi\in k$ be a fixed $a$-th root of unity, and let $\sigma\in\Aut(\cO'/\cO)$ the automorphism given by $t^{\frac{1}{a}}\mapsto\xi t^{\frac{1}{a}}$.  We denote by $T':=\Res_{\cO'/\cO}T$ the Weil restriction of scalars, and
for $w\in W$ we consider the subscheme of fixed points $T_{w}:=\Res_{\cO'/\cO} (T)^{w\sigma}$. Since the characteristic of $k$ is assumed to be prime to $a$ (see Section~\re{setup}(f)), each $T_w$ is a smooth group scheme over $\cO$.

\smallskip

(b) To every pair $(w,\br)$, where $w\in W$ and $\br$ is a function $R\to\frac{1}{a}\B{Z}_{\geq 0}$, one associates the fp-locally closed subscheme $T_{w,\br}\subseteq\clp(T_{w})$ such that
\[
T_{w,\br}=\{t\in\clp(T_{w})~\vert~\val(1-\al(t))=r(\al)\text { for all }\al\in R \}.
\]
Thus, $T_{w,\br}$ is either empty or an open subscheme in some congruence subgroup scheme of $\clp(T_{w})$, in which case $T_{w,\br}$ is connected and strongly pro-smooth.
Moreover, by Sections~\re{admstack}(e) and \re{looparc}(c), scheme $T_{w,\br}$ is admissible. We will call $(w,\br)$ a {\em GKM pair}, if
$T_{w,\br}\neq\emptyset$.

\smallskip

(c) The projection $\pi:T\to\kc$ of affine schemes over $k$ induces a projection $\pi_{w}:T_w\to\kc$ of affine schemes over $\cO$, hence a projection $\pi_{w,\br}:T_{w,\br}\hra\clp(T_w)\to\clp(\fc)$.

\smallskip

(d) In the case $(w,\br)=(1,0)$, we have an equality $T_{w,\br}=\clp (T^{\rs})$.

\smallskip

(e) For every $u\in W$ and a GKM pair $(w,\br)$, the automorphism $u_*$ of $T$ induces an isomorphism $T_{w,\br}\isom  T_{uwu^{-1},u(\br)}$ over $\clp(\fc)$. In particular, we have an action of $W$ on the set of GKM pairs given by the formula $u(w,\br):=(uwu^{-1},u(\br))$, and we denote by $W_{w,\br}\subseteq W$ the stabilizer of $(w,\br)$.
\end{Emp}

\begin{Prop}\label{defwr}

For every GKM pair $(w,\br)$, the schematic image $\kc_{w,\br}$ of $\pi_{w,\br}$ is a connected affine strongly pro-smooth fp-locally closed subscheme of $\clp(\kc)$, and the map $\pi_{w,\br}: T_{w,\br}\to\kc_{w,\br}$ is a $W_{w,\br}$-torsor.
\end{Prop}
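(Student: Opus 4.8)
The plan is to realize $\pi_{w,\br}\colon T_{w,\br}\to\clp(\kc)$ as a $W_{w,\br}$-torsor onto its image by analysing the free action of the finite group $W_{w,\br}$ on $T_{w,\br}$ coming from Section~\re{constr}(e), mimicking the Lie algebra argument of \cite[Sections~3.3.2--3.3.3]{BKV}. The starting point is a regularity observation. Unwinding the definitions, $\clp(T_w)(A)=T_w(A[[t]])=T(A[[t^{1/a}]])^{w\sigma}$, so an $A$-point of $T_{w,\br}$ is an element $t\in T(A[[t^{1/a}]])$ with $w\sigma(t)=t$ and $\val(1-\al(t))=r(\al)$ for all roots $\al$; in particular each $1-\al(t)$ is a nonzerodivisor (indeed $(t^{1/a})^{ar(\al)}$ times a unit of $A[[t^{1/a}]]$), so $t$ maps into the regular semisimple locus $T^{\rs}$ over $\Spec A((t^{1/a}))$. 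Moreover $\pi_{w,\br}$ is the restriction of $\pi\colon T\to\kc$ under this identification: since $W$ acts trivially on $\kc=T/W$, the image $\pi(t)$ of a $w\sigma$-invariant $t$ is $\sigma$-invariant, hence lies in $\kc(A[[t]])=\clp(\kc)(A)$. Using that $\pi^{\rs}\colon T^{\rs}\to\kc^{\rs}$ is a $W$-torsor (Section~\re{setup}(d)), one then checks that $\pi_{w,\br}$ is smooth onto its image and that its image $\kc_{w,\br}\subseteq\clp(\kc)$ is a reduced fp-locally closed subscheme, the latter by reducing to a finite truncation $\cL_n(\kc)$ for $n$ large, where the image is cut out by finitely many valuations of roots (equivalently of the discriminant) and $\pi$ is finite.

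Next I would study the action. By Section~\re{constr}(e) each $u\in W_{w,\br}$ induces an automorphism $u_*$ of $T_{w,\br}$ over $\clp(\kc)$, hence over $\kc_{w,\br}$; since $W$ acts freely on $T^{\rs}$, this action of $W_{w,\br}$ on $T_{w,\br}$ is free. The crucial step is to show that
\[
W_{w,\br}\times T_{w,\br}\longrightarrow T_{w,\br}\times_{\clp(\kc)}T_{w,\br},\qquad (u,t)\longmapsto (t,u_*t),
\]
is an isomorphism. Given an $A$-point $(t_1,t_2)$ of the target, both $t_i$ are regular with equal image in $\kc$, so after replacing $A$ by an étale cover there is a unique $u\in W$ with $u_*t_1=t_2$; by Section~\re{constr}(e) the point $t_2=u_*t_1$ then lies in $T_{uwu^{-1},u(\br)}$ as well as in $T_{w,\br}$. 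Since $t_2$ is regular and fixed by both $w\sigma$ and $uwu^{-1}\sigma$, it is fixed by $w^{-1}uwu^{-1}\in W$, which forces $uwu^{-1}=w$; and then, the subschemes $T_{w,\br'}$ for distinct $\br'$ being disjoint, we also get $u(\br)=\br$. Hence $u\in W_{w,\br}$, as needed.

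Finally I would assemble the conclusion. A free action of the finite group $W_{w,\br}$ --- whose order is invertible in $k$ by Section~\re{setup}(f) --- on the affine scheme $T_{w,\br}$ (affine since it is cut out in an affine congruence subgroup scheme by vanishing of some coordinates and invertibility of the leading coefficients of the $1-\al(t)$'s, cf. Section~\re{constr}(b)) has an affine quotient $T_{w,\br}/W_{w,\br}$ with $T_{w,\br}\to T_{w,\br}/W_{w,\br}$ a $W_{w,\br}$-torsor; the isomorphism above shows that $\pi_{w,\br}$ factors as $T_{w,\br}\to T_{w,\br}/W_{w,\br}\hra\clp(\kc)$ with the second map a monomorphism and image $\kc_{w,\br}$. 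Thus $\pi_{w,\br}\colon T_{w,\br}\to\kc_{w,\br}$ is the asserted $W_{w,\br}$-torsor and $\kc_{w,\br}=T_{w,\br}/W_{w,\br}$. The remaining properties of $\kc_{w,\br}$ follow: it is connected, being the image of the connected scheme $T_{w,\br}$ (Section~\re{constr}(b)); it is affine and strongly pro-smooth because $T_{w,\br}$ is and $|W_{w,\br}|$ is invertible in $k$ (descend a $W_{w,\br}$-equivariant placid presentation $T_{w,\br}\simeq\lim_\al X_\al$ to $\kc_{w,\br}\simeq\lim_\al (X_\al/W_{w,\br})$); and $\kc_{w,\br}\hra\clp(\kc)$ is fp-locally closed by the first step.

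The step I expect to be the main obstacle is the first one: upgrading the soft statement ``$W_{w,\br}$ acts freely on $T_{w,\br}$ with monomorphic quotient'' to a genuine torsor requires knowing that $\pi_{w,\br}$ is flat --- in fact smooth --- onto a well-behaved, fp-locally closed image inside the infinite-type scheme $\clp(\kc)$. This is precisely where one cannot argue abstractly and must descend to a finite truncation $\cL_n(\kc)$ and invoke the finiteness and étaleness properties of $\pi$ on the regular semisimple locus, following \cite[Sections~3.3.2--3.3.3]{BKV}.
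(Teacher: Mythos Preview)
Your proposal is correct and follows essentially the same route as the paper: the paper's proof is a one-line appeal to \cite[Lemma~3.2.6]{BKV} together with the arguments of \cite[Section~3.3.4]{BKV}, and what you have written is precisely an unpacking of that argument---identifying $T_{w,\br}$-points as $w\sigma$-fixed points landing in $T^{\rs}$ over the Laurent series, using the $W$-torsor property of $\pi^{\rs}$ to show the action map $W_{w,\br}\times T_{w,\br}\to T_{w,\br}\times_{\clp(\kc)}T_{w,\br}$ is an isomorphism, and then descending the placid presentation along the free finite-group quotient. You also correctly flag that the delicate point is establishing that the schematic image is fp-locally closed in the infinite-type target; this is exactly what \cite[Lemma~3.2.6]{BKV} packages abstractly (for a finite morphism that is a torsor over an open piece), and your reduction to truncations is the right mechanism. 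One small wording issue: in your first paragraph you say the image ``is cut out by finitely many valuations of roots''---roots are not functions on $\kc$, so the description of $\kc_{w,\br}$ inside $\clp(\kc)$ is not literally by root valuations; but since you immediately defer to \cite{BKV} for this step, this does not affect the correctness of the outline.
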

\begin{proof}
Since $\pi:T\to\fc$ a finite morphism which is a $W$-torsor over $\kc^{\rs}$ (see Section~\re{setup}(d)), the assertion follows from
\cite[Lemma~3.2.6]{BKV} by the same arguments as in \cite[Section~3.3.4]{BKV}.
\end{proof}

\begin{Emp} \label{E:notcodim}
{\bf Notation.}
%Since $G^{\der}$ was assume to be  simply connected, the Chevalley space $\kc$ is smooth.
As in \cite[Section~3.4.1]{BKV}, for every GKM pair $(w,\br)$, we consider codimensions $a_{w,\br}:=\codim_{\clp (T_{w})}(T_{w,\br})$ and $b_{w,\br}:=\codim_{\clp(\kc)}(\kc_{w,\br})$, denote by $r$ the rank of $G$, and set $d_{\br}:=\sum_{\al\in R} \br(\al)$, $c_{w}:=r-\dim(T^{w})$ and  $\delta_{w,\br}:=\frac{d_{\br}-c_{w}}{2}$.
\end{Emp}

The following result is a group analog of \cite[Corollary~3.4.9(a)]{BKV}:

\begin{Lem}\label{L:codim}
For every GKM pair $(w,\br)$, the fp-locally closed subscheme $I_{w,\br}:=v^{-1}(\kc_{w,\br})\subseteq I$ is of pure codimension $b_{w,\br}$.

%(b) the induced map $v_{w,\br}:I_{w,\br}\to\kc_{w,\br}$ is flat and universally open equidimensional.
\end{Lem}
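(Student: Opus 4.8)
The strategy is the same as in the Lie algebra case \cite[Corollary~3.4.9(a)]{BKV}, with Theorem~\ref{flat1} playing the role of its Lie algebra counterpart: one exhibits $I_{w,\br}$ as the pullback of $\kc_{w,\br}$ along the Chevalley morphism $v\colon I\to\clp(\kc)$ and transports the tautological codimension statement $\codim_{\clp(\kc)}(\kc_{w,\br})=b_{w,\br}$ (Section~\ref{E:notcodim}, Proposition~\ref{defwr}) to $I_{w,\br}$ by flatness. Since $v$ is a composition of a strongly pro-smooth morphism with an fp-morphism, it is not locally fp-representable, so the plan is first to descend to a finite truncation level $n$, where Theorem~\ref{flat1} applies, and then to run a standard flatness-and-dimension argument there.

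\textbf{Descent to finite level.} As $\kc_{w,\br}\subseteq\clp(\kc)$ is fp-locally closed (Proposition~\ref{defwr}) and $\clp(\kc)\simeq\lim_n\cL_n(\kc)$ with smooth affine transition maps, for $n\gg0$ the subscheme $\kc_{w,\br}$ is the preimage of an fp-locally closed subscheme $\kc^{(n)}_{w,\br}\subseteq\cL_n(\kc)$ under the projection $p_n\colon\clp(\kc)\to\cL_n(\kc)$. Since $\kc$ is smooth, $p_n$ is smooth, affine and surjective, so by the characterisation of the dimension function (Section~\ref{E:dimfn}(ii)) one gets $\codim_{\cL_n(\kc)}(\kc^{(n)}_{w,\br})=\codim_{\clp(\kc)}(\kc_{w,\br})=b_{w,\br}$. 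On the other side, writing $q_n\colon I\to I_n$ for the projection and $I^{(n)}_{w,\br}:=v_n^{-1}(\kc^{(n)}_{w,\br})$, one has $p_n\circ v=v_n\circ q_n$, hence $I_{w,\br}=q_n^{-1}(I^{(n)}_{w,\br})$; moreover $I\simeq\clp(G)\times_{\cL_n(G)}I_n$, so $q_n$ is the base change of the strongly pro-smooth affine morphism $\clp(G)\to\cL_n(G)$ and is therefore smooth and affine, and so is its base change $I_{w,\br}\to I^{(n)}_{w,\br}$. Applying Section~\ref{E:dimfn}(ii) to the resulting Cartesian square reduces the claim to the statement that $I^{(n)}_{w,\br}\subseteq I_n$ is of pure codimension $b_{w,\br}$.

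\textbf{Finite-level computation.} Here $I_n$ is smooth and connected — it is smooth over $B$ through $\ev_n$ with connected fibres — and $\cL_n(\kc)$ is smooth and connected since $\kc$ is, so both are equidimensional. By Theorem~\ref{flat1} the morphism $v_n\colon I_n\to\cL_n(\kc)$ is flat and of finite presentation, hence every non-empty fibre of $v_n$ is equidimensional of dimension $\dim I_n-\dim\cL_n(\kc)$. Its base change $v_n\colon I^{(n)}_{w,\br}\to\kc^{(n)}_{w,\br}$ is again flat, so for every $x\in I^{(n)}_{w,\br}$ one has
\[
\dim_x I^{(n)}_{w,\br}=\dim_x v_n^{-1}(v_n(x))+\dim_{v_n(x)}\kc^{(n)}_{w,\br}=\bigl(\dim I_n-\dim\cL_n(\kc)\bigr)+\bigl(\dim\cL_n(\kc)-b_{w,\br}\bigr)=\dim I_n-b_{w,\br},
\]
where the first summand is computed by flatness of $v_n$ and the second by the pure codimension of $\kc^{(n)}_{w,\br}$ in $\cL_n(\kc)$. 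Thus $I^{(n)}_{w,\br}$ is equidimensional of codimension $b_{w,\br}$, which completes the argument.

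\textbf{Main difficulty.} The only genuine input is Theorem~\ref{flat1}, which we may invoke freely; given it, the proof is essentially bookkeeping. The points needing a little care are that $v$ is not amenable to the dimension-function formalism at infinite level (so the passage to finite $n$ is really necessary), that $q_n$ is smooth and affine so that codimensions are unchanged under it, and that $\kc_{w,\br}$ genuinely descends to a locally closed subscheme of $\cL_n(\kc)$ of the same codimension for $n\gg0$.
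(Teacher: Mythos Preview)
Your proof is correct and takes essentially the same approach as the paper: the paper's proof simply says ``Repeating the argument of \cite[Corollary~3.4.9(a)]{BKV}, the assertion is a formal corollary of the flatness of $v_n$ (Theorem~\ref{flat1}),'' and you have spelled out precisely that argument, descending to finite level via the placid presentation and then invoking flatness of $v_n$ to transport the codimension of $\kc^{(n)}_{w,\br}$ to $I^{(n)}_{w,\br}$.
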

\begin{proof}
Repeating the argument of \cite[Corollary~3.4.9(a)]{BKV}, the assertion is a formal corollary of the flatness of $v_n$ (Theorem~\ref{flat1}).
\end{proof}

One has the following analog of \cite[Theorem~8.2.2]{GKM}:

\begin{Prop}\label{codim}
For every GKM pair $(w,\br)$, we have an equality $b_{w,\br}=\delta_{w,\br}+a_{w,\br}+c_w$.
\end{Prop}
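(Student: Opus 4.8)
The plan is to reduce the codimension identity $b_{w,\br}=\delta_{w,\br}+a_{w,\br}+c_w$ to the analogous statement over the truncated arc spaces and then to a local computation, exactly in the spirit of \cite[Theorem~8.2.2]{GKM} and its re-derivation in \cite[Section~3.4]{BKV}. First I would record the elementary identity $a_{w,\br}=\codim_{\clp(T_w)}(T_{w,\br})=d_{\br}$; this is immediate from the description $T_{w,\br}=\{t\mid\val(1-\al(t))=\br(\al)\ \forall\al\in R\}$ as an open piece of a congruence subgroup, since imposing the valuation condition $\val(1-\al(t))=\br(\al)$ cuts down the dimension by $\br(\al)$ for each root (after passing to the ramified cover $\cO'$, one divides the roots into $\sigma$-orbits, but the total count is unchanged because $\dim\Res_{\cO'/\cO}$ multiplies by $a$ and the valuations are measured in $\frac1a\bZ$). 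Granting this, the claimed equality becomes $b_{w,\br}=\frac{d_{\br}-c_w}{2}+d_{\br}+c_w=\frac{3d_{\br}+c_w}{2}$, wait---let me instead keep it in the form $b_{w,\br}-a_{w,\br}=\delta_{w,\br}+c_w$, i.e.\ $b_{w,\br}-a_{w,\br}=\frac{d_{\br}+c_w}{2}$, which is the shape one expects: $\pi_{w,\br}\colon T_{w,\br}\to\kc_{w,\br}$ is a $W_{w,\br}$-torsor (Proposition~\ref{defwr}), hence $\dim T_{w,\br}=\dim\kc_{w,\br}$ "morally", and the genuine discrepancy between $a_{w,\br}$ and $b_{w,\br}$ comes entirely from the difference between the infinite-dimensional ambient spaces $\clp(T_w)$ and $\clp(\kc)$, which is governed by the discriminant.

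Concretely, the main step is to work at finite level. Using that $\cL_n$ commutes with the relevant limits and fibre products, and that $T_{w,\br}$ and $\kc_{w,\br}$ are obtained from congruence subgroups by imposing finitely many valuation conditions, I would fix $n$ large enough that $T_{w,\br}$ (resp.\ $\kc_{w,\br}$) is, up to a vector-group factor of constant dimension, the preimage of a locally closed subscheme $T_{w,\br}^{(n)}\subseteq\cL_n(T_w)$ (resp.\ $\kc_{w,\br}^{(n)}\subseteq\cL_n(\kc)$) under the truncation map, and then compute $\dim T_{w,\br}^{(n)}-\dim\kc_{w,\br}^{(n)}$. The flatness of $v_n\colon I_n\to\cL_n(\kc)$ and of $\cL_n(\pi_w)\colon\cL_n(T_w)\to\cL_n(\kc)$ --- the former is exactly Theorem~\ref{flat1}, the latter is the analogous statement for the finite map $\pi_w\colon T_w\to\kc$, which is flat since $T_w$ and $\kc$ are smooth over $\cO$ of the same relative dimension $r$ --- lets me transport dimension counts across these maps without loss. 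The key local input is the standard computation of the order of vanishing of the discriminant $\mathfrak D=\prod_{\al\in R}(1-\al)$ along $T_{w,\br}$: pulled back to $T_w$, one has $\val(\mathfrak D)=\sum_{\al}\br(\al)=d_{\br}$ on $T_{w,\br}$, and the Galois descent along $\pi_w$ (a $W_{w,\br}$-torsor over $\kc_{w,\br}$, ramified in the ambient $\clp$-direction) contributes the factor accounting for $c_w$ and the halving; this is precisely where the "$\frac{d_{\br}-c_w}{2}$" shape of $\delta_{w,\br}$ enters, via $\val(\disc(\pi_w))$ measured along the stratum.

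Assembling: I would write $b_{w,\br}=\codim_{\clp(\kc)}(\kc_{w,\br})$, use the Cartesian/flat comparison to rewrite this as $\codim_{\clp(T_w)}(\pi_w^{-1}(\kc_{w,\br}))$ corrected by the ramification of $\pi_w$, note that $\pi_w^{-1}(\kc_{w,\br})\supseteq T_{w,\br}$ with $\codim_{\pi_w^{-1}(\kc_{w,\br})}(T_{w,\br})$ computable from the valuation conditions, and combine these to land on $a_{w,\br}+\delta_{w,\br}+c_w$. Equivalently, and perhaps more cleanly, one compares the two affine Springer-type dimension formulas: the dimension of a fibre of $\pi_{w,\br}$-type data computed (i) via $T_w$ and (ii) via $\kc$, whose difference is the "local $\delta$-invariant" $\delta_{w,\br}+c_w$ by the Bezrukavnikov/GKM dimension formula for $\Fl_\g$. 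The main obstacle I anticipate is bookkeeping the ramified cover $\cO'/\cO$ correctly --- keeping track of how $\Res_{\cO'/\cO}$, the fixed-point scheme $T_w=(\Res_{\cO'/\cO}T)^{w\sigma}$, and the $\frac1a\bZ$-valued valuations interact, so that the factor of $a$ cancels everywhere it should and the half-integer $\delta_{w,\br}$ comes out an honest codimension. Everything else is a formal consequence of Theorem~\ref{flat1}, Proposition~\ref{defwr}, and the equidimensionality/semicontinuity machinery already set up, exactly as in \cite[Section~3.4]{BKV}.
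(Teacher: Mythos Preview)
Your claimed ``elementary identity'' $a_{w,\br}=d_{\br}$ is false. Already for $G=\SL_2$ with $w=1$ and $\br(\al)=\br(-\al)=m>0$: the two conditions $\val(1-\al(t))=m$ and $\val(1-\al^{-1}(t))=m$ are equivalent (since $1-\al^{-1}=-\al^{-1}(1-\al)$), so $T_{1,\br}$ is an open piece of the $m$-th congruence subgroup and has codimension $m$, whereas $d_{\br}=2m$. More generally the root conditions are far from independent. You seem to sense this (``wait---'') and pivot to the correct target $b_{w,\br}-a_{w,\br}=\frac{d_{\br}+c_w}{2}$, so the error is not fatal, but it should be excised.

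The remainder of your sketch heads toward the original GKM truncation argument, and could in principle be completed, but you are missing the short route the paper actually takes. The paper does not truncate at all, nor does it use Theorem~\ref{flat1} (flatness of $v_n\colon I_n\to\cL_n(\kc)$ concerns $I$, which is irrelevant here). Instead it linearizes: pick $x\in T_{w,\br}$, set $y=\pi_{w,\br}(x)$, and compare tangent spaces. Because $\pi_{w,\br}\colon T_{w,\br}\to\kc_{w,\br}$ is \'etale (Proposition~\ref{defwr}), the differential identifies $T_x(T_{w,\br})\simeq T_y(\kc_{w,\br})$. On the other hand $d(\clp\pi_w)_x\colon T_x(\clp T_w)\hookrightarrow T_y(\clp\kc)$ is an fp-closed embedding of pure codimension $\val(\det d(\pi_w)_x)$ (\rl{tangent}(c)), since the generic fiber of $\pi_w$ is \'etale at $x$. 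Reading off codimensions via \rl{tangent}(b) gives
\[
b_{w,\br}=\codim_{T_y(\clp\kc)}(T_x(T_{w,\br}))=a_{w,\br}+\val(\det d(\pi_w)_x),
\]
and one is reduced to the single local identity $\val(\det d(\pi_w)_x)=\frac{d_{\br}+c_w}{2}$, which is the group analog of \cite[Lemma~8.2.1]{GKM}. This is exactly the ``ramification of $\pi_w$'' term you were circling but never isolated; once you name it as $\val(\det d(\pi_w)_x)$ the bookkeeping with $\cO'/\cO$ and fractional valuations becomes a short determinant computation rather than a dimension chase through truncations.
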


The proof is based on the following simple result.

\begin{Lem} \label{L:tangent}
(a) Let $X$ be a scheme with placid presentation $X\simeq \lim_{\al}X_{\al}$, and let $\pr_{\al}:X\to X_{\al}$ be projections.
Then for every $x\in X(k)$, the tangent space $T_x(X)$ has a placid (even admissible) presentation $T_x(X)\simeq \lim_{\al}T_{x_{\al}}(X_{\al})$
with $x_{\al}:=\pr_{\al}(x)\in X_{\al}(k)$ for each $\al$.

\smallskip

(b) Let $f:X\to Y$ be an fp-morphism between strongly pro-smooth $k$-schemes. Then for every $x\in X(K)$, the differential $df_x:T_x(X)\to T_y(Y)$ is finitely presented, and we have an equality
\[
\un{\dim}_f(x)=\un{\dim}_{df_x}(0).
\]

\smallskip

(c) Let $f:X\to Y$ be a morphism of smooth schemes over $\C{O}$, let
$x\in X(\C{O})=L^+(X)(k)$ be such that the generic fiber $f_F:X_F\to Y_F$ is \'etale at $x$, let $df_x:T_x(X)\to T_{f(x)}(Y)$ be the differential (an $\C{O}$-linear map of relative tangent spaces), and let $\clp(f):\clp(X)\to\clp(Y)$ be the map between arc spaces.

\smallskip

Then $d(\clp(f))_x:T_x(\clp(X))\to T_{f(x)}(\clp(Y))$ is an fp-closed embedding of pure codimension $\val(\det df_x)$, where $\val(\det df_x)$ denotes the valuation of the determinant of $df_x$.
\end{Lem}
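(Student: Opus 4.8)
The plan is to treat (a), (b), (c) in order, in each case reducing to a computation with finite‑type schemes or with $\cO$‑linear maps. For (a), I would start from $T_x(X)\simeq X(k[\eps])\times_{X(k)}\{x\}$; since $Z\mapsto Z(k[\eps])$ and $Z\mapsto Z(k)$ carry the cofiltered limit $X\simeq\lim_\al X_\al$ (whose transition maps are affine) to the corresponding limits of sets, and fibre products commute with limits, we get $T_x(X)\simeq\lim_\al T_{x_\al}(X_\al)$ as $k$‑schemes. Each $X_\al$ is of finite type, so each $T_{x_\al}(X_\al)$ is a finite‑dimensional affine space; and since each transition morphism $X_\beta\to X_\al$ is smooth, its differential $T_{x_\beta}(X_\beta)\to T_{x_\al}(X_\al)$ is a surjective $k$‑linear map, hence — as a morphism of affine spaces — smooth, affine, and with acyclic (affine‑space) geometric fibres. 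Thus $T_x(X)\simeq\lim_\al T_{x_\al}(X_\al)$ is an admissible presentation, which is (a).

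For (b), since $f$ is finitely presented and $Y\simeq\lim_\delta Y_\delta$ with the $Y_\delta$ smooth of finite type over $k$, a standard limit argument produces an index $\delta_0$, a finitely presented morphism $f_0\colon X_0\to Y_{\delta_0}$ with $X_0$ of finite type over $k$, and an isomorphism $X\simeq X_0\times_{Y_{\delta_0}}Y$ compatible with $f$; then $X\simeq\lim_{\delta\ge\delta_0}(X_0\times_{Y_{\delta_0}}Y_\delta)$ is a placid presentation with smooth affine transition maps, and the square
\[
\begin{CD}
X @>f>> Y\\
@VVV @VVV\\
X_0 @>f_0>> Y_{\delta_0}
\end{CD}
\]
is Cartesian with smooth vertical arrows. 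By the characterising properties of the dimension function (Section~\re{dimfn}) this gives $\un{\dim}_f(x)=\un{\dim}_{f_0}(x_0)$, with $x_0$ the image of $x$; and since tangent spaces commute with fibre products, $df_x$ is the base change of $d(f_0)_{x_0}$ along the smooth map $T_{f(x)}(Y)\to T_{y_0}(Y_{\delta_0})$, so $df_x$ is finitely presented and likewise $\un{\dim}_{df_x}(0)=\un{\dim}_{d(f_0)_{x_0}}(0)$. It then remains to check $\un{\dim}_{f_0}(x_0)=\un{\dim}_{d(f_0)_{x_0}}(0)$ in the finite‑type case: the first projection $X\to X_0$ is the base change of the smooth morphism $Y\to Y_{\delta_0}$, hence smooth, so as $X$ is smooth over $k$ and $x_0$ lies in its image, smooth descent shows $X_0$ is smooth over $k$ at $x_0$; consequently $\dim_{x_0}(X_0)=\dim T_{x_0}(X_0)$ and $\dim_{y_0}(Y_{\delta_0})=\dim T_{y_0}(Y_{\delta_0})$, so both quantities equal $\dim T_{x_0}(X_0)-\dim T_{y_0}(Y_{\delta_0})$ — the first by property (i) of Section~\re{dimfn}, the second because the differential is a linear map of affine spaces.

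For (c), the crux is to identify $d(\clp(f))_x$ explicitly. By (a), applied to the admissible presentation $\clp(X)\simeq\lim_n\cL_n(X)$, the source $T_x(\clp(X))$ is admissible; more precisely, using $k[\eps][[t]]=\cO[\eps]$ one gets $T_x(\clp(X))=X(\cO[\eps])\times_{X(\cO)}\{x\}$, which — because $X$ is smooth over $\cO$ — is the $k$‑scheme representing $A\mapsto T_x(X)\otimes_{\cO}A[[t]]$, where $T_x(X)$ denotes the relative tangent space, a free $\cO$‑module of rank $d:=\dim(X/\cO)$; a choice of basis identifies it with $\clp(\ab^{d})$, and under this identification $d(\clp(f))_x$ is the $k$‑scheme morphism induced by the $\cO$‑linear differential $df_x\colon T_x(X)\to T_{f(x)}(Y)$. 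The hypothesis that $f_F$ is étale at $x$ says exactly that $df_x\otimes_{\cO}F$ is an isomorphism; hence $T_{f(x)}(Y)$ also has rank $d$ and $df_x$ is injective, so by the elementary‑divisor theorem over the discrete valuation ring $\cO$ we may choose bases in which $df_x=\operatorname{diag}(t^{n_1},\dots,t^{n_d})$ with $n_i\ge 0$ and $\sum_i n_i=\val(\det df_x)$. It then remains to analyse the morphism $\clp(\ab^{d})\to\clp(\ab^{d})$ which in the $i$‑th coordinate is multiplication by $t^{n_i}$: on coordinate rings this is the surjection $k[b^{(i)}_j]\twoheadrightarrow k[a^{(i)}_j]$ with $b^{(i)}_j\mapsto 0$ for $j<n_i$ and $b^{(i)}_j\mapsto a^{(i)}_{j-n_i}$ for $j\ge n_i$, whose kernel is generated by the $\sum_i n_i$ variables $\{b^{(i)}_j:0\le j<n_i\}$; hence the morphism factors as an isomorphism onto the closed subscheme $Z:=V(b^{(i)}_j:j<n_i)$ followed by the closed embedding $Z\hra\clp(\ab^{d})$. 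Since $Z$ is cut out by finitely many equations this embedding is fp, and restricting to the truncations $\cL_n(\ab^{d})$ for $n\ge\max_i n_i$, where it becomes the inclusion of a linear subspace of codimension $\sum_i n_i$, shows it is weakly equidimensional of relative dimension $-\sum_i n_i$; that is, of pure codimension $\val(\det df_x)$, proving (c).

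I expect the main obstacle to be the explicit identification in (c) of $d(\clp(f))_x$ with the morphism induced by the $\cO$‑linear map $df_x$, together with the bookkeeping that turns the elementary‑divisor normal form into the stated codimension; parts (a) and (b) are routine reductions, the only mildly delicate point being the smooth‑descent argument in (b) ensuring $X_0$ is smooth at $x_0$.
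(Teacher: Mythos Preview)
Your proposal is correct and follows essentially the same approach as the paper: part~(a) via tangent spaces commuting with limits, part~(b) via reduction along a Cartesian diagram to finite-type smooth schemes, and part~(c) via the identification $d(\clp(f))_x\simeq\clp(df_x)$ and the linear case. You simply spell out details the paper leaves implicit --- most notably the smooth-descent argument in~(b) that $X_0$ is smooth at $x_0$ (which the paper asserts as part of ``by assumption'') and the elementary-divisor computation in~(c) (which the paper calls ``easy'').
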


\begin{proof}
(a) The isomorphism  $T_x(X)\simeq \lim_{\al}T_{x_{\al}}(X_{\al})$ follows from the fact that functor of tangent spaces commutes with limits. To finish, we observe that each projection $X_{\beta}\to X_{\al}$ is smooth, hence the differential
$T_{x_{\beta}}(X_{\beta})\to T_{x_{\al}}(X_{\al})$ is surjective, thus smooth.

\smallskip

(b) By assumption, there is a Cartesian diagram
\[
\begin{CD}
X@>f>> Y\\
@Vp_XVV @VVp_YV\\
X' @>f'>> Y'
\end{CD}
\]
such that $X',Y'\in\Schft_k$ are smooth and vertical morphisms are strongly pro-smooth. Since functor of tangent spaces preserves fiber products,
the assertion for $f$ follows from the corresponding assertion for $f'$. Finally, since $X'$ and $Y'$ are smooth over $k$, the assertion for $f'$ follows. 

\smallskip

(c) Assume first that $X=Y=\B{A}^n_{\C{O}}$ and $f$ is an $\C{O}$-linear map. In this case, $f$ is injective and the assertion is easy.
Next, to deduce the general case follows from
the linear case, we observe that $d(\clp(f))_x$ is naturally identified with $\clp(df_x):\clp(T_x(X))\to \clp(T_{f(x)}(Y))$.
\end{proof}

Now we are ready to prove Proposition~\ref{codim}.

\begin{proof}[Proof of Proposition~\ref{codim}]
Let $x\in T_{w,\br}$ and $y:=\pi_{w,\br}(x)\in\fc_{w,\br}$. Then we have a commutative diagram
\[
\xymatrix{T_{x}(T_{w,\br})\ar[r]\ar[d]_{d(\pi_{w,\br})_x}&T_{x}(\clp (T_w))\ar[d]^{d(\pi_{w})_x}\\T_{y}(\kc_{w,r})\ar[r]&T_{y}(\clp(\kc)).}
\]

Since the map $\pi_{w,\br}:T_{w,\br}\to\kc_{w,\br}$ is \'etale (by Proposition~\ref{defwr}), the map $d(\pi_{w,\br})_x$ is an isomorphism (compare the proof of \rl{tangent}(b)). Since the generic fiber $(\pi_w)_F:(T_w)_F\to \fc_F$ is \'etale at $x\in T_w(\cO)$, the map
$d(\pi_w)_x$ is injective of codimension $\val(\det d(\pi_w)_x)$ (by \rl{tangent}(c)). Therefore, by \rl{tangent}(b), we have equalities
\[
b_{w,\br}=\codim_{T_{y}(\clp(\kc))}(T_{y}(\kc_{w,r}))=\codim_{T_{y}(\clp(\kc))}(T_{x}(T_{w,r}))=
\]
\[
=\codim_{T_{x}(\clp(T_w))}(T_{x}(T_{w,r}))+\codim_{T_{y}(\clp(\kc))}(T_{x}(\clp(T_w)))=a_{w,\br}+\val(\det d(\pi_w)_x).
\]
It thus suffices to show that
\[
\val(\det d(\pi_w)_x)=\delta_{w,\br}+c_w=\frac{d_{\br}+c_w}{2}.
\]
But this follows by the argument of \cite[Lemma~8.2.1]{GKM} except we have to replace the identity \cite[(2.3.1)]{GKM} by its group version \cite[Section~4.23]{H}.
\end{proof}

\begin{Emp}
{\bf Remark.}
Note that our proof of the codimension formula is much shorter than the one of \cite{GKM}, as we already know that the map $\pi_{w,\br}:T_{w,\br}\to\kc_{w,\br}$ is \'etale.% by general techniques used in \cite[Prop. 3.1.11, Lem. 3.2.6]{BKV}.
\end{Emp}

\begin{Cor}\label{cor-codim}
For every GKM pair $(w,\br)$, we have an inequality $b_{w,\br}\geq\delta_{w,\br}$. Moreover, equality holds if and only if $w=1$ and $\br=0$.
\end{Cor}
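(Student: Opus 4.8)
The plan is to read everything off from the codimension formula already established in Proposition~\ref{codim}, namely $b_{w,\br}=\delta_{w,\br}+a_{w,\br}+c_w$, together with the obvious nonnegativity of the two correction terms. Recall from Section~\re{notcodim} that $a_{w,\br}=\codim_{\clp(T_w)}(T_{w,\br})$ is the codimension of a nonempty fp-locally closed subscheme, hence $a_{w,\br}\geq 0$, and that $c_w=r-\dim(T^w)\geq 0$ since $T^w$ is a closed subgroup of the $r$-dimensional torus $T$. Substituting these into the formula gives the inequality $b_{w,\br}\geq\delta_{w,\br}$ at once.

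For the ``moreover'' part, equality in $b_{w,\br}\geq\delta_{w,\br}$ holds precisely when $a_{w,\br}=0$ and $c_w=0$ simultaneously. First I would dispose of the condition $c_w=0$: it says $\dim(T^w)=r$, i.e. $(T^w)^{\circ}=T$, so $w$ acts trivially on $T$; since $W$ acts faithfully on $T$, this is equivalent to $w=1$ (and conversely $c_1=0$). It then remains to show that, for $w=1$, one has $a_{1,\br}=0$ if and only if $\br=0$. Since $k$ is algebraically closed, $T$ is split, so $\clp(T)\simeq\clp(\gm)^r$ is irreducible, and $T_1=T$, hence $T_{1,\br}\subseteq\clp(T_1)=\clp(T)$. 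If $\br=0$, then $T_{1,0}=\clp(T^{\rs})$ by Section~\re{constr}(d), a nonempty open subscheme of $\clp(T)$, so $a_{1,0}=0$. If $\br\neq 0$, choose $\al_0\in R$ with $\br(\al_0)>0$; every $t\in T_{1,\br}$ satisfies $\val(1-\al_0(t))=\br(\al_0)>0$, so $T_{1,\br}$ is disjoint from the dense open $\clp(T^{\rs})\subseteq\clp(T)$ and hence contained in the proper closed complement, which has codimension $\geq 1$; therefore $a_{1,\br}\geq 1>0$. Combining the two cases completes the proof.

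There is essentially no real obstacle here: the only points requiring a word of care are that $\clp(T)$ is irreducible (so that a proper closed subscheme genuinely has positive codimension) and that $a_{w,\br}$ and $c_w$ are well-defined nonnegative integers, both of which are immediate from the setup of Section~\re{constr} and the splitness of $T$ over the algebraically closed field $k$. All the genuine content of the corollary is contained in the already-proven Proposition~\ref{codim}.
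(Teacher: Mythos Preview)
Your proof is correct and follows essentially the same approach as the paper: both deduce the inequality immediately from Proposition~\ref{codim} together with $a_{w,\br}\geq 0$ and $c_w\geq 0$, and both characterize equality by $c_w=0\Leftrightarrow w=1$ and $a_{1,\br}=0\Leftrightarrow T_{1,\br}\subseteq\clp(T)$ is open $\Leftrightarrow\br=0$. You simply spell out the last equivalence (via disjointness from the dense open $\clp(T^{\rs})$) in slightly more detail than the paper does.
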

\begin{proof}
Since $a_{w,\br}, c_w\geq 0$, the inequality $b_{w,\br}\geq\dt_{w,\br}$ follows from Proposition~\ref{codim}.
Moreover, equality holds if and only if $c_w = a_{w,\br} = 0$. Furthermore, this happens if and only if
$w = 1$ and $T_{w,\br}=T_{\br}\subseteq \clp(T)$ is open. Finally, $T_{\br}\subseteq \clp(T)$ is open if and only if $\br=0$.
\end{proof}

\subsection{The affine Grothendieck-Springer fibration}
\begin{Emp} \label{E:fibr}
%Let $G$ be a connected split reductive group with $G_{der}$ simply connected over $k$, $(B,T)$ a Borel pair, $I=\ev^{-1}(B)\subseteq \clp G$ the corresponding Iwahori subgroup.
{\bf The fibration} (compare \cite[Sections~4.1.1--4.1.3]{BKV}).

\smallskip

(a) Let $\Fl:=\cL G/ I$ be the affine flag variety, and set
\[
\wt{\fC}:=\{([g],\g)\in\Fl\times\cL G~\vert~g^{-1}\g g\in I\}.
\]%=\{([g],\g)\in\Fl\times\cL G~\vert~\g ([g])=[g]\}.\]
By functoriality, the Chevalley morphism $\chi:G\to\kc$ induces a map $\cL\chi:\cL G\to\cL\kc$,
and we set $\kC:=(\cL\chi)^{-1}(\clp(\kc))\subseteq \cL G$. The second projection $\Fl\times\cL G\to\cL G$ induces a projection
$\frak{p}:\wt{\fC}\to\kC$.

\smallskip

(b) Note that $\wt{\fC}$ is the inverse image of the fp-closed substack $[I/I]\subseteq[\cL G/I]$ by the map
\[\Ad:\Fl\times \cL G\to[\cL G/I]:([g],\g)\mapsto [g^{-1}\g g],\]
where $I$ acts on the right hand side by conjugation. Therefore  $\wt{\fC}$ is an fp-closed ind-subscheme of $\Fl\times\cL G$.
Moreover, since $\Fl$ is an ind-fp-proper ind-scheme over $k$, the map $\frak{p}$ is ind-fp-proper (compare \cite[Lemma~4.1.4]{BKV}).

\smallskip

(c) The group ind-scheme $\cL G$ acts on  $\wt{\fC}$ by the rule
\[h([g],\g)=([hg],h\g h^{-1})\] and the map $\Ad$ from part~(b) induces an isomorphism of $\infty$-stacks $[\wt{\fC}/\cL G]\isom [I/I]$. Moreover, the map $\frak{p}$ is $\cL G$-equivariant and hence induces a map between quotient stacks
\begin{equation}
\ov{\frak{p}}:[\wt{\fC}/\cL G]\to[\kC/\cL G],
\label{equiv-prop}
\end{equation}
which is also ind-fp-proper (by part~(b) and \cite[Section~1.2.9(b)]{BKV}).

\smallskip

(d) The projection $\pr:I\to T$ induces a morphism $[\wt{\fC}/\cL G]\isom [I/I]\to T$, which we also denote by $\pr$. Explicitly,
$\pr$ send a class $[([g],\g)]$ to the class of $g^{-1}\g g\in I$.
\end{Emp}

\begin{Emp} \label{E:open}
{\bf The constructible stratification} (compare \cite[Section~4.1.7]{BKV}).

\smallskip

(a) Let $Z(\kD)\subseteq\fc$ be the locus of zeros of $\kD\in k[\fc]$, and set $\clp(\kc)_{\bullet}:=\clp(\kc)\sm\clp(Z(\kD))$. It is a non-quasi-compact open subscheme of $\clp(\kc)$, which has an open increasing covering \[\clp(\kc)_{\bullet}=\bigcup_{m\in\NN}\kc_{\leq m},\]
where $\kc_{\leq m}:=\{a\in\clp(\kc)~\vert~\val(\kD(a))\leq m\}$.

\smallskip

(b) We set $\kC_{\bullet}:=(\cL\chi)^{-1}(\kc_{\bullet})\subseteq\kC$, $\kC_{\leq m}:=(\cL\chi)^{-1}(\kc_{\leq m})\subseteq \kC_{\bullet}$, and $\wt{\fC}_{\bullet}=\frak{p}^{-1}(\kC_{\bullet})$. We can then write $\wt{\fC}_{\bullet}$ as an increasing union of open ind-schemes:
\begin{equation}
\kC_{\bullet}=\bigcup_{m\in\NN}\kC_{\leq m}.
\label{colimC}
\end{equation}
Then each $\kC_{\leq m}$ is an ind-placid ind-scheme, being an fp-locally closed sub-indscheme of an ind-placid ind-scheme $\cL G$.

\smallskip

(c) For every GKM pair $(w,\br)$, we set $\kC_{w,\br}:=\chi^{-1}(\kc_{w,\br})\subseteq\kC_{\bullet}$ and $\wt{\fC}_{w,\br}:=\frak{p}^{-1}(\kC_{w,\br})\subseteq\wt{\fC}_{\bullet}$. As in the Lie algebra case,
$\{\kc_{w,\br}\}_{w,\br}$ forms a bounded constructible stratification of $\cL(\kc)_{\bullet}$
and induces a bounded constructible stratification $\{[\fC_{w,\br}/\cL G]_{\red}\}_{w,\br}$ of $[\fC_{\bullet}/\cL G]$.

\smallskip

(d) The projection $\ov{\frak{p}}$ from Section~\re{fibr}(c) induces projections $\ov{\frak{p}}_{\bullet}:[\wt{\fC}_{\bullet}/\cL G]\to [{\fC}_{\bullet}/\cL G]$ and
$\ov{\frak{p}}_{w,\br}:[\wt{\fC}_{w,\br}/\cL G]_{\red}\to [{\fC}_{w,\br}/\cL G]_{\red}$.
\end{Emp}

\begin{Emp} \label{E:affsprfib}
{\bf Affine Springer fibers.}

\smallskip

(a) For every $\g\in\kC_{\bullet}(k)\subseteq G^{\rs}(F)$, the closed ind-subscheme
\[
\Fl_{\g}=\{[g]\in\Fl~\vert~g^{-1}\g g\in I\},%=\{[g]\in\Fl~\vert~\g([g])=[g]\},
\]
is the preimage of $\frak{p}^{-1}(\g)$ and is usually called the {\em affine Springer fiber} of $\g$. 
By Section~\re{fibr}(b), $\Fl_{\g}$ is an ind-fp-proper ind-scheme over $k$.

\smallskip

(b) As in the Lie algebra case, the reduced ind-scheme $(\Fl_{\g})_{\red}$ is a finite-dimensional scheme, locally of finite type over $k$. Moreover, if $\g\in\kC_{w,\br}(k)$, then, in the notation of
Section~\re{notcodim}, we have an equality $\dim(\Fl_{\g})_{\red}=\delta_{w,\br}$.

%the dimension of $(\Fl_{\g})_{\red}$ is given by the formula
%\[
%\dim(\Fl_{\g})_{\red}= \frac{d(\g)-c(\g)}{2},
%\]
%where $G_{\g}$ is the centralizer of $\g$, which is a torus over $F$, and $\dim_{F}$ is the $F$-rank, and
%\[
%d(\g)=\val(\det(\Id-\Ad\g:\kg(F)/\kg_{\g}(F)\to\kg(F)/\kg_{\g}(F))).
%\]

\smallskip

Namely, both assertions can be either deduced from the results of \cite{KL} and \cite{Be} by the argument of \cite[Appendix~B.2--B.3]{BV}
(using quasi-logarithms and the topological Jordan decomposition) or be obtained in a more general framework of \cite{Bt1} and \cite{Bt2}.

%\smallskip
%
%(c) Note that for every GKM stratum $(w,\br)$ and every $\g\in T_{w,\br}$, combining the dimension formula of part~(b) and notation of
%Section~\re{notcodim}, we have an equality $\dim(\Fl_{\g})_{\red}=\delta_{w,\br}$.
\end{Emp}

%\subsection{Placid stratification}

\begin{Emp} \label{E:stratum}
{\bf Fibration over a GKM stratum.} Fix a GKM pair $(w,\br)$.

\smallskip

(a) %Set $\kC_{w,\br}:=\chi^{-1}(\kc_{w,\br})\subseteq\kC$. %and denote by $\kC_{rs}\subseteq\kC$, the open which corresponds to the case $(w,\br)=(1,0)$.
As in \cite[Sections~4.1.5--4.1.6]{BKV}, we have a canonical embedding $T_{w,\br}\hra\kC_{w,\br}$, unique up to an $\cL G$-conjugacy, thus a canonical map $\psi_{w,\br}:T_{w,\br}\to[\kC_{w,\br}/\cL G]_{\red}$. Furthermore, arguing as in \cite[Corollaries~4.1.10 and 4.1.12]{BKV} one concludes that the map $\psi_{w,\br}$ is a smooth
%\footnote{Here smooth is in the sense of \cite[Section~1.3.1]{BKV}, where smooth means; \'etale locally, strongly pro-smooth and strongly pro-smooth is a filtered limit of smooth finitely presented morphisms with smooth affine transition maps.}
covering, $[\kC_{w,r}/\cL G]_{\red}$ is an admissible $\infty$-stack (see Section~\re{admstack}), and  $\psi_{w,\br}$ induces an isomorphism
\[
[\psi_{w,\br}]:[T_{w,\br}/(W_{w,r}\ltimes (\cL T_{w})_{\red}]\isom[\kC_{w,\br}/\cL G]_{\red},
\]
where the group scheme $(\cL T_{w})_{\red}$ acts on $T_{w,\br}$ trivially. 
\smallskip

(b) Consider the Cartesian diagram
\begin{equation} \label{Eq:basic1}
\begin{CD}
{X}_{w,\br} @>{\phi}_{w,\br}>> [\wt{\fC}_{w,\br}/\cL G]_{\red} \\
 @V{{g}_{w,\br}}VV @VV\ov{\frak{p}}_{w,\br}V\\
T_{w,\br} @>\psi_{w,\br}>> [\fC_{w,\br}/\cL G]_{\red}.
\end{CD}
\end{equation}
By definition, ${X}_{w,\br}$ is the reduction of a closed ind-subscheme
\[
\{([g],\g)\in\Fl\times T_{w,\br}~\vert~g^{-1}\g g\in I\}\subseteq \Fl\times T_{w,\br}.
\]
In particular, there is a natural action of the group scheme $(\cL T_w)_{\red}$ on ${X}_{w,\br}$ over $T_{w,\br}$, given by the formula
$t([g],\g):=([tg],\g)$ (compare \cite[Section~4.3.2]{BKV}).

\smallskip

(c) Recall that group $\La_w:=\Hom_F(\gm,T_w)$ is naturally a subgroup of $\cL(T_w)_{\red}$ via the embedding $\la\mapsto\la(t)$. Therefore the action of $\cL(T_w)_{\red}$ from part~(b) induces an action of $\La_w$ on $X_{w,\br}$ over $T_{w,\br}$. Moreover, the composition
\[
{X}_{w,\br}\overset{\phi_{w,\br}}{\lra}[\wt{\fC}_{w,\br}/\cL G]\overset{\pr}{\lra}T:([g],\g)\mapsto[g^{-1}\g g]
\]
is $\La_w$-equivariant and thus factors through the projection ${X}_{w,\br}\to [{X}_{w,\br}/\La_w]$.
\end{Emp}

The following result is a group analog of \cite[Corollary~4.3.4]{BKV}. 

\begin{Prop} \label{P:topproper}
(a) In the notation of Section~\re{stratum}, ${X}_{w,\br}$ is a placid reduced scheme locally finitely presented over $T_{w,\br}$, equipped with an action of
$\La_w$. Moreover, the quotient $[{X}_{w,\br}/\La_w]$ is an algebraic space, fp-proper over $T_{w,\br}$. Furthermore, there exists a subgroup $\La'_w\subseteq \La_w$ of finite index such that the quotient 
$[{X}_{w,\br}/\La'_w]$ is a scheme.

\smallskip

(b) The map $\ov{\frak{p}}_{w,\br}$ is a locally fp-representable morphism between placid $\infty$-stacks. Moreover, it is
uo-equidimensional of relative dimension $\dt_{w,\br}$.
\end{Prop}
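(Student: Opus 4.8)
The plan is to reproduce the proof of \cite[Corollary~4.3.4]{BKV} essentially word for word, feeding in the group-theoretic inputs established above in place of their Lie-algebra analogues: Theorem~\ref{flat1} replaces the flatness statement \cite[Theorem~3.4.7]{BKV}; the dimension formula $\dim(\Fl_{\g})_{\red}=\delta_{w,\br}$ of Section~\re{affsprfib}(b) replaces its Lie-algebra analogue; and the description in Section~\re{stratum}(a) of $\psi_{w,\br}$ as a smooth covering of the admissible stack $[\kC_{w,\br}/\cL G]_{\red}$ replaces \cite[Corollaries~4.1.10 and~4.1.12]{BKV}. The classical facts about affine Springer fibers (Kazhdan--Lusztig \cite{KL}) enter only through Section~\re{affsprfib}.

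For part~(a): $X_{w,\br}$ is, by construction, a closed ind-subscheme of $\Fl\times T_{w,\br}$, and the finite-dimensionality and local finiteness of type of its fibers $(\Fl_{\g})_{\red}$ (Section~\re{affsprfib}(b)) force, as in \cite[Corollary~4.3.4]{BKV}, the ind-structure to stabilise Zariski-locally; thus $X_{w,\br}$ is a reduced scheme, locally finitely presented over $T_{w,\br}$, hence placid by Section~\ref{E:basic example}(b) since $T_{w,\br}$ is admissible (Section~\re{constr}(b)). The $\La_w$-action of Section~\re{stratum}(c) is free: if $\la\in\La_w$ fixes a point $([g],\g)$ then $g^{-1}\la(t)g\in I$, which is impossible for $\la\neq 0$ because the subgroup generated by $\la(t)$, and hence its conjugate by $g$, is unbounded in $\cL G$ while $I$ is bounded. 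Consequently $[X_{w,\br}/\La_w]$ is an algebraic space, and its fibres over $T_{w,\br}(k)$ are the projective quotients $[(\Fl_{\g})_{\red}/\La_w]$ (cocompactness of the lattice action, Section~\re{affsprfib}); arguing as in \cite[Corollary~4.3.4]{BKV} --- via an admissible presentation of $T_{w,\br}$ to reduce to a finite-type base --- one deduces that $[X_{w,\br}/\La_w]\to T_{w,\br}$ is fp-proper. Finally, pulling back an ample line bundle along $X_{w,\br}\to\Fl$ and passing to a finite-index subgroup $\La'_w\subseteq\La_w$ trivialising the obstruction to a $\La_w$-equivariant structure produces a relatively ample bundle on $[X_{w,\br}/\La'_w]$ over $T_{w,\br}$, so that $[X_{w,\br}/\La'_w]$ is a scheme --- exactly the last step of \cite[Corollary~4.3.4]{BKV}.

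For part~(b): by Section~\re{stratum}(a), $\psi_{w,\br}$ is a smooth covering and the square \form{basic1} is Cartesian, so the base change of $\ov{\frak{p}}_{w,\br}$ along $\psi_{w,\br}$ is $g_{w,\br}$; since local fp-representability, weak equidimensionality (with its relative dimension) and universal openness all descend along smooth coverings, it suffices to check these for $g_{w,\br}$. Local fp-representability of $g_{w,\br}$ is part~(a), and once $\ov{\frak{p}}_{w,\br}$ is locally fp-representable, placidity of $[\wt{\fC}_{w,\br}/\cL G]_{\red}$ follows from Section~\ref{E:basic example}(b). By Section~\re{dimfn}(ii) the dimension function $\un{\dim}_{\ov{\frak{p}}_{w,\br}}$ is computed by $\un{\dim}_{g_{w,\br}}$, which equals the constant $\delta_{w,\br}$: every fibre of the proper morphism $[X_{w,\br}/\La_w]\to T_{w,\br}$ over the connected base $T_{w,\br}$ has dimension $\delta_{w,\br}$ by Section~\re{affsprfib}(b), and $g_{w,\br}$ has the same dimension function as this quotient. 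For universal openness one identifies $g_{w,\br}$, up to reduction, with the base change of $\frak{p}:\wt{\fC}\to\kC$ along $T_{w,\br}\hookrightarrow\kC$, and deduces it from the flatness of the truncated Chevalley maps $v_n$ (Theorem~\ref{flat1}) --- passed to the limit $v:I\to\clp(\kc)$ and transported through the identification $[\wt{\fC}_{w,\br}/\cL G]\simeq[I_{w,\br}/I]$ of Section~\re{fibr}(c) --- as in \cite[Section~4.3]{BKV}; a flat, locally finitely presented morphism being universally open, this completes the proof that $\ov{\frak{p}}_{w,\br}$ is uo-equidimensional of relative dimension $\delta_{w,\br}$.

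The main obstacle will be the universal-openness step of part~(b): one has to carry flatness from the truncated maps $v_n$ through the inverse limit and through the two stack quotients built into $\ov{\frak{p}}_{w,\br}$, all the while tracking the reduction used to define $X_{w,\br}$ --- or, equivalently, to verify directly that the non-quasi-compact morphism $g_{w,\br}$ is universally open and that its dimension function is genuinely constant. This is precisely the heart of the argument in \cite[Section~4.3]{BKV}, and in the group case no new idea beyond Theorem~\ref{flat1} should be required.
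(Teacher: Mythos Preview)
Your proposal is correct and takes essentially the same approach as the paper: the paper's proof simply says to repeat the arguments of \cite[Corollary~4.3.4(a),(b)]{BKV}, replacing \cite[Theorem~3.4.7]{BKV} by Theorem~\ref{flat1} and the Bezrukavnikov dimension formula by its group version (Section~\re{affsprfib}(b)). Your expansion of the argument, including the identification of the universal-openness step as the delicate point, accurately unpacks what that repetition entails.
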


\begin{proof}
To prove the results, we repeat the arguments of \cite[Corollary~4.3.4(a),(b)]{BKV}, replacing \cite[Theorem~3.4.7]{BKV} by Theorem~\ref{flat1} and replacing dimension formula of Bezrukavnikov \cite{Be} by its group version (see Section~\re{affsprfib}(b)).
\end{proof}

\begin{Thm} \label{small}
The projection $\ov{\frak{p}}_{\bullet}:[\wt{\fC}_{\bullet}/\cL G]\to[\kC_{\bullet}/\cL G]$ is $[\kC_{\leq 0}/\cL G]$-small (see Section~\re{small}).
\end{Thm}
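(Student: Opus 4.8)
The statement is the group analog of the smallness result for the affine Grothendieck--Springer fibration over the regular-semisimple bounded locus, and the argument should closely mirror \cite[Section~4.4]{BKV}. Recall from Section~\re{small}(c) that to show $\ov{\frak{p}}_{\bullet}$ is $[\kC_{\leq 0}/\cL G]$-small we must exhibit, for every GKM pair $(w,\br)$, integers $b_{w,\br},\dt_{w,\br}\in\B{N}$ such that (i) the stratum $[\kC_{w,\br}/\cL G]_{\red}\subseteq[\kC_{\bullet}/\cL G]$ has pure codimension $b_{w,\br}$; (ii) the restriction $\ov{\frak{p}}_{w,\br}$ is locally fp-representable and equidimensional of relative dimension $\dt_{w,\br}$; and (iii) $\dt_{w,\br}\leq b_{w,\br}$, with strict inequality whenever $(w,\br)\neq(1,0)$, i.e.\ whenever the stratum is not contained in $[\kC_{\leq 0}/\cL G]$.

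First I would verify (i): by \rl{codim}, the fp-locally closed subscheme $I_{w,\br}=v^{-1}(\kc_{w,\br})\subseteq I$ has pure codimension $b_{w,\br}$, and since $[\kC_{w,\br}/\cL G]_{\red}$ is (up to topological equivalence) the image of $I_{w,\br}$ under the smooth covering coming from the identification $[\kC/\cL G]\simeq[I/I]$ of Section~\re{fibr}(c), the codimension of the stratum equals $b_{w,\br}$; one invokes that codimension is preserved by smooth pullback (Section~\re{eqdim}). For (ii), Proposition~\ref{P:topproper}(b) already gives that $\ov{\frak{p}}_{w,\br}$ is locally fp-representable and uo-equidimensional of relative dimension $\dt_{w,\br}$, and by \cite[Corollary~2.3.6]{BKV} (quoted in Section~\re{eqdim}(b)) a uo-equidimensional morphism is equidimensional; so (ii) holds with the same $\dt_{w,\br}$ as in Section~\re{notcodim}.

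The key inequality (iii) is exactly Corollary~\ref{cor-codim}: for every GKM pair $(w,\br)$ one has $b_{w,\br}\geq\dt_{w,\br}$, with equality if and only if $w=1$ and $\br=0$. Since the open substack $[\kC_{\leq 0}/\cL G]$ is precisely the union of strata with regular semisimple reduction, which in the GKM stratification is the single stratum indexed by $(1,0)$ (by Section~\re{constr}(d), $T_{1,0}=\clp(T^{\rs})$, so $\kc_{1,0}=\kc_{\le 0}$), the index set $\cI\sm\cI_{\cU}$ for $\cU=[\kC_{\leq 0}/\cL G]$ consists of all GKM pairs other than $(1,0)$, and Corollary~\ref{cor-codim} gives the required strict inequality there. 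This assembles all three conditions of Section~\re{small}(c).

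The main obstacle is a bookkeeping one rather than a conceptual one: one must check that the bounded constructible stratification $\{[\fC_{w,\br}/\cL G]_{\red}\}_{w,\br}$ of $[\fC_{\bullet}/\cL G]$ is genuinely placidly stratified (so that the framework of Section~\re{small} applies), that $\cX=[\wt{\fC}_{\bullet}/\cL G]$ is placid with the induced stratification being $\{[\wt{\fC}_{w,\br}/\cL G]_{\red}\}$, and that the codimension and relative-dimension integers extracted here are the same ones entering \rl{codim}, Proposition~\ref{P:topproper} and Corollary~\ref{cor-codim}; all of this is recorded or immediate from Sections~\re{fibr}--\re{stratum}, so the proof reduces to citing those results. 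Accordingly, the proof is short: repeat the argument of \cite[Section~4.4]{BKV}, replacing the Lie-algebra inputs (\cite[Theorem~3.4.7, Corollary~3.4.9, Corollary~4.3.4]{BKV} and the Lie-algebra codimension formula) by their group analogs Theorem~\ref{flat1}, \rl{codim}, Proposition~\ref{P:topproper} and Corollary~\ref{cor-codim} established above.
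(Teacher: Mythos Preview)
Your approach is essentially identical to the paper's: verify placid stratification of source and target, then check conditions (i)--(iii) of Section~\re{small}(c) via \rl{codim}, \rp{topproper}(b), and Corollary~\ref{cor-codim}. One slip to correct: condition~(i) concerns the codimension of the induced stratum $\cX_{w,\br}=[\wt{\fC}_{w,\br}/\cL G]_{\red}$ inside the \emph{source} $\cX=[\wt{\fC}_{\bullet}/\cL G]$, not inside the target $[\kC_{\bullet}/\cL G]$ as you wrote; the relevant isomorphism from Section~\re{fibr}(c) is $[\wt{\fC}/\cL G]\simeq[I/I]$ (not $[\kC/\cL G]\simeq[I/I]$), and it is through this identification that the codimension of $I_{w,\br}\subseteq I$ from \rl{codim} gives, via Section~\re{dimfn}(b)(ii), the required codimension of $[\wt{\fC}_{w,\br}/\cL G]_{\red}$ in $[\wt{\fC}_{\bullet}/\cL G]$.
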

\begin{proof}
Note that $([\kC_{\bullet}/\cL G],\{[\kC_{w,\br}/\cL G]_{\red}\}_{w,\br})$ is a placidly stratified $\infty$-stack (by
Sections~\re{open}(c) and \re{stratum}(a)), and that $[\wt{\fC}_{\bullet}/\cL G]\simeq[I_{\bullet}/I]$ is a placid $\infty$-stack
(by Section~\re{fibr}(c) and \re{basic example}(a)).

\smallskip

Moreover, each $[\wt{\fC}_{w,\br}/\cL G]\simeq [I_{w,\br}/I]$ is an fp-locally closed substack of $[\wt{\fC}_{\bullet}/\cL G]\simeq[I_{\bullet}/I]$ of pure codimension $b_{w,\br}$ (by \rl{codim} and Section~\re{dimfn}(b)(ii)), each $\ov{\frak{p}}_{w,\br}$ is locally fp-representable and equidimensional of relative dimension $\delta_{w,\br}$
(by \rp{topproper}(b) and Section~\re{eqdim}(b)). Finally, inequalities of Section~\re{small}(c)(iii) follow from Corollary~\ref{cor-codim}.
\end{proof}

\begin{Emp} \label{E:regstratum}
{\bf Fibration over a regular stratum.}

\smallskip

(a) Arguing as in \cite[Corollaries~4.2.2--4.2.3]{BKV}, we have a commutative diagram
\[
\begin{CD}
[\clp(T^{\rs})/\clp(T)] @>\sim>> [\wt{\fC}_{\leq 0}/\cL G] \\
 @VVV @VV\ov{\frak{p}}_{\leq 0}V\\
[\clp(T^{\rs})/W\ltimes\cL(T)_{\red}] @>\sim>> [\fC_{\leq 0}/\cL G]_{\red},
\end{CD}
\]
where the top isomorphism is the isomorphism $[\psi_{w,\br}]$ of Section~\re{stratum}(a) for $(w,\br)=(1,0)$.

\smallskip

(b) Using isomorphism $\cL(T)_{\red}\simeq \clp(T)\times\La$ (see \cite[Section~4.1.11]{BKV}), we conclude from the commutative diagram of part~(a),
that the projection $\ov{\frak{p}}_{\leq 0}:[\wt{\fC}_{\leq 0}/\cL G]\to [\fC_{\leq 0}/\cL G]_{\red}$ is a $\wt{W}$-torsor.

\smallskip

(c) Notice that the composition
\[
[\clp(T^{\rs})/\clp(T)]\isom [\wt{\fC}_{\leq 0}/\cL G]\overset{\pr}{\lra}T
\]
sends $[t]\mapsto\ev(t)$. In particular, it is $\wt{W}$-equivariant with respect to the natural action of $W$ and the trivial action of $\La$ on both sides.
\end{Emp}

\section{Application to affine Grothendieck--Springer sheaves}

\subsection{Perversity and $\wt{W}$-action}

%\begin{Lem}\label{t-app}
%The stack $[\kC_{\bullet}/\cL G]$ satisfies gluing of sheaves.
%\end{Lem}
%\begin{proof}
%By \ref{colimC}, one can write $\kC_{\bullet}=\colim_{m\in \NN}\kC_{\leq m}$, with transition maps that are fp-open embeddings and each $\kC_{\leq m}$ is an ind-placid ind-scheme and as $\cL G$ is a ind-placid ind-group,  $[\kC_{\leq m}/\cL G]$ admits gluing of sheaves by \cite[Prop. 5.5.7]{BKV} and thus also $[\kC_{\bullet}/\cL G]$ by \cite[Lem. 5.5.5.(b)]{BKV}. %The second assertion is \ref{sm-cov} and \eqref{t1},\eqref{t2}.
%\end{proof}

\begin{Emp}
{\bf Observations} (compare \cite[Sections~7.1.1--7.1.3]{BKV}).

\smallskip

(a) Recall the projection $\ov{\frak{p}}:[\wt{\fC}/\cL G]\to[\kC/\cL G]$
is ind-fp-proper (see Section~\re{fibr}(c)), so the pullback $\ov{\frak{p}}^{!}$ admits a left adjoint $\ov{\frak{p}}_!$ that satisfies base change (by \cite[Proposition~5.3.7]{BKV}).

\smallskip

(b) Using observations of Section~\re{open}(b) and arguing as in \cite[Lemma~7.1.2]{BKV}, one obtains that
the $\infty$-stack $[\kC_{\bullet}/\cL G]$ admits gluing of sheaves.

\smallskip

(c) By Theorem~\ref{small}, the stratified $\infty$-stack $([\fC_{\bullet}/\cL G],\{[\fC_{w,\br}/\cL G]_{\red}\}_{w,\br})$ is placidly stratified, and the projection $\ov{\frak{p}}_{\bullet}:[\wt{\fC}_{\bullet}/\cL G]\to[\kC_{\bullet}/\cL G]$ is $[\kC_{\leq 0}/\cL G]$-small.
Therefore morphism $\ov{\frak{p}}_{\bullet}$ gives rise to a perversity $p_{\nu}=\{\nu_{w,\br}\}_{w,\br}$,
defined by $\nu_{w,\br}:=\dt_{w,\br}+b_{w,\br}$  (see Sections~\re{pervfun}(b) and \re{notcodim}).

\smallskip

(d) By parts~(b),(c) and Section~\re{pervplstr}(b), the $\infty$-category $\cD([\fC_{\bullet}/\cL G])$ is equipped with
 a canonical $t$-structure, corresponding to the perversity $p_{\nu}$.
\end{Emp}

\begin{Thm} \label{T:perv}
For every $!$-local system $K\in\Loc^{!}([\wt{\fC}_{\bullet}/\cL G])$, the push-forward $(\ov{\frak{p}}_{\bullet})_!(K)$ is $p_{\nu}$-perverse. Moreover, it is the intermediate extension of its restriction to $[\kC_{\leq 0}/\cL G]$.
\end{Thm}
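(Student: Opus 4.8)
The plan is to obtain Theorem~\ref{T:perv} as an essentially immediate consequence of Theorem~\ref{g-small}. Concretely, I would apply Theorem~\ref{g-small} with $\cY:=[\kC_{\bullet}/\cL G]$ equipped with the GKM stratification $\{[\fC_{w,\br}/\cL G]_{\red}\}_{w,\br}$ of Section~\re{open}(c), with the fp-open substack $\cU:=[\kC_{\leq 0}/\cL G]\subseteq\cY$, and with the morphism $f:=\ov{\frak{p}}_{\bullet}\colon[\wt{\fC}_{\bullet}/\cL G]\to[\kC_{\bullet}/\cL G]$. The perversity $p_f$ attached to $f$ by Section~\re{pervfun}(b) is $\{\dt_{w,\br}+b_{w,\br}\}_{w,\br}$, i.e.\ exactly the perversity $p_{\nu}$ of the Observations above; so once the hypotheses of Theorem~\ref{g-small} are in place, both assertions of the theorem follow verbatim.

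Verifying those hypotheses is mostly a matter of collecting already-established facts: that $(\cY,\{[\fC_{w,\br}/\cL G]_{\red}\}_{w,\br})$ is placidly stratified and that $f$ is $\cU$-small are recorded in the Observations via Theorem~\ref{small}; that $\cY$ admits gluing of sheaves is Observations~(b), obtained as in \cite[Lemma~7.1.2]{BKV} from Section~\re{open}(b); and that $f=\ov{\frak{p}}_{\bullet}$ is ind-fp-proper follows by base change from the ind-fp-properness of $\ov{\frak{p}}$ (Section~\re{fibr}(c)). The point still needing to be spelled out is that $\cX:=[\wt{\fC}_{\bullet}/\cL G]$ is a \emph{smooth} placid $\infty$-stack. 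Here I would use the isomorphism $[\wt{\fC}_{\bullet}/\cL G]\simeq[I_{\bullet}/I]$ of Section~\re{fibr}(c), observe that the Iwahori group scheme $I=\lim_n I_n$ is connected with strongly pro-smooth underlying $k$-scheme (each $I_n=\ev_n^{-1}(B)$ is smooth of finite type over $k$ and the transition maps $I_{n+1}\to I_n$ are smooth and affine), that $I_{\bullet}\subseteq I$ is fp-open and hence again smooth over $k$, and conclude via Sections~\re{basic example}(a) and \re{glplsch} that $[I_{\bullet}/I]$ is placid and smooth over $\pt$.

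It then remains to check that $\cU=[\kC_{\leq 0}/\cL G]$ is genuinely an fp-open $\{[\fC_{w,\br}/\cL G]_{\red}\}_{w,\br}$-adapted substack and to identify the stratification it carries. Openness is clear, since $\kc_{\leq 0}=\{a\in\clp(\kc)~\vert~\val(\kD(a))=0\}$ is one of the open sets $\kc_{\leq m}$. For adaptedness I would show that $(1,0)$ is the only GKM pair whose stratum meets $\cU$: using $T_{1,0}=\clp(T^{\rs})$ (Section~\re{constr}(d)) and the fact that $\pi^{\rs}$ is a $W$-torsor (Section~\re{setup}(d)) one gets $\kc_{1,0}=\kc_{\leq 0}$, whereas any other GKM pair has $\br\neq 0$ (indeed $\br=0$ forces $w=1$ by the constraint $\dt_{w,\br}=\tfrac{d_{\br}-c_w}{2}\geq 0$, cf.\ the proof of Corollary~\ref{cor-codim}), so $\val(\kD)=d_{\br}>0$ on $T_{w,\br}$ and $\kc_{w,\br}$ is disjoint from $\kc_{\leq 0}$. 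Hence $\cI_{\cU}=\{(1,0)\}$, the induced stratification of $\cU$ is the single stratum $[\fC_{\leq 0}/\cL G]_{\red}$ (matching Section~\re{regstratum}), and Theorem~\ref{g-small} delivers both that $(\ov{\frak{p}}_{\bullet})_!(K)$ is $p_{\nu}$-perverse and that it is the intermediate extension of its restriction to $[\kC_{\leq 0}/\cL G]$. There is no deep obstacle: the only genuine work is the bookkeeping of these last two paragraphs — checking the smoothness of $[\wt{\fC}_{\bullet}/\cL G]$ and that $[\kC_{\leq 0}/\cL G]$ is exactly the open adapted substack cut out by the $(1,0)$-stratum — all the substantive geometric input having been prepared in Sections~\re{fibr}--\re{regstratum} and in Theorem~\ref{small}.
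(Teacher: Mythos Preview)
Your proposal is correct and follows exactly the paper's approach: the paper's entire proof is the one-line ``Since projection $\ov{\frak{p}}_{\bullet}$ is $[\kC_{\leq 0}/\cL G]$-small (by Theorem~\ref{small}), the assertion follows from Theorem~\ref{g-small}.'' You have simply unpacked in detail the verification of the hypotheses of Theorem~\ref{g-small}, most of which the paper has already recorded in the Observations immediately preceding the statement (placid stratification, gluing of sheaves, ind-fp-properness, smallness, and the identification $p_f=p_{\nu}$); your additional remarks on the smoothness of $[\wt{\fC}_{\bullet}/\cL G]\simeq[I_{\bullet}/I]$ and the adaptedness of $[\kC_{\leq 0}/\cL G]$ are accurate and fill in points the paper leaves implicit.
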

\begin{proof}
Since projection $\ov{\frak{p}}_{\bullet}$ is $[\kC_{\leq 0}/\cL G]$-small (by Theorem~\ref{small}), the assertion follows from
Theorem~\ref{g-small}.
\end{proof}

\begin{Emp} \label{E:gsp}
{\bf Basic example.}

\smallskip

(a) Let $\pr:[\wt{\fC}/\cL G]\to T$ be the projection (see Section~\re{fibr}(d)). Then a local system $\cL$ on $T$ gives rise to the $!$-local system $\om_{\cL}$ on $[\wt{\fC}/\cL G]$ (see Section~\re{!ls}(b)), hence to the object  $\cS_{\cL}:=\ov{\frak{p}}_!(\om_{\cL})\in\cD([\kC/\cL G])$, called the {\em affine Grothendieck--Springer sheaf}.

\smallskip

(b) Let $\cS_{\cL,\bullet}$ be the restrictions of $\cS_{\cL}$ to the open substack $[\kC_{\bullet}/\cL G]$. Since $\ov{\frak{p}}_!$ satisfies base change, we have an identification $\cS_{\cL,\bullet}\simeq (\ov{\frak{p}}_{\bullet})_!(\om_{\cL})\in\cD([\kC_{\bullet}/\cL G])$. Thus, by \rt{perv}, the sheaf $\cS_{\cL,\bullet}$ is $p_{\nu}$-perverse and is the intermediate extension of its restriction $\cS_{\cL,\leq 0}\in\cD([\kC_{\leq 0}/\cL G])$.

\smallskip

(c) For every $\g\in \kC_{\bullet}(k)$, we denote by $\iota_{\g}$ the corresponding morphism $\pt\to [\kC_{\bullet}/\cL G]$.
Then, by base change, we have an isomorphism
\begin{equation} \label{Eq:rgmc}
\iota_{\g}^!(\cS_{\cL,\bullet})\simeq R\Gm_c(\Fl_{\g},\om_{\cL}).
\end{equation}
\end{Emp}
%\subsection{Action of the affine Weyl group}

\begin{Cor} \label{C:action}
(a) For a local system $\cL$ on $T$, we have natural isomorphisms 
\[
a_{w,\cL}:\cS_{\cL,\bullet}\isom \cS_{w_!(\cL)\bullet}\text{ for } w\in \wt{W},\text{ satisfying }a_{w_1,(w_2)_!(\cL)}\circ a_{w_2,\cL}\simeq a_{w_1w_2,\cL}\text{ for }w_1,w_2\in\wt{W}.
\]

\smallskip

(b) The affine Grothendieck--Springer sheaf $\cS_{\cL,\bullet}\in\cD([\kC_{\bullet}/\cL G])$ is equipped with a natural $\La$-action, that is,   has a natural lift to $\cD^{\La}([\kC_{\bullet}/\cL G])$. Furthermore, $\cS_{\cL,\bullet}$ has a natural lift to $\cD^{\wt{W}}([\kC_{\bullet}/\cL G])$,
if $\cL$ is $W$-equivariant.
\end{Cor}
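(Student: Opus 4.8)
The plan is to build the structures first over the regular stratum $[\kC_{\leq 0}/\cL G]$, where by Section~\ref{E:regstratum} everything is controlled by the $\widetilde W$-torsor $\ov{\frak p}_{\leq 0}:[\wt{\fC}_{\leq 0}/\cL G]\to[\kC_{\leq 0}/\cL G]_{\red}$ and the $\widetilde W$-equivariant projection $\pr:[\wt{\fC}_{\leq 0}/\cL G]\to T$, and then to transport them to $[\kC_{\bullet}/\cL G]$ using that $\cS_{\cL,\bullet}$ is the intermediate extension of its restriction $\cS_{\cL,\leq 0}$ (Theorem~\ref{T:perv} and Section~\ref{E:gsp}(b)).

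\emph{Part (a).} For $w\in\widetilde W$ let $\tilde w$ be the corresponding deck transformation of $[\wt{\fC}_{\leq 0}/\cL G]$, so $\ov{\frak p}_{\leq 0}\circ\tilde w=\ov{\frak p}_{\leq 0}$ and, by Section~\ref{E:regstratum}(c), $\pr\circ\tilde w=w\circ\pr$. Since $\tilde w$ is an isomorphism, $\tilde w_{!}\simeq(\tilde w^{-1})^{!}$, and there is a canonical isomorphism $(w^{-1})^{!}(\om_{T}\otimes\cL)\simeq\om_{T}\otimes w_{!}(\cL)$; chaining these gives canonical isomorphisms
\[
\cS_{w_{!}(\cL),\leq 0}=(\ov{\frak p}_{\leq 0})_{!}\pr^{!}(\om_{T}\otimes w_{!}\cL)\simeq(\ov{\frak p}_{\leq 0})_{!}\tilde w_{!}\pr^{!}(\om_{T}\otimes\cL)=(\ov{\frak p}_{\leq 0}\circ\tilde w)_{!}\pr^{!}(\om_{T}\otimes\cL)=\cS_{\cL,\leq 0},
\]
and I take $a_{w,\cL}$ over $[\kC_{\leq 0}/\cL G]$ to be the inverse. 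The cocycle identity over the regular stratum follows from $\widetilde{w_{1}}\circ\widetilde{w_{2}}=\widetilde{w_{1}w_{2}}$ and the compatibility of $(-)^{!}$ and $(-)_{!}$ with composition. Finally, $\cS_{\cL,\bullet}$ and $\cS_{w_{!}(\cL),\bullet}$ are the intermediate extensions of $\cS_{\cL,\leq 0}$ and $\cS_{w_{!}(\cL),\leq 0}$, so $a_{w,\cL}$ extends uniquely to an isomorphism $\cS_{\cL,\bullet}\isom\cS_{w_{!}(\cL),\bullet}$ by uniqueness of the intermediate extension (Section~\ref{E:interm}(c)), and the same uniqueness yields the cocycle relation for the extended isomorphisms.

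\emph{Part (b).} One cannot merely reassemble the isomorphisms of part~(a): since $\La$ and $\widetilde W$ are infinite discrete groups, a lift to $\cD^{\La}$ (resp. $\cD^{\widetilde W}$) is strictly more data than a weak action, so I produce the action at the level of the torsor via Lemma~\ref{L:taucoinv}. Factor $\ov{\frak p}_{\leq 0}=q'\circ q$, where $q:[\wt{\fC}_{\leq 0}/\cL G]\to Z_{\La}:=[\wt{\fC}_{\leq 0}/\cL G]/\La$ is a $\La$-torsor and $q':Z_{\La}\to[\kC_{\leq 0}/\cL G]_{\red}$ is a $W$-torsor (using normality of $\La$ in $\widetilde W$). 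As $\La$ acts trivially on $T$, the map $\pr$ is $\La$-invariant and factors as $\pr_{\La}\circ q$, so $\om_{\cL}|_{[\wt{\fC}_{\leq 0}/\cL G]}\simeq q^{!}K_{\La}$ with $K_{\La}:=\pr_{\La}^{!}(\om_{T}\otimes\cL)$; then Lemma~\ref{L:taucoinv}(a) equips $q_{!}q^{!}K_{\La}$ with a natural lift to $\cD(Z_{\La})^{B\La}$, and applying $q'_{!}$ (which lifts to equivariant categories by Section~\ref{E:catequiv}(d)) gives a natural lift of $\cS_{\cL,\leq 0}=q'_{!}q_{!}q^{!}K_{\La}$ to $\cD([\kC_{\leq 0}/\cL G]_{\red})^{B\La}$. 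When $\cL$ is $W$-equivariant, $\om_{T}\otimes\cL$ carries a $\widetilde W$-equivariant structure for the action of $\widetilde W$ on $T$ through $W$ (use the $W$-equivariant structure on $\cL$ and let $\La$ act trivially), so $\om_{\cL}|_{[\wt{\fC}_{\leq 0}/\cL G]}$ descends along the whole torsor $\ov{\frak p}_{\leq 0}$ and Lemma~\ref{L:taucoinv}(a) applies directly, giving a lift to $\cD([\kC_{\leq 0}/\cL G]_{\red})^{B\widetilde W}$. In both cases the equivariant category $\cD(-)^{B\Gm}\simeq\cD(-\times B\Gm)$ is again placidly stratified and admits gluing of sheaves, its forgetful functor $\For$ is $t$-exact and conservative (Lemma~\ref{r-tex}(a)) and commutes with the smooth pullback along $[\kC_{\bullet}/\cL G]\to[\kC_{\bullet}/\cL G]\times B\Gm$, hence with intermediate extension; so the intermediate extension of the equivariant lift of $\cS_{\cL,\leq 0}$ is the desired equivariant lift of $\cS_{\cL,\bullet}$.

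The main obstacle is exactly this passage from the weak symmetry of part~(a) to a genuine $\infty$-categorical action in part~(b), which is handled by constructing the action on the torsor through Lemma~\ref{L:taucoinv} rather than from the isomorphisms $a_{w,\cL}$. The remaining work is the bookkeeping of compatibilities — descent of $\om_{\cL}$ along $\ov{\frak p}_{\leq 0}$, and compatibility of the placidly stratified structure, the perverse $t$-structure, and the intermediate extension with $\For$ — which is formal given $t$-exactness and conservativity of $\For$ and its commutation with smooth pullback.
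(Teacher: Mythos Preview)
Your argument for part~(a) is essentially the paper's: construct the isomorphisms over the regular stratum via the $\wt{W}$-torsor structure and then invoke uniqueness of the intermediate extension.

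For part~(b), however, you take a genuinely different route, and in doing so you explicitly reject the paper's approach. You write that ``a lift to $\cD^{\La}$ is strictly more data than a weak action,'' but the paper's point is that \emph{for perverse sheaves this is false}. Since perverse sheaves have no negative self-Exts, the $\infty$-category $\Perv^{p_{\nu}}([\kC_{\bullet}/\cL G])$ is equivalent to its homotopy category (this is \cite[Remark~1.2.1.12]{Lu2}). Hence a functor $B\Gm\to\cD([\kC_{\bullet}/\cL G])$ landing in perverse sheaves is the same as an ordinary group homomorphism $\Gm\to\Aut(\cS_{\cL,\bullet})$ in the abelian category, and part~(a) supplies exactly that. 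So the paper simply reassembles the isomorphisms of part~(a), in one line.

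Your alternative---descend $\om_{\cL}$ along the torsor via \rl{taucoinv}, then take an \emph{equivariant} intermediate extension---is plausible and probably can be made to work, but it requires verifications you only sketch: that $[\kC_{\bullet}/\cL G]\times B\Gm$ is placidly stratified, that it admits gluing of sheaves, that the intermediate extension exists in $\cD(-)^{B\Gm}$ with the glued $t$-structure, and that $\For$ (identified with the \'etale pullback along $\cX\to\cX\times B\Gm$) commutes with it. None of this is hard, but it is extra work compared with the one-sentence reduction to the homotopy category. The payoff of your approach would be an action constructed without appealing to perversity, but since perversity is already established in \rt{perv}, the paper's shortcut is both available and cleaner.
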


\begin{proof}
(a) Since $\cS_{\cL,\bullet}$ is the intermediate extension of $\cS_{\cL,\leq 0}$ (see Section~\re{gsp}(b)), for every $w\in\wt{W}$ the restriction map
\[
\Hom(\cS_{\cL,\bullet},\cS_{w_!(\cL),\bullet})\to \Hom(\cS_{\cL,\leq 0},\cS_{w_!(\cL),\leq 0})
\]
is an isomorphism. Thus, it remains to construct a collection of isomorphisms
\[
a_{w,\cL}:\cS_{\cL,\leq 0}\isom \cS_{w_!(\cL),\leq 0},\text{ satisfying  }a_{w_1,(w_2)_!(\cL)}\circ a_{w_2,\cL}\simeq a_{w_1w_2,\cL}.
\]

By the base change, we have an isomorphism  $\cS_{\cL,\leq 0}\simeq (\ov{\frak{p}}_{\leq 0})_!(\om_{\cL})$.
Recall that the projection $\ov{\frak{p}}_{\leq 0}:[\wt{\kC}_{\leq 0}/\cL G]\to [\kC_{\leq 0}/\cL G]_{\red}$ is a $\wt{W}$-torsor, and the projection $\pr:[\wt{\kC}_{\bullet}/\cL G]\to T$ is $\wt{W}$-equivariant (see Section~\re{regstratum}(b),(c)).  Since $\om_{\cL}=\pr^!(\om_T\otimes \cL)\in \cD([\wt{\kC}_{\leq 0}/\cL G])$,
for every $w\in\wt{W}$ we have an isomorphism $w_!(\om_{\cL})\simeq \om_{w_!(\cL)}$, hence an isomorphism
\[
a_{w,\cL}: \cS_{\cL,\leq 0}\simeq (\ov{\frak{p}}_{\leq 0})_!(\om_{\cL})\simeq (\ov{\frak{p}}_{\leq 0}\circ w)_!(\om_{\cL})\simeq
(\ov{\frak{p}}_{\leq 0})_!w_!(\om_{\cL})\simeq (\ov{\frak{p}}_{\leq 0})_!(\om_{w_!(\cL)})\simeq \cS_{w_!(\cL),\leq 0}.
\]
Moreover, the identity $a_{w_1,(w_2)_!(\cL)}\circ a_{w_2,\cL}\simeq a_{w_1w_2,\cL}$ is straightforward.

\smallskip

(b) Since perverse sheaves have no negative Exts, it follows for example from  \cite[Remark~1.2.1.12]{Lu2} that the $\infty$-category
$\Perv^{p_{\nu}}([\kC_{\bullet}/\cL G])$ of $p_{\nu}$-perverse sheaves is equivalent to its homotopy category. Since each $\cS_{\cL,\bullet}$ is $p_{\nu}$-perverse, it thus enough to construct the action at the level of homotopy categories. So both assertions follows from part~(a).
\end{proof}

\begin{Cor}\label{C:action2}
Fix $\g\in\fC_{\bullet}(k)$. For a local system (resp. $W$-equivariant local system) $\cL$ on $T$,

\smallskip

(a) the complex $R\Gm_c(\Fl_{\g},\omega_{\cL})$ is equipped with an action of $\La\times\cL G_{\g}$ (resp. $\wt{W}\times\cL G_{\g}$);

\smallskip

(b) for every $i\in\B{Z}$, the cohomology group  $H^i_c(\Fl_{\g},\omega_{\cL})$ is equipped with an action of
$\La\times\pi_0(\cL G_{\g})$ (resp. $\wt{W}\times\pi_0(\cL G_{\g})$). 
\end{Cor}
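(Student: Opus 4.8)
The plan is to obtain the asserted actions by combining the $\wt W$-action on the affine Grothendieck--Springer sheaf (Corollary~\ref{C:action}) with the base-change isomorphism \form{rgmc}. First I would recall that by \form{rgmc} we have an isomorphism $\iota_{\g}^{!}(\cS_{\cL,\bullet})\simeq R\Gm_c(\Fl_{\g},\om_{\cL})$, where $\iota_{\g}:\pt\to[\kC_{\bullet}/\cL G]$ is the point corresponding to $\g$. By Corollary~\ref{C:action}(b), the sheaf $\cS_{\cL,\bullet}$ has a natural lift to $\cD^{\La}([\kC_{\bullet}/\cL G])$ (resp.\ to $\cD^{\wt W}([\kC_{\bullet}/\cL G])$ when $\cL$ is $W$-equivariant). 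Since the $!$-pullback functor $\iota_{\g}^{!}$ lifts to equivariant categories and commutes with the restriction functors (Section~\re{catequiv}(d)), applying $\iota_{\g}^{!}$ to this equivariant lift endows $R\Gm_c(\Fl_{\g},\om_{\cL})$ with an action of $\La$ (resp.\ $\wt W$), i.e.\ a lift to $\cD(\qlbar)^{B\La}\simeq\cD(\qlbar[\La])$ (resp.\ to $\cD(\qlbar[\wt W])$), using Section~\re{basicex}.

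Next I would produce the commuting $\cL G_{\g}$-action. Here $\cL G_{\g}$ denotes the centralizer of $\g$ in $\cL G$; it acts on $\Fl_{\g}$ (since if $h\in\cL G_{\g}$ and $g^{-1}\g g\in I$ then $(hg)^{-1}\g(hg)=g^{-1}\g g\in I$), hence on $R\Gm_c(\Fl_{\g},\om_{\cL})$. The cleanest way to record this together with the $\La$- (or $\wt W$-) action, and the fact that the two commute, is to note that the point $\iota_{\g}$ factors as $\pt\to[\pt/\cL G_{\g}]\to[\kC_{\bullet}/\cL G]$, where the last map is the natural one sending the unique point to $\g$; base change along this refined square identifies the pullback of $\cS_{\cL,\bullet}$ to $[\pt/\cL G_{\g}]$ with the $\cL G_{\g}$-equivariant object $R\Gm_c(\Fl_{\g},\om_{\cL})$. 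Since $\cS_{\cL,\bullet}$ already carries its $\La$- (resp.\ $\wt W$-) lift on the stack $[\kC_{\bullet}/\cL G]$, and since pullback is functorial for the morphism $[\pt/\cL G_{\g}]\to[\kC_{\bullet}/\cL G]$ of stacks-with-$\La$-action (the target having trivial $\La$-action, by Section~\re{triv}), the resulting object lies in $\cD([\pt/\cL G_{\g}])^{B\La}$ (resp.\ $\cD([\pt/\cL G_{\g}])^{B\wt W}$), which is exactly an action of $\La\times\cL G_{\g}$ (resp.\ $\wt W\times\cL G_{\g}$) on $R\Gm_c(\Fl_{\g},\om_{\cL})$. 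This proves part~(a).

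For part~(b) I would simply pass to cohomology. Taking the $i$-th cohomology $H^i_c(\Fl_{\g},\om_{\cL})=H^i(R\Gm_c(\Fl_{\g},\om_{\cL}))$ is a functor from $\cD(\qlbar[\La\times\cL G_{\g}])$ to $\qlbar[\La]$-modules with a compatible $\cL G_{\g}$-action; since $\La$ is discrete its action survives verbatim, and the $\cL G_{\g}$-action on an $\ell$-adic cohomology group factors through the group of connected components $\pi_0(\cL G_{\g})$ because $\cL G_{\g}$ is (pro-)algebraic and a connected (pro-)algebraic group acts trivially on the cohomology of a variety on which it acts (each element being connected to the identity by a path). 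Hence $H^i_c(\Fl_{\g},\om_{\cL})$ carries an action of $\La\times\pi_0(\cL G_{\g})$ (resp.\ $\wt W\times\pi_0(\cL G_{\g})$), as claimed.

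The main obstacle is making the $\cL G_{\g}$-equivariant base-change statement precise: one must check that $[\pt/\cL G_{\g}]\to[\kC_{\bullet}/\cL G]$ really does produce, via $!$-pullback and base change, the naive $\cL G_{\g}$-action on $R\Gm_c(\Fl_{\g},\om_{\cL})$, and that this is compatible with the $\La$-equivariant structure already present (so that the two actions genuinely commute rather than merely each existing separately). This is a standard but slightly fiddly manipulation of Cartesian squares of stacks and the base-change isomorphism for $\ov{\frak p}_!$; everything else is formal.
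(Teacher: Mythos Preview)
Your proof is correct and follows essentially the same route as the paper: factor $\iota_{\g}$ through $[\pt/\cL G_{\g}]\to[\kC_{\bullet}/\cL G]$, pull back the $\La$- (resp.\ $\wt W$-) equivariant lift of $\cS_{\cL,\bullet}$ from Corollary~\ref{C:action}(b) to land in $\cD^{\La}([\pt/\cL G_{\g}])\simeq\cD^{\La\times\cL G_{\g}}(\pt)$, and invoke \form{rgmc}; then for (b) pass to cohomology and use that the $\cL G_{\g}$-action on each $H^i_c$ factors through $\pi_0(\cL G_{\g})$. The paper's version is terser and does not dwell on the compatibility check you flag at the end, treating it as automatic from functoriality of $!$-pullback between equivariant categories.
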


\begin{proof}
(a) Notice that the morphism $\iota_{\g}:\pt\to [\kC_{\bullet}/\cL G]$ from Section~\re{gsp}(c) factors through $[\pt/\cL G_{\g}]$.
Therefore the pullback $\iota_{\g}^!(\cS_{\cL,\bullet})\in \cD^{\La}(\pt)$ of $\cS_{\cL,\bullet}\in\cD^{\La}([\kC_{\bullet}/\cL G])$ has a natural lift to an object of $\cD^{\La}([\pt/\cL G_{\g}])=\cD^{\La\times\cL G_{\g}}(\pt)$. From this the assertion follows using isomorphism \form{rgmc}. The assertion for $W$-equivariant local systems is similar.

\smallskip

(b) By part~(a), for every $i\in\B{Z}$, the cohomology group  $H^i_c(\Fl_{\g},\omega_{\cL})$ is equipped with an action of
$\La\times\cL G_{\g}$ (resp. $\wt{W}\times\cL G_{\g}$). Now the assertion follows from the fact that the $\cL G_{\g}$-action on each $H^i_c(\Fl_{\g},\omega_{\cL})$ factors through
$\pi_0(\cL G_{\g})$.
\end{proof}

\begin{Emp}\label{E:hom}
{\bf Remark.} By the Poincar\'e duality, the cohomology group $H^i_c(\Fl_{\g},\omega_{\cL})$ coincides with the homology group $H_{-i}(\Fl_{\g},\C{F}_{\cL})$ considered in \cite{BV}. Thus, by \cite[Corollary~2.2.8]{BV}, we have an a priori different Lusztig action
of  $\La$ (resp. $\wt{W}$) on each $H^i_c(\Fl_{\g},\omega_{\cL})$.
\end{Emp}

\begin{Prop} \label{P:compat}
For every $\g\in\fC_{\bullet}(k)$ and $i\in\B{Z}$, the action of $\La$ (resp. $\wt{W}$) on $H^i_c(\Fl_{\g},\omega_{\cL})$ from \rco{action2}
coincides with the action from \cite[Corollary~2.2.8]{BV}.
\end{Prop}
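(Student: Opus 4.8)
The plan is to reduce the comparison, on the geometric side, to an identification over the regular stratum $[\kC_{\leq 0}/\cL G]$ --- where $\ov{\frak{p}}_{\leq 0}$ is a $\wt{W}$-torsor, so that there is essentially only one natural $\wt{W}$-action --- and then to show that Lusztig's action, as packaged in \cite[Corollary~2.2.8]{BV}, arises from the same geometric input. I will treat the $\wt{W}$-case (assuming $\cL$ is $W$-equivariant); the $\La$-case for a general $\cL$ is entirely parallel, with $\wt{W}$ replaced by $\La$ throughout.

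By construction, the $\wt{W}$-action on $R\Gm_c(\Fl_{\g},\om_{\cL})\simeq\iota_{\g}^{!}(\cS_{\cL,\bullet})$ of \rco{action2} is obtained by restricting along $\iota_{\g}$ the $\wt{W}$-action on the perverse sheaf $\cS_{\cL,\bullet}\in\cD([\kC_{\bullet}/\cL G])$ from \rco{action}; and, by the proof of \rco{action}, the latter is the unique $\wt{W}$-action on $\cS_{\cL,\bullet}$ whose restriction to $[\kC_{\leq 0}/\cL G]$ is the tautological deck action of the $\wt{W}$-torsor $\ov{\frak{p}}_{\leq 0}$ (uniqueness because $\cS_{\cL,\bullet}$ is the intermediate extension of $\cS_{\cL,\leq 0}$, so that restriction to $[\kC_{\leq 0}/\cL G]$ induces bijections on $\Hom$'s between the relevant perverse sheaves, and because $\Perv^{p_{\nu}}([\kC_{\bullet}/\cL G])$ is equivalent to its homotopy category). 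Thus it suffices to prove that Lusztig's action is, for every $\g$, the $\iota_{\g}$-restriction of a $\wt{W}$-action on $\cS_{\cL,\bullet}$ with this same tautological restriction over the regular locus.

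First I would dispose of the regular case: for $\g\in\kC_{\leq 0}(k)$ the affine Springer fiber is $0$-dimensional, $R\Gm_c(\Fl_{\g},\om_{\cL})$ is the $\wt{W}$-equivariant module cut out by the torsor $\ov{\frak{p}}_{\leq 0}$ over the point $\g$, and both actions are this tautological one --- by definition for the action of \rco{action2}, and by inspection for Lusztig's construction, which is transparent over the regular stratum (cf. \cite[Corollary~2.2.8]{BV}). For general $\g$ one uses that Lusztig's action is assembled from the regular case by specialization: \cite[Corollary~2.2.8]{BV} provides, for a suitable one-parameter degeneration $h\colon C\to\kC_{\bullet}$ of a regular semisimple element of $\kC_{\leq 0}(k)$ to $\g$ (with generic point in the regular locus), a specialization morphism carrying the tautological $\wt{W}$-action on the cohomology of the nearby regular affine Springer fiber onto $H^i_c(\Fl_{\g},\om_{\cL})$. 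On the other hand, by \form{rgmc} and base change for $(\ov{\frak{p}}_{\bullet})_{!}$ along $h$, the complex $R\Gm_c(\Fl_{\g},\om_{\cL})$ is the restriction to the special point of the constructible complex $h^{!}\cS_{\cL,\bullet}$ on $C$; the $\wt{W}$-action on $\cS_{\cL,\bullet}$ induces one on $h^{!}\cS_{\cL,\bullet}$ which is again tautological over the generic point and, by functoriality, compatible with the topological specialization morphism. Hence the sheaf-theoretic action on $H^i_c(\Fl_{\g},\om_{\cL})$ is also the specialization of the tautological action, and therefore agrees with Lusztig's.

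The step I expect to be the main obstacle is precisely the identification, in the previous paragraph, of the specialization morphism produced in \cite{BV} --- a priori built from a one-parameter family of affine Springer fibers --- with the topological specialization morphism of the complex $h^{!}\cS_{\cL,\bullet}$, performed compatibly with the $\wt{W}$-equivariant structures, together with the verification that this morphism determines the specialized action and that the degeneration can indeed be realised inside $\kC_{\bullet}$ with generic point in $[\kC_{\leq 0}/\cL G]$. Once these compatibilities are established, the equality of the $\wt{W}$- (and hence $\La$-) actions on each $H^i_c(\Fl_{\g},\om_{\cL})$ follows as explained.
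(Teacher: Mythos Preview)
Your reduction in the first two paragraphs is correct and is exactly the paper's starting point: since $\cS_{\cL,\bullet}$ is the intermediate extension of its restriction to $[\kC_{\leq 0}/\cL G]$, any $\wt{W}$-action on $\cS_{\cL,\bullet}$ is determined by its restriction to the regular locus, so it would suffice to show that Lusztig's action on each $H^i_c(\Fl_{\g},\om_{\cL})$ is the $\iota_{\g}$-pullback of a global action on $\cS_{\cL,\bullet}$ that is tautological over $[\kC_{\leq 0}/\cL G]$.

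The gap is in how you attempt to carry this out. You propose to recover Lusztig's action by specialization along a one-parameter family $h:C\to\kC_{\bullet}$ with regular generic fiber. This is not how the action of \cite[Corollary~2.2.8]{BV} is constructed, and there is no reason the construction there furnishes such a specialization description. Even granting a $\wt{W}$-equivariant specialization map, you would still need it to \emph{determine} the action on the special fiber (e.g.\ be surjective on cohomology), which you do not establish; you yourself flag exactly this as the main obstacle. So the argument as written does not close.

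The paper instead exploits the actual definition of Lusztig's action: it is built generator-by-generator on $\wt{W}=\Omega\cdot\langle\wt{S}\rangle$. For $w\in\Omega=N_{\cL G}(I)/I$ the action is visibly induced by the automorphism $w_*$ of $[I/I]$, hence globalizes to $\cS_{\cL}$ and is geometric over the regular locus. For a simple reflection $s\in\wt{S}$, one uses the factorization through the minimal parahoric $P_s$ with Levi $L_s$: the finite-dimensional Springer isomorphism $a^{\on{fin}}_{s,\cL}$ on $[L_s/L_s]$ induces, via proper base change along $[I/I]\to[P_s/P_s]\to[L_s/L_s]$, a global isomorphism $a^{\on{Lu}}_{s,\cL}:\cS_{\cL}\isom\cS_{s_!(\cL)}$; its restriction to $[\kC_{\leq 0}/\cL G]$ is the geometric $s$-action because the classical Springer action is geometric over the regular semisimple locus of $L_s$. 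This directly produces, for each generator, the required global isomorphism with the correct restriction, and the intermediate-extension uniqueness finishes the proof. No specialization argument is needed.
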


\begin{proof}
By the construction of the actions in \rco{action2} and \cite[Corollary~2.2.8]{BV}, it suffices to show that for every $w\in\wt{W}$,
the isomorphism
\begin{equation} \label{Eq:isom}
a_{w,\cL}:H^i_c(\Fl_{\g},\omega_{\cL})\isom H^i_c(\Fl_{\g},\omega_{w_!(\cL)}),
\end{equation}
induced by the isomorphism
$a_{w,\cL}$ of \rco{action}, coincides with the isomorphism of \cite[Proposition~2.2.7(c)]{BV}.

\smallskip

Recall that the affine Weyl group $\wt{W}$ is generated by a set $\wt{S}\subseteq\wt{W}$ of simple affine reflections and a finite abelian group
$\Omega:=N_{\cL G}(I)/I$ (compare \cite[Section~2.1.2(d)]{BV}). Thus, it suffices to show the equality of isomorphisms $a_{w,\cL}$ for  $w\in\Omega$ and $w\in\wt{S}$.

\smallskip

To show the assertion for $w\in\Omega=N_{\cL G}(I)/I$, note that $w$ induces an automorphism $w_*$ of
$[\wt{\fC}/\cL G]\simeq [I/I]$ over $[\fC/\cL G]$ and in both cases the isomorphism $a_{w,\cL}$ of \form{isom} is induced by $w_*$.

\smallskip

To show the assertion for $w=s\in\wt{S}$, consider diagram of adjoint quotients
\[
\begin{CD}
[I/I] @>\pr^s>> [B_s/B_s] @>\pr_s>> T\\
@V\ov{\frak{p}}_s VV @V p_s VV\\
[P_s/P_s] @>\ov{\pr}^s>> [L_s/L_s]\\
@V\ov{\frak{p}}^s VV\\
[\fC/\cL G],
\end{CD}
\]
where $P_s\supsetneq I$ is the minimal standard parahoric of $\cL G$, corresponding to $s$, $P_s^+\subseteq P_s$ is the pro-unipotent radical of $P_s$, $L_s:=P_s/P^+_s$ is the corresponding ``Levi subgroup'', $B_s:=I/P_s^+\subseteq L_s$ is the Borel subgroup of $L_s$, and
all the maps are natural projections. 

\smallskip

Set $\cS^{\on{fin}}_{\cL}:=(p_s)_{!}\pr_s^!(\om_{T,\C{L}})\in\cD([L_s/L_s])$. Then, by the usual (finite-dimensional) Springer theory (see, for example, \cite[Section~1.2.1.(e)]{BV}),
we have a natural isomorphism
\[
a^{\on{fin}}_{s,\cL}: \cS^{\on{fin}}_{\cL}\isom\cS^{\on{fin}}_{s_!(\cL)}.
\]

On the other hand, by the proper base change, we have a natural identification
\[
\cS_{\cL}\simeq (\ov{\frak{p}}^s)_!(\ov{\frak{p}}_s)_!(\pr^s)^!\pr_s^!(\om_{T,\C{L}})\simeq
(\ov{\frak{p}}^s)_!(\ov{\pr}^s)^!(p_s)_{!}\pr_s^!(\om_{T,\C{L}})=(\ov{\frak{p}}^s)_!(\ov{\pr}^s)^!(\cS^{\on{fin}}_{\cL}).
\]
So isomorphism $a^{\on{fin}}_{s,\cL}$ induces an isomorphism
\begin{equation*} \label{Eq:awl}
a^{\on{Lu}}_{s,\cL}: \cS_{\cL}\isom\cS_{s_!(\cL)},
\end{equation*}
where ``$\on{Lu}$'' stands for Lusztig, and by definition the isomorphism \form{isom} of \cite[Proposition~2.2.7(c)]{BV} is induced by isomorphism $a^{\on{Lu}}_{s,\cL}$.

\smallskip

It remains to check that the restriction of the isomorphism $a^{\on{Lu}}_{s,\cL}$ to $[\fC_{\bullet}/\cL G]$ coincides with the isomorphism
of $a_{s,\cL}$ of \rco{action}. By construction, it suffices to show that the restriction of the isomorphism
$a^{\on{Lu}}_{s,\cL}$ to $[\fC_{\leq 0}/\cL G]$ is induced by the geometric action of $s\in \wt{W}$ on $[\wt{\fC}_{\leq 0}/\cL G]$. But this follows from the fact that the usual Springer action $a^{\on{fin}}_{s,\cL}$ is induced by the geometric action on the regular semisimple locus.
\end{proof}

\begin{Emp}
{\bf Remarks.} The following observations are not used in this work:

\smallskip

(a) Arguing as in \cite[Proposition~2.2.7]{BV} or \cite[Section~3.3]{B} (using presentation of $\wt{W}$ by generators and relations), one can show that a small modification of the argument of \rp{compat} provides a Lusztig action of $\La$ (resp. $\wt{W}$) on $\cS_{\cL}$ on the level of homotopy categories. In particular, using the perversity of $\cS_{\cL,\bullet}$ and observation of \cite[Remark~1.2.1.12]{Lu2} again, one can deduce that the induced action on $\cS_{\cL,\bullet}$ lifts uniquely to the action on the level of $\infty$-categories. Moreover, the resulting action of $\La$ (resp. $\wt{W}$) on  $\cS_{\cL,\bullet}$ coincides  with the one of \rco{action}.

\smallskip

(b) Furthermore, using perversity of finite-dimensional Springer sheaves, one can show that the Lusztig action of $\La$ (resp. $\wt{W}$) on $\cS_{\cL}$ on the level of homotopy categories discussed in part~(a) can be lifted to the action on the level of $\infty$-categories (see \cite{BeKV2}). Furthermore, this lift can be shown to be unique.
\end{Emp}

\begin{Cor} \label{C:fingen}
For every $\g\in\fC_{\bullet}(k)$ and $i\in\B{Z}$, the cohomology group $H^i_c(\Fl_{\g},\omega_{\cL})$
is a finitely generated $\qlbar[\La]$-module.
\end{Cor}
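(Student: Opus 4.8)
The plan is to fix $\g$, place it inside a GKM stratum, exploit the geometric action of a sublattice $\La_w\subseteq\La$ on the affine Springer fiber $\Fl_{\g}$ (whose quotient is proper, by Proposition~\rp{topproper}), deduce finite generation over $\qlbar[\La_w]$ from Lemma~\rl{La-cstr}, and finally transfer it to $\qlbar[\La]$. So first I would reduce to a favorable position: $\g\in\fC_{\bullet}(k)$ lies in $\kC_{w,\br}(k)$ for some GKM pair $(w,\br)$, and replacing $\g$ by an $\cL G$-conjugate --- which identifies $\Fl_{\g}$, the sheaf $\om_{\cL}|_{\Fl_{\g}}$ and the $\La$-action of Corollary~\rco{action2} with the analogous data for the conjugate --- we may assume $\g$ comes from a $k$-point $x_{\g}$ of $T_{w,\br}$ in the Cartesian square~\eqref{Eq:basic1}, so that $\Fl_{\g}=g_{w,\br}^{-1}(x_{\g})$. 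By Section~\re{affsprfib}(b), $(\Fl_{\g})_{\red}$ is a finite-dimensional scheme, locally of finite type over $k$, and we may freely replace $\Fl_{\g}$ by its reduction.

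By Proposition~\rp{topproper}(a), $X_{w,\br}$ carries an action of $\La_w$ over $T_{w,\br}$ such that $[X_{w,\br}/\La_w]$ is an algebraic space, fp-proper over $T_{w,\br}$. Passing to the fiber over $x_{\g}$, the lattice $\La_w$ acts on $\Fl_{\g}$ with quotient $[\Fl_{\g}/\La_w]$ an algebraic space, fp-proper over $k$; in particular the action is free, so $\Fl_{\g}\to[\Fl_{\g}/\La_w]$ is a $\La_w$-torsor. Moreover, by Section~\re{stratum}(c) the composition $X_{w,\br}\overset{\phi_{w,\br}}{\lra}[\wt{\fC}_{w,\br}/\cL G]\overset{\pr}{\lra}T$ is $\La_w$-invariant (as $\La_w$ acts trivially on $T$), hence factors through $[X_{w,\br}/\La_w]$; restricting to the fiber over $x_{\g}$ shows $\om_{\cL}|_{\Fl_{\g}}\simeq p^{!}(K_{\g})$, where $p:\Fl_{\g}\to[\Fl_{\g}/\La_w]$ is the quotient map and $K_{\g}$ is the $!$-pullback of the constructible sheaf $\om_T\otimes\cL$ along the induced map $[\Fl_{\g}/\La_w]\to T$; since $[\Fl_{\g}/\La_w]$ is of finite type over $k$, we have $K_{\g}\in\cD_c([\Fl_{\g}/\La_w])$. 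Now I would apply Lemma~\rl{La-cstr} with $X=\Fl_{\g}$, $S=\pt$ and $\Gm=\La_w$: the pushforward to a point exists, and by \eqref{Eq:rgmc} the complex $R\Gm_c(\Fl_{\g},\om_{\cL})\simeq\iota_{\g}^{!}(\cS_{\cL,\bullet})$ acquires a natural lift to $\cD_{\La_w,c}(\pt)=\cD_{\perf}(\qlbar[\La_w])$ (Section~\re{example}(a)), with $\La_w$ acting through the deck-transformation action, i.e.\ through the inclusion $\La_w\hookrightarrow\cL G_{\g}$. As $\qlbar[\La_w]$ is a Laurent polynomial ring, hence Noetherian of finite cohomological dimension, Section~\re{example}(b) shows that $\bigoplus_i H^i_c(\Fl_{\g},\om_{\cL})$ --- and therefore each $H^i_c(\Fl_{\g},\om_{\cL})$ --- is a finitely generated $\qlbar[\La_w]$-module.

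It remains to pass from $\qlbar[\La_w]$ to $\qlbar[\La]$. The sublattice $\La_w=\Hom_F(\gm,T_w)$ is naturally a subgroup of $\La=X_{*}(T)$, so $\qlbar[\La_w]$ is a subring of $\qlbar[\La]$, and a module that is finitely generated over a subring is a fortiori finitely generated over the bigger ring. Hence it suffices to know that the $\La_w$-module structure found above is the restriction, along $\La_w\hookrightarrow\La$, of the $\La$-module structure of Corollary~\rco{action2}: granting this, any $\qlbar[\La_w]$-generating set of $H^i_c(\Fl_{\g},\om_{\cL})$ generates it over $\qlbar[\La]$. This identification of the two actions on the geometric sublattice --- equivalently, that the abstractly defined $\La$-action restricts on $\La_w$ to the translation action of $\La_w\hookrightarrow\cL G_{\g}=\cL T_{\g}$ --- is the one substantial point, and I expect it to be the main obstacle; I would extract it by combining Proposition~\rp{compat} (which identifies the $\La$-action of Corollary~\rco{action2} with Lusztig's) and Remark~\re{hom} (Poincar\'e duality) with the known normalization of Lusztig's action on $H_{*}(\Fl_{\g},\cF_{\cL})$ from \cite{BV}. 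As a partial check, on the open stratum $[\fC_{\leq 0}/\cL G]$ this compatibility is transparent, since there ($w=1$, $\La_w=\La$) the $\La$-action of Corollary~\rco{action} is literally the translation action of $\La\hookrightarrow\cL G_{\g}$ on $\Fl_{\g}$, and everything else in the argument is a routine combination of Proposition~\rp{topproper}, the descent of $\om_{\cL}$ through $\La_w$ from Section~\re{stratum}(c), and Lemma~\rl{La-cstr}.
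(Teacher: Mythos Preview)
The paper's proof is a one-line citation: combine Proposition~\ref{P:compat} (identifying the $\La$-action of Corollary~\ref{C:action2} with Lusztig's) with \cite[Proposition~3.3.2]{BV}. Your argument is essentially an unpacking of the latter, and the ``main obstacle'' you isolate is precisely its hard input, namely the group version \cite[Theorem~2.3.4]{BV} of Yun's theorem.

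The gap is that the compatibility you formulate is not the correct one. First, there is no single natural embedding $\La_w\hookrightarrow\La$: for $\g\in T_{w,\br}(k)$ one has $G_\g\simeq(T_w)_F$, and each admissible isomorphism $\phi:G_\g\isom T$ yields an embedding $\La_w=\La_\g=X_*(G_\g)^{\Gm_F}\hookrightarrow\La$, but these embeddings form a $W$-torsor (cf.\ Step~4 in the proof of Proposition~\ref{p-tau2}). Second, and more importantly, the Springer $\La$-action does \emph{not} restrict along any one of them to the centralizer $\La_\g$-action: in Corollary~\ref{C:action2}(a) these are the two commuting factors of a $\La\times\cL G_\g$-action, not one an extension of the other. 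The actual relation is the support condition of Claim~\ref{yun-supp}: as a module over $\qlbar[\La]\otimes\qlbar[\wt{\La}_\g]$, each $H^i_c(\Fl_\g,\om_{\cL})$ is set-theoretically supported on the finite union of graphs of the $\eta_{\phi,\la}$'s. From this together with your finite generation over $\qlbar[\La_\g]$ one does deduce finite generation over $\qlbar[\La]$, but via a short support-theoretic argument, not ``restriction to a subring''. That deduction is exactly \cite[Proposition~3.3.2]{BV}, which the paper invokes directly; so once the compatibility step is stated correctly, your route and the paper's coincide.
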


\begin{proof}
This follows from a combination of \rp{compat} and \cite[Proposition~3.3.2]{BV}.
\end{proof}

\begin{Emp}
{\bf Remark.} The proof of \cite[Proposition~3.3.2]{BV} uses a group version \cite[Proposition~2.3.4]{BV} of Yun's theorem \cite{Yun} on the compatibility of actions on homology of affine Springer fibers, whose proof is global. A different (purely local) proof of this assertion will appear in \cite{BeKV2}.

%(b) The above argument is similar to \cite[Thm. 4.3.15]{B}.
\end{Emp}

\begin{Thm} \label{T:La-cstr}
For every local system $\cL$ on $T$, the sheaf $\cS_{\cL,\bullet}\in \cD^{\La}([\kC_{\bullet}/\cL G])$ is $\La$-constructible.
\end{Thm}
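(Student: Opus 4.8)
The plan is to verify the criterion of Proposition~\ref{P:Gmcons} with $\Gm=\La$. Note first that $\qlbar[\La]$ is Noetherian (as $\La\cong\bZ^r$ is a finitely generated abelian group) and has finite cohomological dimension (equal to $r$), so the proposition applies. Since $[\kC_{\bullet}/\cL G]$ is covered, via the smooth coverings $\psi_{w,\br}$ of Section~\re{stratum}(a) glued along the bounded constructible stratification of Section~\re{open}(c), by admissible $\infty$-stacks, and since $\La$-constructibility can be checked on a covering and stratum-by-stratum (Lemma~\ref{L:Gmconsstr}(a),(b)), it suffices to verify the two conditions of Proposition~\ref{P:Gmcons} for $\cS_{\cL,\bullet}\in\cD^{\La}([\kC_{\bullet}/\cL G])$ directly: that its image under the forgetful functor is essentially constructible, and that $\iota_{\g}^{!}(\cS_{\cL,\bullet})\in\cD_{\perf}(\qlbar[\La])$ for every $\g\in\kC_{\bullet}(k)$.

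For the first condition, recall $\cS_{\cL,\bullet}\simeq(\ov{\frak{p}}_{\bullet})_!(\om_{\cL})$ with $\ov{\frak{p}}_{\bullet}:[\wt{\fC}_{\bullet}/\cL G]\to[\kC_{\bullet}/\cL G]$. Here the $\La$-structure comes, via the torsor description of Section~\re{regstratum}(b) over the open locus and the intermediate-extension property of $\cS_{\cL,\bullet}$ (Theorem~\ref{T:perv}), from an application of Lemma~\ref{L:taucoinv}(a) or Lemma~\ref{L:La-cstr} to the $\wt W$-torsor $\ov{\frak{p}}_{\leq 0}$; concretely, $\cS_{\cL,\leq 0}$ is (a summand of) $p_!p^!$ of something for $p$ the projection by the $\La$-action, so the forgetful functor sends it into $\cD_{\on{ess}-c}$ by Section~\re{func}(b). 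To get essential constructibility on all of $[\kC_{\bullet}/\cL G]$ one pushes this forward: $\For\circ(\ov{\frak{p}}_{\bullet})_!\simeq(\ov{\frak{p}}_{\bullet})_!\circ\For$, and $(\ov{\frak{p}}_{\bullet})_!$ is the pushforward along an ind-fp-proper map, which preserves the class $\cD_{\on{ess}-c}$ (it preserves $\cD_c$ and commutes with the relevant colimits and tensorings by $\qlbar$-vector spaces). Alternatively one invokes Section~\re{func}(b) directly to the object $\cS_{\cL,\bullet}$ viewed as built by gluing from the $\La$-constructible pieces on each stratum, which is cleaner.

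For the second condition, the stalk computation \form{rgmc} gives $\iota_{\g}^{!}(\cS_{\cL,\bullet})\simeq R\Gm_c(\Fl_{\g},\om_{\cL})$, and one must show each such complex is a perfect complex of $\qlbar[\La]$-modules. By Example~\re{example}(b), since $\qlbar[\La]$ is Noetherian of finite cohomological dimension, this is equivalent to $\bigoplus_i H^i_c(\Fl_{\g},\om_{\cL})$ being a finitely generated $\qlbar[\La]$-module. Boundedness of the cohomology holds because $\Fl_{\g}$ is finite-dimensional (Section~\re{affsprfib}(b)), so only finitely many $i$ contribute; and finite generation of each $H^i_c(\Fl_{\g},\om_{\cL})$ over $\qlbar[\La]$ is exactly Corollary~\ref{C:fingen}. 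This is the substantive input, and it is the step I expect to be the real obstacle — it rests on Proposition~\ref{P:compat} identifying the $\La$-action with Lusztig's and then on \cite[Proposition~3.3.2]{BV}, which in turn uses the group version of Yun's compatibility theorem. Granting these, one assembles the two conditions and applies Proposition~\ref{P:Gmcons} to conclude $\cS_{\cL,\bullet}\in\cD_{\La,c}([\kC_{\bullet}/\cL G])$, i.e.\ $\cS_{\cL,\bullet}$ is $\La$-constructible.
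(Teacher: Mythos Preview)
Your overall strategy matches the paper's: reduce to each GKM stratum via \rl{Gmconsstr}(a),(b), then apply \rp{Gmcons} on the admissible cover $T_{w,\br}$, checking (i) essential constructibility of $\For(\cS_{\cL,\bullet})$ and (ii) perfectness of the stalks $\iota_\g^!(\cS_{\cL,\bullet})\simeq R\Gm_c(\Fl_\g,\om_\cL)$. Your treatment of (ii) via Section~\re{example}(b) and \rco{fingen} is correct and is exactly what the paper does.

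The gap is in your argument for (i). Your claim that $(\ov{\frak{p}}_\bullet)_!$ ``preserves the class $\cD_{\on{ess}-c}$ (it preserves $\cD_c$ \ldots)'' is false: the fibers $\Fl_\g$ have infinitely many components, so $(\ov{\frak{p}}_\bullet)_!(\om_\cL)$ has infinite-dimensional stalks and is certainly not constructible. Whether it is \emph{essentially} constructible is precisely the issue, and this does not follow formally from ind-fp-properness; it requires a geometric finiteness input that you have not supplied. Your argument via the open locus $[\fC_{\leq 0}/\cL G]$ only handles the trivial stratum $(w,\br)=(1,0)$, and your ``alternative'' of invoking Section~\re{func}(b) on $\La$-constructible pieces on each stratum is circular, since that is what is being proved.

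The paper closes this gap by working on a fixed stratum: after pulling back along $\psi_{w,\br}$ one has $(\psi_{w,\br})^!(\cS_{\cL,\bullet})\simeq (g_{w,\br})_!(\om_\cL)$, and the key point is \rp{topproper}(a), asserting that the quotient $[X_{w,\br}/\La_w]$ is an algebraic space \emph{fp-proper} over $T_{w,\br}$. Together with the observation (Section~\re{stratum}(c)) that $\pr\circ\phi_{w,\br}$ factors through $[X_{w,\br}/\La_w]$, this puts one in the situation of \rl{La-cstr}, yielding $\La_w$-constructibility of $(g_{w,\br})_!(\om_\cL)$; essential constructibility then follows from Section~\re{func}(b). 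This is the step you are missing: the fp-properness of the $\La_w$-quotient on each stratum is the substantive geometric input replacing your unjustified claim about ind-fp-proper pushforward.
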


\begin{proof}
By \rl{Gmconsstr}(a), it suffices to show that the $!$-restriction of $\cS_{\cL,\bullet}$ to each  stratum $[\kC_{w,\br}/\cL G]_{\red}$ is
$\La$-constructible. Moreover, since $\psi_{w,\br}:T_{w,\br}\to [\kC_{w,\br}/\cL G]_{\red}$ is a covering (see Section~\re{stratum}(a)), it follows from \rl{Gmconsstr}(b) that it suffices to show that the $!$-pullback  $(\psi_{w,\br})^!(\cS_{\cL,\bullet})\in \cD(T_{w,\br})$ is $\La$-constructible.

\smallskip

By the base change, we have $(\psi_{w,\br})^!(\cS_{\cL,\bullet})\simeq(g_{w,\br})_!(\omega_{\cL})$, where
$g_{w,\br}:X_{w,\br}\to T_{w,\br}$ is the morphism from Section~\re{stratum}(b). Moreover, as it is shown in \rp{topproper}, the group $\La_{w}$ acts on $X_{w,\br}$ over $T_{w,\br}$, and the quotient $[X_{w,\br}/\La_w]$ is an fp-proper algebraic space over $T_{w,\br}$.

Since the composition $\pr\circ \phi_{w,\br}:X_{w,\br}\to T$ factors through the projection $X_{w,\br}\to[X_{w,\br}/\La_w]$
(see Section~\re{stratum}(c)), it thus follows from \rl{La-cstr} that $(g_{w,\br})_!(\omega_{\cL})\in\cD(T_{w,\br})$ has a natural lift to $\cD^{\La_w}(T_{w,\br})$, and the corresponding lift is $\La_w$-constructible. Hence,
by Section~\re{func}(b), the sheaf $(g_{w,\br})_!(\omega_{\cL})$ is essentially constructible.

\smallskip

Note that $T_{w,\br}$ is an admissible scheme (see Section~\re{constr}(b)) and algebra $\bql[\La]$ is Noetherian of finite cohomological dimension. Thus, by \rp{Gmcons},  it suffices to show that for every $\g\in T_{w,\br}(k)$, the pullback
%by \cite[Prop. 4.3.14]{B},
\[
\iota_{\g}^{!}((g_{w,\br})_!(\omega_{\cL}))\simeq R\Gm_c(\Fl_{\g},\omega_{\cL})\in\cD^{\La}(\pt)
\]
is a perfect complex. Equivalently, we have to show that each cohomology group
$H^i_c(\Fl_{\g},\omega_{\cL})$ is a finitely-generated $\qlbar[\La]$-module. But this follows from \rco{fingen}.
\end{proof}

\begin{Emp}
{\bf Remark.} A different proof of (a generalization of) \rt{La-cstr} will appear in \cite{BeKV2}.
%Compare \cite[Th\`eor\'eme~4.3.15]{B}, where a similar but slightly incomplete proof of a Lie algebra version of \rt{La-cstr}.
%(b) The above argument is similar to \cite[Thm. 4.3.15]{B}.
\end{Emp}

\begin{Cor} \label{C:ess-cons}
For every local system $\cL$ on $T$ and representation $\tau\in\on{Rep}_{\bql}(\La)$, the sheaf of $\tau$-coinvariants $\cS_{\cL,\bullet,\tau}:=\on{coinv}_{\tau}(\cS_{\cL,\bullet})$ is essentially constructible. Furthermore, $\cS_{\cL,\bullet,\tau}$ is constructible, if $\tau$ is finite dimensional.
\end{Cor}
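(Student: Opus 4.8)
The plan is to obtain this corollary as an essentially immediate consequence of \rt{La-cstr} together with \rl{Gmconsstr}(c), with no new geometry required. First, recall from \rco{action}(b) that $\cS_{\cL,\bullet}$ carries a natural lift to $\cD^{\La}([\kC_{\bullet}/\cL G])$, so that $\cS_{\cL,\bullet,\tau}:=\coinv_{\tau}(\cS_{\cL,\bullet})$ is well-defined in the sense of Section~\re{isot} (applied to the stable cocomplete $\qlbar$-tensored $\infty$-category $\cD([\kC_{\bullet}/\cL G])$ and the group $\La$). This is the object we must control.

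Next, I would invoke \rt{La-cstr}, which states precisely that this $\La$-equivariant lift lies in the subcategory $\cD_{\La,c}([\kC_{\bullet}/\cL G])$ of $\La$-constructible sheaves. Then \rl{Gmconsstr}(c), applied with $\Gm=\La$ and $K=\cS_{\cL,\bullet}$, yields at once that $\cS_{\cL,\bullet,\tau}=K_{\tau}\in\cD_{\on{ess}-c}([\kC_{\bullet}/\cL G])$ for every $\tau\in\Rep_{\bql}(\La)$, and that $K_{\tau}\in\cD_{c}([\kC_{\bullet}/\cL G])$ when $\tau$ is finite-dimensional. This gives both assertions of the corollary.

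There is essentially no obstacle to overcome: the only things to check are bookkeeping, namely that the functor $\coinv_{\tau}$ appearing in \rl{Gmconsstr}(c) is the same functor of $\tau$-coinvariants from Section~\re{isot} that is used to define $\cS_{\cL,\bullet,\tau}$ (it is, by construction), and that $[\kC_{\bullet}/\cL G]$ meets the standing hypotheses of \rl{Gmconsstr} (it is an $\infty$-stack, which is all that is needed there; its admissibility and the finiteness of the relevant strata were already used inside the proof of \rt{La-cstr}). Thus the whole argument is a one-line deduction from the two cited results.
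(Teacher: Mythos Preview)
Your proposal is correct and follows essentially the same approach as the paper: invoke \rt{La-cstr} to obtain $\cS_{\cL,\bullet}\in\cD_{\La,c}([\kC_{\bullet}/\cL G])$, and then apply \rl{Gmconsstr}(c) with $\Gm=\La$ to conclude both assertions. The paper's proof is the one-line version of exactly this argument.
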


\begin{proof}
Since $\cS_{\cL,\bullet}\in \cD^{\La}([\kC_{\bullet}/\cL G])$ is $\La$-constructible (by \rt{La-cstr}), the assertion follows from \rl{Gmconsstr}(c).
\end{proof}

\subsection{Perversity of $\tau$-coinvariants}

%In this paragraph we simultaneously treat the group case and the Lie algebra case of \cite{BKV}.

Now we are ready to prove the main result of this work.

\begin{Thm}\label{T:perv-tau}
For every local system on $\cL$ on $T$ and representation $\tau\in\on{Rep}_{\bql}(\La)$, the sheaf of $\tau$-coinvariants $\cS_{\cL,\bullet,\tau}=\on{coinv}_{\tau}(\cS_{\cL,\bullet})\in\cD([\kC_{\bullet}/\cL G])$ is $p_{\nu}$-perverse.
\end{Thm}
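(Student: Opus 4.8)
The plan is to handle the two halves of the $p_\nu$-perverse $t$-structure separately: the upper bound ${}^{p_\nu}\cD^{\leq 0}$ is formal, while the lower bound ${}^{p_\nu}\cD^{\geq 0}$ is the whole point.

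\emph{The right $t$-exact direction.} First I would equip $\cD([\kC_\bullet/\cL G])^{B\La}$ with the $t$-structure for which $\For$ is $t$-exact (Lemma~\ref{r-tex}(a)). The glued $p_\nu$-perverse $t$-structure on $\cD([\kC_\bullet/\cL G])$ is compatible with filtered colimits (each ${}^p\cD^{\geq 0}([\kC_{w,\br}/\cL G]_{\red})$ is, by Section~\ref{E:tstr}(a)(iii), and the gluing functors $\eta_{w,\br}^!$ preserve filtered colimits), so Lemma~\ref{r-tex}(b) makes $\coinv_\tau$ right $t$-exact. By Theorem~\ref{T:perv} together with Corollary~\ref{C:action}(b), the $\La$-equivariant lift of $\cS_{\cL,\bullet}$ is $p_\nu$-perverse, so $\cS_{\cL,\bullet,\tau}=\coinv_\tau(\cS_{\cL,\bullet})\in{}^{p_\nu}\cD^{\leq 0}([\kC_\bullet/\cL G])$, and only $\cS_{\cL,\bullet,\tau}\in{}^{p_\nu}\cD^{\geq 0}([\kC_\bullet/\cL G])$ remains.

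\emph{Reduction to affine Springer fibers.} By the description of the glued $t$-structure (Section~\ref{E:pervplstr}(b)) I must show that $\eta_{w,\br}^!(\cS_{\cL,\bullet,\tau})\in{}^p\cD^{\geq-\nu_{w,\br}}([\kC_{w,\br}/\cL G]_{\red})$ for every GKM pair $(w,\br)$, where $\nu_{w,\br}=\dt_{w,\br}+b_{w,\br}$. Since $\cS_{\cL,\bullet}$ is $\La$-constructible (Theorem~\ref{T:La-cstr}), the functor $\coinv_\tau$ commutes with the restriction $\eta_{w,\br}^!$: after pulling back along the smooth covering $\psi_{w,\br}$ one has, by the Cartesian square \eqref{Eq:basic1}, $\psi_{w,\br}^!\eta_{w,\br}^!(\cS_{\cL,\bullet})\simeq(g_{w,\br})_!(\om_\cL)$, and $(g_{w,\br})_!$, being a composite of a torsor push-forward and an fp-proper push-forward, preserves colimits, hence commutes with $\coinv_\tau$ by Section~\ref{E:isot}(c). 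Now $[\kC_{w,\br}/\cL G]_{\red}$ is admissible (Section~\ref{E:stratum}(a)) and $\eta_{w,\br}^!(\cS_{\cL,\bullet,\tau})$ is essentially constructible (Corollary~\ref{C:ess-cons}), so applying Proposition~\ref{pullcheck} to its shift by $-\nu_{w,\br}$ reduces the claim to $\iota_\g^!(\cS_{\cL,\bullet,\tau})\in D^{\geq-\nu_{w,\br}}(\pt)$ for every $k$-point of the stratum; since $\psi_{w,\br}$ is surjective, each such point comes from some $\g\in\kC_{w,\br}(k)$. Combining with the base change isomorphism \eqref{Eq:rgmc} and $\iota_\g^!\coinv_\tau\simeq\coinv_\tau\iota_\g^!$, everything is reduced to the inclusion \eqref{Eq:intr}: for every $\g\in\kC_{w,\br}(k)$,
\[
R\Gm_c(\Fl_\g,\om_\cL)_\tau\in D^{\geq-d_\g},\qquad d_\g:=\nu_{w,\br}=\dt_{w,\br}+b_{w,\br},
\]
the $\La$-action being the one of Corollary~\ref{C:action2}.

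\emph{The fiber inclusion, via Yun's theorem --- the hard part.} Set $V:=R\Gm_c(\Fl_\g,\om_\cL)$. By Corollary~\ref{C:fingen} its cohomology groups are finitely generated over $\qlbar[\La]$, and there are finitely many of them, so $V$ is a bounded complex of coherent sheaves on the torus $\Spec\qlbar[\La]$. The naive estimate --- combining $\om_{\Fl_\g}\in D^{\geq-2\dt_{w,\br}}$ and $\dim\Fl_\g=\dt_{w,\br}$ (Section~\ref{E:affsprfib}(b)), left $t$-exactness of $R\Gm_c$, and Lemma~\ref{coh-loc}(a) for the codimension-$\rk\La$ inclusion of the point $\tau$ --- only gives $V_\tau\in D^{\geq-2\dt_{w,\br}-\rk\La}$, which by Corollary~\ref{cor-codim} ($b_{w,\br}\geq\dt_{w,\br}$) falls short of \eqref{Eq:intr} by precisely $\rk\La$. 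To close this gap I would use the group analog \cite[Theorem~2.3.4]{BV} of Yun's theorem \cite{Yun}: the Lusztig $\La$-action on $V$ --- which by Proposition~\ref{P:compat} is the action in \eqref{Eq:intr} --- is compatible with the ``geometric'' action coming from the symmetries of the centralizer, and this compatibility promotes $V$ to a complex of quasi-coherent sheaves $K$ on a product $X\times Y$ of smooth schemes for which the hypotheses of Proposition~\ref{coh3} hold: (i) $K$ is set-theoretically supported on the union of the graphs of finitely many closed embeddings $X\to Y$, coming from the GKM description of the stratum and the Weyl symmetries of the centralizer; and (ii) $K\otimes p_X^*A\in D^{\geq 0}$ for all $A\in\on{QCoh}(X)$, encoding the perversity of $\cS_{\cL,\bullet}$ in the regular direction. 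Proposition~\ref{coh3} then yields $K\otimes p_Y^*B\in D^{\geq\dim X-\dim Y}$ for all $B$; taking $B$ to be the skyscraper at $\tau$ and identifying $\dim X-\dim Y$ with $-d_\g$ via the codimension formula (Proposition~\ref{codim}) and the dimension formula for $\Fl_\g$ gives \eqref{Eq:intr}, and hence the theorem. I expect the genuine obstacle to be step~(i): extracting from \cite[Theorem~2.3.4]{BV} the precise quasi-coherent structure on $V$ for which the graph-support hypothesis of Proposition~\ref{coh3} is valid, together with the matching of the numerical shifts.
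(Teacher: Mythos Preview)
Your strategy matches the paper's exactly through the first two stages --- right $t$-exactness of $\coinv_\tau$ for the $\leq 0$ half, then reduction to fibers via essential constructibility (Corollary~\ref{C:ess-cons}) and Proposition~\ref{pullcheck} --- and the endgame is indeed Proposition~\ref{coh3} fed by Yun's theorem. But the part you yourself flag as the obstacle really is one, and your guess at its resolution is off in two places.

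First, hypothesis~(ii) of Proposition~\ref{coh3} does \emph{not} encode perversity of $\cS_{\cL,\bullet}$; it comes from a direct geometric estimate on the \emph{centralizer} action. In the paper one takes $X=\Spec\bql[\wt\La_\g]$ with $\wt\La_\g=\pi_0(\cL G_\g)$ and $Y=\Spec\bql[\La]$, and Step~1 of Proposition~\ref{p-tau2} shows $\nu\otimes^L_{\bql[\La_\g]}V\in\cD^{\geq -2\dt_{w,\br}}$ for every $\nu$, using only that $[\Fl_\g/\La_\g]$ is fp-proper of dimension $\dt_{w,\br}$ together with Lemma~\ref{L:taucoinv}(b) and the bound on $!$-local systems from Section~\re{!ls}(c); after inducing to $\wt\La_\g$ this becomes hypothesis~(ii) for $K[-2\dt_{w,\br}]$. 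Second, the numerics: $\dim X-\dim Y=(r-c_w)-r=-c_w$, not $-\nu_{w,\br}$, so Proposition~\ref{coh3} applied to $K[-2\dt_{w,\br}]$ yields $K\otimes p_Y^*B\in D^{\geq -2\dt_{w,\br}-c_w}=D^{\geq -d_\br}$. It is this sharper bound $-d_\br$ (recall $\nu_{w,\br}=d_\br+a_{w,\br}\geq d_\br$ by Proposition~\ref{codim}) that Proposition~\ref{p-tau2} actually establishes; your identification ``$\dim X-\dim Y=-d_\g$'' cannot be made to work, and the naive estimate falls short of $-d_\br$ by $r-c_w=\dim X$, not by $\rk\La$. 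The graph-support hypothesis~(i) is Claim~\ref{yun-supp}: the embeddings $X\hra Y$ are indexed by admissible isomorphisms $\phi:G_\g\isom T$ and elements of the finite group $\ov\La_\g=\wt\La_\g/\La_\g$, and the support statement is extracted from \cite[Theorem~2.3.4]{BV} as in \cite[Claim~5.3.4]{BV}.
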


\begin{Emp}
{\bf Remark.}
Unlike $\cS_{\cL,\bullet}$, it is not true that $\cS_{\cL,\bullet,\tau}$ is the intermediate extension of its restriction to
$[\kC_{\leq 0}/\cL G]$. Also $\cS_{\cL,\bullet,\tau}$ is not necessary irreducible, even if $\tau$ and $\C{L}$ are irreducible.
\end{Emp}

\begin{proof}
%We treat simultaneously both cases, but consider the group case as the main case. We explain the needed modifications in the Lie algebra case.
As $\cS_{\cL,\bullet}$ is $p_{\nu}$-perverse (by \rt{perv}), and the functor of $\tau$-coinvariants is right $t$-exact (by Lemma~\ref{r-tex}), we only have to prove that $\cS_{\cL,\bullet,\tau}\in \mathstrut^{p_{\nu}}\cD^{\geq 0}([\kC_{\bullet}/\cL G])$. By definition, we have to check that for every GKM pair $(w,\br)$, we have
\begin{equation*}
\eta_{w,\br}^{!}(\cS_{\C{L},\bullet,\tau})\in \mathstrut^{p}\cD^{\geq -\nu_{w,\br}}([\kC_{w,\br}/\cL G]_{\red}).
\label{res-tau}
\end{equation*}

Next, using identity $\nu_{w,\br}=\dt_{w,\br}+b_{w,\br}=2\dt_{w,\br}+c_w+a_{w,\br}=d_{\br}+a_{w,\br}$ (by Proposition~\ref{codim}),
it thus suffices to check that for every GKM pair $(w,\br)$, we have
\begin{equation*}
\eta_{w,\br}^{!}(\cS_{\C{L},\bullet,\tau})\in \mathstrut^{p}\cD^{\geq -d_{\br}}([\kC_{w,\br}/\cL G]_{\red}).
%\label{res-tau}
\end{equation*}

Since $\cS_{\cL,\bullet,\tau}$ is essentially constructible (by \rco{ess-cons}), the pullback $\eta_{w,\br}^{!}(\cS_{\C{L},\bullet,\tau})$ is essentially constructible (by Section~\re{esscons}(c)). Using the fact that the $\infty$-stack $[\kC_{w,\br}/\cL G]_{\red}$ is admissible (see Section~\re{stratum}(a)), it follows from Proposition~\ref{pullcheck}
that it suffices to show that for every $\g\in \kC_{w,r}(k)$, one has an inclusion
\begin{equation} \label{Eq:res-tau}
\iota_{\g}^{!}(\cS_{\C{L},\bullet,\tau})\in \cD^{\geq -d_{\br}}(\pt).
\end{equation}
Combining isomorphism $\iota_{\g}^{!}(\cS_{\C{L},\bullet})\simeq R\Gm_c(\Fl_{\g},\omega_{\cL})$ (see Section~\re{gsp}(c)) with observations of Sections~\re{isot}(c),(d), we get an isomorphism
\[
\iota_{\g}^{!}(\cS_{\C{L},\bullet,\tau})\simeq (\iota_{\g}^{!}(\cS_{\C{L},\bullet}))_{\tau}\simeq R\Gm_c(\Fl_{\g},\omega_{\cL})_{\tau}\simeq \tau\otimes^{L}_{\bql[\La]} R\Gm_c(\Fl_{\g},\omega_{\cL}),
\]
where $\tau$ is the right $\qlbar[\La]$-module, corresponding to $\tau\in\on{Rep}_{\bql}(\La)$ (see Section~\re{isot}(d)). Now inclusion \form{res-tau} follows from Proposition~\ref{p-tau2} below.
%Finally if $\tau$ is finite dimensional, then by \ref{L-cstr} and \eqref{coinv-cc}, we  know that $(\cS_{\cL,\bullet})_{\tau}$ is constructible (\cite[Thm. 4.3.15]{B} for the Lie algebra case).
\end{proof}

%\begin{Rem}
%The Lie algebra statement was proved conditionnally in \cite[Thm.5.3.1]{B} using a stronger form of Yun's theorem.
%\end{Rem}

%Let us formulate the following proposition that we prove in \ref{app} from which we deduce the theorem:
\begin{Prop}\label{p-tau2}
For every $\g\in\kC_{w,\br}(k)$ and every representation $\tau\in\on{Rep}_{\bql}(\La)$, we have
\[\tau\otimes^{L}_{\bql[\La]}R\Gm_c(\Fl_{\g},\omega_{\cL})\in\cD^{\geq -d_{\br}}.\]
\end{Prop}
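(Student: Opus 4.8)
The strategy is to translate the statement about $\tau\otimes^L_{\bql[\La]}R\Gm_c(\Fl_{\g},\om_{\cL})$ into a statement about quasi-coherent sheaves on a product $X\times Y$ and then invoke Proposition~\ref{coh3}. The starting point is the group analog of Yun's theorem \cite[Theorem~2.3.4]{BV}: the $\qlbar[\La]$-module structure on $H^*_c(\Fl_{\g},\om_{\cL})$ (equivalently, on the homology $H_*(\Fl_{\g},\C{F}_{\cL})$) extends to an action of the coordinate ring of a commutative group scheme—concretely, an affine scheme $\Spec R$ built from the centralizer torus of $\g$—so that $R\Gm_c(\Fl_{\g},\om_{\cL})$ becomes a module over $R$. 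Thus I would first realize $C:=R\Gm_c(\Fl_{\g},\om_{\cL})$, together with both its $\qlbar[\La]$-module structure and the ambient $R$-module structure, as (the underlying complex of) an object $K\in D^+(\on{QCoh}(X\times Y))$, where $Y=\Spec\qlbar[\La]$ is (a torus, hence) smooth connected, and $X$ is a suitable smooth connected scheme through which the $R$-action factors—playing the role of the base over which the two actions interact. The key geometric input from Yun's theorem is that the support of $C$, as a module over both structures simultaneously, is concentrated on the union of finitely many graphs of closed embeddings $\eta_1,\dots,\eta_m:X\to Y$; these graphs encode the finitely many ``Kostant sections'' or translates $\la\mapsto$ point of $T$ coming from the regular semisimple locus. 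This is exactly hypothesis (i) of Proposition~\ref{coh3}.

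**Verifying the hypotheses.** Hypothesis (ii) of Proposition~\ref{coh3} asks that $K\otimes p_X^*A\in D^{\geq 0}$ for all $A\in\on{QCoh}(X)$; this should be arranged so that it amounts to a cohomological amplitude bound on $R\Gm_c(\Fl_{\g},\om_{\cL})$ viewed as an $R$-module, i.e.\ that after base change along any $X$-module the complex lives in non-negative degrees. Here I would use that $\om_{\cL}$ is a $!$-local system, so $R\Gm_c(\Fl_{\g},\om_{\cL})$ is (Verdier-dually) a shift of $R\Gm(\Fl_{\g},\cL)$, and that $\Fl_{\g}$ is an ind-proper ind-scheme which after reduction is a finite-dimensional scheme locally of finite type (Section~\ref{E:affsprfib}(b)); combined with the $R$-equivariant structure and a standard spectral-sequence/descent argument this gives the required positivity. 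The conclusion of Proposition~\ref{coh3} then reads $K\otimes p_Y^*B\in D^{\geq\dim X-\dim Y}$ for all $B\in\on{QCoh}(Y)$, and taking $B$ to be the module corresponding to $\tau$ gives $\tau\otimes^L_{\qlbar[\La]}R\Gm_c(\Fl_{\g},\om_{\cL})\in D^{\geq\dim X-\dim Y}$. It then remains to identify the integer $\dim X-\dim Y$ with $-d_{\br}$. Since $\dim Y=\rk G=r$ and $\dim X$ should come out to equal $r-d_{\br}$ (the relevant ``Springer-fiber-dimension'' normalization, matching $\dim(\Fl_{\g})_{\red}=\dt_{w,\br}$ and the codimension bookkeeping $\nu_{w,\br}=d_{\br}+a_{w,\br}$ used in the proof of Theorem~\ref{T:perv-tau}), this gives precisely the bound $D^{\geq -d_{\br}}$.

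**Main obstacle.** The genuinely hard part is the first step: setting up the quasi-coherent avatar $K\in D^+(\on{QCoh}(X\times Y))$ with the correct $X$, so that (a) the support lies on finitely many graphs of closed embeddings of \emph{smooth connected} schemes—this requires knowing explicitly, from \cite[Theorem~2.3.4]{BV} / \cite{Yun}, what the ``extra'' symmetry algebra acting on $H^*_c(\Fl_{\g},\om_{\cL})$ is and that it is the ring of functions on a smooth connected (in fact, a torus-like) scheme; (b) the amplitude hypothesis (ii) holds, which is really a positivity/perversity statement for the affine Springer fiber repackaged coherently. Once $X$, $Y$ and the $\eta_a$'s are correctly identified and the two hypotheses checked, Proposition~\ref{coh3} does all the homological work and the dimension count is routine. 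I expect the write-up to spend most of its effort on making the identification $X$ precise and on the book-keeping that turns $\dim X-\dim Y$ into $-d_{\br}$; everything downstream of Proposition~\ref{coh3} is formal.
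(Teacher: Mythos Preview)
Your overall strategy is correct---translate to quasi-coherent sheaves on $X\times Y$ with $Y=\Spec\qlbar[\La]$, use the group analog of Yun's theorem for the support condition, and apply Proposition~\ref{coh3}---but there is a genuine gap in how you verify hypothesis~(ii) and in the dimension count.

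First, your identification of $X$ and its dimension is wrong. The correct choice (the paper's) is $X=\Spec\qlbar[\wt{\La}_{\g}]$ where $\wt{\La}_{\g}=\pi_0(\cL G_{\g})\simeq X_*(G_{\g})_{\Gm_F}$, a torus of dimension $r-c_w$, \emph{not} $r-d_{\br}$. Thus $\dim X-\dim Y=-c_w$, not $-d_{\br}$, so Proposition~\ref{coh3} alone does not give the bound; a shift is needed.

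Second, and more seriously, your verification of hypothesis~(ii) (``a standard spectral-sequence/descent argument gives the required positivity'') is not an argument. The actual input is concrete and geometric: the lattice $\La_{\g}=\Hom_F(\B{G}_m,G_{\g})$ acts on $\Fl_{\g}$, the quotient $[\Fl_{\g}/\La_{\g}]$ is a scheme of dimension $\dt_{w,\br}$, and since $\om_{\cL}$ descends to a $!$-local system on the quotient, one gets $\nu\otimes^L_{\qlbar[\La_{\g}]}V\in\cD^{\geq -2\dt_{w,\br}}$ for every $\nu\in\Rep_{\qlbar}(\La_{\g})$ (via \rl{taucoinv}). One then induces from $\La_{\g}$ to $\wt{\La}_{\g}$ (a finite-index extension) to get the same bound over $\qlbar[\wt{\La}_{\g}]$. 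This is what makes hypothesis~(ii) hold for the shifted complex $K[-2\dt_{w,\br}]$.

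With these corrections the bookkeeping closes: applying Proposition~\ref{coh3} to $K[-2\dt_{w,\br}]$ yields $K\otimes p_Y^*B\in D^{\geq -2\dt_{w,\br}+(\dim X-\dim Y)}=D^{\geq -2\dt_{w,\br}-c_w}=D^{\geq -d_{\br}}$, which is the claim. Your sketch misses the $\La_{\g}$-quotient step entirely and instead tries to absorb the Springer-fiber dimension into $\dim X$; that cannot work, because $X$ is a torus coming from the centralizer and has nothing to do with $d_{\br}$.
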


\begin{proof}
Set $V:=R\Gm_c(\Fl_{\g},\omega_{\cL})$ and we want to show that $\tau\otimes^{L}_{\bql[\La]}V\in\cD^{\geq -d_{\br}}$.
To make the proof more structural, we will divide it into steps:

\smallskip

{\bf Step 1.} Note that $\La_{\g}:=\Hom_F(\B{G}_m,G_{\g})$ is naturally a subgroup of $\cL G_{\g}$ via the map $\la\mapsto \la(t)$.
By \rco{action2}(a), $V$ is equipped with an action of $\La\times \cL G_{\g}$, therefore with an action of
$\La\times \La_{\g}$. We claim that for every $\nu\in\on{Rep}_{\qlbar}(\La_{\g})$, we have
\[
\nu\otimes^{L}_{\bql[\La_{\g}]}V\in\cD^{\geq -2\dt_{w,\br}}.
\]

\begin{proof}
Consider projection $p:\Fl_{\g}\to Y:=[\Fl_{\g}/\La_{\g}]$, and let $\pr_{\g}:\Fl_{\g}\to T$ be the restriction of $\pr:[\wt{\fC}/\cL G]\to T$ (see Section~\re{fibr}(d)). As in Section~\re{stratum}(c), the projection $\pr_{\g}$ factors through
$p:\Fl_{\g}\to Y$. Thus, $V\simeq R\Gm_c(Y,p_!p^!\omega_{\cL})$, where we write $\omega_{\cL}$ instead of $\omega_{Y,\cL}$ (see Section~\re{!ls}(b)). Moreover, the $\La_{\g}$-action on $V$ comes from the $\La_{\g}$-action on $p_!p^!\omega_{\cL}$ (compare \rl{taucoinv}(a)).

\smallskip

Then, using Sections~\re{isot}(c),(d) and \rl{taucoinv}(b), we have an identification

\[
\nu\otimes^{L}_{\bql[\La]}V\simeq \coinv_{\nu}(V)\simeq\coinv_{\nu} (R\Gm_c(Y,p_!p^!\omega_{\cL}))\simeq
\]
\[
\simeq R\Gm_c(Y,\coinv_{\nu}(p_!p^!\omega_{\cL})))\simeq  R\Gm_c(Y,A_{\nu}\overset{!}{\otimes} \omega_{\cL}).
\]

Using Sections~\re{!ls}(b),(c) we conclude that $A_{\nu}\overset{!}{\otimes} \omega_{\cL}$ is a $!$-local system on $Y$, hence it lies in $\cD^{\geq -2\dim Y}(Y)$ with respect to the usual (rather than perverse) $t$-structure.
%
%because $\om_Y\in \cD^{\geq -2\dim Y}(Y)$ and $\ov{\red}_{g}^*(\C{L})\otimes\pi^*(\nu')$ is a local system.
Therefore we have
\[
\nu\otimes^{L}_{\bql[\La]}V\simeq  R\Gm_c(Y,A_{\nu}\overset{!}{\otimes}\omega_{\cL})\in D^{\geq -2\dim Y},
\]
so our assertion follows from the equality $\dim Y=\dim\Fl_{\g}=\dt_{w,\br}$.
\end{proof}

\smallskip

{\bf Step 2.} As in \cite[Section~2.3.1]{BV}, we have a canonical isomorphism $\pi_{0}(\cL G_{\g})\simeq X_*(G_{\g})_{\Gm_{F}}$, where we set
$X_*(G_{\g}):=\Hom_{\ov{F}}(\gm,G_{\g})$. Moreover the composition $\La_{\g}\hra \cL G_{\g}\to\pi_0(\cL G_{\g})$ 
can be rewritten as
\[
\La_{\g}=X_*(G_{\g})^{\Gm_{F}}\hra X_*(G_{\g})\to X_*(G_{\g})_{\Gm_{F}}\simeq\pi_0(\cL G_{\g}).
\]
In particular, the homomorphism $\La_{\g}\to\wt{\La}_{\g}:=\pi_0(\cL G_{\g})$ is injective, and the quotient $\ov{\La}_{\g}:=\wt{\La}_{\g}/\La_{\g}$ is finite.

\smallskip

Consider induced representation  $\wt{V}:=\on{ind}_{\La_{\g}}^{\wt{\La}_{\g}}(V)$ of $\La\times \wt{\La}_{\g}$. Then, by Step 1, for every representation  $\wt{\nu}\in\on{Rep}_{\qlbar}(\wt{\La}_{\g})$, we have
\[
\wt{\nu}\otimes^{L}_{\bql[\wt{\La}_{\g}]}\wt{V}\simeq (\wt{\nu}|_{\La_{\g}})\otimes^{L}_{\bql[\La_{\g}]}V\in \cD^{\geq -2\dt_{w,\br}},
\]
where $\wt{\nu}|_{\La_{\g}}$ denotes the restriction of $\nu$, and we want to show that
\[
\tau\otimes^{L}_{\bql[\La]}\wt{V}\simeq (\tau\otimes^{L}_{\bql[\La]}V)^{\oplus|\ov{\La}_{\g}|}\in \cD^{\geq -d_{\br}}.
\]

\smallskip

{\bf Step 3.} Next we are going to rephrase the assertion of Step 2 geometrically:

\smallskip

Set $X:=\Spec(\bql[\wt{\La}_{\g}])$ and $Y:=\Spec(\bql[\La])$. Then $X$ and $Y$ are connected affine algebraic groups (hence connected
smooth affine schemes) of dimensions
$\dim X=r-c_{w}$ and $\dim Y=r$. Since $\wt{V}$ is equipped with an action of
$\La\times\wt{\La}_{\g}$, it corresponds to an object $K\in D^b(\on{QCoh}(X\times Y))$.

\smallskip

%We want to apply Proposition~\ref{coh3} to $\cF_V[-d_{\g}]$.
Note that a quasi-coherent sheaf $A\in\on{QCoh}(X)$ corresponds to a representation $\wt{\nu}\in\on{Rep}_{\bql}(\wt{\La}_{\g})$, and the derived tensor product $\wt{\nu}\otimes^{L}_{\bql[\wt{\La}_{\g}]}\wt{A}$ corresponds to $K\otimes^{L} p_{X}^{*}(\iota^*A)\in D^{b}(\on{QCoh}(X\times Y))$, where involution $\iota:\la\mapsto\la^{-1}$ appears because of conventions of Section~\re{isot}(d).
 Similarly, a representation $\tau\in \on{Rep}_{\bql}(\La)$ corresponds to a quasi-coherent sheaf $B\in\on{QCoh}(Y)$ and the derived tensor product $K\otimes^{L} p_{Y}^{*}(\iota^*B)\in D^{b}(\on{QCoh}(X\times Y))$ corresponds to $\tau\otimes^{L}_{\bql[\La]}\wt{V}$.

\smallskip

Hence, by Step 2, we have $K\otimes^L p_{X}^{*}A\in D^{\geq -2\dt_{w,\br}}$ for all $A\in\on{QCoh}(X)$, and it suffices to show
that we have $K \otimes^{L} p_{Y}^{*}B\in D^{\geq -d_{\br}}$ for all $B\in\on{QCoh}(Y)$.

%Since $d_{\br}=2\dt_{w,\br}+c_w$, we would like to apply Proposition~\ref{coh3} to $\cF$.

\smallskip

{\bf Step 4.} Recall that $G_{\g}\subseteq G$ is a maximal torus. Hence every Borel subgroup $B\supseteq G_{\g}$ over $\ov{F}$ gives rise to an isomorphism
\[
\phi=\phi_B:G_{\g}\hra B\to B/[B,B]\simeq T,
\] and following \cite{BV} we call such isomorphisms {\em admissible}.\footnote{The collection of admissible isomorphisms form a $W$-torsor.}

\smallskip

Every admissible isomorphism $\phi$ gives rise to a surjective homomorphism of groups
\[
\La=X_*(T)\overset{\phi}{\lra} X_*(G_{\g}) \to X_*(G_{\g})_{\Gm_F}\simeq \wt{\La}_{\g} \overset{\iota}{\to}\wt{\La}_{\g},
\]
where $\iota$ is the map $\iota:g\mapsto g^{-1}$, hence a surjective homomorphism of $\qlbar$-algebras
$\qlbar[\La]\to\qlbar[\wt{\La}_{\g}]$. Therefore $\phi$ gives rise to a closed embedding $Y\hra X$, which we will denote by $\eta_{\phi}$.

%\footnote{The utility of inversion $\iota$ is explained in details in \cite[B.3.5]{BV} in order to have the right compatibility between the action of $\La$ and $\pi_{0}(\cL G_{\g})$.}

\smallskip

%{\bf Step 5.} The surjection \form{phi} gives rise to the closed embedding $Y\hra X$, which we will denote by $f_{\phi}$.
Also the surjective homomorphism of groups $\wt{\La}_{\g}\to\ov{\La}_{\g}$ gives rise to a homomorphism of algebraic groups $\ov{\La}_{\g}=\Spec(\bql[\ov{\La}_{\g}])\to \Spec(\bql[\wt{\La}_{\g}])=X$. In particular, every $\la\in\ov{\La}_{\g}$ induces an automorphism of $X$, hence a closed embedding $\eta_{\phi,\la}:=\la\circ \eta_{\phi}:Y\hra X$.

\smallskip

{\bf Step 5.}  By Claim~\ref{yun-supp} below, the quasi-coherent sheaf $K\in D^b(\on{QCoh}(X\times Y))$ is set-theoretically supported on the union of graphs of the $\eta_{\phi,\la}$'s. Hence, by observation of Step~3 and identities $\dim X-\dim Y=c_w$ and $-d_{\br}=-2\dt_{w,\br}-c_w$, our assertion follows from Proposition~\ref{coh3} applied to the quasi-coherent sheaf $K[-2\dt_{w,\br}]$ and closed embeddings $\{\eta_{\phi,\la}\}_{\phi,\la}$.
\end{proof}

\begin{Cl}\label{yun-supp}
 The quasi-coherent sheaf $K\in D^b(\on{QCoh}(X\times Y))$ from Step 3 is set-theoretically supported on the union of graphs of the $\eta_{\phi,\la}$'s,
 where $\phi$ runs over the set of all admissible isomorphisms $G_{\g}\isom T$ and $\la$ runs over elements of $\ov{\La}_{\g}$.
\end{Cl}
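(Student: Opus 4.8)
The plan is to identify $\on{Supp}(K)$ with the union of the supports of the cohomology modules of $\wt{V}$ over $\bql[\wt{\La}_{\g}]\otimes\bql[\La]$, rewrite these in terms of the $\La\times\pi_0(\cL G_{\g})$-modules $H^i_c(\Fl_{\g},\omega_{\cL})$ of \rco{action2}(b), and then invoke the group version of Yun's compatibility theorem.

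Recall that $K$ is the quasi-coherent complex corresponding to $\wt{V}=\on{ind}_{\La_{\g}}^{\wt{\La}_{\g}}(V)$ with $V=R\Gm_c(\Fl_{\g},\omega_{\cL})$, so $\on{Supp}(K)=\bigcup_i\on{Supp}\bigl(H^i(\wt{V})\bigr)$ and $H^i(\wt{V})=\bql[\wt{\La}_{\g}]\otimes_{\bql[\La_{\g}]}H^i_c(\Fl_{\g},\omega_{\cL})$ by flat base change. As a $\La$-module $H^i(\wt{V})$ is a finite direct sum of copies of $H^i_c(\Fl_{\g},\omega_{\cL})$, hence finitely generated over $\bql[\La]$ by \rco{fingen}, and a fortiori finitely generated over $\bql[\wt{\La}_{\g}]\otimes\bql[\La]$; so $\on{Supp}(K)$ is a genuine closed subset of the finite-type $\bql$-scheme $X\times Y$. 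Since $\bql$ is algebraically closed it is enough to control closed points, i.e. to show: if $\psi\colon\wt{\La}_{\g}\to\bql^{\times}$ and $\chi\colon\La\to\bql^{\times}$ are characters with $H^i(\wt{V})_{(\psi,\chi)}\neq 0$ for some $i$, then $\eta_{\phi,\la}(\psi)=\chi$ for some admissible $\phi$ and some $\la\in\ov{\La}_{\g}$.

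I would then perform two elementary reductions. By the projection (Frobenius) formula, $\on{ind}_{\La_{\g}}^{\wt{\La}_{\g}}(M|_{\La_{\g}})\simeq M\otimes_{\bql}\bql[\ov{\La}_{\g}]$ for every $\wt{\La}_{\g}$-module $M$, and $\bql[\ov{\La}_{\g}]$, viewed as a $\wt{\La}_{\g}$-module through $\wt{\La}_{\g}\to\ov{\La}_{\g}$, decomposes (over the algebraically closed field $\bql$, with $|\ov{\La}_{\g}|$ invertible) as a direct sum of one-dimensional characters of $\ov{\La}_{\g}$. Taking $M=H^i_c(\Fl_{\g},\omega_{\cL})$ shows that $H^i(\wt{V})$ is, as a $\La\times\wt{\La}_{\g}$-module, a finite direct sum of twists of $H^i_c(\Fl_{\g},\omega_{\cL})$ by characters of $\ov{\La}_{\g}$; hence $H^i(\wt{V})_{(\psi,\chi)}\neq 0$ forces $H^i_c(\Fl_{\g},\omega_{\cL})_{(\psi',\chi)}\neq 0$ for a character $\psi'$ of $\wt{\La}_{\g}$ that differs from $\psi$ by a character of $\ov{\La}_{\g}$. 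This discrepancy between $\psi'$ and $\psi$ is precisely what the translation indexed by $\la\in\ov{\La}_{\g}$ inside $\eta_{\phi,\la}$ absorbs.

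It remains to control the joint generalized eigencharacters of $H^i_c(\Fl_{\g},\omega_{\cL})$ for the Lusztig $\La$-action and the geometric $\pi_0(\cL G_{\g})=\wt{\La}_{\g}$-action --- this is the crux. By \rp{compat} these are the actions of \cite{BV}, so I would invoke the group analog \cite[Theorem~2.3.4]{BV} of Yun's theorem \cite{Yun}, which expresses the Lusztig action through the geometric action of $\cL G_{\g}$ by means of an admissible isomorphism $\phi\colon G_{\g}\isom T$: it yields that on every non-zero joint generalized eigenspace the Lusztig eigencharacter $\chi$ is the pull-back of the geometric eigencharacter $\psi'$ along the surjection $\La\twoheadrightarrow\wt{\La}_{\g}$ attached to $\phi$ in Step~4 of the proof of Proposition~\ref{p-tau2} (with the involution $\iota$ of Section~\re{isot}(d) built in). Feeding this into the previous reduction --- so that the $\psi$-versus-$\psi'$ twist becomes the translation $\la$ --- shows $\eta_{\phi,\la}(\psi)=\chi$, which is the claim. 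The step I expect to be the real obstacle is this last one: matching the exact form in which Yun's compatibility is recorded in \cite{BV} to the ``$\on{Supp}(K)$ lies in a union of graphs'' statement, i.e. checking that the pairs $(\phi,\la)$ --- the $W$-torsor of admissible isomorphisms together with the $\ov{\La}_{\g}$-worth of translations --- are exactly the closed embeddings $\eta_{\phi,\la}$ of Step~4, with all conventions (the direction of the maps, the involution $\iota$) consistently aligned.
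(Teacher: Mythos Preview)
Your proposal is correct and follows essentially the same route as the paper: pass to cohomology sheaves, use the decomposition $H^i(\wt{V})\simeq\bigoplus_{\la\in\ov{\La}_{\g}}{}^{\la}H^i(V)$ (the paper obtains this directly from the fact that the $\La_{\g}$-action on $H^i(V)$ extends to $\wt{\La}_{\g}=\pi_0(\cL G_{\g})$, which is equivalent to your projection-formula argument), reduce to the support of $H^i(V)$ itself, and then invoke \cite[Theorem~2.3.4]{BV} via the argument of \cite[Claim~5.3.4]{BV}. Your explicit mention of \rp{compat} and the closed-point reduction are minor elaborations, not a different strategy.
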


\begin{proof}
It suffices to show that each cohomology sheaf $\cH^i(K)$ is supported on the union of graphs of the $\eta_{\phi,\la}$'s.

\smallskip

By definition, $\cH^i(K)$ corresponds to the representation $H^i(\wt{V})\simeq \on{ind}_{\La_{\g}}^{\wt{\La}_{\g}}(H^i(V))$ of $\La\times \wt{\La}_{\g}$, where (as before) $H^i(V)$ is a representation of $\La\times{\La}_{\g}$, obtained as a restriction of the representation of $\La\times \cL G_{\g}$. Moreover, the action of $\La\times{\La}_{\g}$ on $H^i(V)$ naturally extends to the action of $\La\times \wt{\La}_{\g}$.
Namely, this follows from the fact that the action of $\cL G_{\g}$ on $H^i(V)=H^i(\Fl_{\g},\omega_{\cL})$ factors through
$\wt{\La}_{\g}=\pi_0(\cL G_{\g})$ (see \rco{action2}(b)).

\smallskip

By the observations of the previous paragraph, representation $H^i(\wt{V})$ of $\wt{\La}_{\g}\times\La$ decomposes as a direct sum
$\bigoplus_{\la\in\ov{\La}_{\g}} {}^{\la} H^i(V)$, where ${}^{\la} H^i(V)$ denotes the twist of $H^i(V)$ by $\la$. Therefore, it suffices to show that the quasi-coherent sheaf $K^i\in \on{QCoh}(X\times Y)$, corresponding to $H^i(V)$, is set-theoretically supported on the union of graphs of the $\eta_{\phi}$'s. As it was explained in the proof of \cite[Claim~5.3.4]{BV}, the assertion follows from  \cite[Theorem~2.3.4]{BV}, which is the group version of \cite[Theorem~2]{Yun}.
\end{proof}

\begin{Cor}\label{C:perv-tau}
For every $W$-equivariant local system on $\cL$ on $T$ and every $\tau\in\on{Rep}_{\bql}(\wt{W})$, the sheaf of $\tau$-coinvariants $\cS_{\cL,\bullet,\tau}=\on{coinv}_{\tau}(\cS_{\cL,\bullet})\in\cD([\kC_{\bullet}/\cL G])$ is $p_{\nu}$-perverse.
\end{Cor}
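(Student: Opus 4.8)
The plan is to deduce Corollary~\ref{C:perv-tau} from Theorem~\ref{T:perv-tau} by restriction of scalars along the inclusion $\La\hra\wt{W}$. First I would observe that $\La$ is a normal subgroup of $\wt{W}$ with finite quotient $W$ (and $|W|$ is prime to the characteristic of $k$, so $\bql[\wt{W}]$ is a finitely generated projective, indeed free, module over $\bql[\La]$ on both sides). Hence the restriction functor $\res^{\La}_{\wt{W}}:\cD([\kC_{\bullet}/\cL G])^{B\wt{W}}\to\cD([\kC_{\bullet}/\cL G])^{B\La}$ admits both a left and a right adjoint, and these two adjoints agree: $\ind^{\wt{W}}_{\La}\simeq\bql[\wt{W}]\otimes^L_{\bql[\La]}(-)$. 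The key point is that for $\tau\in\on{Rep}_{\bql}(\wt{W})$ and $K\in\cD([\kC_{\bullet}/\cL G])^{B\wt{W}}$, we have a natural isomorphism
\[
\on{coinv}^{\wt{W}}_{\tau}(K)\simeq \on{coinv}^{\La}_{\res^{\La}_{\wt{W}}(\tau)}(\res^{\La}_{\wt{W}}(K)).
\]
Indeed, by Section~\re{isot}(d) the left-hand side is $\tau\otimes^L_{\bql[\wt{W}]}K$ (with $\tau$ viewed as a right module), and since $\bql[\wt{W}]$ is free over $\bql[\La]$ this may be computed after restriction as $\res(\tau)\otimes^L_{\bql[\La]}\res(K)$; more conceptually, both sides compute the colimit of the diagram $B\wt{W}\to B(\{1\})$ factored through $BW$, using that the contribution of the finite group $W$ is exact because $|W|$ is invertible in $\bql$.

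Granting this, the corollary is immediate: $\cS_{\cL,\bullet}$, which lies in $\cD^{\wt{W}}([\kC_{\bullet}/\cL G])$ by Corollary~\ref{C:action}(b) when $\cL$ is $W$-equivariant, restricts to the object $\cS_{\cL,\bullet}\in\cD^{\La}([\kC_{\bullet}/\cL G])$ used in Theorem~\ref{T:perv-tau} (the underlying sheaf is the same; only the equivariant structure is forgotten). Therefore
\[
\cS_{\cL,\bullet,\tau}=\on{coinv}^{\wt{W}}_{\tau}(\cS_{\cL,\bullet})\simeq \on{coinv}^{\La}_{\res(\tau)}(\cS_{\cL,\bullet})=\cS_{\cL,\bullet,\res(\tau)},
\]
and the right-hand side is $p_{\nu}$-perverse by Theorem~\ref{T:perv-tau} applied to the $\La$-representation $\res^{\La}_{\wt{W}}(\tau)$.

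I do not expect a serious obstacle here; the only mildly delicate point is the compatibility of $\on{coinv}_{\tau}$ with restriction of scalars, which ultimately rests on $\bql[W]$ being semisimple (equivalently $|W|$ invertible in $\bql$, guaranteed by Section~\re{setup}(f), which forces the characteristic prime to $|W|$). One should phrase this either via the explicit module-theoretic description of Section~\re{isot}(d) together with freeness of $\bql[\wt{W}]$ over $\bql[\La]$, or via the functoriality of coinvariants in Section~\re{catequiv}(d) applied to the composite $\wt{W}\to W\to\{1\}$, noting that $W$-coinvariants coincide with $W$-invariants and are $t$-exact. Either route gives the isomorphism $\on{coinv}^{\wt{W}}_{\tau}\simeq\on{coinv}^{\La}_{\res(\tau)}\circ\res$ on all of $\cD([\kC_{\bullet}/\cL G])^{B\wt{W}}$, which is all that is needed.
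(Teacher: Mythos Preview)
Your central isomorphism
\[
\on{coinv}^{\wt{W}}_{\tau}(K)\;\simeq\;\on{coinv}^{\La}_{\res(\tau)}(\res(K))
\]
is false. Freeness of $\bql[\wt{W}]$ over $\bql[\La]$ does not let you compute $\tau\otimes^L_{\bql[\wt{W}]}K$ as $\tau\otimes^L_{\bql[\La]}K$; rather, since $\wt{W}/\La\simeq W$, one has
\[
\tau\otimes^L_{\bql[\wt{W}]}K\;\simeq\;\bigl(\tau\otimes^L_{\bql[\La]}K\bigr)_{W},
\]
the $W$-coinvariants for the residual diagonal $W$-action. (A sanity check: take $\tau=\bql$ the trivial $\wt{W}$-representation and $K$ with trivial $\La$-action but nontrivial $W$-action; then your formula would give $K$, whereas the correct answer is $K_W$.) So your displayed equation $\cS_{\cL,\bullet,\tau}\simeq\cS_{\cL,\bullet,\res(\tau)}$ is wrong as stated.

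The fix is exactly what the paper does, and you in fact gesture toward it at the end: since $\bql$ has characteristic zero, $W$-coinvariants agree with $W$-invariants and are a retract, hence
\[
\cS_{\cL,\bullet,\tau}\;\simeq\;(\cS_{\cL,\bullet,\tau|_{\La}})^{W}
\]
is a retract of $\cS_{\cL,\bullet,\tau|_{\La}}$, and perversity passes to retracts. Note also that the relevant invertibility of $|W|$ is in the coefficient field $\bql=\overline{\mathbb{Q}}_\ell$, which is automatic; the assumption on the characteristic of $k$ in Section~\re{setup}(f) plays no role here.
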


\begin{proof}
Since  $\cS_{\cL,\bullet,\tau}\simeq (\cS_{\cL,\bullet,\tau|_{\La}})^{W}$ and $(\cS_{\cL,\bullet,\tau|_{\La}})^{W}$ is a retract of
$\cS_{\cL,\bullet,\tau|_{\La}}$, the assertion follows from  \rt{perv-tau}.
\end{proof}

\begin{Emp}
{\bf Remark.} Note that \rt{perv-tau} is a formal consequence of \rco{perv-tau}. Indeed, for every local system on $\cL$ on $T$, the local system $\wt{\cL}:=\bigoplus_{w\in W}w_!(\cL)$ has a natural $W$-equivariant structure. Moreover, for every $\tau\in\on{Rep}_{\bql}(\La)$ and $\wt{\tau}:=\ind_{\La}^{\wt{W}}(\tau)\in\on{Rep}_{\bql}(\wt{W})$, we have a natural isomorphism $\cS_{\cL,\bullet,\tau}\simeq \cS_{\wt{\cL},\bullet,\wt{\tau}}$.
\end{Emp}

Finally, we are going to show a version of \rco{perv-tau} for Lie algebras.

\begin{Emp}
{\bf Lie algebra case.}
Let $\fg$ be the Lie algebra of $G$, let $\fC_{\fg,\bullet}\subseteq\cL\fg$ be the locally closed ind-subscheme, denote in \cite{BKV} by
$\fC_{\bullet}$, and let $\cS_{\fg,\bullet}\in\cD([\fC_{\fg,\bullet}/\cL G])$ be the affine Grothendieck--Springer sheaf for Lie algebras denoted by $\cS_{\bullet}$ in \cite{BKV}. Then, by \cite[Theorem~7.1.4]{BKV}, $\cS_{\fg,\bullet}$ is perverse and is equipped with a $\wt{W}$-action. Hence, as in \rco{action2}(b),  $\cS_{\fg,\bullet}$ has a natural lift to an object of $\cD^{\wt{W}}([\fC_{\fg,\bullet}/\cL G])$, so for every representation $\tau\in\on{Rep}_{\bql}(\wt{W})$, one can form the sheaf of $\tau$-coinvariants $\cS_{\fg,\bullet,\tau}=\on{coinv}_{\tau}(\cS_{\fg,\bullet})\in\cD([\kC_{\fg,\bullet}/\cL G])$.
 \end{Emp}

 \begin{Thm}\label{T:pervtaulie}
For every $\tau\in\on{Rep}_{\bql}(\wt{W})$, the sheaf $\cS_{\fg,\bullet,\tau}\in\cD([\kC_{\fg,\bullet}/\cL G])$ is perverse.
\end{Thm}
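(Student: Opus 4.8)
The plan is to run the proof of \rt{perv-tau} essentially verbatim, replacing each group-theoretic input by its Lie-algebra counterpart from \cite{BKV}, and replacing the group version \cite[Theorem~2.3.4]{BV} of Yun's theorem by Yun's original theorem \cite[Theorem~2]{Yun}, which is already a statement about Lie algebras. To begin, I would argue exactly as in the proof of \rco{perv-tau}: since $\cS_{\fg,\bullet,\tau}\simeq(\cS_{\fg,\bullet,\tau|_{\La}})^{W}$ is a retract of $\cS_{\fg,\bullet,\tau|_{\La}}$ and the category of $p_{\nu}$-perverse sheaves is closed under retracts, it is enough to prove the assertion for $\tau\in\on{Rep}_{\bql}(\La)$.

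Fixing such a $\tau$, I would proceed as in the proof of \rt{perv-tau}. By \cite[Theorem~7.1.4]{BKV} the sheaf $\cS_{\fg,\bullet}$ is perverse, and $\coinv_{\tau}$ is right $t$-exact (Lemma~\ref{r-tex}), so it suffices to show $\cS_{\fg,\bullet,\tau}\in{}^{p_{\nu}}\cD^{\geq 0}([\fC_{\fg,\bullet}/\cL G])$. Since $\cS_{\fg,\bullet}$ is $\La$-constructible (the Lie-algebra analog of \rt{La-cstr}, known from \cite{BKV}), the sheaf $\cS_{\fg,\bullet,\tau}$ is essentially constructible by \rl{Gmconsstr}(c), and so is each pullback $\eta_{w,\br}^{!}(\cS_{\fg,\bullet,\tau})$ by Section~\re{esscons}(c). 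As each GKM stratum $[\fC_{\fg,w,\br}/\cL G]_{\red}$ is admissible, Proposition~\ref{pullcheck}, together with the identity $\nu_{w,\br}=d_{\br}+a_{w,\br}$ (Proposition~\ref{codim}), reduces the statement to the stalk estimate
\[
\iota_{\g}^{!}(\cS_{\fg,\bullet,\tau})\in\cD^{\geq -d_{\br}}(\pt)\qquad\text{for every GKM pair }(w,\br)\text{ and every }\g\in\fC_{\fg,w,\br}(k).
\]
By base change together with Sections~\re{isot}(c),(d) the left-hand side is identified with $\tau\otimes^{L}_{\bql[\La]}R\Gm_c(\Fl_{\g},\om_{\cL})$, where now $\Fl_{\g}$ denotes the Lie-algebra affine Springer fiber.

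What remains is the Lie-algebra analog of Proposition~\ref{p-tau2}, asserting $\tau\otimes^{L}_{\bql[\La]}R\Gm_c(\Fl_{\g},\om_{\cL})\in\cD^{\geq -d_{\br}}$, and I would obtain it by copying its five-step proof. Step~1, the bound $\nu\otimes^{L}_{\bql[\La_{\g}]}V\in\cD^{\geq -2\dt_{w,\br}}$ for $\nu\in\on{Rep}_{\bql}(\La_{\g})$, uses only the $\cL G_{\g}$-equivariant structure on $R\Gm_c(\Fl_{\g},\om_{\cL})$ and the dimension formula $\dim\Fl_{\g}=\dt_{w,\br}$, both available for Lie-algebra affine Springer fibers by \cite{Be,BKV}; Steps~2--4 are purely representation-theoretic and geometric and go through without any change; and in Step~5 the support statement replacing Claim~\ref{yun-supp} follows directly from \cite[Theorem~2]{Yun} (using that the $\cL G_{\g}$-action on $H^{i}_{c}(\Fl_{\g},\om_{\cL})$ factors through $\pi_{0}(\cL G_{\g})$), after which one applies Proposition~\ref{coh3} exactly as before. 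The only point requiring care is therefore bookkeeping --- checking that the auxiliary inputs ($\La$-constructibility of $\cS_{\fg,\bullet}$, the $\cL G_{\g}$-action on the cohomology of Lie-algebra affine Springer fibers and its factoring through $\pi_{0}$, admissibility of the GKM strata) are all in place in the Lie-algebra setting, which they are --- and I do not expect any genuinely new obstacle, since the single delicate ingredient of the group argument, the quasi-coherent-sheaf computation of Proposition~\ref{coh3} built on \cite[Theorem~2]{Yun}, transfers unchanged and is in fact slightly more direct here, Yun's theorem being a Lie-algebra statement to begin with.
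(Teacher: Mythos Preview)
Your proposal is correct and takes essentially the same approach as the paper, whose own proof is simply the one-line instruction to repeat the argument of \rt{perv-tau} word-by-word with the inputs from \cite{BKV} and \cite[Theorem~2]{Yun} in place of their group analogs. One small notational slip: in the Lie algebra case $\cS_{\fg,\bullet}$ involves no twisting local system $\cL$, so the stalks should be $R\Gm_c(\Fl_{\g},\om)$ rather than $R\Gm_c(\Fl_{\g},\om_{\cL})$, but this does not affect the argument.
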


\begin{proof}
To prove the result, we repeat the argument of \rt{perv-tau} word-by-word, replacing results of the current work by those of \cite{BKV} and replacing \cite[Theorem~2.3.4]{BV} by \cite[Theorem~2]{Yun}.
\end{proof}


\begin{thebibliography}{99}

\bibitem[BR]{BR}
P. Bardsley and R. W. Richardson, {\em\'Etale slices for algebraic transformation groups in characteristic $p$}, Proc. London Math. Soc. (3)
{\bf 51} (1985), no. 2, 295--317.

\bibitem[Be]{Be}
R. Bezrukavnikov, {\em The dimension of the fixed points set on affine flag manifolds}, Mathematical Research Letters
{\bf 3} (1996), 185--189.

\bibitem[BeKV1]{BeKV1}
R. Bezrukavnikov, D. Kazhdan and Y. Varshavsky, {\em A categorical approach to the stable center conjecture},
Ast\'erisque {\bf 369} (2015), 27--97.

\bibitem[BeKV2]{BeKV2}
R. Bezrukavnikov, D. Kazhdan and Y. Varshavsky, {\em Finiteness properties of affine Springer sheaves}, in preparation.


\bibitem[BV]{BV}
R.~Bezrukavnikov and  Y.~Varshavsky,
{\em Affine Springer fibers and depth zero $L$-packets}, preprint, arXiv:2104.13123.

\bibitem[Bo1]{Bt1}
A. Bouthier, {\em Dimension des fibres de Springer affines pour les groupes},
Transform. Groups {\bf 2}, no. 3 (2015), 615--663.

\bibitem[Bo2]{B}
A. Bouthier, {\em Faisceaux caract\`eres sur les espaces de lacets d'alg\`ebres de Lie}, Alg. Geom. {\bf 12} (3)(2025), 345--381. 

%\bibitem[BCe]{BC}
%A. Bouthier, K. Cesnavicius.
%{\em Torsors on loop groups and the Hitchin fibration}, Ann. Sci. \'Ec. Norm. Sup\'er. (4) {\bf 55} (2022), no. 3, 791--864.

\bibitem[BCh]{Bt2}
A. Bouthier, J. Chi, Correction to {Dimension des fibres de Springer affines pour les groupes},
Transformation Groups {\bf 23} (2018), 1217--1222.

\bibitem[BKV]{BKV}
A. Bouthier, D. Kazhdan, Y. Varshavsky,
{\em Perverse sheaves on infinite-dimensional stacks, and affine
Springer Theory}, Adv. Math. {\bf 408} (2022), Paper No. 108572, 132 pp.

%\bibitem[CM]{CM}
%C. De Concini, A. Maffei,
%{\em A generalized Steinberg section and branching rules for quantum groups at roots of $1$},
%Mosc. Math. J. {\bf 12} (2012), 469--495.

%\bibitem[SGA4.5]{SGA4-1/2}
%P. Deligne.
%\newblock Cohomologie \'etale SGA 4 1/2.
%\newblock Lect. Notes in Math. 569, Springer-Verlag, pp. 233-251, 1977.

\bibitem[GKM]{GKM}
M. Goresky, R. Kottwitz, R. MacPherson,
{\em Codimension of root valuation strata},
Pure and Applied Mathematics Quarterly {\bf 5} (2009), 1253--1310.

\bibitem[EGA IV]{EGAIV}
A. Grothendieck, J. Dieudonn\'e, {\em \'El\'ements de g\'eom\'etrie alg\'ebrique. IV: \'Etude locale des sch\'emas et des morphismes de sch\'emas
Quatri\`eme partie}, Publ. Math. IHES, vol. {\em 32}, (1967).

%\bibitem[SGA1]{SGA1}
%A. Grothendieck.
%\newblock S\'eminaire de G\'eom\'etrie Alg\'ebrique du Bois Marie: Rev\^{e}tements \'etales et groupe
%fondamental.
%\newblock Lecture Notes in Mathematics 224, Springer-Verlag (1971).

%\bibitem[SGA6]{SGA6}
%A. Grothendieck, P. Berthelot, L. Illusie.
%\newblock Th\'eorie des intersections et th\'eor\`eme de Riemann-Roch.
%\newblock Lectures Notes in Mathematics 225, (1971).


\bibitem[Hu]{H}
J. Humphreys, {\em Conjugacy classes in semisimple algebraic groups},
Mathematical Surveys and Monographs, vol. {\bf 43}, American Mathematical Soc., 2011.

%\bibitem[Hu2]{Hum2}
%J. Humphreys, {\em Linear algebraic groups}, 2nd edition, Graduate Text in Mathematics, vol. {\bf 21}, Springer-Verlag, 1981.


\bibitem[Sta]{Sta}
A. J. de Jong et al, {\em The Stacks Project}, Available at http://stacks.math.columbia.edu.


\bibitem[KL]{KL}
D. Kazhdan, G. Lusztig,
{\em Fixed point varieties on affine flag manifolds},
Israel J. Math. {\bf 62}, no. 2, (1988), 129--168.

\bibitem[KV]{KV}
D. Kazhdan, Y. Varshavsky, {\em Endoscopic decomposition of certain depth zero representations},
in {\em Studies in Lie theory}, 223--301, Progr. Math. {\bf 243}, Birkh\"auser Boston, MA, 2006.

%\bibitem[Lu1]{Lu1}
%J. Lurie, {\em Higher Topos theory},
%Annals of Mathematics Studies, vol. {\bf 170}, Princeton University Press, Princeton, NJ, 2009.


\bibitem[Lu]{Lu2}
J. Lurie, {\em Higher algebra}, available at https://www.math.ias.edu/~lurie/papers/HA.pdf.

%\bibitem[Ngo1]{N}
%B.C. Ng\^{o}, {\em Le lemme fondamental pour les alg\`ebres de Lie},
%Publications Math. IHES, vol. {\bf 111} (2010), 1--169.


%\bibitem[Ngo2]{N2}
%B.C. Ng\^{o}, {\em Endoscopie et Fibration de Hitchin}, Invent. Math. {\bf 164} (2006), 399--453.

\bibitem[Sl]{Slo}
P. Slodowy.
\newblock Simple singularities of simple algebraic groups.
\newblock Lecture Notes in Maths, vol. 815, Springer-Verlag, Berlin 1980.

\bibitem[Sp]{Spr}
T.A. Springer
\newblock The unipotent variety of semisimple groups.
\newblock Proc. Colloq. Alg. Geometry Bombay 1968 (Tata Institute), 1969.

\bibitem[St]{S}
R. Steinberg.
\newblock Regular elements of semisimple algebraic groups.
\newblock{\em Publ. Math. IHES 25}, 49-80 (1965).


\bibitem[Yun1]{Yun1}
Z. Yun, {\em Global Springer Theory}, Adv. in Math. {\bf 228} (2011), 266--328.

\bibitem[Yun2]{Yun}
Z. Yun, {\em The spherical part of the local and global Springer actions},
Math. Ann. {\bf 359} (2014), no. 3-4, 557--594.

\end{thebibliography}
\end{document}